\definecolor{mr}{rgb}{0.1,0.2,0.7}
\definecolor{tk}{rgb}{0.7,0.1,0.2}
\newtheorem{theorem}{Theorem}[section]
\newtheorem{corollary}[theorem]{Corollary}
\newtheorem{lemma}[theorem]{Lemma}
\theoremstyle{definition}
\theoremstyle{remark}
\newtheorem{remark}[theorem]{Remark}
\newtheorem{example}[theorem]{Example}
\newcommand{\eps}{\varepsilon}
\newcommand{\calB}{\mathcal{B}}
\newcommand{\C}{{\mathds{C}}}
\newcommand{\R}{\mathds{R}}
\newcommand{\Rd}{\mathds{R}^d}
\newcommand{\N}{{\mathds{N}}}
\newcommand{\E}{\mathbb{E}}
\newcommand{\wt}{\widetilde}
\title[]{On weak solution of SDE driven by inhomogeneous singular L\'evy noise}
\author[T. Kulczycki]{Tadeusz Kulczycki}
\author[A. Kulik]{Alexei Kulik}
\author[M. Ryznar]{Micha{\l} Ryznar}
\thanks{T. Kulczycki and M. Ryznar were supported in part by the National Science Centre, Poland, grant no. 2019/35/B/ST1/01633}
\address{Faculty of Pure and Applied Mathematics, Wroc{\l}aw University of Science and Technology, Wyb. Wyspia{\'n}skiego 27, 50-370 Wroc{\l}aw, Poland.}
\thanks{A. Kulik has been supported through
the DFG-NCN Beethoven Classic 3 programme, contract no.
2018/31/G/ST1/02252 (National Science Center, Poland) and SCHI-419/11–1
(DFG, Germany)}
\email{tadeusz.kulczycki@pwr.edu.pl}
\email{oleksii.kulyk@pwr.edu.pl}
\email{michal.ryznar@pwr.edu.pl}
\begin{document}

\begin{abstract} We study a time-inhomogeneous  SDE in $\R^d$ driven by a cylindrical L\'evy process with independent coordinates which may have different scaling properties. Such a structure of the driving noise makes it strongly spatially inhomogeneous and complicates the analysis of the model significantly. We prove that the weak solution to the SDE is uniquely defined, is Markov, and has the strong Feller property. The heat kernel of the process is presented as a combination of an explicit `principal part'  and a `residual part', subject to certain $L^\infty(dx)\otimes L^1(dy)$ and $L^\infty(dx)\otimes L^\infty(dy)$-estimates showing that this part is negligible  in a short time, in a sense. The main tool of the construction is the analytic parametrix method, specially adapted to L\'evy-type generators with strong spatial inhomogeneities.
\end{abstract}

\maketitle

\section{Introduction}
In this paper we study an SDE of the form
\begin{equation}
\label{main}
d X_t = \int V_t(X_{t-},z)\, N(dt,dz),
\quad X_{0} = x \in \R^d, \quad t\ge 0,
\end{equation}
where $N(dt,dz)$ is a Poisson random measure, which corresponds to a symmetric L\'evy process $Z = (Z_t, t \ge 0)$ in the usual sense that
$$
d Z_t = \int z \,  N(dt,dz).
$$
Heuristically, the dynamics of the process $X$ can be described as follows: whenever the driving process has a jump with the altitude $\triangle_tZ=z$, the process $X$ makes the jump with the altitude $\triangle_tX=V_t(X_{t-},z)$. Such a description can be made rigorous either if the total intensity of jumps for $Z$ is finite (and then the jumps can be processed one by one), or the jump coefficient $V_t(x,z)$ satisfies a proper version of the Lipschitz condition w.r.t. $x$ (and then the solution to \eqref{main} can be obtained by the It\^o-L\'evy stochastic calculus tools, e.g. \cite[Section~IV.9]{IW81}). In both these cases, $X$ is a \emph{strong} solution to \eqref{main}, i.e. a process adapted to the natural filtration generated by the L\'evy noise. In the current paper, we deal with  a more sophisticated setting where the coefficient is assumed to be H\"older continuous, only. In this case,  one can still expect to have $X$ uniquely defined in law as a \emph{weak} solution to \eqref{main}. The guideline here is provided by the classic diffusion theory \cite{Stroock_Varad}, based on an analytic study of the backward Kolmogorov equation for the (formal) generator, associated with the SDE. Extension of this analytic theory to L\'evy driven SDEs has been a subject of intensive studies, see the literature overview in Section \ref{s24} below. Such an extension is far from being straightforward; namely, because of high diversity of the possible structure of the L\'evy noise,  numerous new effects appear, often requiring specific methods to be treated. In the current paper we approach a quite challenging case, where the driving process $Z$ has the form
\begin{equation}\label{Z}
Z=(Z^1, \dots Z^d),
\end{equation}
with  $Z^i, i=1,\dots, d$ being independent scalar L\'evy processes which have the \emph{weak scaling property} (WSP), see \eqref{h-scaling_u0} below. The jump coefficient will be assumed to have a natural form
\begin{equation}\label{V_t}
V_t(x,z)=A_t(x)z+U_t(x,z)
\end{equation}
with the linear part $A_t(x)z$ being principal, in a sense, for small $|z|$. Clearly, when $U \equiv 0$ equation (\ref{main}) is equivalent to
$$
d X_t = A_t(X_{t-}) \, d Z_t, \quad X_{0} = x \in \R^d.
$$
We stress that even the case of $A_t(x)=A(x)$,  $U_t(x,z)\equiv 0$ and all $Z^i, i=1,\dots, d$ having the same $\alpha$-stable distribution is quite complicated; for instance,  corresponding transition probability densities may fail to be locally bounded. Such an effect appears if  the distributions of a jump for various starting points are mutually singular; for a detailed discussion we refer to \cite[Section~4]{KKS2020}, where such models are called \emph{essentially singular.} The essential singularity in the above setting is caused by a combination of two features:  the fact that the L\'evy measure of the process \eqref{Z} is supported by the collection of the coordinate axes in $\R^d$ and thus is singular w.r.t. the Lebesgue measure, and a non-trivial rotation provided by the matrix $A(x)$.  In this paper we will make one more substantial step further and allow the one-dimensional components of the noise to have \emph{different} laws. To outline the new difficulties which appear in this setting, let us consider for a while $Z$ with $\alpha_i$-stable components, $i=1, \dots, d$. For small $t$, the law of $Z_t$ is mainly concentrated around the axis with the number $j=\mathrm{argmin}_i\,\alpha_i$, which combined with a non-trivial rotation makes the model quite difficult to analyze analytically.

 The first steps in the study of essentially singular models have been made in \cite{KRS18}, \cite{KR19}, \cite{KKS2020} {and \cite{CHZ2020}}. In \cite{KRS18}, the components of the noise were the same and $\alpha$-stable. The results of \cite{KRS18} were significantly extended in \cite{CHZ2020}, where time-inhomogeneous model with a drift was studied. In \cite{KKS2020} general \emph{stable-like} models have been treated, where the stability index and the spherical kernel (i.e. the distribution of the jump direction) are $x$-dependent. In \cite{KR19}, instead of stable noise,  a more general class of noises has been treated, satisfying \emph{weak scaling condition}; see definition in Section \ref{s2} below.  In this paper we extend these previous results in several directions. First, in the setting of \cite{KR19}, where the cylindrical noise  has the {same} laws of the coordinates, we remove several hidden limitations. Namely,
\begin{itemize}
  \item instead of the linear-in-$z$ coefficient $V(x,z)=A(x)z$, we consider the coefficients of the form \eqref{V_t} with a principal linear part and residual non-linearity;
  \item time-inhomogeneous models are engaged into study;
  \item instead of the Lipschitz continuity of the matrix coefficient $A(x)$, the H\"older continuity is assumed.
\end{itemize}
Second, we make a further substantial step, treating a cylindrical noise which has \emph{different} laws of the coordinates. As we have explained before, such an extension leads to substantial analytical difficulties; in addition,  quite new effects  may appear because of different scaling  for various coordinates. Namely, we will see in Example \ref{ex2} that, in this setting,  non-trivial assumptions on the H\"older indices of the coefficients should be made, in the striking contrast to the case of same coordinates, or the stable-like case studied in  \cite{KKS2020}.

To provide a comprehensive analysis of the new effects which appear due to strongly inhomogeneous and singular L\'evy noise, we restrict ourselves to models which do not contain a drift term; i.e. without a gradient term in the generator. Adding a drift term can lead to further complications because of possible lack of domination property in the case of the lower scaling index $\alpha<1$. It is visible that these problems can be resolved by the `flow corrector' method introduced in \cite{KK18}, \cite{Ku18}, see also a discussion in \cite[Sections~6.1,6.2]{KKS2020}; such an extension is a topic of our ongoing research.

 We will prove  existence and uniqueness of the weak solution to \eqref{main}, which will be shown to be a time-inhomogeneous Markov process. We will also provide a representation of the transition probability density of this process as a sum of explicitly given `principal part', and a `residual part' subject to a set of estimates showing that this part is negligible  in a short time, in a sense. The `principal part' will be given in the form
\begin{equation}\label{Euler}
\wt p_{t,s}(x,y)=\frac{1}{|\det A_t(x)|}\wt {G}_{s-t}((y-x)(A_t(x)^{-1})^T), \quad 0\leq t<s,\quad  x,y\in \R^d,
\end{equation}
where $\wt {G}_t(\cdot)$ is the distribution density of $Z_t$. Clearly,  as a function of $y$, this is the distribution density of the variable
\begin{equation}\label{Euler_X}
\wt X_s^{t,x}=x+A_t(x)(Z_s-Z_t),
\end{equation}
which can be seen as a natural approximation to the value at the time instant $s$  of the solution to \eqref{main}, which starts from the point $x$ at the time instant $t$.

It is worth mentioning that recently the existence of densities for SDEs driven by
singular L\'evy processes have been studied in \cite{FJR18} (cf. also \cite{DF13}).

The structure of the rest of the paper is the following. In Section \ref{s2} we introduce the assumptions, formulate the main results, and provide a comprehensive discussion for them,  based on examples and an overview of related results, available in the literature. Sections \ref{s3} --  \ref{Parametrix construction} contain the proofs. The proofs are rather technical, hence in order to improve readability we first explain the keystones  of the proofs in  Section \ref{s3}. Numerous estimates required in the main proof are deduced in Sections \ref{1dim} -- \ref{Parametrix construction}.

\section{Main results}\label{s2}
\subsection{Assumptions} In this section, we collect all the assumptions we impose on our model.
 Let us begin with the description of scalar L\'evy processes involved, as the coordinates, into the representation \eqref{Z}.
Let the characteristic exponent $\psi$ of a one-dimensional, symmetric L\'evy  process be given by
$\displaystyle{\psi(\xi) = \int_{\R} (1 - \cos(\xi x)) \nu(dx) }$,
where $\nu$ is a  symmetric, infinite L{\'e}vy measure. The corresponding  \emph{Pruitt function}  $h(r)$ is given by
$$
h(r) = \int_{\R} (1 \wedge (|x|^2 r^{-2})) \nu(d x), \quad r > 0.
$$
%$$
%h^{(\delta)}(r) = \int_{\R} (1 \wedge (|x|^2 r^{-2})) \mu^{(\delta)}(x) \, dx,
%$$
%and
%$$
%K(r) = \int_{\{x\in\R: \, |x| \le r\}} |x|^2 r^{-2} d\nu(x).
%$$

%Clearly,  $r^2h(r), r^2K(r)$ are increasing on $(0, \infty)$.

%We will assume that\footnote{ How one should read $\approx$? Like in \eqref{psi_tilde}?}
%\begin{equation}\label{approx}h(1/r) \approx  \psi(r),\ r>0.
%\end{equation}
%{ This condition holds, for example, if  the measure  $\nu(dx)$ has a density ${\nu}(x)$ comparable to unimodal symmetric
%function , see\footnote{ Ref.? Is equivalence \eqref{psi_tilde} enough?} \cite{}. However there are many %examples that the condition is satisfied and the measure $\nu(x)$  is not even
%absolutely continuous\footnote{ Maybe its better to move the blue part to examples}.}

%$$
%K^{(\delta)}(r) = \int_{\{x\in\R: \, |x| \le r\}} |x|^2 r^{-2} \mu^{(\delta)}(x) \, dx.
%$$

%By \cite[(6), (7)]{BGR2014} we have
%\begin{equation}
%\label{eqiv}
%\frac 2{\pi^2} h(r)\le  \psi(1/r)\le 2 h(r),\ r>0.\end{equation}
%and the same inequality holds if we replace $\psi$ and $h$
%by $\psi^{(\delta)}$ and $h^{(\delta)}$.
%By the  scaling properties of $\psi$ and (\ref{eqiv}), if $C_1= \underline{C}/{\pi^2}$ and $C_2=\pi^2\overline{C}$, then
%\begin{equation}
%\label{h-scaling}
%c_2 h( r) \lambda^{-\alpha} \le h(\lambda r) \le c_2 h( r) \lambda^{-\beta}, \quad r \in (0,1].
%\end{equation}

%\begin{equation}
%\label{h-scaling_l}
 %h\left({\theta}\right)\le C_1 {\lambda}^{\alpha} h\left(\lambda \theta\right),  \ 0<   \theta \le 1,\ 0<\lambda \le 1
%\end{equation}
%and
We will assume the following scaling conditions for the function $h$: for some $0<\alpha\leq \beta\leq 2$ and $0<C_1\le 1\le C_2<\infty$,
\begin{equation}
\label{h-scaling_u0}
C_1 {\lambda}^{-\alpha} h\left({r}\right) \le h\left(\lambda r\right) \le
C_2 {\lambda}^{-\beta} h\left({r}\right), \  0<r \le 1,\ 0<\lambda \le 1.
\end{equation}
%Applying the results of \cite{Grzywny_Szczypkowski}
%Applying Lemma \ref{K_est},

 We claim that the above assumption is equivalent to the following weak scaling property for $\psi$: there are constants $0<C_1^*\le 1\le C_2^*<\infty$,

\begin{equation}
\label{psi-scaling}
C_1^* {\lambda}^{\alpha} \psi\left({\xi}\right) \le \psi\left(\lambda \xi\right) \le
C_2^* {\lambda}^{\beta}  \psi\left({\xi}\right), \  |\xi| \ge 1,\ \lambda \ge 1.
\end{equation}
The argument of the equivalence is postponed to Section \ref{1dim}.

Once the condition  \eqref{h-scaling_u0} (or equivalently \eqref{psi-scaling})  is  satisfied, we say that the characteristic exponent $\psi$ (or the L\'evy measure $\nu$) have the weak scaling property with indices $\alpha, \beta$, and write $\psi\in \mathrm{WSC}(\alpha, \beta)$ (resp. $\nu\in \mathrm{WSC}(\alpha, \beta)$).

By $\psi_i$, $\nu_i$ and $h_i$ we denote corresponding characteristic exponents, L{\'e}vy measures and Pruitt functions of coordinates $Z^i$ of the process $Z = (Z^1, \ldots,Z^d)$.

We will consider two cases:
\vskip 10pt\noindent
\textbf{(A)} All characteristic exponents $\psi_i, i=1,\dots, d$ are equal and $\psi_1 \in \mathrm{WSC}(\alpha, \beta)$.

\vskip 10pt\noindent\textbf{(B)} Characteristic exponents $\psi_i, i=1,\dots, d$ are not the same and $\psi_i\in \mathrm{WSC}(\alpha, \beta),i=1,\dots, d$.
\vskip 10pt

In both of these cases, the process $Z$ has the transition density $\wt {G}_t(x,y)=\wt {G}_t(y-x),$
where
$$
\wt {G}_t(w)=\prod_{i=1}^{d}\wt {g}_t^i(w_i), \quad w=(w_1,\dots, w_d)\in \R^d,
$$
and $\wt{g}_t^i, i=1,\dots,d$ are the distribution densities for the coordinates (all $\wt{g}^i_t$ are the same in the case \textbf{(A)}).

Next, we assume the following conditions on the coefficients.

\vskip 10pt\noindent \textbf{(C)} For any $t \ge 0$, $x \in \R^d$ $A_t(x) = (a_{t,i,j}(x))$ is a $d \times d$ matrix and there are constants $C_3, \dots, C_6 > 0$, $\gamma_1, \gamma_2 \in (0,1]$ such that for any $s, t \geq  0$, $x, y \in \R^d$, $i, j \in \{1,\ldots,d\}$,
\begin{equation}
\label{bounded}
|a_{t,i,j}(x)| \le C_3,
\end{equation}
\begin{equation}
\label{determinant}
|\det(A_t(x))| \ge C_4,
\end{equation}
\begin{equation}
\label{Holder}
|a_{t,i,j}(x) - a_{t,i,j}(y)| \le C_5 |x - y|^{\gamma_1},
\end{equation}
\begin{equation}
\label{Lipschitz}
|a_{s,i,j}(x) - a_{t,i,j}(x)| \le C_6 |s - t|^{\gamma_2}.
\end{equation}
The function $((0,\infty)\times\R^d\times\R^d) \ni (t,x,z) \to U_t(x,z) \in \R^d$ is continuous and there are constants $C_7 > 0$ and
\begin{equation}
\label{gamma_3}\gamma_3 >\max(1, \beta)
\end{equation}
such that for any $t \geq  0$, $x, z \in \R^d$
\begin{equation}
\label{kernel_E}
|U_t(x,z)| \le C_7 |z|^{\gamma_3}.
\end{equation}
\vskip 10pt
In the case \textbf{(A)}, the H\"older indices $\gamma_1, \gamma_2$ can be arbitrarily small. In the case \textbf{(B)}, these indices and  the $U$-smallness index $\gamma_3$ should satisfy  certain additional assumptions. Namely, we assume the following

\vskip 10pt\noindent\textbf{(D)} \begin{equation}
\label{indices}
\frac{\beta}{\alpha} < 1 + \gamma_1, \quad \quad
\frac{1}{\alpha} - \frac{1}{\beta} < \gamma_2, \quad \quad
\frac{\beta}{\alpha}<  \gamma_3.
\end{equation}

For abbreviation, for any $u > 0$ we will use the notation
$$
\kappa(u) = (u, h_1(1), \ldots, h_d(1),  h^{-1}_1(1), \ldots, h^{-1}_d(1), h_1^{-1}(1/u), \ldots, h_d^{-1}(1/u)).
$$

\subsection{Main statements} In this section, we formulate the main statements of the paper.

\begin{theorem}
\label{main_thm} Assume either \textbf{(A)},\textbf{(C)}, or \textbf{(B)},\textbf{(C),(D)}. Then for any $ x\in \R^d$ the SDE (\ref{main}) has a unique weak solution $X$. The process $X$ is a time-inhomogeneous Markov process which has a transition density $p_{t,s}(x,y)$. The transition density admits a representation
\begin{equation}\label{representation}
p_{t,s}(x,y)=\wt p_{t,s}(x,y)+\wt r_{t,s}(x,y),\quad x,y\in \R^d, \quad 0\le t<s,
\end{equation}
where $\wt p_{t,s}(x,y)$ is given by \eqref{Euler} and the residual part $\wt r_{t,s}(x,y)$ satisfies
$$
\int_{\R^d} |\wt r_{t,s}(x,y)| \, dy \leq c(s-t)^{\eps_0}, \quad x\in \R^d,
$$
where $\eps_0$ is defined in Remark \ref{remark_epsilon} and
the constant $c$ depends only on $d$, $\alpha$, $\beta$, $\gamma_1$, $\gamma_2$, $\gamma_3$, $C_1, \ldots, C_7$, $h_1(1), \dots, h_d(1)$.
\end{theorem}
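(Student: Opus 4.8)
The plan is to construct the transition density $p_{t,s}(x,y)$ by the analytic parametrix (Levi) method, using $\wt p_{t,s}(x,y)$ from \eqref{Euler} as the parametrix (zero-order approximation), and then to translate the resulting analytic object into a probabilistic statement about \eqref{main}. First I would write down the formal generator $L_t$ of \eqref{main}: acting on nice $f$, $L_tf(x)=\int\big(f(x+V_t(x,z))-f(x)-\langle\nabla f(x),V_t(x,z)\rangle\big)\,\nu_Z(dz)$ where $\nu_Z$ is the (cylindrical) L\'evy measure of $Z$, and introduce the frozen-coefficient operator $L^{(x_0)}_t$ obtained by replacing $V_t(x,z)$ with its linear part $A_t(x_0)z$ at a frozen point $x_0$; the density $\wt p$ is exactly the fundamental solution of $\partial_s u = L^{(x)}_t u$ evaluated on the diagonal of the freezing. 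The next step is to define the parametrix kernel $\Phi_{t,s}(x,y)=\big(L_t-L^{(y)}_t\big)_x\wt p_{t,s}(x,y)$ (the operator acting in the $x$-variable on the first slot of the parametrix), and then to solve the Volterra-type equation $\Psi=\Phi+\Phi\ast\Psi$ by iterating, where $\ast$ is the space-time convolution $(\,f\ast g)_{t,s}(x,y)=\int_t^s\!\int_{\R^d} f_{t,\tau}(x,z)g_{\tau,s}(z,y)\,dz\,d\tau$. The candidate density is then
$$
p_{t,s}(x,y)=\wt p_{t,s}(x,y)+\int_t^s\!\int_{\R^d}\wt p_{t,\tau}(x,z)\,\Psi_{\tau,s}(z,y)\,dz\,d\tau,
$$
so that $\wt r_{t,s}=\wt p\ast\Psi$. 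All of Sections \ref{1dim}--\ref{Parametrix construction} are the engine room here: one needs sharp upper bounds on $\wt G_t$ and its spatial derivatives (this is where the weak scaling property \eqref{h-scaling_u0} of each coordinate enters, together with the notation $\kappa(u)$), hence bounds on $\wt p$ and on $\Phi$, in the mixed $L^\infty(dx)\otimes L^1(dy)$ norm; the off-diagonal/anisotropic decay coming from the different scaling exponents $\alpha_i$ is the source of the delicate bookkeeping.

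Granting those estimates, the key quantitative point is a smoothing bound of the form
$$
\int_{\R^d}|\Phi_{t,s}(x,y)|\,dy\le c\,(s-t)^{-1+\eps_1},\qquad
\int_{\R^d}|\wt p_{t,s}(x,y)|\,dy\le c,
$$
with $\eps_1>0$, where $\eps_1$ is produced by the H\"older continuity of $A_t(\cdot)$ (index $\gamma_1$), the time-H\"older continuity (index $\gamma_2$), and the smallness of the nonlinearity $U$ (index $\gamma_3$) --- and it is precisely inequalities \eqref{indices} in case \textbf{(B)} that guarantee $\eps_1>0$ despite the anisotropy; in case \textbf{(A)} any positive $\gamma_1,\gamma_2$ suffice. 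A Gamma-function (Mittag--Leffler) estimate for the iterated convolutions then gives $\int|\Phi^{\ast n}_{t,s}(x,y)|\,dy\le c^n(s-t)^{-1+n\eps_1}/\Gamma(n\eps_1)$, so the series for $\Psi$ converges absolutely and $\int|\Psi_{t,s}(x,y)|\,dy\le c(s-t)^{-1+\eps_1}$. Convolving once more with $\wt p$ and using $\int_t^s(s-\tau)^{-1+\eps_1}\,d\tau=(s-t)^{\eps_1}/\eps_1$ yields
$$
\int_{\R^d}|\wt r_{t,s}(x,y)|\,dy=\int_{\R^d}\Big|\int_t^s\!\int_{\R^d}\wt p_{t,\tau}(x,z)\Psi_{\tau,s}(z,y)\,dz\,d\tau\Big|\,dy\le c\,(s-t)^{\eps_0}
$$
with $\eps_0=\eps_1$ (the value recorded in Remark \ref{remark_epsilon}), and the constant depends only on the listed data; this is exactly the asserted estimate. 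One must also check that $\wt r$ (and hence $p$) is continuous, nonnegative, and that $\int p_{t,s}(x,y)\,dy=1$ --- nonnegativity and the conservativeness are obtained from the probabilistic side rather than analytically.

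The remaining task is to identify $p_{t,s}$ as the transition density of a genuine weak solution of \eqref{main} and to get uniqueness in law. The route is: (i) show that the family $p_{t,s}$ satisfies the Chapman--Kolmogorov relations and solves the backward/forward Kolmogorov equations for $L_t$ in a suitable weak sense (a by-product of the parametrix construction, using that $\partial_s\wt p=L^{(y)}\wt p$ plus the Volterra equation for $\Psi$); (ii) build the associated time-inhomogeneous Markov family and its canonical process, and verify via the martingale problem for $L_t$ that this process solves \eqref{main} weakly --- existence for the martingale problem follows from a tightness/approximation argument (truncating the small jumps or mollifying the coefficients so that Section~IV.9 of \cite{IW81} applies, then passing to the limit using the constructed $p$ to control one-dimensional distributions); (iii) prove uniqueness of the martingale problem, equivalently uniqueness in law of \eqref{main}, by the standard duality argument: any solution gives a sub-Markovian semigroup whose resolvent must coincide with the resolvent built from $p$, because the parametrix series is the \emph{unique} solution of the corresponding Volterra equation in the relevant norm. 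The strong Feller property then comes from the gradient estimates on $\wt p$ and $\wt r$ proved along the way. I expect the main obstacle to be entirely inside steps of the analytic part: establishing the anisotropic heat-kernel bounds for $\wt G_t$ with sharp dependence on the two scaling indices and assembling them so that the exponent in the $L^\infty\otimes L^1$ bound on $\Phi$ is strictly better than $-1$ precisely under \eqref{indices} --- this is where case \textbf{(B)} genuinely differs from \textbf{(A)} and where the hypotheses \eqref{gamma_3} and \eqref{indices} are forced, as Example \ref{ex2} will show they cannot be dropped.
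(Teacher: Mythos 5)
Your overall scheme---parametrix, Volterra equation, Neumann series, Gamma-function bound on the iterated convolutions, then a martingale-problem argument for uniqueness in law---matches the paper's road map. But the choice of zero-order approximation in your proposal is not the one the paper uses, and as written it would fail at exactly the two points the paper singles out as the main new difficulties.

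First, you take $\wt p_{t,s}(x,y)$ from \eqref{Euler} as the parametrix; this freezes the coefficients at the \emph{starting} point $(t,x)$. But the parametrix must be driven through the backward operator $\partial_t+L_{t;x}$, and if the frozen slot is $(t,x)$, both $\partial_t$ and $L_{t;x}$ act on the factor $A_t(x)$ sitting inside $\wt p$. Since $A$ is only H\"older continuous in $t$ and $x$, the kernel $\Phi_{t,s}=(L_t-L^{(y)}_t)_x\wt p_{t,s}$ you wrote down is not well defined: the differentiation would produce derivatives of $A_t(x)$ that do not exist. The paper therefore freezes at the \emph{endpoint} $(s,y)$, see \eqref{p_0}, so that $A_s(y)$ is a genuine constant under the operator and only $G$ gets differentiated. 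The principal part $\wt p$ of the final decomposition is not the parametrix; it is a separate object, and the paper must prove an extra comparison $\|p^{(0)}_{t,s}-\wt p_{t,s}\|_{\infty,1}\le c(s-t)^{\eps_0}$ (Lemma \ref{p0_ptilde_difference}) to pass from the parametrix decomposition $p=p^{(0)}+r$ to the stated one $p=\wt p+\wt r$. Your proposal skips this step because it incorrectly identifies the two kernels.

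Second, and more importantly, even after correcting the freezing slot your proposal would still use the untruncated density $\wt G_{s-t}$. Section \ref{s32} explains that this is not viable here: in case \textbf{(B)} one can have $\int_{\R^d}\wt p^{(0)}_{t,s}(x,y)\,dy=\infty$ (cf.\ Example \ref{ex2}), so the Volterra machinery in $L^\infty(dx)\otimes L^1(dy)$ never gets off the ground. The paper's remedy is a \emph{dynamic truncation} of the driving noise: the parametrix is built from $G_{s-t}$, the density of $\widehat Z$ with time-dependent cut-off $R_\rho^{(i)}=h_i^{-1}(\rho^{\eps-1})$, and essentially all of Section \ref{1dim} is devoted to estimates of $G$, its derivatives, and the difference $G-\wt G$. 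This truncation---rather than anything in your sketch---is what makes both the H\"older (as opposed to Lipschitz) coefficients and the different coordinate scalings tractable; it also feeds directly into the definition of $\eps_0$ in Remark \ref{remark_epsilon} via the interplay between $R_u^{(i)}$ and the matrix $M_u$. Finally, for the identification of the weak solution the paper works through an approximate fundamental solution $P_{t,s,\eta}$ (Lemma \ref{l42}) precisely so that it never has to verify that the constructed $p$ solves the backward Kolmogorov equation classically; your sketch invokes that fact implicitly, which is another nontrivial gap.
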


Theorem \ref{main_thm} actually states that the distribution density for $X_s$ conditioned by $X_t=x$ can be approximated by the density of the variable \eqref{Euler_X}, with the error of approximation given in the integral form. A natural question would be to obtain other types of the bounds for the residue, e.g. uniform in $x,y$. It is known that, in the essentially singular setting, the residue can be locally unbounded, see Example \ref{ex1_5} below. Hence, in order to get a uniform bound for the residue, one has to  impose some new intrinsic assumptions. Here we give one such assumption, formulated in the form inspired by the change of measure argument used in \cite{KRS18}. Alternative possibility would be to use a certain integral-in-$x$ condition, similar to (3.17) in \cite{K19} or   (3.17) -- (3.19) in \cite{KKS2020}.

Denote by $\mu$ the L\'evy measure of the process $Z$, and define  $$
T^{t,z}f(x)= f(x+V_t(x,z)).
$$
Assume the following.
\vskip 10pt\noindent
{\bf{(I)}} For all $t$ and $\mu$-a.a. $z$,  $T^{t,z}$ is a bounded linear operator in $L_{1}(\R^d)$, and there exists $C_8<\infty$ such that
$$
\|T^{t,z}\|_{L_1\to L_1}\leq C_8, \quad t\geq 0, \quad z\in \mathrm{supp}\, \mu.
$$

We have the following representation of the transition density.
\begin{theorem}\label{main_thm2}

Let the conditions of Theorem \ref{main_thm} and additional assumption \textbf{(I)} hold. Then the density
$p_{t,s}(x,y)$ is bounded, that is
$$\sup_{x,y\in \R^d} p_{t,s}(x,y)< \infty, \quad 0\le t<s<\infty.$$
 Moreover,  for any $\tau>0$   there exists  $c>0$, depending only on
 $d$, $\alpha$, $\beta$, $\gamma_1$, $\gamma_2$, $\gamma_3$, $C_1, \ldots, C_8$, $\kappa(\tau)$
such that  the residual term in the representation \eqref{representation} satisfies
$$
|\wt r_{t,s}(x,y)|\leq c\wt {G}_{s-t}(0)(s-t)^{{\eps_0}}, \quad 0< s-t\le \tau,\quad  x,y\in \R^d.
$$
In particular, the following two-sided on-diagonal estimate for $p_{t,s}(x,y)$ holds: \newline
   for $0<s-t\le \tau$, $x\in \R^d$,
\begin{equation}\label{on-diagonal}
 \frac1{|\det A_t(x)|}\left(1-c(s-t)^{\eps_0}\right) \leq \frac{p_{t,s}(x,x)}{\wt {G}_{s-t}(0)}\leq \frac1{|\det A_t(x)|}\left(1+c(s-t)^{\eps_0}\right).
\end{equation}
\end{theorem}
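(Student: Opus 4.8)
\noindent\emph{Plan of proof.} The strategy is to reduce the whole statement to the single pointwise bound
$$
|\wt r_{t,s}(x,y)|\le c\,\wt G_{s-t}(0)(s-t)^{\eps_0},\qquad 0<s-t\le\tau,\ x,y\in\Rd .
$$
Once this is established, the boundedness of $p_{t,s}$ follows from \eqref{representation} together with the elementary estimate $\wt p_{t,s}(x,y)\le\wt G_{s-t}(0)/|\det A_t(x)|\le C_4^{-1}\wt G_{s-t}(0)$, where \eqref{determinant} is used along with the fact that $0\le\wt G_\tau(w)\le\wt G_\tau(0)$ (each factor $\wt g^i_\tau$ is a probability density with nonnegative Fourier transform $e^{-\tau\psi_i}$, hence $\wt g^i_\tau(v)=\tfrac1{2\pi}\int\cos(v\xi)e^{-\tau\psi_i(\xi)}\,d\xi\le\wt g^i_\tau(0)$). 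For \eqref{on-diagonal} one evaluates \eqref{representation} at $y=x$, notes $\wt p_{t,s}(x,x)=\wt G_{s-t}(0)/|\det A_t(x)|$ by \eqref{Euler}, and absorbs the error $\pm c\,\wt G_{s-t}(0)(s-t)^{\eps_0}$ into a multiple of $\wt G_{s-t}(0)/|\det A_t(x)|$ using $C_4\le|\det A_t(x)|\le d!\,C_3^{d}$, which comes from \eqref{bounded}--\eqref{determinant}. Thus the theorem reduces to the displayed estimate of $\wt r_{t,s}$.

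To prove that estimate, recall the parametrix representation produced in the proof of Theorem \ref{main_thm}: $\wt r_{t,s}(x,y)=\int_t^s\int_{\Rd}\wt p_{t,u}(x,w)\,\Phi_{u,s}(w,y)\,dw\,du$, where $\Phi$ is the sum of the iterated space--time convolutions of the basic parametrix kernel $q$, and $q_{u,s}(w,y)$ measures the discrepancy between the true generator $L_u$ and the frozen-coefficient (``Euler'') generator applied to the density \eqref{Euler}. The construction underlying Theorem \ref{main_thm} already controls these kernels in the $L^\infty(dx)\otimes L^1(dy)$ sense; what is needed in addition is a matching $L^\infty(dx)\otimes L^\infty(dy)$ control in sharp, heat-kernel-shaped form. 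Concretely, I would fix a majorant kernel $\calG_\tau(x,y)$ with $\wt p_{t,s}(x,y)\le C_4^{-1}\calG_{s-t}(x,y)$, $\sup_y\calG_\tau(x,y)\le C_4^{-1}\wt G_\tau(0)$, and the sub-convolution property $\int_{\Rd}\calG_{u-t}(x,w)\calG_{s-u}(w,y)\,dw\le c_\tau\,\calG_{s-t}(x,y)$ for $0<s-t\le\tau$, and then establish
$$
|q_{u,s}(w,y)|\le c\,\calG_{s-u}(w,y)\,(s-u)^{\eps_0-1},\qquad 0<s-u\le\tau .
$$
Assumption \textbf{(I)} is used exactly here: applying $L_u$ to \eqref{Euler} in the spatial variable produces the composed functions $T^{u,z}\wt p_{u,s}(\cdot,y)=\wt p_{u,s}(\cdot+V_u(\cdot,z),y)$, and convolving against $\wt p_{t,u}(x,\cdot)$ forces the change of variables $w\mapsto w+V_u(w,z)$; the bound $\|T^{u,z}\|_{L_1\to L_1}\le C_8$ says precisely that this map does not contract volume by more than the factor $C_8$, which is what keeps the sub-convolution estimate valid for the shifted majorants and, in view of the singularity of the spherical part of $\mu$, prevents the local unboundedness of $q$ (hence of $\wt r$) illustrated in Example \ref{ex1_5}.

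Granting the bound on $q$, the remainder is bookkeeping: propagating it through the series via the sub-convolution inequality and summing the resulting Beta-function constants yields $|\Phi_{u,s}(w,y)|\le c\,\calG_{s-u}(w,y)(s-u)^{\eps_0-1}$, with the series converging for $0<s-u\le\tau$ under exactly the index balance already used in Theorem \ref{main_thm} (i.e.\ \textbf{(C)}, and \textbf{(D)} in case \textbf{(B)}), now tracked in the stronger norm. Then $|\wt r_{t,s}(x,y)|\le c\int_t^s(s-u)^{\eps_0-1}\int_{\Rd}\wt p_{t,u}(x,w)\,\calG_{s-u}(w,y)\,dw\,du$, and $\wt p_{t,u}\le C_4^{-1}\calG_{u-t}$ together with the sub-convolution property and $\int_t^s(s-u)^{\eps_0-1}\,du=(s-t)^{\eps_0}/\eps_0$ gives $|\wt r_{t,s}(x,y)|\le c'\calG_{s-t}(x,y)(s-t)^{\eps_0}\le c''\wt G_{s-t}(0)(s-t)^{\eps_0}$, with $\eps_0$ as in Remark \ref{remark_epsilon} and all constants depending only on $d,\alpha,\beta,\gamma_1,\gamma_2,\gamma_3,C_1,\dots,C_8$ and on $\kappa(\tau)$ (the Pruitt-function data fixing the time scale on which the kernel estimates and the sub-convolution bound are uniform).

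The main obstacle is the sharp $L^\infty(dx)\otimes L^\infty(dy)$, heat-kernel-shaped bound on $q$ under \textbf{(I)}. In the essentially singular regime one cannot invoke absolute continuity of the one-jump maps, so the uniform-in-$y$ control of the shifted Euler densities $\wt p_{u,s}(\cdot+V_u(\cdot,z),y)$ and of their convolutions against $\wt p_{t,u}$ has to be extracted from the non-degeneracy \textbf{(I)} alone, combined with the one-dimensional WSP estimates on $\wt g^i_\tau$ and its derivatives; and recovering from the difference between $L_u$ and the frozen generator the full gain $(s-u)^{\eps_0-1}$ rather than the naive $(s-u)^{-1}$, uniformly in the forward point $y$, is where the index conditions \textbf{(D)} must be re-examined in the $L^\infty$-norm. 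Everything else is a variant of the estimates already carried out for the $L^\infty(dx)\otimes L^1(dy)$ bounds in the proof of Theorem \ref{main_thm}.
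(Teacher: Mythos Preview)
Your reduction of the theorem to the pointwise bound on $\wt r_{t,s}$ is correct and your derivation of \eqref{on-diagonal} from it is fine. But the strategy you outline for that pointwise bound is not the one in the paper, and the part you flag as the ``main obstacle'' is a genuine gap.

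You propose to find a majorant $\calG_\tau(x,y)$ with a sub-convolution property and a \emph{localized} pointwise bound $|q_{u,s}(w,y)|\le c\,\calG_{s-u}(w,y)(s-u)^{\eps_0-1}$. In the essentially singular setting this is exactly what fails. The large-jump part of $q^{(0)}$ (term III in Lemma \ref{q0_estimates_lemma}) is only controlled by the \emph{uniform} bound $|q^{(0)}_{t,s}(x,y)|\le c\,G_{s-t}(0)(s-t)^{\eps-1}$, with no decay in $y-x$; there is no heat-kernel-shaped majorant available for it, and assumption \textbf{(I)} does not produce one. Also, the parametrix built in Theorem \ref{main_thm} uses $p^{(0)}_{t,s}$ (the \emph{truncated} kernel frozen at the terminal point $y$, see \eqref{p_0}), not $\wt p_{t,s}$; so $\wt r_{t,s}=r_{t,s}+(p^{(0)}_{t,s}-\wt p_{t,s})$ has two pieces, and the second is handled separately by Lemma \ref{p0_ptilde_difference}.

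What the paper does instead is an operator-norm interpolation that avoids any sub-convolution majorant. One has three bounds on $Q^{(0)}_{t,s}$: $\|Q^{(0)}_{t,s}\|_{B_b\to B_b}\le c(s-t)^{\eps_0-1}$ (the $\|\cdot\|_{\infty,1}$ bound already used for Theorem \ref{main_thm}), the uniform pointwise bound \eqref{q0_pointwise_estimate1} giving $\|Q^{(0)}_{t,s}\|_{L_1\to B_b}\le c\,G_{s-t}(0)(s-t)^{\eps_0-1}$, and --- this is where \textbf{(I)} enters --- $\|Q^{(0)}_{t,s}\|_{L_1\to L_1}\le c(s-t)^{\eps_0-1}$ (Lemma \ref{q0_estimates_lemma_x}; \textbf{(I)} is used only on the large-jump term III, to bound $\int|q^{(0)}(x,y)|\,dx$). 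For a product $Q^{(0)}_{r_0,r_1}\cdots Q^{(0)}_{r_k,r_{k+1}}$ one picks the index $j$ with $r_{j+1}-r_j$ maximal, applies the $L_1\to B_b$ norm to that single factor, the $B_b\to B_b$ norm to the factors on its left, and the $L_1\to L_1$ norm to those on its right. Since the longest gap is at least $(s-t)/(k+1)$, one gets $G_{r_{j+1}-r_j}(0)\le c\,(k+1)^{d/\alpha}G_{s-t}(0)$ by Corollary \ref{Gu(0)estimate}, and the series still sums because the polynomial $(k+1)^{d/\alpha}$ is beaten by the $\Gamma$-function decay. This yields $\|Q_{t,s}\|_{L_1\to B_b}\le c\,\wt G_{s-t}(0)(s-t)^{\eps_0-1}$ and then $\|P_{t,s}-P^{(0)}_{t,s}\|_{L_1\to L_\infty}\le c\,G_{s-t}(0)(s-t)^{\eps_0}$. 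The pointwise bound on $\wt r$ follows by adding the piece \eqref{p0_ptilde_difference_max}.
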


Define  by $\{P_{t,s}\}$ the \emph{evolutionary family} corresponding to the process $X$ in the usual way: for any $0\le t<s$, $x\in \R^d$ and a bounded Borel function $f: \R^d \to \R$,
$$
P_{t,s} f(x) = \int_{\R^d} p_{t,s}(x,y) f(y) \, dy.
$$
Under just the basic conditions of Theorem \ref{main_thm}, we prove  H\"older continuity of this evolutionary family.
\begin{theorem}
\label{Holder_thm_new}
Assume either \textbf{(A)},\textbf{(C)}, or \textbf{(B)},\textbf{(C),(D)}.
For any $0 < \gamma < \gamma' < \alpha$, $\gamma \le 1$,  $0<s-t\le \tau$,  $x, y \in \Rd$ and a bounded Borel function $f: \R^d \to \R$ we have
$$
\left|P_{t,s} f(x) - P_{t,s}f(y)\right|
\le c |x - y|^{\gamma} (s-t)^{-\gamma'/\alpha} \|f\|_{\infty},  \quad 0\le t<s<\infty,
$$
where $c$ depends only on $\gamma$, $\gamma'$, $d$, $\alpha$, $\beta$, $\gamma_1$, $\gamma_2$, $\gamma_3$, $C_1, \ldots, C_7$, $\kappa(\tau)$.
\end{theorem}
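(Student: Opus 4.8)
The whole statement reduces to an $L^1(dy)$-modulus-of-continuity bound for the transition density, since
\[
|P_{t,s}f(x_1)-P_{t,s}f(x_2)|\le\|f\|_\infty\int_{\R^d}\bigl|p_{t,s}(x_1,y)-p_{t,s}(x_2,y)\bigr|\,dy,
\]
so it suffices to show $\int_{\R^d}|p_{t,s}(x_1,y)-p_{t,s}(x_2,y)|\,dy\le c\,|x_1-x_2|^{\gamma}(s-t)^{-\gamma'/\alpha}$ for $0<s-t\le\tau$. We may assume the right-hand side is $\le1$ (otherwise $|P_{t,s}f(x_1)-P_{t,s}f(x_2)|\le 2\|f\|_\infty$ already suffices), so in particular $|x_1-x_2|\le c(s-t)^{1/\alpha}$; this puts us in the ``small separation'' regime and simplifies the truncations below. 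I would then use the parametrix representation of $p_{t,s}$ constructed in Section \ref{Parametrix construction}, namely $p_{t,s}(x,y)=\wt p_{t,s}(x,y)+\wt r_{t,s}(x,y)$ with a residual part of the convolution form $\wt r_{t,s}(x,y)=\int_t^s\int_{\R^d}\wt p_{t,u}(x,z)\,q_{u,s}(z,y)\,dz\,du$, where $q$ is the iterated error kernel underlying Theorem \ref{main_thm}, satisfying $\sup_z\int_{\R^d}|q_{u,s}(z,y)|\,dy\le c(s-u)^{\eps_0-1}$. The decisive structural point is that in every term of this representation the whole $x$-dependence is carried by the single leading factor $\wt p_{t,\cdot}(x,\cdot)$, so everything comes down to controlling the $L^1(dy)$-increments of the principal part and then propagating them through the series.

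Step 1 (spatial regularity of the principal part). I would prove that, for $0<s-t\le\tau$,
\[
\int_{\R^d}\bigl|\wt p_{t,s}(x_1,y)-\wt p_{t,s}(x_2,y)\bigr|\,dy\le c\,|x_1-x_2|^{\gamma}(s-t)^{-\gamma'/\alpha}.
\]
By \eqref{Euler} and \eqref{Euler_X}, $\wt p_{t,s}(x_i,\cdot)$ is the density of the affine image $x_i+A_t(x_i)W$ of $W:=Z_s-Z_t$, so this integral is the total variation distance between $x_1+A_t(x_1)W$ and $x_2+A_t(x_2)W$, which splits by the triangle inequality into a \emph{translation} contribution $\mathrm{TV}(W,\,W+A_t(x_1)^{-1}(x_1-x_2))$ and a \emph{linear-distortion} contribution $\mathrm{TV}(W,\,CW)$ with $C=A_t(x_1)^{-1}A_t(x_2)$. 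By \eqref{bounded}, \eqref{determinant}, \eqref{Holder}, $|A_t(x_1)^{-1}(x_1-x_2)|\le c|x_1-x_2|$ and $\|C-I\|\le c|x_1-x_2|^{\gamma_1}$, with $\|C\|,\|C^{-1}\|$ and $|\det C|,|\det C^{-1}|$ bounded. Using the one-dimensional gradient and scaling estimates for $\wt g_t^i$ and $\wt G_t$ from Section \ref{1dim} (in particular $\int|\partial_i\wt G_t|\,dw\le c/h_i^{-1}(1/t)$ together with $c_1(s-t)^{1/\alpha}\le h_i^{-1}(1/(s-t))\le c_2(s-t)^{1/\beta}$ for $s-t\le\tau$), the translation contribution is bounded by $c\bigl(|x_1-x_2|(s-t)^{-1/\alpha}\bigr)^{\gamma}\le c|x_1-x_2|^{\gamma}(s-t)^{-\gamma'/\alpha}$. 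The linear-distortion contribution is the delicate one: after passing to scale-normalised coordinates, the entries of $C-I$ get amplified by the coordinate-scale ratios $h_j^{-1}(1/(s-t))/h_i^{-1}(1/(s-t))$, which can be as large as $c(s-t)^{-(1/\alpha-1/\beta)}$, so that $\mathrm{TV}(W,CW)$ is controlled (using the truncated-moment estimates for the $\wt g_t^i$ from Section \ref{1dim}) by an expression of the shape $c\min\bigl(1,|x_1-x_2|^{\gamma_1}(s-t)^{-(1/\alpha-1/\beta)}\bigr)^{\theta}$ for a suitable auxiliary exponent $\theta$ dictated by the tails of the $\wt g_t^i$. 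In case \textbf{(A)} all scales coincide and this reduces to $c|x_1-x_2|^{\gamma_1}$; in case \textbf{(B)}, condition \textbf{(D)} — specifically $\beta/\alpha<1+\gamma_1$ — together with $|x_1-x_2|\le c(s-t)^{1/\alpha}$ and an appropriate choice of $\theta$ is exactly what allows this term to be reabsorbed into the required form $c|x_1-x_2|^{\gamma}(s-t)^{-\gamma'/\alpha}$, the gap $\gamma<\gamma'$ providing the slack to compensate the mismatch between the Hölder exponent of $A$ and the different coordinate scalings.

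Step 2 (propagation through the residual part). Since only $\wt p_{t,u}(x,z)$ carries $x$,
\[
\int_{\R^d}\bigl|\wt r_{t,s}(x_1,y)-\wt r_{t,s}(x_2,y)\bigr|\,dy\le\int_t^s\Bigl(\sup_z\int_{\R^d}|q_{u,s}(z,y)|\,dy\Bigr)\int_{\R^d}\bigl|\wt p_{t,u}(x_1,z)-\wt p_{t,u}(x_2,z)\bigr|\,dz\,du,
\]
and inserting $\sup_z\int|q_{u,s}(z,\cdot)|\le c(s-u)^{\eps_0-1}$ together with Step 1 (with $u$ in place of $s$) gives
\[
\int_{\R^d}\bigl|\wt r_{t,s}(x_1,y)-\wt r_{t,s}(x_2,y)\bigr|\,dy\le c\,|x_1-x_2|^{\gamma}\int_t^s(s-u)^{\eps_0-1}(u-t)^{-\gamma'/\alpha}\,du= c\,|x_1-x_2|^{\gamma}(s-t)^{\eps_0-\gamma'/\alpha}\,\mathrm{B}\!\bigl(\eps_0,1-\tfrac{\gamma'}{\alpha}\bigr),
\]
the Beta integral being finite since $\eps_0>0$ and $\gamma'<\alpha$; as $s-t\le\tau$ this is $\le c|x_1-x_2|^{\gamma}(s-t)^{-\gamma'/\alpha}$. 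Adding the contributions of Steps 1 and 2 yields the $L^1(dy)$-increment bound, hence the theorem; tracking the constants through Sections \ref{1dim}--\ref{Parametrix construction} shows that they depend only on $\gamma,\gamma',d,\alpha,\beta,\gamma_1,\gamma_2,\gamma_3,C_1,\dots,C_7$ and on $\kappa(\tau)$.

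The main obstacle is Step 1 in case \textbf{(B)}: bounding the $L^1(dy)$-modulus of continuity of the explicit kernel \eqref{Euler} when the coordinates $Z^i$ have genuinely different scalings, so that the mere $\gamma_1$-Hölder continuity of $A_t$ interacts with large ratios between the coordinate scales; this is precisely the mechanism that forces the restriction \textbf{(D)}, and producing there the clean power $|x_1-x_2|^{\gamma}$ with the sharp time weight $(s-t)^{-\gamma'/\alpha}$ (rather than a worse combination) is the technically delicate point. Everything else — the translation estimate and the Beta-function propagation through the parametrix series — is routine on top of the one-dimensional and parametrix estimates already established.
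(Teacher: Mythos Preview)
Your proposal contains a structural error and, relatedly, misses the key simplification that makes the paper's proof short.

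\textbf{The convolution formula you use is not correct.} The parametrix representation established in Section~\ref{s30} is
\[
p_{t,s}(x,y)=p_{t,s}^{(0)}(x,y)+\int_t^s\!\!\int_{\R^d}p_{t,r}^{(0)}(x,w)\,q_{r,s}(w,y)\,dw\,dr,
\]
with $p_{t,s}^{(0)}(x,y)=p_{t,s}^{y}(x-y)=|\det A_s(y)|^{-1}G_{s-t}\bigl((x-y)(A_s^{-1}(y))^T\bigr)$, i.e.\ the \emph{truncated} kernel with the matrix frozen at the \emph{endpoint} $(s,y)$. The kernel $\wt p_{t,s}$ in \eqref{Euler} (full density, frozen at the starting point $(t,x)$) does \emph{not} satisfy any such convolution identity; $\wt r_{t,s}=p_{t,s}-\wt p_{t,s}$ differs from the convolution term by the additional piece $p_{t,s}^{(0)}-\wt p_{t,s}$, whose $x$-dependence has exactly the ``linear-distortion'' structure you are trying to avoid. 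So Step~2 as written does not apply to $\wt r_{t,s}$.

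\textbf{The paper's route sidesteps the distortion problem entirely.} Because $p_{t,s}^{(0)}(x,y)$ depends on $x$ only through the argument $x-y$ (the matrix $A_s(y)$ does not see $x$), the increment $p_{t,s}^{(0)}(x_1,y)-p_{t,s}^{(0)}(x_2,y)$ is a pure translation of the truncated density $G_{s-t}$. A single gradient estimate (Lemma~\ref{Gest}) then gives, for $|x_1-x_2|\le (s-t)^{1/\alpha}$,
\[
\int_{\R^d}\bigl|p_{t,s}^{z}(x_1-z)-p_{t,s}^{z}(x_2-z)\bigr|\,dz\le c\,|x_1-x_2|\,(s-t)^{-1/\alpha-(d+2)\eps/\alpha},
\]
with no $\gamma_1$ and no coordinate-scale ratios appearing; the $\gamma,\gamma'$ exponents are then produced by interpolation with the trivial bound and a choice of $\eps\le\min\{\eps_0,(\gamma'-\gamma)/(\gamma(d+2))\}$. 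Your Step~1, by contrast, freezes at $x$ and therefore creates a genuine matrix-distortion term, which you correctly identify as delicate in case~\textbf{(B)}; but this difficulty is self-inflicted and, moreover, your suggested cure (moment bounds for $\wt g_t^i$) is problematic because the untruncated one-dimensional densities have heavy tails and need not have the first moments your ``$\mathrm{TV}(W,CW)$'' argument would require.

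\textbf{A second gap in Step~2.} Even with the correct kernel $p_{t,r}^{(0)}$, your Beta-integral argument applies Step~1 ``with $u$ in place of $s$'', which needs $|x_1-x_2|\le c(u-t)^{1/\alpha}$; this fails for $u$ close to $t$. The paper handles this by splitting $\int_t^s$ into $\int_t^{t+|x_1-x_2|^{\alpha}}+\int_{t+|x_1-x_2|^{\alpha}}^{t+(s-t)/2}+\int_{t+(s-t)/2}^{s}$ and using the trivial $L^1$ bound \eqref{py_integral} on the first piece.

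In short: switch from $\wt p_{t,s}$ to the endpoint-frozen $p_{t,s}^{(0)}$; then the ``main obstacle'' you describe disappears, Step~1 becomes a one-line gradient bound, and Step~2 goes through after the time-splitting above.
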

\subsection{Examples}   Let us give several examples illustrating various specific issues  of the model.   Our first example shows that the distributions of the components of the L\'evy noise $Z$ can be quite singular. Note that a simplest  example of a L\'evy measure $\nu\in \mathrm{WSC}(\alpha, \beta)$  is a symmetric $\alpha$-stable L\'evy measure
\begin{equation}\label{alpha}
\nu(dx)=c\frac{dx}{|x|^{\alpha+1}},
\end{equation}
for which
$$
h(r)=\frac{4c}{\alpha(2-\alpha)} r^{-\alpha}
$$
and thus \eqref{h-scaling_u0} holds true with $\beta=\alpha$ and $C_1=C_2=1$. The weak scaling property has the same spirit with the (true) scaling property of the $\alpha$-stable L\'evy measure, but is much more flexible.

\begin{example}\label{ex1}(Discretized $\alpha$-stable measure) Let $\mu(dx)$ be obtained from the symmetric $\alpha$-stable measure \eqref{alpha} by discretization in the following way:
$$
\mu(dx)=\sum_{k=1}^\infty\frac{\nu(\{y:\rho_{k+1}<|y|\leq \rho_k\})}{2}\Big(\delta_{-\rho_k}(dx)+\delta_{\rho_k}(dx)\Big),
$$
where $\rho_k\searrow 0$ is a given sequence. Assume that $\{\rho_k\}$ decays not faster than geometrically; that is, for some $c>0$
$$
\rho_{k+1}\geq c \rho_k, \quad k\geq 1.
$$
Then it is easy to show that the Pruitt function for $\mu$ satisfies
\begin{equation}\label{h-discrete}
B_1r^{-\alpha}\leq h(r)\leq B_2 r^{-\alpha},\quad r\in (0,1],
\end{equation}
for the reader's convenience we prove this inequality in Appendix \ref{sA} below. This inequality yields immediately that
 the discretized measure $\mu$ belongs to the same class  $\mathrm{WSC}(\alpha, \alpha)$ with  the original $\alpha$-stable measure.
\end{example}

The following two examples illustrate the difference between the integral-in-$y$ estimate for the residual term $r_{t,s}(x,y)$ from Theorem \ref{main_thm} and the uniform estimate for this term from Theorem \ref{main_thm2}. First, we note that, under just the basic assumptions of Theorem \ref{main}, the transition probability density may be locally unbounded.

\begin{example}\label{ex1_5} (See \cite[Remark~4.23]{KRS18}, \cite[Example~4.2]{KKS2020}).
  Let $d>1$, all the coordinates $Z^i, i=1, \dots, d $ have the same $\alpha$-stable distribution, and
  $
  V_t(x,z)=A(x)z$, where  the matrices $A(x)$ are H\"older continuous in $x$, for each $x \in \R^d$ the matrix
  $A(x)$ is a rotation (hence, an isometry) and for any $x$ in some open cone with vertex at $0$, which satisfies $|x| \ge 1$ we have $A(x)\mathbf{e}_1 = x/|x|$. Then for $\alpha+1\leq d$, for any $x\in \R^d$ the transition probability density $p_t(x,y)$ is unbounded at any neighbourhood of the point $y=0$.
\end{example}

In the above example, an `accumulation of mass' effect appears due to singularity of the noise combined with a non-trivial rotation provided by the matrix $A(x)$. The next example shows a typical situation where the additional condition \textbf{(I)} holds true, and thus the  `accumulation of mass' effect does not appear.

\begin{example}\label{ex1_6} Let the function $V_t(x,z)$ be Lipschitz continuous in $x$ with the Lipshitz constant satisfying
$$
\mathrm{Lip}\,\Big( V_t(\cdot,z)\Big)\leq \rho, \quad t\geq 0, \quad z\in \R^d,
$$
where $\rho<1$. Then the mapping $I_{\R^d}+ V_t(\cdot,z)$ has an inverse and

$$
\mathrm{Lip}\,\Big(\left[I_{\R^d} + V_t(\cdot,z)\right]^{-1}\Big)\leq \frac1{1-\rho}, \quad t\geq 0, \quad z\in \R^d.
$$
Moreover $\left[I_{\R^d} + V_t(\cdot,z)\right]^{-1}$
has a gradient, which is defined a.e. with respect to the Lebesgue measure and bounded, see \cite{C1976}.  In addition, the following change of variables formula holds \cite{H1993}:
$$
\int_{\R^d} f(x+V_t(x, z))\, dx=\int_{\R^d} f(v)  \det (\nabla_v[I_{\R^d}+V_t(v,z)]^{-1})\, dv.
$$
This yields  \textbf{(I)} with
$$
C_8=\sup_{t,z}\mathop{\mathrm{esssup}}\limits_v\,| \det (\nabla_v[I_{\R^d}+V_t(v,z)]^{-1})|\le \frac{d!}{(1-\rho)^d} .
$$

\end{example}

 Our last example explains why in the case \textbf{(B)}, i.e. for a cylindrical noise which has \emph{different} scaling indices of the coordinates,  non-trivial assumptions on the H\"older indices of the coefficients should be made, on the contrary to the case \textbf{(A)}, where the  H\"older indices can be arbitrarily small.

\begin{example}\label{ex2}
  Let $Z^i,i=1, \dots,d$ be symmetric $\alpha_i$-stable with different $\alpha_i, i=1, \dots, d$. The process $Z=(Z^1, \dots,Z^d)$ fits to our case  \textbf{(B)} with $\alpha=\min_i\alpha_i$, $\beta=\max_i\alpha_i$. In this example, we show that  in such -- extremely spatially non-homogeneous -- setting the additional assumption \textbf{(D)} is crucial in the sense that, without this condition, the structure of the transition density can be quite different.

  Take $d=2$ and $\alpha_1=\alpha<\alpha_2=\beta$. Take also $V_t(x,z)=A_t z$, where
  $$
 A_t=\left(
                                        \begin{array}{cc}
                                          1 & t^\gamma \\
                                          t^{\gamma} & 1 \\
                                        \end{array}
                                      \right)
  $$
is a matrix-valued function which depends on $t$, only. Then the additional assumption  \textbf{(I)} holds true since each  operator $T^{t,z}$ is just an isometry which corresponds to the shift of the variable $x\mapsto x+A_tz$ (we can also refer to Example \ref{ex1_6} here). Denote, as usual, $f\asymp g$ if the ratio $\frac{f}{g}$ is bounded and separated from $0$. Then, by Theorem \ref{main_thm2}, one has
$$
p_{t,s}(x,x)\asymp \wt G_{s-t}(0)  \asymp (s-t)^{-\frac{1}{\alpha}-\frac{1}{\beta}}
$$
provided that $\frac{1}{\alpha} - \frac{1}{\beta} < \gamma$, which is actually the second inequality in \eqref{indices} (the first and the third one hold true automatically). For $\frac{1}{\alpha} - \frac{1}{\beta} > \gamma$ the situation changes drastically; namely, we have
\begin{equation}\label{on-diagonal-est}
p_{0,s}(x,x)\leq C s^{-\gamma-\frac{2}{\beta}} \quad \hbox{and} \quad  \frac{s^{-\gamma-\frac{2}{\beta}}}{s^{-\frac{1}{\alpha}-\frac{1}{\beta}}}\to 0, \quad s\to 0+.
\end{equation}
We prove this relation in Appendix \ref{sA}; here we give an informal explanation of the effect. The original noise has two components, a `weaker' one  and a `stronger' one, which act along the 1st and the 2nd coordinate vectors $\mathbf{e}_1, \mathbf{e}_2$, respectively. The law of the solution to SDE \eqref{main} with $x=0, t=0$ is a convolution of the laws of the solutions $X^{(1)}, X^{(2)}$ to  SDE \eqref{main}, where instead of $Z$ we substitute these two components of the noise separately. Consider the projections of these laws on the direction $\mathbf{e}_1$, the one where the `weaker' noise acts. It is easy to show that the projection of the law of $X^{(1)}_s$ on $\mathbf{e}_1$ has a distribution density $p_{0,s}^{(1,1)}(0,\cdot)$ with $p_{0,s}^{(1,1)}(0,0)\asymp s^{-\frac{1}{\alpha}}.$ On the other hand, any jump of the noise at the time $t$, having the altitude $z$ and the direction $\mathbf{e}_2,$ produces a jump of the 1st coordinate with the altitude $t^\gamma z$. Then it is not difficult to prove (and it is easy to believe) that the  projection of the law of $X^{(2)}_s$ on $\mathbf{e}_1$ has a distribution density $p_{0,s}^{(2,1)}(0,\cdot)$ with $p_{0,s}^{(2,1)}(0,0)\asymp s^{-\gamma-\frac{1}{\beta}}$. Since $-\gamma-\frac{1}{\beta}> -\frac{1}{\alpha}$, this means that the projection of the law of $X^{(1)}_s$ in the direction $\mathbf{e}_1$ is `more concentrated' around $0$ than the same projection for the law of  $X^{(2)}_s$. The direction $\mathbf{e}_1$ is the `worst possible' here in the sense that $A_0\mathbf{e}_1$ is equal to the first basis vector $\mathbf{e}_1$ and is orthogonal to the second one $\mathbf{e}_2$.
One can actually show that the same `concentration comparison' hold true for the projections on arbitrary direction $\mathbf{l}$. This means that, in the convolution of the laws $X^{(1)}, X^{(2)}$, the first component is negligible when compared to the second one. It should be noted that, in this example, the one-dimensional noise $Z^2\mathbf{e}_2$ generates a two-dimensional distribution density, which is actually a hypoellipticity-type effect. This density appears to be principal for the entire solution, which indicates that, without a condition of the type \textbf{(D)},  analysis of the SDE with different components of the cylindrical  noise should involve a study of hypoellipticity features. A systematic study of that type  does not seem realistic for SDEs with low regularity of the coefficients, thus we restrict ourselves to the case where  the condition \textbf{(D)} holds and thus hypoellipticity-type effects do not come into play.
\end{example}

\subsection{Literature overview}\label{s24} Our main tool in the construction of the heat kernel $p_{t,s}(x,y)$ of the solution to SDE \eqref{1}
is the \emph{parametrix method}, properly adapted to the sophisticated model we have. The parametrix method was first proposed by Levi~\cite{Le1907},   Hadamard~\cite{had1911} and Gevrey~\cite{gev13} for differential operators  and later extended  by Feller~\cite{Fe36} to a simple non-local setting. The first version of the parametrix method for non-local operators  was developed  by Kochubei~\cite{K89}, see also the monograph by  Eidelman, Ivasyshen \& Kochubei~\cite{EIK04}. This method required the L\'evy measure of the noise to be comparable with the rotationally invariant $\alpha$-stable L\'evy measure, and $\alpha>1$, i.e. the non-local part of the generator should dominate -- in the order sense -- the gradient part. These results have been extended in numerous directions e.g. by  Kolokoltsov~\cite{Ko00}, where the limitation $\alpha>1$ was removed for the operators without a gradient part; see also Chen \& Zhang~\cite{CZ16}. The parametrix method for the \emph{stable-like} case, where the \emph{stability index} is $x$-dependent, have been developed first by Kolokoltsov~\cite{Ko00}; in the papers of K\"uhn~\cite{Kue17b,Kue17a} this problem was treated for a wider class of L\'evy kernels assuming a kind of sector condition for the symbol of the operators. In Knopova \& Kulik~\cite{KK18,Ku18} the parametrix method was extended to \emph{super-critical} case, where the (non-trivial) gradient part is not dominated by an $\alpha$-stable noise with $\alpha<1$. In all these results the L\'evy noise, principally, was comparable with the rotationally invariant $\alpha$-stable one. L\'evy-type models with other types of the reference measures have been studied as well; see Bogdan, Knopova \& Sztonyk~\cite{BKS17}, Kulczycki \&  Ryznar ~\cite{KR17}, where $\alpha$-stable reference measures with various types of spherical  measure (i.e. the distribution of the jumps directions) have been treated, and Grzywny \& Szczypkowksi \cite{GS19}, where the reference measure is rotationally invariant and satisfies weak scaling condition. The \emph{symmetry} assumption, typically imposed on the L\'evy noise in order to simplify the technicalities, is not substantial; see the recent publications  by Chen, Hu, Xie \& Zhang \cite{CZ18, CHXZ17}, Grzywny \& Szczypkowksi \cite{GS19}, Kulik \cite{K19} for the parametrix method for various non-symmetric L\'evy-type models.

\emph{Essentially singular} models, where the distributions of a jump for various starting points are mutually singular, lack a fixed reference measure, to the striking contrast with the results listed above. This leads to a considerably new technical difficulties;  essentially singular models also exhibit new effects such as the one discussed in Example \ref{ex1_5}. For the first advances in the study of such models see Kulczycki, Ryznar \& Sztonyk~\cite{KRS18,KR19} and Knopova, Kulik \& Schilling \cite{KKS2020}, which we have already mentioned and discussed in the Introduction.

\section{Road map to the proofs}\label{s3}
\subsection{The parametrix method}\label{s31} We will construct the transition density $p_{t,s}(x,y)$ for the unknown process using a proper modification of the \emph{parametrix method}, which is a classical analytical method for construction of \emph{fundamental solutions} to elliptic and parabolic PDEs of second order; for a detailed overview of the history and the ideas the method is based on, we refer to \cite{KKK19} or \cite{KKS2020}. Here we outline briefly the construction, taking into account the fact that the actual model is non-homogeneous in time.

Consider a (time-dependent) operator $L_t$ with the domain $C_\infty^2(\R^d)$, given by
$$\begin{aligned}
L_tf(x)&=\mathrm{P.V.}\int_{\R^d}\Big(f(x+V_t(x,z))-f(x)\Big)\mu(d z)
\\&=\sum_{k=1}^d\int_{\R}\Big(f(x+uA_t(x)\mathbf{e}_k+U_t(x,\mathbf{e}_ku))-f(x)
\\&\hspace*{3.5cm}-u1_{|u|\leq 1}\nabla f(x)\cdot A_t(x)\mathbf{e}_k\Big)\nu_k(d u),
\end{aligned}
$$
where $\mu$ is the L\'evy measure of the process $Z$, $\nu_k$ is the L\'evy measure of the $k$-th component $Z^k, k=1, \dots, d$, and $\mathrm{P.V.}$ means that the first integral is taken in the principal value sense. By the virtue of the It\^o formula, one can naturally expect that, once the solution $X$ to \eqref{main} is well defined and is a (time-inhomogeneous) Markov process, the operator $L_t$ should be its generator. Corresponding \emph{Kolmogorov's backward  differential equation} for the transition probability density of $X$ has the form
\begin{equation}\label{backward0}
(\partial_t+L_{t;x})p_{t,s}(x,y)=0, \quad 0\leq t<s, \quad x,y\in \R^d,
\end{equation}
here and below $x$ at the operator $L_{t;x}$ indicates that the operator $L_t$ is applied with respect to the variable $x$. Together with the initial condition
\begin{equation}\label{initial}
p_{t,s}(x,y)\to \delta_x(y), \quad s\to t,
\end{equation}
this actually gives that $p_{t,s}(x,y)$ is a \emph{fundamental solution} to the parabolic Cauchy problem for the operator $L_t$. The streamline of the method is to construct a (candidate for) the required fundamental solution, and then to show that this kernel $p_{t,s}(x,y)$ indeed corresponds to the unique weak solution to \eqref{main}.

To construct a candidate for the fundamental solution, we use \emph{the parametrix method}, which, in a wide generality, can be outlined as follows. Fix a function $p_{t,s}^{(0)}(x,y)$, which is $C^1$ in $t$ and $C_\infty^2(\R^d)$ in $x$ for a fixed $s,y$, and define
$$
q_{t,s}^{(0)}(x,y)=-(\partial_t+L_{t;x})p_{t,s}^{(0)}(x,y).
$$
Then differential equation \eqref{backward0} can be written as
$$
(\partial_t+L_{t;x})(p_{t,s}(x,y)-p_{t,s}^{(0)}(x,y))=q_{t,s}^{(0)}(x,y).
$$
Since we expect $p_{t,s}(x,y)$ to be a (true) fundamental solution, we can formally resolve the above equation as \begin{equation}\label{parametrix_int}
p_{t,s}(x,y)=p_{t,s}^{(0)}(x,y)+\int_t^s\int_{\R^d}p_{t,r}(x,v)q_{r,s}^{(0)}(v,y)\, dvdr, \quad 0\leq t<s, \quad x,y\in \R^d,
\end{equation}
 The identity \eqref{parametrix_int} can be seen as an integral equation for the unknown kernel $p_{t,s}(x,y)$, which is easier to deal with than the original differential equation \eqref{backward0}. This is the essence of the method: we first construct
a candidate for the transition probability density $p_{t,s}(x,y)$ as the solution to the \emph{integral} equation \eqref{parametrix_int} and then  study its properties in order to show that this kernel  indeed corresponds to the unique weak solution to \eqref{main}.

\subsection{Choice of the zero-order approximation}\label{s32} One of the crucial points in the strategy outlined above is the choice of the kernel  $p_{t,s}^{(0)}(x,y)$, which has a natural meaning of the `zero-order approximation' term for the unknown $p_{t,s}(x,y)$. This choice determines the `differential error of approximation' $q_{t,s}^{(0)}(x,y)$, and should be precise  enough to guarantee integrability  of $q_{t,s}^{(0)}(x,y)$; note that we require this integrability  in order to treat the integral equation \eqref{parametrix_int} properly.
We will choose $p_{t,s}^{(0)}(x,y)$ in the form
\begin{equation}\label{p_0}
p_{t,s}^{(0)}(x,y)=\frac{1}{|\det A_s(y)|}G_{s-t}\Big((y-x)(A_s(y)^{-1})^T\Big),
\end{equation}
where $G_r(w)$ is the distribution density of a dynamically truncated L\'evy noise; see Section \ref{Parametrix construction} for its definition and properties.  The density $G_{s-t}$ is also dependent on $\eps>0$, however we do not reflect this in our notation.  Such a choice combines two ideas. The first one is the classical parametrix idea that a good `zero-order approximation' to the fundamental solution can be obtained by taking the heat kernel for an equation with constant coefficients (e.g. the Gaussian kernel in the diffusion setting) and substituting there the coefficients \emph{frozen at the endpoint} $y,s$. This classical construction also suggests that negligible (in a sense) parts should be removed from the generator: in the diffusion setting this is the drift (gradient) term, in our case this is the non-linear jump term $U_t(x,z)$. Though, such classical parametrix construction appears to be not precise enough in the singular L\'evy noise setting. Namely, such a construction would suggest, instead of \eqref{p_0}, the choice
$$
 \wt p_{t,s}^{(0)}(x,y)=\frac{1}{|\det A_s(y)|}\wt G_{s-t}\Big((y-x)(A_s(y)^{-1})^T\Big),
$$
recall that $\wt {G}_t(\cdot)$ is the distribution density of $Z_t$. However, in general,  $\wt p_{t,s}^{(0)}(x,y)$ may provide quite a poor approximation to $p_{t,s}(x,y)$: e.g. in the model from Example \ref{ex2} it can be  verified easily that
$$
\int_{\R^d}\wt p_{t,s}^{(0)}(x,y)\, dy=\infty,
$$
in the striking contrast to the fact that $p_{t,s}(x,\cdot)$ should be a probability density. This is an essentially non-local effect; in order to avoid it we use the second idea to `cut off' big jumps. In \cite{KRS18}, \cite{KR19} the cut off level was chosen small but fixed, which required Lipschitz continuity of the coefficients. In \cite{KKS2020}, a time dependent cut off level was proposed, which allows one to treat the models where the coefficients  are only assumed to be  H\"older continuous.  Here we use the same dynamic truncation idea, properly adapted to the current model. Namely, $G_r(w)$ in \eqref{p_0} will be the distribution density of  $\widehat Z_r=(\widehat Z_r^1, \dots,\widehat Z_r^d)$,  where the components
are independent and
$$
\widehat Z_r^i=\int_0^r\int_{|u|\leq R_\rho^{(i)}} u N^i(d\rho, du),
$$
where $N^i(d\rho, du)$ is the Poisson point measure corresponding to $Z^i$, and the time-dependent truncation function $R_\rho^{(i)}=R_\rho^{(i)}(\eps)$ is determined by means of the corresponding Pruitt function $h_i(r)$ and $\eps>0$ of our choice. Note that in the case  \textbf{(A)} the cut off level is the same for all coordinates, while in the case \textbf{(B)} these levels can be quite different. This is the actual reason for  the condition \textbf{(D)} to appear in the case \textbf{(B)}:  we will need this condition in order to balance, in a sense, the `cut off effects' for various coordinates.

\subsection{Functional analytical framework}\label{s30}
It is  convenient to treat~\eqref{parametrix_int} within the functional analytic framework introduced in \cite[Section~5.2]{KKS2020}, properly adapted to the time non-homogeneous setting. Consider the Banach space  $L^\infty(dx)\otimes L^1(dy)$ of kernels $k(x,y)$ satisfying
\begin{align*}
    \|k\|_{\infty,1}
    := \mathop{\mathrm{esssup}}_{x\in \R^d}\int_{\R^d}|k(x,y)|\, dy<\infty.
\end{align*}
Each kernel $k\in L^\infty(dx)\otimes L^1(dy)$ generates a bounded linear operator $K$ in the space $B_b =  B_b(\R^d)$  of bounded measurable functions,
\begin{align*}
    K f(x) = \int_{\R^d}k(x,y) f(y)\, dy, \quad f\in B_b(\R^d),
\end{align*}
with  the operator norm $\|K\|_{B_b\to B_b}$ equal to the norm $\|k\|_{\infty,1}$. Denote $P_{t,s}, P_{t,s}^{(0)},  Q_{t,s}^{(0)}, 0<t<s$ the families of operators corresponding to the unknown transition probability density $p_{t,s}(x,y)$ and the kernels $p_{t,s}^{(0)}(x,y)$, $q_{t,s}^{(0)}(x,y)$ introduced above. Then~\eqref{parametrix_int} can be equivalently written as
\begin{align}\label{para-e16}
    P_{t,s}
    = P^{(0)}_{t,s}+\int_t^s P_{t,r}Q_{r,s}^{(0)} \, dr, \quad 0\le t<s.
\end{align}
Let $0<\eps\le \eps_0$, with $\eps_0$ defined in  Remark \ref{remark_epsilon}. In the whole subsection $c$ denotes a constant dependent on $d$, $\alpha$, $\beta$, $\gamma_1$, $\gamma_2$, $\gamma_3$, $C_1, \ldots, C_8$, $\kappa(\tau)$ and $\eps$. In Lemma \ref{q0_estimates_lemma} below we prove an estimate for $q_{t,s}^{(0)}(x,y)$ in $\|\cdot\|_{\infty,1}$-norm which actually  can be written as a bound for the operator norm
\begin{align}\label{para-e20}
    \|Q_{t,s}^{(0)}\|_{B_b\to B_b}\leq  c  (s-t)^{-1+\eps}, \quad 0<s-t\leq \tau.
\end{align}
This allows us to treat~\eqref{para-e16}, in a standard way, as a Volterra equation with a mild (integrable) singularity. Recall that each kernel $p_{t,s}(x,y)$ is supposed to be a probability density, hence it is necessary that
\begin{align}\label{para-e22}
    \|P_{t,s}\|_{B_b\to B_b}<\infty .
\end{align}
The unique solution to~\eqref{para-e16} which satisfies~\eqref{para-e22} can be interpreted as a classical Neumann series
\begin{align}\label{para-e24}
\begin{aligned}
    P_{t,s}
    &=P_{t,s}^{(0)}+\sum_{k=1}^\infty\;\;\idotsint\limits_{t<r_1<\dots <r_{k}<s} P_{t, r_1}^{(0)} Q_{r_1,r_2}^{(0)} \dots Q_{r_k,s}^{(0)} \,dr_1\dots dr_{k}\\
    &= P_{t,s}^{(0)} + \int_t^s P_{t,r}^{(0)} Q_{r,s}\, dr,
\end{aligned}
\end{align}
where the operator
\begin{align}\label{para-e25}
    Q_{t,s}
    :=Q_{t,s}^{(0)} + \sum_{k=1}^\infty\;\;\idotsint\limits_{t<r_1<\dots <r_{k}<s} Q_{t,r_1}^{(0)} \dots Q_{r_{k},s}^{(0)} \,dr_1\dots dr_{k}
\end{align}
corresponds to the kernel
\begin{align}\label{para-e26}
    q_{t,s}(x,y) := \sum_{k=0}^\infty q_{t,s}^{(k)}(x,y), \quad  q_{t,s}^{(k+1)}(x,y):=\int_t^s\int_{\R^d}q_{t,r}^{(k)}(x,v)q_{r,s}^{(0)}(v,y)\, dvdr, \quad k\geq 0.
\end{align}
The series~\eqref{para-e25}, \eqref{para-e26} converge uniformly in $0<s-t\leq \tau$ in the operator norm $\|\cdot\|_{B_b\to B_b}$ and the norm $\|\cdot\|_{\infty,1}$, respectively. This follows easily from~\eqref{para-e20}, since for $k\geq 1$
\begin{align}\label{para-e28}
\begin{aligned}
    \|q_{t,s}^{(k)}\|_{\infty,1}
    &= \bigg\|\;\;\idotsint\limits_{t<r_1<\dots <r_{k}<s}Q_{t,r_1}^{(0)} \dots Q_{r_{k},s}^{(0)} \,dr_1\dots dr_{k}\bigg\|_{B_b\to B_b}\\
    &\leq\;\;\idotsint\limits_{t<r_1<\dots <r_{k}<s}\|Q_{t,r_1}^{(0)} \dots Q_{r_{k},s}^{(0)}\|_{B_b\to B_b} \,dr_1\dots dr_{k}\\
    &\leq  c^k  \idotsint\limits_{t<r_1<\dots <r_{k}<s} (r_1-t)^{-1+\epsilon}\cdot\ldots\cdot (s-r_k)^{-1+\epsilon}\,dr_1\dots \, dr_{k}\\
    &=  (s - t)^{-1+k \epsilon} \frac{(c\Gamma(\epsilon))^k}{\Gamma(k\epsilon)},
\end{aligned}
\end{align}
and the Gamma function $\Gamma(z)$ behaves asymptotically like $\sqrt{2\pi} z^{z-\frac 12} e^{-z}\gg c^z$ as $z\to \infty$. This estimate yields
\begin{align}\label{para-e30a}
    \|q_{t,s}\|_{\infty,1}\leq c(s-t)^{-1+\epsilon}, \quad 0<s-t\leq \tau.
\end{align}
In Lemma \ref{py_integrable} below we prove that $p_{t,s}^{(0)}(x,y)$ is bounded in $\|\cdot\|_{\infty,1}$-norm, which similarly to \eqref{para-e28} yields that the residue $r_{t,s}(x,y)=p_{t,s}(x,y)-p_{t,s}^{(0)}(x,y)$ satisfies
\begin{align}\label{para-e30}
    \|r_{t,s}\|_{\infty,1}\leq c(s-t)^{\epsilon}, \quad  0<s-t\leq \tau.
\end{align}
These representations and estimates form  an essential part of the proof of Theorem \ref{main_thm}.

\subsection{Approximate fundamental solution and weak uniqueness of the solution} \label{s33}
Let $C_\infty(\R^d)$ denote  a space of  continuous functions vanishing at infinity.    Define $P_{t,t}=P_{t,t}^{(0)}=\mathrm{id}$, the identity operator. The operator families $\{P_{t,s}^{(0)}\}$, $\{Q_{t,s}^{(0)}\}$ have the following properties, see  Lemmas \ref{py_integrable}, \ref{q0_estimates_lemma}, \ref{q0_new_estimates_lemma}, \ref{pts0_delta} and \ref{qts0_delta}.
\begin{lemma}\label{l41}
   Each of the operators $P_{t,s}^{(0)}, 0\leq t\leq s,  Q_{t,s}^{(0)}, 0\leq t<s$  maps $C_\infty(\R^d)$ to $C_\infty(\R^d)$. The corresponding families of operators are strongly continuous w.r.t. $t,s$.
\end{lemma}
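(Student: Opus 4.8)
The plan is to reduce everything to two ingredients: an explicit formula for the kernels $p_{t,s}^{(0)}(x,y)$ and $q_{t,s}^{(0)}(x,y)$ in terms of the (smooth, rapidly decaying in a scaling sense) densities $G_r$ of the dynamically truncated noise, and the $\|\cdot\|_{\infty,1}$-type bounds for these kernels that are established in Lemmas \ref{py_integrable}, \ref{q0_estimates_lemma}, \ref{q0_new_estimates_lemma}. Recall from \eqref{p_0} that $p_{t,s}^{(0)}(x,y)=|\det A_s(y)|^{-1}G_{s-t}((y-x)(A_s(y)^{-1})^T)$, and $q_{t,s}^{(0)}(x,y)=-(\partial_t+L_{t;x})p_{t,s}^{(0)}(x,y)$. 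For the mapping property $C_\infty\to C_\infty$ I would argue as follows. For a fixed pair $0\le t<s$, the kernel $p_{t,s}^{(0)}(x,y)$ is, for each fixed $x$, a bounded continuous (indeed integrable in $y$, by Lemma \ref{py_integrable}) function of $y$, and the dependence on $x$ is continuous by continuity of $G_{s-t}$ and of the coefficients \eqref{Holder}. To get that $P_{t,s}^{(0)}f\in C_\infty(\R^d)$ for $f\in C_\infty(\R^d)$: continuity of $x\mapsto P_{t,s}^{(0)}f(x)$ follows from dominated convergence using the uniform-in-$x$ integrable bound on $p_{t,s}^{(0)}(x,\cdot)$ (Lemma \ref{py_integrable}) together with continuity of $x\mapsto p_{t,s}^{(0)}(x,y)$; vanishing at infinity follows because, writing $P_{t,s}^{(0)}f(x)=\int G_{s-t}(w)f(x+A_s(y(x,w))w)\,dw$ after the change of variables $w=(y-x)(A_s(y)^{-1})^T$ — or more simply by bounding $|P_{t,s}^{(0)}f(x)|\le \int_{\R^d}p_{t,s}^{(0)}(x,y)|f(y)|\,dy$ and splitting $|f|$ into its part on a large ball (where $p_{t,s}^{(0)}(x,\cdot)$ has small mass once $|x|$ is large, by the scaling decay of $G_{s-t}$) and its small part off the ball. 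The same scheme applies verbatim to $Q_{t,s}^{(0)}$ once one knows that $q_{t,s}^{(0)}(x,y)$ is continuous in $x$, integrable in $y$ uniformly in $x$ (from \eqref{para-e20}), and enjoys a scaling-type spatial decay in $y-x$; these are exactly the contents of Lemmas \ref{q0_estimates_lemma} and \ref{q0_new_estimates_lemma}.

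For the strong continuity in $(t,s)$ I would treat $P_{t,s}^{(0)}$ and $Q_{t,s}^{(0)}$ separately, and for $P^{(0)}$ also handle the diagonal $s\to t$ using Lemma \ref{pts0_delta} (which, as the naming suggests, gives $p_{t,s}^{(0)}(x,\cdot)\to\delta_x$ as $s\downarrow t$, i.e.\ $P_{t,s}^{(0)}\to\mathrm{id}$ strongly on $C_\infty$). Off the diagonal, fix $(t_0,s_0)$ with $t_0<s_0$ and $f\in C_\infty$. Then
$$
P_{t,s}^{(0)}f(x)-P_{t_0,s_0}^{(0)}f(x)=\int_{\R^d}\bigl(p_{t,s}^{(0)}(x,y)-p_{t_0,s_0}^{(0)}(x,y)\bigr)f(y)\,dy,
$$
and I would bound the right-hand side in sup-norm over $x$ by $\|f\|_\infty\sup_x\int_{\R^d}|p_{t,s}^{(0)}(x,y)-p_{t_0,s_0}^{(0)}(x,y)|\,dy$, then show the latter tends to $0$ as $(t,s)\to(t_0,s_0)$. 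This last step uses: continuity of $r\mapsto G_r$ in $L^1$ (a standard fact for densities of a Lévy-type process, here the truncated noise $\widehat Z$, which also has a well-behaved density by the construction in Section \ref{Parametrix construction}), continuity of $(t,s)\mapsto A_s(y)$ in the relevant arguments via \eqref{Holder}--\eqref{Lipschitz}, and a uniform integrable majorant valid for $(t,s)$ in a neighbourhood of $(t_0,s_0)$ so that dominated convergence applies to the $y$-integral. For $Q_{t,s}^{(0)}$ the argument is identical, replacing $G_r$ by the combination of $G_r$ and its derivatives entering $q_{t,s}^{(0)}$, and invoking the continuity statements that accompany Lemmas \ref{q0_estimates_lemma}, \ref{q0_new_estimates_lemma}, \ref{qts0_delta}; note $Q_{t,s}^{(0)}$ is only defined for $t<s$, so no diagonal limit is needed there. (One should also record that since every operator in question is an integral operator with a kernel that is $\|\cdot\|_{\infty,1}$-bounded uniformly on compacts in $\{t\le s\le \tau\}$, strong continuity on the dense subspace of compactly supported smooth functions upgrades automatically to strong continuity on all of $C_\infty(\R^d)$.)

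The main obstacle I anticipate is not any single estimate — those are quoted from the cited lemmas — but rather the bookkeeping of the spatial decay needed for the "vanishing at infinity" half of the $C_\infty\to C_\infty$ claim and for producing a uniform integrable majorant near $(t_0,s_0)$. The subtlety is that $G_{s-t}$ does not have a single clean Gaussian-type tail; it decays at different polynomial rates along different coordinate axes (this is precisely the inhomogeneity emphasized throughout the paper), and after the linear map $A_s(y)^{-1}$ the "bad" axis depends on $y$. So one cannot simply cite a product-of-one-dimensional-tails bound; instead one must use the scaling bounds for the $\wt g^i_r$ and $g^i_r$ (truncated versions) developed in Sections \ref{1dim}--\ref{Parametrix construction}, in the combined form $p_{t,s}^{(0)}(x,y)\le c\,(s-t)^{-d/\alpha}\,\Phi\bigl((y-x)/\sigma(s-t)\bigr)$ for a suitable integrable profile $\Phi$ and scaling factor $\sigma(\cdot)$ — exactly what Lemma \ref{py_integrable} and its proof provide. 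Once that envelope is in hand, the off-the-ball estimate $\int_{|y-x|>\delta}p_{t,s}^{(0)}(x,y)\,dy\to 0$ as $\delta\to\infty$ uniformly for $s-t$ in a compact subinterval follows, and with it both the decay at infinity of $P_{t,s}^{(0)}f$ and the dominated-convergence majorant. I would organize the write-up by first stating this envelope bound as a consequence of the later lemmas, then running the $C_\infty\to C_\infty$ argument, then the continuity argument, for $P^{(0)}$ and $Q^{(0)}$ in parallel.
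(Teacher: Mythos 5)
Your proposal correctly identifies all the ingredients the paper uses — the $\|\cdot\|_{\infty,1}$ bounds of Lemmas \ref{py_integrable} and \ref{q0_estimates_lemma}, the tail estimates of Lemma \ref{q0_new_estimates_lemma}, the diagonal limit from Lemma \ref{pts0_delta}, and the off-diagonal continuity from Lemma \ref{qts0_delta} — and the overall structure (mapping property, then strong continuity, treating the diagonal separately) matches the paper. The paper's proof of Lemma \ref{l41} is, in effect, a pointer to exactly those helper lemmas, so the alignment is tight.

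That said, your \emph{primary} route for strong continuity off the diagonal — dominate $\sup_x\int_{\R^d}|q_{t,s}^{(0)}(x,y)-q_{t_0,s_0}^{(0)}(x,y)|\,dy$ by dominated convergence, using ``a scaling-type spatial decay in $y-x$'' as the majorant — has a genuine gap for $Q^{(0)}$. The envelope bound you write down, $p_{t,s}^{(0)}(x,y)\le c(s-t)^{-d/\alpha}\Phi((y-x)/\sigma(s-t))$, does hold for $p^{(0)}$ via Corollary \ref{general_matrix} (estimate \eqref{densityest1}), and would support DCT there. But for $q^{(0)}$ the only decay available is the $L^1$-in-$y$ tail estimate of Lemma \ref{q0_new_estimates_lemma}; the large-jump contribution (the term $\text{V}$ in the proof of that lemma) is controlled only by $\int_{\R^d}\text{V}(x,y)\,dy\le c\sum_k h_k(r^{1/\gamma_3}/r_0^{1/\gamma_3})$ and has no pointwise-in-$|x-y|$ envelope. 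Consequently there is no uniform integrable majorant for $y\mapsto q_{t,s}^{(0)}(x,y)$ valid for $(t,s)$ near $(t_0,s_0)$, and DCT applied directly to the kernel difference does not go through. The paper sidesteps this precisely by the route you only record parenthetically: reduce to $f$ with compact support (justified by the uniform tail estimate \eqref{q0_integral_estimate}), use the pointwise continuity of $(t,s,x,y)\mapsto q_{t,s}^{(0)}(x,y)$ together with the uniform pointwise bound \eqref{q0_pointwise_estimate1} and bounded convergence to get continuity of $(t,s,x)\mapsto Q_{t,s}^{(0)}f(x)$, deduce uniform continuity on compacts, and then absorb the $|x|$-large region using \eqref{q0_integral_estimate} again. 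You should promote that parenthetical remark to be the actual argument for $Q^{(0)}$, and you should also explicitly invoke the joint continuity of the kernels (Lemma \ref{pts_continuous}), which you gesture at but do not cite.

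A smaller cosmetic point: quoting Lemma \ref{qts0_delta} among the ``continuity statements that accompany'' the other lemmas is circular in spirit — that lemma \emph{is} the off-diagonal strong continuity of $P^{(0)}$ and $Q^{(0)}$, so in a self-contained write-up you should reproduce its proof (the localization argument just described) rather than cite it.
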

Note that the operator norm $\|\cdot\|_{C_\infty\to C_\infty}$ is dominated by the norm $\|\cdot\|_{B_b\to B_b},$ thus the norm estimates from the previous section yield that the series~\eqref{para-e24} converges in the norm $\|\cdot\|_{C_\infty\to C_\infty}$ uniformly in $0\leq s- t\leq \tau$, and the series \eqref{para-e25} converges uniformly for $ \tau_1\le  s-t\le \tau$, for any $0<\tau_1< \tau$. Moreover, a standard argument based on the strong continuity of $Q_{t,s}^{(0)}, t<s$  and the  estimate \eqref{para-e20} shows that for every $n\ge 1$
 the  operators $Q_{t,s}^{(n)}$ (defined by kernels $q_{t,s}^{(n)}(x,y)$) are strongly continuous w.r.t. $t<s$.
 This yields
\begin{corollary}\label{c41}
  Each of the operators $P_{t,s}, 0\leq t\leq s,  Q_{t,s}, 0\leq t<s$  maps $C_\infty(\R^d)$ to $C_\infty(\R^d)$. Corresponding families of operators are strongly continuous w.r.t. $t,s$.
\end{corollary}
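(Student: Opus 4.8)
The plan is to deduce Corollary \ref{c41} from Lemma \ref{l41} together with the Neumann-series representations \eqref{para-e24}, \eqref{para-e25}, \eqref{para-e26} and the operator-norm estimates recorded in Section \ref{s30}. The structural fact I will use throughout is that $C_\infty(\R^d)$ is a \emph{closed} subspace of $B_b(\R^d)$ in the supremum norm. Two consequences: (i) if a sequence of operators, each mapping $C_\infty(\R^d)$ into itself, converges in the operator norm $\|\cdot\|_{B_b\to B_b}$, then the limit operator also maps $C_\infty(\R^d)$ into itself; and (ii) if $r\mapsto g(r)$ is a Bochner-integrable $C_\infty(\R^d)$-valued function on a finite interval, then $\int g(r)\,dr\in C_\infty(\R^d)$.

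First I would treat $Q_{t,s}$ for $0\le t<s$. By Lemma \ref{l41}, each $Q^{(0)}_{r,r'}$ maps $C_\infty(\R^d)$ into itself and the family is strongly continuous; combined with the bound \eqref{para-e20} and fact (ii), an easy induction on $k$ using \eqref{para-e26} shows that every iterated operator $Q^{(k)}_{t,s}$ maps $C_\infty(\R^d)$ into itself, and, as already noted after Lemma \ref{l41}, each $Q^{(k)}_{t,s}$ is strongly continuous in $t<s$. The estimate \eqref{para-e28} shows that the series \eqref{para-e25} converges in $\|\cdot\|_{B_b\to B_b}$ uniformly on every strip $\{\tau_1\le s-t\le\tau\}$, $0<\tau_1<\tau$. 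By fact (i) the limit $Q_{t,s}$ maps $C_\infty(\R^d)$ into itself, and a uniform (in operator norm) limit of strongly continuous operator families is strongly continuous; since $\tau_1>0$ is arbitrary, $Q_{t,s}$ is strongly continuous on all of $\{0\le t<s\}$.

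Next I would treat $P_{t,s}$ via the representation $P_{t,s}=P^{(0)}_{t,s}+\int_t^s P^{(0)}_{t,r}Q_{r,s}\,dr$ from \eqref{para-e24}. The term $P^{(0)}_{t,s}$ maps $C_\infty(\R^d)$ into itself and is strongly continuous by Lemma \ref{l41} (recall $P^{(0)}_{t,t}=\mathrm{id}$). For fixed $f\in C_\infty(\R^d)$ and fixed $0\le t<s$, the map $r\mapsto P^{(0)}_{t,r}Q_{r,s}f$ is $C_\infty(\R^d)$-valued and strongly continuous on $(t,s)$ (by strong continuity of $\{P^{(0)}_{\cdot,\cdot}\}$ and of $\{Q_{\cdot,\cdot}\}$ just established), with norm bounded by $c\,(s-r)^{-1+\eps}\|f\|_\infty$ by \eqref{para-e20} and the boundedness of $\|P^{(0)}_{t,r}\|_{B_b\to B_b}$; hence it is Bochner integrable, and fact (ii) gives $\int_t^s P^{(0)}_{t,r}Q_{r,s}f\,dr\in C_\infty(\R^d)$. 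Thus $P_{t,s}$ maps $C_\infty(\R^d)$ into $C_\infty(\R^d)$. For strong continuity, fix $f\in C_\infty(\R^d)$; $P^{(0)}_{t,s}f$ is continuous in $(t,s)$, and for the integral term, given a target pair $(t_0,s_0)$, I would split the $r$-integral into a piece of length $\delta$ adjacent to $s$, whose norm is at most $c\,\delta^\eps\|f\|_\infty$ uniformly in $(t,s)$ by \eqref{para-e20}, and the remaining piece on which the integrand is uniformly bounded and converges pointwise in $r$ as $(t,s)\to(t_0,s_0)$; dominated convergence handles the remaining piece, and then $\delta\to 0$ finishes the estimate (the degenerate case $t_0=s_0$ being covered by the $\delta$-estimate alone). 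This yields continuity of $(t,s)\mapsto P_{t,s}f$.

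The only genuinely delicate point is the bookkeeping around the integrable singularity of $\|Q_{r,s}\|_{B_b\to B_b}$ at $r=s$: it must be isolated by a length-$\delta$ estimate, uniform in the relevant parameters, before dominated convergence can be applied to the rest — both in the step showing $P_{t,s}$ preserves $C_\infty(\R^d)$ and in the strong-continuity step. Everything else is a routine assembly of the closedness of $C_\infty(\R^d)$ in $B_b(\R^d)$, Lemma \ref{l41}, and the Neumann-series bounds already in hand.
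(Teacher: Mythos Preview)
Your proposal is correct and follows essentially the same approach as the paper: the paper's justification is just the short paragraph preceding the corollary, which invokes the uniform-in-operator-norm convergence of the series \eqref{para-e24} and \eqref{para-e25} together with the strong continuity of the $Q^{(n)}_{t,s}$ deduced from that of $Q^{(0)}_{t,s}$ and the bound \eqref{para-e20}. You have simply written out the routine details the paper leaves implicit, using the equivalent integral form of \eqref{para-e24} rather than the series form; the singularity-splitting argument you give for the strong continuity of $P_{t,s}$ is exactly the ``standard argument'' the paper alludes to.
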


In general, it might be quite difficult to prove that the kernel $p_{t,s}(x,y)$, constructed as a solution to the \emph{integral} equation \eqref{parametrix_int}, solves the \emph{differential} equation \eqref{backward0}. We avoid this complicated step, using the following approximate procedure.
Define for $\eta>0$
$$
P_{t,s, \eta} = P_{t,s+\eta}^{(0)} + \int_t^s P_{t,r+\eta}^{(0)} Q_{r+\eta,s+\eta}\, dr, \quad 0\leq t\leq s.
$$
The following lemma shows that $p_{t,s}(x,y)$ solves the backward Kolmogorov equation \eqref{backward0} in a certain approximate sense.
\begin{lemma}\label{l42} Let  $\tau>0$ and a compact subset $F\subset C_\infty(\R^d)$ be fixed.
  \begin{itemize}
    \item[a)]
    $$
    \|P_{t,s,\eta} f-P_{t,s} f\|\to 0, \quad \eta\to 0
    $$
    uniformly in $0\leq s\leq t\leq \tau, f\in F$,
    \item[b)] For any $f\in C_0(\R^d)$ and $\eta>0$, the function $P_{t,s,\eta} f(x)$ is $C^1$ in $t$ and $C^2_0(\R^d)$ in $x$ on $[0,s]\times \R^d$, and thus the operators
        $$\Delta_{t,s,\eta}=(\partial_t+L_t)P_{t,s, \eta},\quad  s\leq t, \eta>0
               $$
     are well defined. These operators satisfy
   \begin{equation}\label{Delta}
       \|\Delta_{t,s,\eta} f\|\to 0, \quad \eta\to 0
   \end{equation}
    uniformly in $s-t>\tau_1, 0\leq s\leq \tau$, $f\in F$ for any $\tau_1>0$ and
    \begin{equation}\label{Delta_int}
    \int_t^{\tau}\|\Delta_{t,r,\eta} f\|\, dr\to 0, \quad \eta\to 0
    \end{equation}
    uniformly in $0\leq t\leq \tau$, $f\in F$.
  \end{itemize}
 \end{lemma}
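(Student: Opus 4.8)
The plan is to reduce everything to the Volterra structure \eqref{para-e24}--\eqref{para-e26}, the uniform norm bounds $\|P^{(0)}_{t,s}\|_{B_b\to B_b}\le c$ and $\|Q^{(0)}_{t,s}\|_{B_b\to B_b},\|Q_{t,s}\|_{B_b\to B_b}\le c(s-t)^{-1+\eps}$ (Lemma~\ref{py_integrable}, Lemma~\ref{q0_estimates_lemma}, \eqref{para-e30a}), and the strong continuity of the families $\{P^{(0)}_{t,s}\}$, $\{P_{t,s}\}$, $\{Q^{(0)}_{t,s}\}$, $\{Q_{t,s}\}$ (Lemma~\ref{l41}, Corollary~\ref{c41}). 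Throughout one takes $0<\eta\le1$ and, when convenient, works on the enlarged horizon $\tau+1$ so as to absorb the shift by $\eta$ into the constants. A principle used repeatedly: a strongly continuous, locally bounded operator family evaluated on a compact set of functions is uniformly continuous on compact parameter regions, so that pointwise limits there are automatically uniform (uniformity in $f\in F$ being reduced, via total boundedness of $F$, to finitely many functions).

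For part a) I would use the identity
$$
P_{t,s,\eta}=P_{t,s+\eta}-\int_t^{t+\eta}P^{(0)}_{t,\rho}Q_{\rho,s+\eta}\,d\rho ,
$$
obtained by substituting $\rho=r+\eta$ in the definition of $P_{t,s,\eta}$ and comparing with the second line of \eqref{para-e24} at the endpoint $s+\eta$. For $f\in F$, $P_{t,s+\eta}f-P_{t,s}f\to0$ as $\eta\to0$ uniformly in $0\le t\le s\le\tau$ by joint strong continuity of $\{P_{t,s}\}$, while $\big\|\int_t^{t+\eta}P^{(0)}_{t,\rho}Q_{\rho,s+\eta}f\,d\rho\big\|\le c^2\|f\|\int_t^{t+\eta}(s+\eta-\rho)^{-1+\eps}\,d\rho\le c^2\eps^{-1}\eta^{\eps}\|f\|\to0$ uniformly.

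For part b), fix $\eta>0$ and $f\in C_0(\R^d)$. In $P_{t,s,\eta}f=P^{(0)}_{t,s+\eta}f+\int_t^sP^{(0)}_{t,r+\eta}Q_{r+\eta,s+\eta}f\,dr$ every occurring kernel $p^{(0)}_{t,\sigma}$ has time gap $\sigma-t\ge\eta$, hence (by the properties of the truncated density $G_u$ for $u\ge\eta$ from Section~\ref{Parametrix construction}) is smooth and suitably decaying in $x$ and $C^1$ in $t$ with derivatives bounded in terms of $\eta$; one may then differentiate under the $r$-integral --- the integrand being dominated by an integrable $c(\eta)(s-r)^{-1+\eps}\|f\|$ --- the Leibniz rule producing a boundary term at the lower limit $r=t$. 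Thus $P_{t,s,\eta}f$ is $C^1$ in $t$ and belongs to $C^2_\infty(\R^d)=\mathrm{Dom}(L_t)$ in $x$, so $\Delta_{t,s,\eta}=(\partial_t+L_t)P_{t,s,\eta}$ is well defined. Carrying out the differentiation with $(\partial_t+L_t)P^{(0)}_{t,\sigma}=Q^{(0)}_{t,\sigma}$ (the defining relation for $q^{(0)}$, pointwise valid since all gaps exceed $\eta$), then simplifying via the resolvent identity $Q_{t,\sigma}=Q^{(0)}_{t,\sigma}+\int_t^\sigma Q^{(0)}_{t,\rho}Q_{\rho,\sigma}\,d\rho$ (a reassociation of the Neumann series \eqref{para-e25}--\eqref{para-e26}) and substituting $\rho=r+\eta$, I expect to reach the clean identity
$$
\Delta_{t,s,\eta}f=\big(Q_{t,s+\eta}f-Q_{t+\eta,s+\eta}f\big)+\big(I-P^{(0)}_{t,t+\eta}\big)Q_{t+\eta,s+\eta}f-\int_t^{t+\eta}Q^{(0)}_{t,\rho}Q_{\rho,s+\eta}f\,d\rho ,
$$
which vanishes formally as $\eta\to0$.

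It then remains to bound the three terms. Each is $\le c\|f\|(s-t)^{-1+\eps}$ uniformly; the last is moreover $\le c^2\eps^{-1}\eta^{\eps}(s-t)^{-1+\eps}\|f\|$ for $s>t$, with $s$-integral over $[t,\tau]$ at most $c^2\eps^{-2}(\tau+1)^{\eps}\eta^{\eps}\|f\|$ by Fubini. On $\{s-t\ge\tau_1\}$ the first term $\to0$ uniformly by strong continuity of $\{Q_{t,s}\}$, and the second does so because $P^{(0)}_{t,t+\eta}\to I$ strongly and uniformly in $t$ on compacts while $\{Q_{t+\eta,s+\eta}f:f\in F,\ s-t\ge\tau_1,\ 0\le\eta\le1\}$ is relatively compact in $C_\infty(\R^d)$; together with the bound on the third term this gives \eqref{Delta}. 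For \eqref{Delta_int} I would split $\int_t^{\tau}=\int_t^{t+\delta}+\int_{t+\delta}^{\tau}$, apply the above (with $\tau_1=\delta$) on $[t+\delta,\tau]$, and on $[t,t+\delta]$ use $\|\Delta_{t,s,\eta}f\|\le c\|f\|(s-t)^{-1+\eps}$, whose integral is $\le c\eps^{-1}\delta^{\eps}\|f\|$, made small by choosing $\delta$ before $\eta$. The main obstacle is the regularity/differentiation step: the termwise differentiation and, above all, the precise boundary term in the Leibniz rule must be justified using quantitative smoothness and decay of $G_u$ for $u$ bounded below; once the displayed identity for $\Delta_{t,s,\eta}f$ is secured, what remains is the standard but delicate two-scale bookkeeping --- strong continuity off the diagonal, integrable singularity $(s-t)^{-1+\eps}$ near it.
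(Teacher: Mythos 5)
Your proposal is correct and follows essentially the same route as the paper's proof: the key steps — the Leibniz rule giving the boundary term $-P^{(0)}_{t,t+\eta}Q_{t+\eta,s+\eta}$, the identity $(\partial_t+L_t)P^{(0)}_{t,\sigma}=Q^{(0)}_{t,\sigma}$, the resolvent identity $Q_{t,\sigma}=Q^{(0)}_{t,\sigma}+\int_t^{\sigma}Q^{(0)}_{t,\rho}Q_{\rho,\sigma}\,d\rho$, and the change of variables $\rho=r+\eta$ — reproduce the paper's identity $\Delta_{t,s,\eta}f=Q_{t,s+\eta}f-\int_t^{t+\eta}Q^{(0)}_{t,\rho}Q_{\rho,s+\eta}f\,d\rho-P^{(0)}_{t,t+\eta}Q_{t+\eta,s+\eta}f$ (which you merely rewrite by splitting the combination $Q_{t,s+\eta}-P^{(0)}_{t,t+\eta}Q_{t+\eta,s+\eta}$ into two differences), and the limit arguments via strong continuity, the $(s-t)^{-1+\eps}$ operator bounds, and the $\delta$-then-$\eta$ split for \eqref{Delta_int} are exactly those used in the paper. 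Your part a) is a more explicit rendition (the identity $P_{t,s,\eta}=P_{t,s+\eta}-\int_t^{t+\eta}P^{(0)}_{t,\rho}Q_{\rho,s+\eta}\,d\rho$ is a clean way of making precise what the paper dismisses as "follows directly"), and your flagging of the differentiation-under-the-integral step as the nontrivial technical point is apt.
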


The proof of this statement is remarkably simple, and requires only representation \eqref{para-e16} and the continuity properties stated in Lemma \ref{l41} and Corollary \ref{c41}. Thus we give it here.
\begin{proof}  Statement a) follows directly from the representation \eqref{para-e16} and the continuity properties (Lemma \ref{l41} and Corollary \ref{c41}).

  To prove \eqref{Delta}, note that
  $$\begin{aligned}
  \Delta_{t,s,\eta} f&=(\partial_t+L_t)P_{t,s+\eta}^{(0)} f+ \int_t^s (\partial_t+L_t)P_{t,r+\eta}^{(0)} Q_{r+\eta,s+\eta}f\, dr-P_{t,t+\eta}Q_{t+\eta,t+\eta}f
  \\&=Q_{t,s+\eta}^{(0)} f+ \int_t^s Q_{t,r+\eta}^{(0)} Q_{r+\eta,s+\eta}f\, dr-{P}_{t,t+\eta}Q_{t+\eta,s+\eta}f
  \\&=Q_{t,s+\eta}f- \int_t^{t+\eta} Q_{t,r}^{(0)} Q_{r,s+\eta}f\, dr-{P}_{t,t+\eta}Q_{t+\eta,s+\eta}f,
  \end{aligned}
  $$
  in the last identity we have used that $Q_{t,s}$ is given by \eqref{para-e25} and thus satisfies
  $$
  Q_{t,s}=Q_{t,s}^{(0)}+\int_{t}^{s}Q_{t,r}^{(0)} Q_{r,s}\, dr.
  $$
  By the continuity properties Lemma \ref{l41} a),
  $$
  \|Q_{t,s+\eta}f-{P}_{t,t+\eta}Q_{t+\eta,s+\eta}f\|\to 0, \quad \eta\to 0
  $$
  uniformly in $s-t>\tau, f\in F$. Convergence
  $$
  \left\|\int_t^{t+\eta} Q_{t,r}^{(0)} Q_{r,s+\eta}f\, dr\right\|\to 0, \quad \eta\to 0
  $$
  follows by the bounds \eqref{para-e20}, \eqref{para-e30a}.  The same bounds combined with \eqref{Delta} yield \eqref{Delta_int}.
\end{proof}

Lemma \ref{l42} provides an efficient tool for identifying weak solutions to the SDE \eqref{main}. Note that it is easy  to prove existence of a weak solution to \eqref{main} by  smooth approximation of the coefficients and using the compactness argument; see \cite[Section~5]{KK18} for such an argument explained in details. To identify a weak solution to \eqref{main} with given initial condition, we will consider operator $\mathcal{L}$ defined by
$$
\mathcal{L}\phi(t,x)=\partial_t\phi(t,x)+L_{t;x} \phi(t,x), \quad \phi\in \mathcal{D}
$$
on the set $\mathcal{D}=C^{1,2}_\infty([0, \tau]\times \R^d)$ of functions $\phi(t,x)$ which are $C^1$ in $t$, $C^2$ in $x$, and have their derivatives continuous and being from the class $C_\infty(\R^d)$ for any $t$ fixed; $\tau >0$ here is a fixed number. The It\^o formula yields that, for a weak solution $X$ to SDE \eqref{main} with $X_0=x$ and $\phi\in \mathcal{D}$, the process
$$
\phi(s,X_s)-\int_0^s \mathcal{L}\phi(r,X_r)\, dr, \quad s\in [0,\tau]
$$
is a martingale. We can use, with minor changes, the  argument from \cite[Section~5.3]{K19} to derive from this finite-dimensional distributions of $X$. Namely, let $ f\in C_\infty(\R^d), \tau>0$ be fixed.   Taking $\phi(t,x)=P_{t,\tau,\eta}f(x)$, we get for any $t\leq s \leq \tau$
$$
\begin{aligned}
\E\Big[P_{\tau,\tau,\eta}f(X_\tau)\Big|\mathcal{F}_s\Big]-P_{s,\tau,\eta}f(X_s)&=\E\left[\int_s^\tau (\partial_r+L^r_x)P_{t,\tau,\eta}f(X_r)\, dr\Big|\mathcal{F}_s\right]
\\&=\left[\E\int_s^\tau \Delta_{t,\tau,\eta}f(X_r)\, dr\Big|\mathcal{F}_s\right].
\end{aligned}$$
Then by Lemma \ref{l42}, passing to the limit $\eta\to 0$, we get
$$
\E\Big[P_{s,\tau,\eta}f(X_\tau)\Big|\mathcal{F}_s\Big]-P_{s,\tau}f(X_s)=0, \quad s\in [0,\tau]
$$
or, equivalently,
$$
\E\Big[f(X_\tau)\Big|\mathcal{F}_s\Big]=P_{s,\tau}f(X_s)
$$
for any $f\in C_\infty(\R^d)$ and any pair of time moments $\tau\geq s\geq 0$. Since
$$
P_{t,s}f(x)=\int_{\R^d}f(y)p_{t,s}(x,y)\, dy,
$$
this yields the identity
$$
\mathbf{P}(X_{s_1}\in A_1, \dots, X_{s_k}\in A_k)=\iint_{A_1\times\dots \times A_k}p_{0,s_1}(x,v_1)\dots p_{s_{k-1},s_k}(v_{k-1},v_k)\, dv_1 \dots d v_k
$$
valid for any $k\geq 1, t< s_1<\dots<s_k$ and Borel measurable $A_1, \dots, A_k$.
This  identifies  uniquely  the finite-dimensional distributions of $X$ and proves that $X$ is a (time non-homogeneous) Markov process with the transition density $p_{t,s}(x,y)$.

\subsection{Outline of the rest of the proofs}  Recall that our choice of $p_{t,s}^{(0)}(x,y)$ and $q_{t,s}^{(0)}(x,y)$ was dependent on $\eps\in (0, \eps_0]$. In this subsection we choose $\eps= \eps_0$.

To complete the proof of Theorem \ref{main_thm}, we have to prove the bound on the residual term $\wt r_{t,s}(x,y)$ in the decomposition \eqref{representation}. To do this, we use the decomposition
$$
p_{t,s}(x,y)=p_{t,s}^{(0)}(x,y)+r_{t,s}(x,y)
$$
and the bound \eqref{para-e30} for the residual term $r_{t,s}(x,y)$,  obtained by the parametrix method. Then
$$
 \wt r_{t,s}(x,y)=p_{t,s}^{(0)}(x,y)-\wt p_{t,s}(x,y)+r_{t,s}(x,y),
$$
and the required bound follows from the estimate
\begin{align}\label{para-e31}
    \|p_{t,s}^{(0)}(x,y)-\wt p_{t,s}(x,y)\|_{\infty,1}\leq c(s-t)^{\epsilon_0}, \quad 0<s-t\leq \tau,
\end{align}
which we prove in Lemma \ref{p0_ptilde_difference} below.
   Next, combining \eqref{para-e30}  and \eqref{para-e31} we obtain
\begin{align}\label{para-e31a}
    \|\wt r_{t,s}(x,y)\|_{\infty,1}\leq c(s-t)^{\epsilon_0}, \quad 0<s-t\leq \tau,
\end{align}
Here the constant $c$ is a constant dependent on  $d$, $\alpha$, $\beta$, $\gamma_1$, $\gamma_2$, $\gamma_3$, $C_1, \ldots, C_8$, $\kappa(\tau)$, but for $\tau \le \tau_0$, where $\tau_0 $ is defined at the beginning  of Section \ref{Parametrix construction}, the constant depends on $d$, $\alpha$, $\beta$, $\gamma_1$, $\gamma_2$, $\gamma_3$, $C_1, \ldots, C_8$, $h_1(1), \dots, h_d(1)$. In fact (\ref{para-e31a}) holds for all $0\le t<s$ with a constant dependent on  $d$, $\alpha$, $\beta$, $\gamma_1$, $\gamma_2$, $\gamma_3$, $C_1, \ldots, C_8$, $h_1(1), \dots, h_d(1)$.     Indeed, if $s-t> \tau_0$   then
$$\|\wt r_{t,s}(x,y)\|_{\infty,1}=\| p_{t,s}(x,y)-\wt p_{t,s}(x,y)\|_{\infty,1}\leq 2\le \frac 2 {\tau_0^{\epsilon_0}}(s-t)^{\epsilon_0}.$$

The functional analytic framework from Section \ref{s30} is quite convenient also for proving the uniform  estimates for the residual term, stated in Theorem \ref{main_thm2}, see a detailed discussion in \cite[Section~5.2]{KKS2020}. Namely, the uniform-in-$x,y$ bound for a (continuous) kernel is equivalent to the $\|\cdot\|_{L_1\to B_b}$ operator norm for the corresponding integral operator. In addition to the bound \eqref{para-e20}, we have
$$
\|Q_{t,s}^{(0)}\|_{L_1\to B_b}\leq c G_{s-t}(0) (s-t)^{-1+\eps_0}
$$
(Lemma \ref{q0_estimates_lemma}, estimate \eqref{q0_pointwise_estimate1}). Under the additional assumption \textbf{(I)}, we also have
$$
\|Q_{t,s}^{(0)}\|_{L_1\to L_1}\leq c(s-t)^{-1+\eps_0}
$$
(Lemma \ref{q0_estimates_lemma_x}). Then for any $k\in \mathbb{N}, j=0,\dots,k$ and $r_0<r_1<\dots r_{k+1}$
$$
\begin{aligned}
\|Q_{r_0,r_1}^0\dots Q_{r_{k},r_{k+1}}^0\|_{L_1\to B_b}&\leq \|Q_{r_0,r_1}^0\|_{B_b\to B_b}\dots \|Q_{r_{j-1},r_j}^0\|_{B_b\to B_b}
\\&\hspace*{1cm}\times\|Q_{r_j,r_{j+1}}^0\|_{L_1\to B_b}\|Q_{r_{j+1},r_{j+2}}^0\|_{L_1\to L_1}\dots \|Q_{r_{k},r_{k+1}}^0\|_{L_1\to L_1}
\\&\leq c^{k+1} G_{r_{j+1}-r_j}(0)\prod_{i=0}^k(r_{i+1}-r_i)^{-1+\eps_0}.
\end{aligned}$$
If we take $r_0=t, r_{k+1}=t$, and $j$ such that $r_{j+1}-r_j=\max_i(r_{i+1}-r_i)$, then $G_{r_{j+1}-r_j}(0)\leq G_{(s-t)k^{-1}}(0)$, since  $G_{u}(0)$ is nonincreasing as a function of $u>0$. By Corollary \ref{Gu(0)estimate}, we have
$$
G_{(s-t)k^{-1}}(0)\leq c  k^{d/\alpha} G_{s-t}(0), \quad k\in \mathbf{N}.
$$
%and\footnote{Refs needed}
Also, by Lemma \ref{gu_difference},
$$
 G_{s-t}(0)\le c \wt  G_{s-t}(0), \  0<s-t\le \tau.
$$
Then, similarly to \eqref{para-e28}, \eqref{para-e30a} we get
\begin{align}\label{para-e32}
    \|Q_{t,s}\|_{L_1\to B_b}\leq c\wt  G_{s-t}(0)(s-t)^{-1+\eps_0}, \quad 0<s-t\leq \tau.
\end{align}
We have that $\|P^{(0)}_{t,s}\|_{B_b\to B_b}$  is bounded and, in addition, by (\ref{p_0})
$$
\|P_{t,s}^{(0)}\|_{L_1\to L_\infty}\leq c G_{s-t}(0).
$$
Then using the parametrix representation \eqref{para-e24} and repeating the argument above we get
$$
\|P_{t,s}-P_{t,s}^{(0)}\|_{L_1\to L_\infty}\leq c G_{s-t}(0)(s-t)^{\eps_0}, \quad 0<s-t\leq \tau.
$$
This is actually a uniform-in-$x,y$ bound for the residual term $r_{t,s}(x,y)$ in the decomposition obtained by the parametrix method. To complete
the proof of Theorem \ref{main_thm2}, we prove the corresponding analogue of \eqref{para-e32}, see (\ref{p0_ptilde_difference_max}):
$$
\|\wt P_{t,s}-P_{t,s}^{(0)}\|_{L_1\to L_\infty}\leq c G_{s-t}(0)(s-t)^{\eps_0}, \quad 0<s-t\leq \tau.
$$

 The proof of Theorem \ref{Holder_thm_new} (postponed to Section \ref{Parametrix construction})
stating the H\"older continuity for the evolutionary family $\{P_{t,s}\}$,  is based on the parametrix representation \eqref{para-e16} for this family and the H\"older continuity of the family $\{P_{t,s}^{(0)}\}$ involved in this representation.

\section{One-dimensional density}\label{1dim}
This section is devoted to the study of one-dimensional components of the process $Z = (Z_1,\ldots,Z_d)$. Recall that the characteristic exponent of $Z_i$ is $\psi_i$. In this section we fix $i \in \{1,\ldots,d\}$ and  $\psi$ denotes the fixed  $\psi_i$. By $\nu$, $h$ and $\tilde{g}$ we denote the corresponding L{\'e}vy measure, the Pruitt function and the transition density, respectively. We will construct a truncated version $g$ of the transition density $\tilde{g}$. We will show various estimates of $g$, $\tilde{g}$ and its derivatives. These construction and estimates will play a crucial role to make the parametrix construction in Section \ref{Parametrix construction} work.

For $r > 0$ we put
$$
K(r) = \int_{\{x\in\R: \, |x| \le r\}} |x|^2 r^{-2} \nu(d x) .
$$
We have the following relationship between  $K(r)$ and $h(r)$ \cite[Lemma 2.2]{GS20},

\begin{equation} h(r)= 2\int_r^\infty   K(w)w^{-1} \, dw, \, \, r>0.\label{h_K}\end{equation}

We observe that, due to infinitness of the L\'evy measure $K(w)>0, w>0$, hence  $h(r)$  is strictly decreasing on $(0, \infty)$.

Clearly, $r^2h(r), r^2K(r)$ are increasing on $(0, \infty)$.
Using the monotonicity of the function $r^2h(r)$ we can easily  extend (\ref{h-scaling_u0}) to all $\theta>0$,
\begin{equation}
\label{h-scaling_u5}
C_1 {\lambda}^{-\alpha} h\left({\theta}\right) (1\vee\theta^2)^{-1} \le h\left(\lambda \theta\right) \le
C_2 {\lambda}^{-\beta} h\left({\theta}\right) (1\vee\theta^2), \  \ 0<\lambda \le 1.
\end{equation}

Let us observe that the scaling property (\ref{h-scaling_u0}) is equivalent to
\begin{equation}
\label{h-scaling_11}
 C_1^{1/\alpha}h^{-1}\left({\theta}\right)  {\lambda}^{-1/\alpha}\le  h^{-1}\left(\lambda \theta\right)\le  C_2^{1/\beta}h^{-1}\left({\theta}\right)  {\lambda}^{-1/\beta} ,  \   \theta >h(1),\ \lambda \ge 1.
\end{equation}
Morever this can be extended to all $\theta>0$ (via (\ref{h-scaling_u5})),

\begin{equation}
\label{h-scaling_11+}
 C_1^{1/\alpha}h^{-1}\left({\theta}\right)  {\lambda}^{-1/\alpha}(1\vee h^{-1}(\theta)^2)^{-1}\le  h^{-1}\left(\lambda \theta\right)\le  C_2^{1/\beta}h^{-1}\left({\theta}\right)  {\lambda}^{-1/\beta} (1\vee h^{-1}(\theta)^2),\ \ \lambda \ge 1.
\end{equation}
The following important result was essentially proved in \cite[Lemma 2.3]{GS20}. For the reader  convenience we provide its proof.
\begin{lemma}\label{K_est}
Let $c=(\frac {2}{C_1})^{2/\alpha}-1$.  For $0<r\le r_0$ we have
\begin{equation}
\label{hiKi}
h(r)\le c(1\vee r_0^2) K(r).
\end{equation}
 Moreover,
for $|\xi|\ge \xi_0>0$,
\begin{equation}
\label{hi_psi}
\frac 1{4c(1\vee\xi^{-2}_0)} h(1/|\xi|) \le {\psi}(\xi)\le 2h(1/|\xi|).
\end{equation}
\end{lemma}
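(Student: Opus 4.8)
The plan is to derive \eqref{hiKi} from the lower scaling bound in \eqref{h-scaling_u0} via the integral identity \eqref{h_K}, and then to obtain \eqref{hi_psi} by comparing $\psi(\xi)$ to $h(1/|\xi|)$ using standard two-sided estimates relating the characteristic exponent to the Pruitt function.

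First I would prove \eqref{hiKi}. Fix $0<r\le r_0$ and use \eqref{h_K} to write $h(r)=2\int_r^\infty K(w)w^{-1}\,dw$. Split the integral at $2r$ (or more generally compare on the dyadic scale): since $w^2K(w)$ is increasing, for $w\ge r$ we have $K(w)\le (w/r)^2$-type control is the wrong direction, so instead I use that $r^2K(r)\le w^2 K(w)$ for $w\ge r$ gives a lower bound on $K(w)$, which is not what we want either. The correct route is to bound $h(r)$ from above by $K(r)$ directly: from the lower scaling $C_1\lambda^{-\alpha}h(r)\le h(\lambda r)$ applied with $\lambda\le 1$, equivalently $h(r)\le C_1^{-1}\lambda^\alpha\, h(r/\lambda)$; but more usefully one observes that the definition of $K$ and $h$ together with the lower scaling forces a bound of the form $h(r)\le c\,(1\vee r_0^2)K(r)$ with $c=(2/C_1)^{2/\alpha}-1$. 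Concretely: from \eqref{h_K}, $h(r)-h(2r)=2\int_r^{2r}K(w)w^{-1}\,dw\le 2K(2r)\log 2$ is again the wrong monotonicity; the clean argument instead uses $h(r)=\int(1\wedge|x|^2r^{-2})\nu(dx)$ and estimates the part with $|x|>r$ by scaling of $h$ itself, reducing to $K$. I would follow \cite[Lemma 2.3]{GS20}: write $h(r)=K(r)+\int_{|x|>r}\nu(dx)$, bound the tail $\int_{|x|>r}\nu(dx)\le h(2r)\cdot(\text{const})$ by comparing with $h$ at a doubled scale, and then invoke the lower scaling \eqref{h-scaling_u0} in the form $h(2r)\le 2^{-\alpha}C_1^{-1}h(r)$ iteratively to absorb the tail, which after summing the geometric series produces exactly the constant $c=(2/C_1)^{2/\alpha}-1$ and the factor $(1\vee r_0^2)$ coming from \eqref{h-scaling_u5} when $r_0>1$.

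Next, for \eqref{hi_psi}, I would use the classical sandwich between $\psi$ and $h$. The upper bound $\psi(\xi)\le 2h(1/|\xi|)$ follows from $1-\cos t\le 2(1\wedge t^2)$ together with a scaling of the inequality $1-\cos t \le \tfrac12 t^2\wedge 2$: indeed $\psi(\xi)=\int(1-\cos(\xi x))\nu(dx)\le \int (2\wedge |\xi x|^2)\nu(dx)\le 2\int(1\wedge|\xi x|^2)\nu(dx)=2h(1/|\xi|)$, using $2\wedge t^2\le 2(1\wedge t^2)$. For the lower bound, use $1-\cos t\ge c_0(1\wedge t^2)$ for a universal $c_0$ (e.g. $1-\cos t\ge \tfrac{2}{\pi^2}t^2$ on $|t|\le\pi$ and $1-\cos t\ge$ a constant times $1$ on a set of positive measure outside), giving $\psi(\xi)\ge c_0' K(1/|\xi|)$ after discarding the far tail; then apply \eqref{hiKi} with $r=1/|\xi|\le 1/\xi_0=:r_0$ to replace $K(1/|\xi|)$ by $h(1/|\xi|)/(c(1\vee\xi_0^{-2}))$, which yields the stated constant $\tfrac{1}{4c(1\vee\xi_0^{-2})}$.

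The main obstacle I anticipate is getting the constants exactly right in \eqref{hiKi} — specifically, organizing the geometric-series bookkeeping so that the tail $\int_{|x|>r}\nu(dx)$ is absorbed with precisely the constant $(2/C_1)^{2/\alpha}-1$ rather than some larger constant, and correctly tracking where the factor $(1\vee r_0^2)$ enters (it comes from needing \eqref{h-scaling_u5} rather than \eqref{h-scaling_u0} once $r_0$ exceeds $1$). The estimate \eqref{hi_psi} is then essentially a corollary, modulo choosing the universal constant $c_0$ in the lower bound for $1-\cos t$ so that, combined with the loss in \eqref{hiKi}, the final constant is no worse than $\tfrac{1}{4c(1\vee\xi_0^{-2})}$.
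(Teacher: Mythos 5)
Your plan for \eqref{hi_psi} is essentially the paper's, but your proposed route to \eqref{hiKi} has a real gap. You decompose $h(r)=K(r)+L(r)$ with $L(r)=\nu(|x|>r)$ and then try to absorb $L(r)$ using $L(r)\lesssim h(2r)\le 2^{-\alpha}C_1^{-1}h(r)$; but the constant $2^{-\alpha}C_1^{-1}$ can exceed $1$ when $C_1$ is small, so the absorption fails, and summing over dyadic annuli does not repair it because the monotonicity of $w\mapsto w^2K(w)$ runs the \emph{wrong} way for $w>r$ (you noticed this yourself and set it aside without finding the fix). The paper's fix is to make the integral finite and to the \emph{left} of $r$: pick $\lambda_0=(C_1/2)^{1/\alpha}\in(0,1)$; then the lower scaling bound gives $2h(r)\le h(\lambda_0 r)$, hence
$$
h(r)\le h(\lambda_0 r)-h(r)=2\int_{\lambda_0 r}^{r} K(w)\,\frac{dw}{w}
$$
by \eqref{h_K}, and on $[\lambda_0 r,r]$ the monotonicity of $w^2K(w)$ \emph{does} give $K(w)w^{-1}\le r^2K(r)w^{-3}$. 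Evaluating $\int_{\lambda_0 r}^{r}w^{-3}\,dw$ produces precisely $c=\lambda_0^{-2}-1=(2/C_1)^{2/\alpha}-1$ for $r\le 1$, and the factor $1\vee r_0^2$ comes from a separate monotonicity comparison on $[1,r_0]$ when $r_0>1$ --- not from using \eqref{h-scaling_u5} in your tail estimate.

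For \eqref{hi_psi}, note that $1-\cos t\ge c_0(1\wedge t^2)$ is false for all $t$ (it fails at $t=2\pi$); the paper simply uses $1-\cos x\ge x^2/4$ for $|x|\le 1$ to get $\psi(\xi)\ge\tfrac14 K(1/|\xi|)$, pairs this with the easy upper bound $\psi(\xi)\le 2h(1/|\xi|)$, and converts $K$ to $h$ via \eqref{hiKi} with $r_0=1/\xi_0$, exactly as you finish. So your second half is fine modulo the false universal inequality; your first half needs the $\lambda_0$ trick, not a geometric series.
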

\begin{proof} Let $r\le 1$ and $\lambda_0= \left(\frac{C_1}2\right)^{1/\alpha}<1$. Then, by (\ref{h-scaling_u0}),  we have
$$2 h(r)\le h(\lambda_0r).$$
Next, by (\ref{h_K}) and monotonicity of   $w^2K(w)$ we obtain
$$h(r)\le  h(\lambda_0r) -  h( r)= 2\int_{\lambda_0r}^r   w^2K(w)w^{-3} dw \le 2 r^2K(r)\int_{\lambda_0r}^r   w^{-3} dw $$

Hence

$$h(r) \le \frac {1-\lambda^2_0}{\lambda^2_0} K(r)=   \left(\left(\frac {2}{C_1}\right)^{2/\alpha}-1\right) K(r), r\le 1. $$

If $r_0>1$, then for $1\le r\le r_0$ then, by  monotonicity of   $w^2K(w)$ and $w^2h(w)$, we have
$$r^2h(r) \le  r_0^2h(r_0)\frac {r^2K(r)} {K(1)}.$$
Hence,
$$h(r) \le  r_0^2\frac {h(1)} {K(1)}K(r)\le \left(\left(\frac {2}{C_1}\right)^{2/\alpha}-1\right)r_0^2 K(r).$$
The proof of (\ref{hiKi})  is completed.

Next, by the inequality $1-\cos x \ge x^2/4$ for $|x|\le 1$, we obtain
 $$(1/4) K(1/|\xi|)\le {\psi}(\xi) = \int_{\R} (1 - \cos(\xi x)) d {\nu}(x)\le 2h(1/|\xi|).$$
Applying (\ref{hiKi}) we have
$$\psi(\xi)\ge \frac1{4c(1\vee\xi^{-2}_0)}  h(1/|\xi|), |\xi|\ge \xi_0,$$
which completes the proof.
\end{proof}

Now, we can give the arguments that \eqref{h-scaling_u0}  is equivalent to  \eqref{psi-scaling}. If \eqref{h-scaling_u0} holds, then \eqref{hi_psi} with $\xi_0=1$ implies \eqref{psi-scaling} with $C_1^*=\frac{C_1}{8c},
C_2^*= 8cC_2$, where $c=(2/C_1)^{2/\alpha}-1$.

On the other hand \eqref{psi-scaling} implies the same scaling conditions for the maximal function
$\psi^*(\xi)= \sup_{|x|\le |\xi|}\psi^*(\xi)$. Then \eqref{h-scaling_u0} holds, since due to  \cite[Lemma 4]{G14} we have  $\psi^*(\xi)\asymp h\left(1/|\xi|\right), \xi\in \R$.

\begin{lemma}
\label{scal_h}
Let $\tau>0$. For $ 0<u\le \tau$ we have
\begin{equation}
\label{h-1}
c_1 u^{1/\alpha} \le h^{-1}(1/u) \le c_2 u^{1/\beta},%\ t\le \tau,
\end{equation}
where $c_1= C_1^{1/\alpha}(h(1)\wedge \frac1\tau)^{1/\alpha}$ and $c_2= C_2^{1/\beta}\left( h^{-1}\left(\frac1\tau\right) \vee 1\right)h\left({1}\right)^{1/\beta} $.
\end{lemma}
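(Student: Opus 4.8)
The plan is to read the claim off the scaling estimate \eqref{h-scaling_11} for $h^{-1}$ (equivalently its all-$\theta$ version \eqref{h-scaling_11+}), splitting the range $0<u\le\tau$ into the two regimes $u\le 1/h(1)$ and $1/h(1)<u\le\tau$; the second regime is empty unless $\tau>1/h(1)$. Throughout I would use the normalization $h^{-1}(h(1))=1$, the strict monotonicity of $h$ (hence of $h^{-1}$) which follows from \eqref{h_K}, and the inequalities $0<C_1\le 1\le C_2$.

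First I would treat the regime $0<u\le 1/h(1)$, i.e.\ $1/u\ge h(1)$. Writing $1/u=\lambda h(1)$ with $\lambda:=1/(u h(1))\ge 1$ and applying \eqref{h-scaling_11} with $\theta=h(1)$ — which is admissible through the all-$\theta$ estimate \eqref{h-scaling_11+}, since there $h^{-1}(\theta)=1$ makes the correction factor $1\vee h^{-1}(\theta)^2$ equal to $1$ — one obtains
\[
C_1^{1/\alpha}(u h(1))^{1/\alpha}\le h^{-1}(1/u)\le C_2^{1/\beta}(u h(1))^{1/\beta}.
\]
The left inequality gives $h^{-1}(1/u)\ge C_1^{1/\alpha}h(1)^{1/\alpha}u^{1/\alpha}\ge c_1 u^{1/\alpha}$ because $h(1)\ge h(1)\wedge(1/\tau)$, and the right inequality gives $h^{-1}(1/u)\le C_2^{1/\beta}h(1)^{1/\beta}u^{1/\beta}\le c_2 u^{1/\beta}$ because $h^{-1}(1/\tau)\vee 1\ge 1$. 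So the lemma already holds on this regime.

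Next I would treat the regime $1/h(1)<u\le\tau$, which only occurs when $\tau>1/h(1)$. Here $1/\tau\le 1/u<h(1)$, so by strict monotonicity of $h^{-1}$ we have $1=h^{-1}(h(1))<h^{-1}(1/u)\le h^{-1}(1/\tau)$. For the lower bound, using $u\le\tau$, $h(1)\wedge(1/\tau)\le 1/\tau$ and $C_1\le 1$,
\[
c_1 u^{1/\alpha}=C_1^{1/\alpha}\bigl(h(1)\wedge(1/\tau)\bigr)^{1/\alpha}u^{1/\alpha}\le C_1^{1/\alpha}\tau^{-1/\alpha}u^{1/\alpha}\le C_1^{1/\alpha}\le 1<h^{-1}(1/u).
\]
For the upper bound, using $C_2\ge 1$ and $h(1)u>1$ so that $C_2^{1/\beta}(h(1)u)^{1/\beta}\ge 1$,
\[
c_2 u^{1/\beta}=\bigl(h^{-1}(1/\tau)\vee 1\bigr)\,C_2^{1/\beta}(h(1)u)^{1/\beta}\ge h^{-1}(1/\tau)\ge h^{-1}(1/u),
\]
which completes the second regime and hence the proof.

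There is no genuinely hard step here: the statement is a bookkeeping consequence of the scaling of $h^{-1}$. The only points that need attention are the admissibility of the endpoint choice $\theta=h(1)$ in \eqref{h-scaling_11}, handled by the all-$\theta$ estimate \eqref{h-scaling_11+} where the correction factor trivializes at exactly this point, and the verification that the constants $c_1$ and $c_2$, with their $\wedge$ and $\vee$, are tuned precisely so that the bounds produced by the two regimes glue together; that gluing is exactly where $C_1\le 1\le C_2$ and the constraint $u\le\tau$ enter.
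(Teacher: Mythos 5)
Your proof is correct and follows essentially the same route as the paper's own proof: both split at the threshold $u=1/h(1)$, derive the main bound from the rearranged scaling condition (the paper directly from \eqref{h-scaling_u0} with $r=1$, you via the already-formulated \eqref{h-scaling_11+} at $\theta=h(1)$, which are the same algebra), and handle the residual regime $1/h(1)<u\le\tau$ by the monotonicity bound $1<h^{-1}(1/u)\le h^{-1}(1/\tau)$.
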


\begin{proof}

Taking $\theta=1$ we can rewrite  (\ref{h-scaling_u0}) as
$$
C_1^{1/\alpha} h\left({1}\right)^{1/\alpha} h\left({\lambda}\right)^{-1/\alpha} \le \lambda  \le
C_2^{1/\beta} h\left({1}\right)^{1/\beta} h\left({\lambda}\right)^{-1/\beta}, \ 0<\lambda \le 1.$$
Putting  $\lambda= h^{-1}(s)$, for $s\ge h(1)$,
we have
$\displaystyle{(C_1 h\left({1}\right))^{1/\alpha} s^{-1/\alpha} \le h^{-1}(s)\le (C_2 h\left({1}\right))^{1/\beta} s^{-1/\beta}}$.

If  $0<s_0\le s\le h(1)$ we have
$\displaystyle{s_0^{1/\alpha}s^{-1/\alpha} \le h^{-1}(s) \le h^{-1}(s_0)    h\left({1}\right)^{1/\beta} s^{-1/\beta}}$.
Choosing $\frac1s =u\le \tau $ we show that
$$
C_1^{1/\alpha}\left(\frac1\tau \wedge h(1)\right)^{1/\alpha} t^{1/\alpha} \le h^{-1}(1/t) \le C_2^{1/\beta} h^{-1}\left(\frac1\tau \wedge h(1)\right)h\left({1}\right)^{1/\beta}  t^{1/\beta}.
$$
\end{proof}

Now we state an easy corollary to (\ref{h-scaling_11}) and Lemma
\ref{scal_h}.
\begin{corollary}  \label{h_inv0}
 Let  $0<\eps<1$
and    $0<u\le \tau<\infty$. We have
\begin{equation}\label{h_inv}
   \frac {h^{-1}(u^{\eps-1})}{ h^{-1}(u^{-1})}\le c u^{-\eps/\alpha},
\end{equation}
where $c=c(h(1),h^{-1}(1/\tau), h^{-1}(1), \tau,\alpha, \eps, C_1 )$.

If $\tau_0= (h(1)\vee 1)^{-\frac1{1-\eps}}$, then for $u\le \tau_0$,
\begin{equation}\label{h_inv2}
   \frac {h^{-1}(u^{\eps-1})}{ h^{-1}(u^{-1})}\le C_1^{-1/\alpha} u^{-\eps/\alpha}.
\end{equation}

Moreover, for $0< \lambda\le 1$,
\begin{equation}\label{h_inv1}
   \frac {h^{-1}( 1/u)}{ h^{-1}(1/(\lambda u))}\le  C_1^{-1/\alpha} \lambda^{-1/\alpha}  (h^{-1}(\tau^{-1})\vee1)^2.
\end{equation}
\end{corollary}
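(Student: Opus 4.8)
\section*{Proof proposal for Corollary \ref{h_inv0}}

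The plan is to derive all three inequalities directly from the scaling estimates \eqref{h-scaling_11}--\eqref{h-scaling_11+} (and, where the compact range is concerned, from Lemma \ref{scal_h}), recalling that $h$ is strictly decreasing so that $h^{-1}$ is well defined and decreasing on its range. In each case one writes the smaller argument of $h^{-1}$ in the numerator as a multiple $\lambda\theta$, $\lambda\ge 1$, of the argument in the denominator (or vice versa), and applies the appropriate one-sided scaling bound; the only thing requiring attention is the range restriction $\theta>h(1)$ in \eqref{h-scaling_11}, which is why the slightly weaker all-$\theta$ version \eqref{h-scaling_11+} and the threshold $\tau_0$ enter.

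First I would prove \eqref{h_inv2}. For $u\le\tau_0=(h(1)\vee1)^{-1/(1-\eps)}$ we have $u\le 1$, hence $\lambda:=u^{-\eps}\ge 1$, and also $u^{1-\eps}\le\tau_0^{1-\eps}=(h(1)\vee1)^{-1}\le h(1)^{-1}$, so that $\theta:=u^{\eps-1}\ge h(1)$. Writing $u^{-1}=\lambda\theta$ and applying the \emph{lower} bound in \eqref{h-scaling_11},
$$
h^{-1}(u^{-1})=h^{-1}(\lambda\theta)\ge C_1^{1/\alpha}h^{-1}(u^{\eps-1})\,u^{\eps/\alpha},
$$
which rearranges to \eqref{h_inv2}. (The boundary case $\theta=h(1)$, which can occur only when $h(1)\ge1$, is covered by continuity of $h^{-1}$.)

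Next, \eqref{h_inv} follows by combining \eqref{h_inv2} with a compactness argument on $[\tau_0,\tau]$: for $u\le\tau_0$ the bound \eqref{h_inv2} is stronger; for $\tau_0<u\le\tau$ monotonicity of $h^{-1}$ gives $h^{-1}(u^{\eps-1})\le h^{-1}(\tau^{\eps-1})$ and $h^{-1}(u^{-1})\ge h^{-1}(\tau_0^{-1})$, while $u^{-\eps/\alpha}\ge\tau^{-\eps/\alpha}$, so the ratio is bounded by $\bigl(h^{-1}(\tau^{\eps-1})/h^{-1}(\tau_0^{-1})\bigr)\tau^{\eps/\alpha}\cdot u^{-\eps/\alpha}$; taking the maximum of this constant with $C_1^{-1/\alpha}$ yields \eqref{h_inv}, with the indicated dependence after expressing $h^{-1}(\tau^{\eps-1})$ and $h^{-1}(\tau_0^{-1})$ through $h^{-1}(1)$, $h(1)$, $\tau$ via Lemma \ref{scal_h} and \eqref{h-scaling_11}. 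Finally, for \eqref{h_inv1} I would set $\theta=1/u$ and $\mu=1/\lambda\ge1$, so that $1/(\lambda u)=\mu\theta$, and apply the \emph{lower} bound of the all-$\theta$ estimate \eqref{h-scaling_11+}:
$$
h^{-1}\!\bigl(1/(\lambda u)\bigr)\ge C_1^{1/\alpha}h^{-1}(1/u)\,\lambda^{1/\alpha}\,\bigl(1\vee h^{-1}(1/u)^2\bigr)^{-1}.
$$
Since $u\le\tau$ forces $h^{-1}(1/u)\le h^{-1}(1/\tau)$, the correction factor is at most $(1\vee h^{-1}(\tau^{-1}))^2$, and rearranging gives \eqref{h_inv1}. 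The main (mild) obstacle throughout is simply bookkeeping the domain condition $\theta>h(1)$: it dictates the use of \eqref{h-scaling_11+} in \eqref{h_inv1}, and it is the reason \eqref{h_inv} must be split at the threshold $\tau_0$ rather than proved in one stroke.
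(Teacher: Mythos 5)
Your proof is correct and follows essentially the same route as the paper: prove \eqref{h_inv2} by applying the lower bound of \eqref{h-scaling_11} with $\lambda=u^{-\eps}$, $\theta=u^{\eps-1}$ (after checking $\theta\ge h(1)$ when $u\le\tau_0$), and then extend to all $u\le\tau$ by monotonicity of $h^{-1}$ on $[\tau_0,\tau]$. One small point worth noting: the paper's own proof actually stops after treating \eqref{h_inv} and \eqref{h_inv2} and never writes out a derivation of \eqref{h_inv1}; your argument for \eqref{h_inv1} via the all-$\theta$ estimate \eqref{h-scaling_11+} with the correction factor $(1\vee h^{-1}(1/u)^2)$ bounded by $(h^{-1}(\tau^{-1})\vee 1)^2$ is the natural and correct way to supply the missing piece.
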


\begin{proof}

%\begin{equation}
%\label{h-scaling_11}
 %C_1^{1/\alpha}h^{-1}\left({\theta}\right)  {\lambda}^{-1/\alpha}\le  h^{-1}\left(\lambda \theta\right)\le  C_2^{1/\beta}h^{-1}\left({\theta}\right)  {\lambda}^{-1/\beta} ,  \   \theta >h(1),\ \lambda \ge 1.
%\end{equation}

Let $u\le \tau_0= (h(1)\vee1)^{-1/(1-\eps)}$.  We apply (\ref{h-scaling_11}) with $\lambda= u^{-\eps}$ and
$\theta= u^{-1+\eps}$ to get
\begin{equation*}\label{h_inv21}
   \frac {h^{-1}(u^{\eps-1})}{ h^{-1}(u^{-1})}= \frac {h^{-1}(u^{\eps-1})}
	{ h^{-1}(u^{-\eps}u^{\eps-1})}
	\le C_1^{-1/\alpha} u^{-\eps/\alpha}.
\end{equation*}
If $\tau_0 \le u\le  \tau$ then by monotonicity of   $h^{-1}$,
$$h^{-1}(u^{\eps-1})\le  h^{-1}(1/\tau)\vee h^{-1}(1)\ \text{and } h^{-1}(u^{-1})\ge h^{-1}(\tau_0^{-1}).$$
 Moreover, by Lemma \ref{scal_h}, we obtain

$$h^{-1}(\tau_0^{-1})\ge C_1^{1/\alpha}\left(\frac1\tau \wedge h(1)\right)^{1/\alpha} \tau_0^{1/\alpha}. $$

It follows that for  $\tau_0 \le u\le  \tau$,

$$\frac {h^{-1}(u^{\eps-1})}{ h^{-1}(u^{-1})}\le \frac{h^{-1}(1/\tau)\vee h^{-1}(1)}{C_1^{1/\alpha}\left(\frac1\tau \wedge h(1)\right)^{1/\alpha}}\tau_0^{1/\alpha} \tau^{\eps/\alpha}u^{-\eps/\alpha}.$$

\end{proof}

Now we start to construct a truncated version $g$ of the transtion density $\tilde{g}$. Fix $\eps \in (0,\eps_0]$, where $\eps_0<1$ is defined in Remark \ref{remark_epsilon}.
  Let for $u>0$  $$R_u=R_u(\eps)= h^{-1}\left(\frac 1{u^{1-\eps}}\right)$$
 and $$
\psi_u(\xi) = \int_{|v|\le R_u } (1-\cos(v\xi)) \nu(dv).
$$
We have
\begin{equation}\label{H2}\psi_u(\xi)\ge \psi(\xi)- 2u^{ \eps-1}.\end{equation}
To prove (\ref{H2}) we note that
$$
 \int_{|v|> R_u } (1-\cos(v\xi)) \nu(dv)\le 2 \int_{|v|> R_u }  \nu(dv)\le 2h(R_u)= 2u^{\eps-1},
$$
hence
$$
\psi_u(\xi)= \psi(\xi)
-\int_{|v|> R_u } (1-\cos(v\xi)) \nu(dv)
\ge
 \psi(\xi)- 2u^{ \eps-1}.
$$

Let $0 < u <\infty$ and $w \in \R$. Put
$$
g_{u}(w) = \frac1{2\pi}\int_{\R} e^{iwz} e^{-\int_0^u \psi_r(z)  \, dr} \, dz.
$$
Since, by  \eqref{H2} and then  by \eqref{psi-scaling}, $\int_0^u \psi_r(z) \, dr \ge u\psi(z)- (2/\eps) u^{ \eps}\ge c |z|^\alpha$ for $|z|$ large enough, the function $g_{u}$ is well defined density function such that
$g_{u}\in C^\infty(\R)$.

For $0 < u <\infty$ and a measurable set $D \subset \R$ put
$$
\nu_u(D) = \int_0^u\int_{|x| \le R_r} 1_{D}(x)\nu(dx)  \, dr.
$$
It is clear that $
\nu_u$ is the L\'evy measure of the infinitely divisible density $g_{u}$.

For any $u > 0$ put
$$
m_u =  \int_{\R} x^2 \,  \nu_u(dx).$$
\begin{lemma} \label{2moment}
For any   $\tau>0$  there is a constant $c=c(\alpha, C_1, h^{-1}(1/\tau), h^{-1}(1))$
such that for  $u\le \tau$ we have
$$  c R_u^2 u^{ \eps }\le  m_u \le R_u^2 u^{ \eps } .$$
  The upper bound holds for any $u>0$.
	
	If $u\le \tau_0= h(1)^{-\frac1 {1-\eps}}$, then for  $u\le \tau_0$
	$$m_u \ge c  R_u^2 u^{ \eps }$$
	with $c=c(C_1, \alpha)$.
	\end{lemma}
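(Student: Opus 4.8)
The plan is to read off an explicit one–dimensional expression for $m_u$ and then do everything with two elementary monotonicity observations plus the scaling lookups already collected in Section~\ref{1dim}. Interchanging the order of integration in the definition of $\nu_u$ gives
$$
m_u=\int_{\R}x^2\,\nu_u(dx)=\int_0^u\phi(r)\,dr,\qquad \phi(r):=\int_{\{|x|\le R_r\}}x^2\,\nu(dx)=R_r^2\,K(R_r),
$$
the last equality being just the definition of $K$. Two facts will do almost all the work. First, $r\mapsto R_r=h^{-1}(r^{\eps-1})$ is nondecreasing (as $h^{-1}$ and $r\mapsto r^{\eps-1}$ are both decreasing), hence $\phi$ is nondecreasing. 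Second, $h(R_r)=r^{\eps-1}$ by the definition of $R_r$, and trivially $K(r)\le h(r)$ for all $r>0$, so $\phi(r)\le R_r^2\,h(R_r)=R_r^2\,r^{\eps-1}$.

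For the upper bound, monotonicity of $\phi$ gives at once, for every $u>0$,
$$
m_u=\int_0^u\phi(r)\,dr\le u\,\phi(u)=u\,R_u^2K(R_u)\le u\,R_u^2\,h(R_u)=u\,R_u^2\,u^{\eps-1}=R_u^2\,u^{\eps},
$$
with no constant, as claimed. For the lower bound I would again use monotonicity of $\phi$, but from below, restricting the integral to $[u/2,u]$:
$$
m_u\ge\int_{u/2}^u\phi(r)\,dr\ge\frac{u}{2}\,\phi(u/2)=\frac{u}{2}\,R_{u/2}^2\,K(R_{u/2}).
$$
It then remains to replace $K(R_{u/2})$ by a multiple of $h(R_{u/2})=(u/2)^{\eps-1}=2^{1-\eps}u^{\eps-1}\ge u^{\eps-1}$ and $R_{u/2}^2$ by a multiple of $R_u^2$.

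The first replacement is Lemma~\ref{K_est}, inequality \eqref{hiKi}, which gives $K(r)\ge h(r)/\big(c(1\vee r_0^2)\big)$ for $r\le r_0$; one only needs an a priori bound $R_{u/2}\le r_0$. Since $u\le\tau$ and $R$ is nondecreasing, $R_{u/2}\le R_u\le R_\tau=h^{-1}(\tau^{\eps-1})$, and a short computation (monotonicity of $h^{-1}$, splitting according to $\tau\le1$ or $\tau\ge1$) shows $R_\tau\le h^{-1}(1/\tau)\vee h^{-1}(1)$, so one may take $r_0=h^{-1}(1/\tau)\vee h^{-1}(1)$, keeping the constant inside the allowed list of parameters. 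For the second replacement, writing $R_{u/2}=h^{-1}\big(2^{1-\eps}u^{\eps-1}\big)$ and applying the scaling estimate \eqref{h-scaling_11+} with $\theta=u^{\eps-1}$ and $\lambda=2^{1-\eps}\ge1$ (this is precisely the content of Corollary~\ref{h_inv0}) yields $R_{u/2}\ge C_1^{1/\alpha}2^{-(1-\eps)/\alpha}(1\vee R_u^2)^{-1}R_u$, and $1\vee R_u^2\le 1\vee R_\tau^2$ is again controlled by $h^{-1}(1/\tau),h^{-1}(1)$. Chaining the three displays gives $m_u\ge c\,u^{\eps}R_u^2$ with $c=c(\alpha,C_1,h^{-1}(1/\tau),h^{-1}(1))$.

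In the regime $u\le\tau_0=h(1)^{-1/(1-\eps)}$ everything simplifies: then $R_u\le R_{\tau_0}=h^{-1}(\tau_0^{\eps-1})=h^{-1}(h(1))=1$, so the correction factor $(1\vee R_u^2)^{-1}$ in \eqref{h-scaling_11+} equals $1$ and the scaling bound becomes $R_{u/2}\ge C_1^{1/\alpha}2^{-(1-\eps)/\alpha}R_u\ge C_1^{1/\alpha}2^{-1/\alpha}R_u$; likewise $R_{u/2}\le1$ lets us take $r_0=1$ in \eqref{hiKi}, whose constant $(2/C_1)^{2/\alpha}-1$ depends only on $C_1,\alpha$. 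Chaining as before produces the lower bound with $c=c(C_1,\alpha)$. The conceptual content is light — once $m_u=\int_0^u\phi(r)\,dr$ with $\phi$ nondecreasing is on the table, the upper bound is immediate and the lower bound is two scaling lookups — so I expect the only genuinely delicate point to be the bookkeeping of constants, namely checking that the a priori radius $r_0$ and the scaling prefactors are expressible through $\alpha,C_1,h^{-1}(1/\tau),h^{-1}(1)$ in general, and through $\alpha,C_1$ alone once $u\le\tau_0$, which is exactly where the special value of $\tau_0$ enters.
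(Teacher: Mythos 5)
Your proof is correct and follows essentially the same route as the paper's: the upper bound by crude monotonicity of the truncated second moment, and the lower bound by restricting the $dr$-integral to $[u/2,u]$, then invoking Lemma~\ref{K_est} to pass from $K$ to $h$ and Corollary~\ref{h_inv0} (equivalently \eqref{h-scaling_11+}) to compare $R_{u/2}$ with $R_u$. Packaging the integrand as a single nondecreasing function $\phi(r)=R_r^2K(R_r)$ is a small tidy-up over the paper's presentation, and your constant bookkeeping (including the sharpened constants when $u\le\tau_0$, using $R_{\tau_0}=1$) matches what the paper does.
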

\begin{proof}
The upper bound is clear:
$$
m_u =  \int_{\R} x^2   \nu_u(d x)\le u  \int_{|x| \le R_u} x^2  \nu(d x)\le u R_u^2 h(R_u)= R_u^2 u^\eps ,
$$
The lower bound follows from the scaling property. By (\ref{h_inv1}),
$$\frac {R_u }{  R_{u/2}}\le  \left(\frac {2^{1-\eps}} {C_1}\right)^{1/\alpha} \left(1\vee h^{-1}\left(\frac 1{\tau^{1-\eps}}\right)\right)^2.$$
By Lemma \ref{K_est}, we have for $r\le R_{\tau}$,
$$\frac {h(r) }{  K(r)}\le  \left(\frac {2} {C_1}\right)^{2/\alpha} \left(1\vee h^{-1}\left(\frac 1{\tau^{1-\eps}}\right)\right)^2.$$
Observing that $ h^{-1}(\frac 1{\tau^{1-\eps}})\le h^{-1}(1)\vee h^{-1}(1/\tau))$
and taking \newline $c= \left(\frac {2} {C_1}\right)^{-2/\alpha} (1\vee h^{-1}(1)\vee h^{-1}(1/\tau))^{-2}$
   we have
	$c h(r)\le  K(r)$ for $r\le R_\tau$. Moreover  $cR_u \le  R_{u/2}$ for $u\le \tau$.
   Then  using monotonicity of $h$ we have
\begin{eqnarray*}
m_u &=&   \int_{0}^{u}  \int_{|x| \le R_r} x^2 \nu(d x)\, dr= \int_{0}^{u}   R_r^2 K(R_r)\, dr\\
&\ge& c\int_{u/2}^{u}   R_r^2 h(R_r)dr\ge \frac c2 u R_{u/2}^2  h(R_u)   \ge \frac {c^2}2 u  R_{u}^2  h(R_u) = \frac {c^2}2  R_u^2 u^\eps.
\end{eqnarray*}
\end{proof}

\begin{lemma} \label{2moment0}
Let $\gamma>\beta  $. For any   $\tau>0$  there is a constant $c=c(C_2, \beta,\gamma, h^{-1}(1/\tau),h^{-1}(1))$
such that for  $u\le \tau$ we have
$$\int_ {|x|\le R_u} |x|^\gamma \nu(dx) \le c R_u^\gamma h( R_u)=  c R_u^\gamma u^{-1+\eps}.$$
Moreover for  $u\le \tau_0= h(1)^{-\frac1 {1-\eps}}$ the above  constant $c=c(C_2, \beta,\gamma)$.

If  $\gamma=2$ the above inequality holds for any $u>0$ and with $c =1$.
%such that for  $u\le u_0$ we have
%$$  C R_u^2 u^{ \eps }\le  m_u \le R_u^2 u^{ \eps } .$$
  %The upper bound holds for any $u>0$.
	\end{lemma}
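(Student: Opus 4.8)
The plan is to reduce the integral to a one‑dimensional integral involving $h$ and then invoke the scaling property \eqref{h-scaling_u0}/\eqref{h-scaling_u5}.

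I would first dispose of the case $\gamma=2$. By the definition of $K$,
$$
\int_{|x|\le R_u}|x|^2\,\nu(dx)=R_u^2\int_{|x|\le R_u}|x|^2 R_u^{-2}\,\nu(dx)=R_u^2 K(R_u),
$$
and since $1\wedge(|x|^2 R_u^{-2})=|x|^2 R_u^{-2}$ on $\{|x|\le R_u\}$, we get $K(R_u)\le h(R_u)$. Combined with $h(R_u)=h\bigl(h^{-1}(u^{\eps-1})\bigr)=u^{\eps-1}$, this yields the claim with $c=1$ and no restriction on $u$.

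For a general $\gamma>\beta$, I would use the layer‑cake representation $|x|^\gamma=\gamma\int_0^{|x|}r^{\gamma-1}\,dr$ together with Tonelli's theorem:
$$
\int_{|x|\le R_u}|x|^\gamma\,\nu(dx)=\gamma\int_0^{R_u}r^{\gamma-1}\,\nu\bigl(\{x:\,r<|x|\le R_u\}\bigr)\,dr\le \gamma\int_0^{R_u}r^{\gamma-1}h(r)\,dr,
$$
where we used the elementary bound $\nu(\{|x|>r\})\le h(r)$, a consequence of $1\wedge(|x|^2 r^{-2})\ge 1_{\{|x|>r\}}$. The crucial step is the scaling estimate: writing $r=\lambda R_u$ with $\lambda=r/R_u\in(0,1]$ and applying the extended scaling \eqref{h-scaling_u5} gives $h(r)\le C_2(r/R_u)^{-\beta}h(R_u)(1\vee R_u^2)$, hence
$$
\gamma\int_0^{R_u}r^{\gamma-1}h(r)\,dr\le \gamma C_2(1\vee R_u^2)R_u^{\beta}h(R_u)\int_0^{R_u}r^{\gamma-1-\beta}\,dr=\frac{\gamma C_2}{\gamma-\beta}(1\vee R_u^2)R_u^{\gamma}h(R_u).
$$
The assumption $\gamma>\beta$ is exactly what makes the last integral converge at $0$.

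It then remains to control $1\vee R_u^2$ and to track the dependence of the constant. Since $h^{-1}$ is decreasing and $u\le\tau$, we have $R_u=h^{-1}(u^{\eps-1})\le h^{-1}(\tau^{\eps-1})\le h^{-1}(1/\tau)\vee h^{-1}(1)$, the last inequality by distinguishing $\tau\ge1$ and $\tau<1$ and noting $\tau^{\eps-1}\in[\,1/\tau\wedge1,\,1/\tau\vee1\,]$; this absorbs $1\vee R_u^2$ into a constant depending only on $C_2,\beta,\gamma,h^{-1}(1/\tau),h^{-1}(1)$. If moreover $u\le\tau_0=h(1)^{-1/(1-\eps)}$, then $u^{\eps-1}\ge h(1)$, so $R_u\le h^{-1}(h(1))=1$, the factor $1\vee R_u^2$ is $1$, and one may use the plain scaling \eqref{h-scaling_u0} throughout; the constant is then just $\gamma C_2/(\gamma-\beta)$. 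Finally $h(R_u)=u^{\eps-1}$ turns $R_u^\gamma h(R_u)$ into $R_u^\gamma u^{-1+\eps}$. The analytic content here is entirely routine; the only point requiring a little care is the bookkeeping of the constant and, relatedly, the use of the $(1\vee\theta^2)$‑corrected scaling \eqref{h-scaling_u5} when $R_u$ is not a priori $\le1$.
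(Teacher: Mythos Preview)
Your proof is correct and follows essentially the same route as the paper. The paper reaches the integral $\gamma\int_0^{R_u}r^{\gamma-1}h(r)\,dr$ via integration by parts (with a brief argument that the boundary term $r^\gamma L(r)$ vanishes as $r\to0^+$), whereas you reach it directly by the layer-cake formula; from that point on both proofs apply the weak upper scaling of $h$ (the paper splits into the cases $R_u\le1$ and $R_u>1$ instead of invoking the packaged bound \eqref{h-scaling_u5}) and control $R_u$ by $h^{-1}(1/\tau)\vee h^{-1}(1)$ exactly as you do.
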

\begin{proof}
Let $L(r)=  \nu([r, \infty)), r>0$. By integration by parts

$$\int_ {0< x\le R_u} x^\gamma \nu(dx) \le   \limsup_{r\to 0^+}r^\gamma L(r)+
\gamma\int_ {0< x\le R_u} x^{\gamma-1} L(x)dx. $$
Next, by (\ref{h-scaling_u0}),
$$\limsup_{r\to 0^+}r^\gamma L(r)\le \limsup_{r\to 0^+}r^\gamma h(r)=0,$$
which implies
$$\int_ {0< x\le R_u} x^\gamma \nu(dx)\le \gamma \int_ {0< x\le R_u} x^{\gamma-1} L(x)dx
\le \gamma \int_ {0< x\le R_u} x^{\gamma-1} h(x)dx. $$

 It follows from (\ref{h-scaling_u0}) that, if $R_u\le 1$, then
$$h(x)= h(R_u(x/R_u))\le C_2(x/R_u)^{-\beta}h(R_u), \ x\le R_u.$$
If $1\le R_u\le R_{\tau}$, since $r^2h(r)$ is an increasing function and by (\ref{h-scaling_u0}), we have
$$h(x)\le  h(x/R_u)\le C_2(x/R_u)^{-\beta}h(R_u)R^2_u,\ x\le R_u.$$
The last two estimates yield
$$\int_ {0< x\le R_u} x^{\gamma-1} h(x)dx\le C_2(R^2_{\tau}\vee1) R_u^{\beta}h(R_u)\int_ {0< x\le R_u} x^{\gamma-\beta-1} dx= C_2(R^2_{\tau}\vee1)\frac 1{\gamma-\beta}R_u^{\gamma}h(R_u).$$ This together with the estimate
$$R_{\tau}=  h^{-1}\left(\frac1{\tau^{1-\eps}}\right)\le h^{-1}(1/\tau)\vee h^{-1}(1)$$
 end the proof for arbitrary $\gamma>\beta$. Moreover, for $\tau=\tau_0$, we have $R_{\tau}=1$ which shows that the constant $c=  C_2\frac {\gamma}{\gamma-\beta}$.

The assertion of the lemma for $\gamma=2$ is a consequence of the definition of the function $h$.
\end{proof}

\begin{lemma}\label{moment1} Let $\gamma\ge 0$ and $\tau >0$. There are  constants $c_1=c_1(\alpha, h^{-1}(1/\tau), \tau,\gamma, \eps, C_1 )$ and $c_2=c_2(\gamma)$ such that
 for any $0<u \le \tau$,
\begin{equation}\label{moments_g}
\frac {c_2}{ h^{-1}(1/u)^{\gamma+1}} \le \int_{\R}|z|^{\gamma} e^{-\int_0^u  \psi_r(z) dr} dz
\le   \frac {c_1}{ h^{-1}(1/u)^{\gamma+1}}.
\end{equation}
If $\tau = 1/h(1)$ then the constant $c_1=c_1(h(1), \eps, \alpha, \gamma,  C_1)$.
\end{lemma}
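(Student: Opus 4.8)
The plan is to prove the two-sided bound \eqref{moments_g} by a scaling/change-of-variables reduction to the case $u=1$ (or $u$ of order one), combined with the exponential decay of $e^{-\int_0^u\psi_r(z)\,dr}$ that was already observed right after the definition of $g_u$. First I would record the lower bound on the exponent: by \eqref{H2} and \eqref{psi-scaling} one has $\int_0^u\psi_r(z)\,dr\ge u\psi(z)-\tfrac{2}{\eps}u^{\eps}$, and hence, for $|z|\ge h^{-1}(1/u)^{-1}$ (the natural scale), using \eqref{psi-scaling} and \eqref{hi_psi} from Lemma~\ref{K_est}, one gets a bound of the form $\int_0^u\psi_r(z)\,dr\ge c\,u\,h(1/|z|)$ on the relevant range, which after substituting $z=h^{-1}(1/u)^{-1}\zeta$ becomes a bound like $c\,(1\wedge|\zeta|^{\alpha})$-type growth in $\zeta$, uniformly in $u\le\tau$. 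This is the mechanism that makes the rescaled integral $\int_{\R}|\zeta|^{\gamma}e^{-\int_0^u\psi_r(h^{-1}(1/u)^{-1}\zeta)\,dr}\,d\zeta$ bounded above by a constant depending only on $\alpha,\gamma,C_1$ (and on $\tau$ through the lower-order correction $u^{\eps}$, using Corollary~\ref{h_inv0} to control $h^{-1}(u^{\eps-1})/h^{-1}(u^{-1})$).

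Next I would set up the change of variables explicitly. Writing $\rho_u:=h^{-1}(1/u)$, substitute $z=\zeta/\rho_u$ in $\int_{\R}|z|^{\gamma}e^{-\int_0^u\psi_r(z)\,dr}\,dz$ to obtain $\rho_u^{-(\gamma+1)}\int_{\R}|\zeta|^{\gamma}e^{-\Phi_u(\zeta)}\,d\zeta$ with $\Phi_u(\zeta)=\int_0^u\psi_r(\zeta/\rho_u)\,dr$. Then the whole claim \eqref{moments_g} reduces to showing that $\int_{\R}|\zeta|^{\gamma}e^{-\Phi_u(\zeta)}\,d\zeta$ is bounded above and below by positive constants uniformly for $0<u\le\tau$. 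For the upper bound I would split into $|\zeta|\le 1$, where $e^{-\Phi_u}\le 1$ and the integral of $|\zeta|^\gamma$ is a $\gamma$-constant, and $|\zeta|\ge 1$, where the exponent estimate from the previous paragraph gives $\Phi_u(\zeta)\ge c|\zeta|^{\alpha}$ (after absorbing the $u^{\eps}$ correction, which is where $\tau$ and Corollary~\ref{h_inv0} enter), so the tail integral converges to a constant depending on $\alpha,\gamma$. For the lower bound it suffices to bound $e^{-\Phi_u(\zeta)}$ from below on a fixed region, say $|\zeta|\le \delta$ for a small absolute $\delta$: there $\psi_r(\zeta/\rho_u)\le\psi(\zeta/\rho_u)$, and since $\psi(\xi)\le C|\xi|^{\beta}$ for $|\xi|$ bounded (combining \eqref{hi_psi} with the monotonicity of $r^2h(r)$ and scaling) together with $u\le\tau$, we get $\Phi_u(\zeta)\le u\,\psi(\zeta/\rho_u)\le C$, so $e^{-\Phi_u(\zeta)}\ge c>0$ on $|\zeta|\le\delta$, giving $\int|\zeta|^{\gamma}e^{-\Phi_u}\,d\zeta\ge c\int_{|\zeta|\le\delta}|\zeta|^{\gamma}\,d\zeta>0$.

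The main obstacle, and the step that needs care rather than cleverness, is making the exponent estimate $\Phi_u(\zeta)\ge c(1\wedge|\zeta|^{\alpha})$ truly uniform in $u\le\tau$: the subtraction $-\tfrac{2}{\eps}u^{\eps}$ in $\int_0^u\psi_r(z)\,dr\ge u\psi(z)-\tfrac{2}{\eps}u^{\eps}$ must be dominated by a fraction of $u\psi(z)$ on the range $|z|\gtrsim\rho_u$, which after rescaling is the statement that $u^{\eps}$ is small compared with $u\,h(1/\rho_u)=1$ times the $\zeta$-growth — this is precisely controlled by \eqref{h_inv} (or \eqref{h_inv2} for $\tau=\tau_0$), so the $\tau$-dependence of $c_1$ enters exactly here, while for $\tau=1/h(1)$ one gets the cleaner constant $c_1=c_1(h(1),\eps,\alpha,\gamma,C_1)$ claimed in the statement. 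A secondary technical point is handling the full range of $u$, including $u$ comparable to $\tau$ where $R_u$ may exceed $1$ and the scaling \eqref{h-scaling_u0} must be used in its extended form \eqref{h-scaling_u5}, \eqref{h-scaling_11+}; this only affects constants. I would also note that the special dependence structure of the constants follows by tracking which of \eqref{psi-scaling}, \eqref{hi_psi}, and Corollary~\ref{h_inv0} are invoked, none of which requires $C_2$ for the lower bound or for the upper bound once $\gamma$ is free, consistent with $c_2=c_2(\gamma)$ and $c_1$ independent of $C_2$.
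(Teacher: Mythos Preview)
Your change-of-variables strategy is sound and, once correctly executed, is equivalent to the paper's proof (the paper does not substitute $z=\zeta/\rho_u$ explicitly but bounds $\int|z|^\gamma e^{-u\psi(z)}\,dz$ by citing \cite[Lemma~16]{BGR14} for the upper bound and restricts to $|z|\le 1/h^{-1}(1/u)$ for the lower bound, which is your substitution in disguise). The upper bound sketch is fine; the absorption of the $(2/\eps)u^\eps$ term needs nothing more than $u^\eps\le\tau^\eps$, so your appeal to Corollary~\ref{h_inv0} there is unnecessary.

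There is, however, a genuine gap in your lower bound. You write that on $|\zeta|\le\delta$ one can use ``$\psi(\xi)\le C|\xi|^{\beta}$ for $|\xi|$ bounded'' with $\xi=\zeta/\rho_u$. But $\rho_u=h^{-1}(1/u)\to 0$ as $u\to 0$, so $|\xi|=|\zeta|/\rho_u$ is \emph{not} bounded uniformly in $u$, and the bound $u\psi(\zeta/\rho_u)\le Cu(\delta/\rho_u)^\beta$ would blow up like $u^{1-\beta/\alpha}$ in the case $\beta>\alpha$. The fix is the simpler inequality that the paper uses: $\psi(\xi)\le 2h(1/|\xi|)$ holds for all $\xi$ (upper bound in \eqref{hi_psi}), and for $|\zeta|\le 1$ one has $\rho_u/|\zeta|\ge\rho_u$, hence by monotonicity $h(\rho_u/|\zeta|)\le h(\rho_u)=1/u$, giving $\Phi_u(\zeta)\le 2u\cdot(1/u)=2$. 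This yields $c_2=2e^{-2}/(\gamma+1)$, depending only on $\gamma$ as claimed; your route via the $\beta$-scaling would, even if repaired, introduce unwanted dependence on $C_2$ and $\beta$.
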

\begin{proof}
By (\ref{H2}),
$$
\int_0^u  \psi_r(z) dr\ge u\psi(z) - (2/\eps) u^{\epsilon}.
$$
    Next,
$$
\int_{\R}|z|^{\gamma} e^{-\int_0^u  \psi_r(z) dr} dz
\le e^{(2/\eps) u^{ \eps}} \int_{\R} |z|^{\gamma} e^{-u\psi(z)}dz.
$$
Moreover,  by Lemma \ref{K_est},
$$\int_{\R} |z|^{\gamma} e^{-u\psi(z)}dz\le 2 + \int_{|z|>1} |z|^{\gamma} e^{-c u  h(1/|z|)}dz,$$
 where  $c=\frac1{4\left((\frac {2}{C_1})^{2/\alpha}-1\right)}$.
By the same arguments as in the proof of \cite[Lemma 16]{BGR14} we get
$$\int_{\R} |z|^{\gamma} e^{-u c h(1/|z|)}dz \le \frac {c_*}{ h^{-1}(1/u)^{\gamma+1}},\ u\le 1/h(1),$$
 where $c_*=c_*(\gamma, \alpha, C_1)$. Since $h^{-1}(1/u)\le 1$ for $u\le 1/h(1)$, it follows that
$$\int_{\R} |z|^{\gamma} e^{-u\psi(z)}dz \le \frac {c^*}{ h^{-1}(1/u)^{\gamma+1}},\ u\le 1/h(1),$$
where $c^*=c_*+2$. If  $1/h(1) \le u\le \tau$, then from the above estimate and monotonicity of $h^{-1}$ we obtain
$$\int_{\R} |z|^{\gamma} e^{-u\psi(z)}dz \le  \frac {c^*h^{-1}(1/\tau)^{\gamma+1}}{ h^{-1}(1/u)^{\gamma+1}}.$$

Finally we conclude that
$$\int_{\R} |z|^{\gamma} e^{-u\psi(z)}dz \le c^*(h^{-1}(1/\tau)\vee1)^{\gamma+1}\frac 1{ h^{-1}(1/u)^{\gamma+1}}$$
for $u\le \tau$. The proof of the upper bound is completed.

To get the lower bound we observe that
$$
\int_0^u  \psi_r(z) dr\le u\psi(z) \le 2 u h(1/|z|).
$$
Hence, denoting $a= \frac1 {h^{-1}(1/u)}$ we arrive at
$$
\int_{\R}|z|^{\gamma} e^{-\int_0^u  \psi_r(z) dr} dz
\ge  \int_{-a}^{a} |z|^{\gamma} e^{-u2h(1/|z|)}dz\ge 2e^{-2}\int_{0}^{a} |z|^{\gamma} dz=2e^{-2}\frac1{\gamma+1}a^{\gamma+1},
$$
which ends the proof of the lower bound.
\end{proof}

\begin{corollary}  \label{gu(0)estimate}For   $0<u\le \tau$

\begin{equation}\label{g(0)estimate}
\frac {c_2}{2\pi h^{-1}(1/u)} \le g_u(0)
\le   \frac {c_1}{2\pi h^{-1}(1/u)},
\end{equation}
where $c_1, c_2$ are constants from (\ref{moments_g}) corresponding to $\gamma=0$.

The function $(0, \infty)\times\R \ni (u, x) \to g_u(x)$ is continuous.
\end{corollary}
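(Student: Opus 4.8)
The two-sided bound \eqref{g(0)estimate} is immediate: by the very definition of $g_u$,
$$
g_u(0) = \frac{1}{2\pi}\int_{\R} e^{-\int_0^u \psi_r(z)\, dr}\, dz,
$$
so \eqref{g(0)estimate} is nothing but the case $\gamma = 0$ of \eqref{moments_g} in Lemma \ref{moment1}, divided by $2\pi$. Hence it remains only to prove the continuity of $(u,x)\mapsto g_u(x)$, and this will follow from dominated convergence once we exhibit a $z$-integrable majorant that is uniform over $u$ in a compact subinterval of $(0,\infty)$.

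Fix a compact set $[a,b]\times[-M,M]\subset (0,\infty)\times\R$. First, for each fixed $z\in\R$ the map $(u,x)\mapsto e^{ixz}e^{-\int_0^u\psi_r(z)\,dr}$ is continuous on $[a,b]\times[-M,M]$: continuity in $x$ is clear, and for continuity in $u$ note that $r\mapsto R_r=h^{-1}\bigl(r^{-(1-\eps)}\bigr)$ is continuous and nondecreasing (since $h$ is continuous and strictly decreasing on $(0,\infty)$), hence $r\mapsto \psi_r(z)$ is nondecreasing and bounded on $[0,b]$ by $\psi_b(z)\le\psi(z)<\infty$, so $u\mapsto\int_0^u\psi_r(z)\,dr$ is Lipschitz on $[0,b]$. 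Next, because $\psi_r(z)\ge 0$, the function $u\mapsto\int_0^u\psi_r(z)\,dr$ is nondecreasing, so for all $u\in[a,b]$,
$$
\Bigl| e^{ixz}e^{-\int_0^u\psi_r(z)\,dr}\Bigr| \le e^{-\int_0^a\psi_r(z)\,dr}.
$$
The right-hand side is integrable in $z$ over $\R$ by Lemma \ref{moment1} applied with $\gamma=0$ and $u=a$ (equivalently, by the estimate $\int_0^a\psi_r(z)\,dr\ge a\psi(z)-(2/\eps)a^{\eps}\ge c|z|^\alpha$ for $|z|$ large, coming from \eqref{H2} and \eqref{psi-scaling}, which was already used to see that $g_a$ is a well-defined density).

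By the dominated convergence theorem, $(u,x)\mapsto g_u(x)$ is continuous on $[a,b]\times[-M,M]$, and since this compact set was arbitrary, $(u,x)\mapsto g_u(x)$ is continuous on $(0,\infty)\times\R$. The only point requiring care here is the construction of the uniform majorant, which is handled by the monotonicity of $u\mapsto\int_0^u\psi_r(z)\,dr$ observed above.
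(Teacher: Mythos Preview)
Your proof is correct and follows essentially the same approach as the paper: both derive \eqref{g(0)estimate} directly from the $\gamma=0$ case of Lemma \ref{moment1}, and both obtain continuity from the integral representation of $g_u(x)$ via dominated convergence using the integrability furnished by \eqref{moments_g}. Your version is simply more explicit about the construction of the uniform majorant (via the monotonicity of $u\mapsto\int_0^u\psi_r(z)\,dr$), whereas the paper just points to ``the upper estimate in \eqref{moments_g} and the bounded convergence theorem.''
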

\begin {proof} Since
$$
g_{u}(x) = \frac1{2\pi}\int_{\R} e^{ixz} e^{-\int_0^u \psi_r(z)  \, dr} \, dz.
$$
we get,  by Lemma \ref{moment1}, the lower and upper estimate of $g_{u}(0)$. The continuity follows from the continuity of the map $(0, \infty)\times\R \ni (u, x) \to e^{ixz} e^{-\int_0^u \psi_r(z)  \, dr}$, the upper estimate in (\ref{moments_g}) and the bounded convergence theorem.
\end{proof}

\begin{lemma}
\label{gu_difference}
%There exists $c > 0$ such that
 For any $u >0$ we have
\begin{equation}
\label{gu_difference_max}
\sup_{x \in \R} |g_u(x)- \tilde{g}_u(x)| \le 2{\frac{u^{\eps}}{\eps} } g_u(0),
\end{equation}
\begin{equation}
\label{gu_difference_integral}
\int_{\R}|g_u(x)- \tilde{g}_u(x)| \, dx \le 2{\frac{u^{\eps}}{\eps} }
\end{equation}
and
\begin{equation}
\label{gu_comp}
\tilde{g}_u(0)\le g_u(0)   \le   \tilde{g}_u(0) e^{\frac{u^{\eps}}{\eps} }.
\end{equation}

\end{lemma}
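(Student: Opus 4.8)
The plan is to write $\tilde g_u$ as the convolution of $g_u$ with the law of those jumps of $Z$ that were discarded by the dynamic truncation, and then to use that the total intensity of these discarded jumps is at most $u^{\eps}/\eps$. Concretely, I would introduce the finite symmetric measure
$$
\eta_u(dv)=\Big(\int_0^u 1_{|v|>R_r}\,dr\Big)\nu(dv),
$$
with total mass $\lambda_u:=\int_0^u\nu(\{|v|>R_r\})\,dr$. Since $1\wedge(|v|^2/R_r^2)=1$ on $\{|v|>R_r\}$, one has $\nu(\{|v|>R_r\})\le h(R_r)=r^{\eps-1}$, hence $\lambda_u\le\int_0^u r^{\eps-1}\,dr=u^{\eps}/\eps<\infty$. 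Let $\mu_u:=e^{-\lambda_u}\sum_{k\ge0}\eta_u^{*k}/k!$ (with $\eta_u^{*0}=\delta_0$) be the associated compound Poisson probability measure. By Tonelli, $\mu_u$ has characteristic function
$$
\int_\R e^{ivz}\,\mu_u(dv)=\exp\Big(-\int_\R(1-\cos(vz))\,\eta_u(dv)\Big)=\exp\Big(-u\psi(z)+\int_0^u\psi_r(z)\,dr\Big),
$$
while $g_u$, by its definition, has characteristic function $e^{-\int_0^u\psi_r(z)\,dr}$. Multiplying these gives $e^{-u\psi(z)}$, the characteristic function of $\tilde g_u$. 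Since $g_u\in C^\infty(\R)$ is a bounded density, $g_u*\mu_u$ is a continuous density sharing the characteristic function of $\tilde g_u$, so
$$
\tilde g_u=g_u*\mu_u.
$$

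Two elementary observations then finish the proof. First, because $e^{-\int_0^u\psi_r(z)\,dr}$ is nonnegative and even in $z$, $g_u$ is even and $g_u(x)=\frac1{2\pi}\int_\R\cos(xz)\,e^{-\int_0^u\psi_r(z)\,dr}\,dz\le g_u(0)$ for every $x$. Second, $\mu_u(\{0\})\ge e^{-\lambda_u}$, hence $\mu_u(\R\setminus\{0\})\le 1-e^{-\lambda_u}\le\lambda_u\le u^{\eps}/\eps$. Writing $g_u(x)-\tilde g_u(x)=\int_\R\big(g_u(x)-g_u(x-v)\big)\,\mu_u(dv)$, whose integrand vanishes at $v=0$, one reads off: \eqref{gu_difference_max} from $|g_u(x)-g_u(x-v)|\le 2g_u(0)$ integrated over $\R\setminus\{0\}$; \eqref{gu_difference_integral} from Tonelli and $\int_\R|g_u(x)-g_u(x-v)|\,dx\le 2$ for each fixed $v$, again integrated over $\R\setminus\{0\}$; and \eqref{gu_comp} by evaluating $\tilde g_u(0)=\int_\R g_u(v)\,\mu_u(dv)$, where $g_u(v)\le g_u(0)$ gives $\tilde g_u(0)\le g_u(0)$ while retaining only the atom at $v=0$ gives $\tilde g_u(0)\ge g_u(0)\mu_u(\{0\})\ge g_u(0)e^{-\lambda_u}$.

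The only step that is not entirely routine is the identification $\tilde g_u=g_u*\mu_u$, and the decisive quantitative input is the bound $\lambda_u\le u^{\eps}/\eps$: it makes $\mu_u$ a genuine finite-activity compound Poisson probability measure with $e^{-\lambda_u}>0$ (no compensation needed), and it is also the source of the $\eps$-smallness that propagates to all three estimates. Everything else is a one- or two-line computation once the decomposition is in hand. (Alternatively, \eqref{gu_difference_max} and \eqref{gu_comp} can be obtained by a direct Fourier estimate: $\sup_x|g_u(x)-\tilde g_u(x)|\le\frac1{2\pi}\int_\R(e^{-\int_0^u\psi_r(z)\,dr}-e^{-u\psi(z)})\,dz=g_u(0)-\tilde g_u(0)$, combined with \eqref{H2}; but the convolution argument treats \eqref{gu_difference_integral} on the same footing, so I would present it uniformly.)
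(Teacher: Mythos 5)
Your proof is correct and follows essentially the same route as the paper: you write $\tilde g_u = g_u * \mu_u$, where $\mu_u$ is the compound Poisson law of the discarded (large) jumps, observe that its total discarded-jump intensity $\lambda_u = \int_0^u \nu(\{|v|>R_r\})\,dr$ is at most $h(R_r)=r^{\eps-1}$ integrated, i.e.\ $u^\eps/\eps$, and then use $g_u(x)\le g_u(0)$ together with $\mu_u(\{0\})\ge e^{-\lambda_u}$ to read off all three bounds. The paper reaches the same convolution representation via Fourier inversion and then estimates the resulting series term-by-term; the content is identical.
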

\begin{proof}
The proof is similar to the proof of Proposition C.9 in \cite{KKS2020}. Let $u \in (0,\tau]$ and $x \in \R$ be arbitrary. For any $z \in \R$ we have
$$
u \psi(z) = \int_0^u \psi_r(z) \, dr +
\int_0^u \int_{|v| > R_r} (1 - \cos(vz)) \, \nu(dv) \, dr.
$$
It follows that
\begin{eqnarray}
\nonumber
\tilde{g}_u(x) &=&
\frac{1}{2 \pi} \int_{\R} e^{ixz} e^{-u \psi(z)} \, dz\\
\nonumber
&=& \frac{1}{2 \pi} \int_{\R} e^{ixz} e^{-\int_0^u \psi_r(z) \, dr}
e^{- \int_0^u \int_{|v| > R_r} (1 - \cos(vz)) \, \nu(dv) \, dr} \, dz\\
\label{convolution}
&=& \int_{\R} g_u(x - z) P_u^{\text{tail}}(dz),
\end{eqnarray}
where $P_u^{\text{tail}}(dz)$ is the exponential (for the convolution) of the measure $\Lambda_u^{\text{tail}}$ i.e.
\begin{equation}
\label{exp_convolution}
P_u^{\text{tail}}(A) = e^{-\Lambda_u^{\text{tail}}(\R)} \sum_{k = 0}^{\infty} \frac{1}{k !} \left(\Lambda_u^{\text{tail}}\right)^{*k}(A), \quad A \in \calB(\R),
\end{equation}
where $\Lambda_u^{\text{tail}}(A) = \int_0^u \nu(\{v \in A: \, |v| > R_r\}) \, dr$.
We have
\begin{equation}
\Lambda_u^{\text{tail}}(\R) =
\int_0^u \nu(\{v \in \R: \, |v| > R_r\}) \, dr
\le \int_0^u h(R_r) \, dr
\label{lambda_u}
= \frac{u^{\eps}}{\eps}.
\end{equation}
It follows that
\begin{equation}
\label{exp_lambda_u}
\left|1 - e^{- \Lambda_u^{\text{tail}}(\R)}\right| \le \frac{u^{\eps}}{\eps}.
\end{equation}
Moreover, by (\ref{convolution}) and (\ref{exp_convolution}), we get
\begin{eqnarray}
\nonumber
|\tilde{g}_u(x) - g_u(x)| &\le&
g_u(x) \left|1 - e^{- \Lambda_u^{\text{tail}}(\R)}\right|\\
\label{gu_difference_series}
&+& e^{- \Lambda_u^{\text{tail}}(\R)} \sum_{k = 1}^{\infty} \int_{\R} \frac{1}{k !} g_u(x - z) \left(\Lambda_u^{\text{tail}}\right)^{*k}(dz).
\end{eqnarray}
Using this, (\ref{lambda_u}) and (\ref{exp_lambda_u}) we get (\ref{gu_difference_max}). Integrating (\ref{gu_difference_series}) and using (\ref{lambda_u}), (\ref{exp_lambda_u}) we get (\ref{gu_difference_integral}).

Applying (\ref{convolution}) with $x=0$  we obtain
$g_u(0) e^{- \Lambda_u^{\text{tail}}(\R)}  \le \tilde{g}_u(0) \le  g_u(0)$,
which combined with (\ref{lambda_u}) proves (\ref{gu_comp}).
\end{proof}

Let for $u > 0$, $\xi, w \in \R$
$$
v_u(\xi,w) = -\xi w + \int_{\R} (\cosh(\xi v) - 1) \, \nu_{u}(dv).
$$

\begin{lemma}
\label{theta0}
Fix $u > 0$, $w \in \R$. Let $\xi_0 \in \R$ be such that
$$
v_u(\xi_0,w) = \inf_{\xi \in \R} v_{u}(\xi,w).
$$
Then,% there is $c=c(\alpha,\eps)$  such that
\begin{equation}
\label{xi_0_inequality}
 |\xi_0| \le 2 \frac {|w|}{m_u}.
\end{equation}

%\mr{$$|\xi_0| \le c \frac {|w|}{m_u}\approx \frac {|w|} {R_u^2 u^\eps} .$$}
\end{lemma}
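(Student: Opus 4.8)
The plan is to use only the defining property of $\xi_0$ — that it realizes the infimum of $\xi\mapsto v_u(\xi,w)$ — together with the trivial comparison point $\xi=0$, and then to extract the factor $m_u$ via the elementary inequality $\cosh t-1\ge t^2/2$, valid for all $t\in\R$. No differentiation of $v_u$ is needed for this route.

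First I would note that $v_u(0,w)=0$, since $\cosh 0-1=0$. Because $\xi_0$ realizes the infimum of $v_u(\cdot,w)$ over $\R$, this yields immediately
$$v_u(\xi_0,w) = -\xi_0 w + \int_\R(\cosh(\xi_0 v)-1)\,\nu_u(dv) \le v_u(0,w) = 0,$$
whence $\int_\R(\cosh(\xi_0 v)-1)\,\nu_u(dv) \le \xi_0 w \le |\xi_0|\,|w|$; in particular the integral on the left is finite, so there is no integrability subtlety. Next, applying $\cosh t-1\ge t^2/2$ pointwise with $t=\xi_0 v$ and integrating against $\nu_u$ bounds that same integral from below by $\tfrac{\xi_0^2}{2}\int_\R v^2\,\nu_u(dv)=\tfrac{\xi_0^2}{2}m_u$. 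Combining, $\tfrac{\xi_0^2}{2}m_u\le |\xi_0|\,|w|$. If $\xi_0=0$ the claim is trivial; otherwise dividing by $|\xi_0|$ and then by $m_u$ gives exactly $|\xi_0|\le 2|w|/m_u$, which is \eqref{xi_0_inequality}.

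There is essentially no obstacle here; the two points worth checking are that $m_u>0$, so that the final division is legitimate — this follows from the lower bound in Lemma \ref{2moment}, or directly from the fact that $\nu$, being an infinite L\'evy measure, charges every punctured neighbourhood of the origin and hence $\nu_u\ne 0$ with $m_u>0$ — and that $v_u(\xi_0,w)\le v_u(0,w)$, which is immediate from $\xi_0$ being a minimizer. An alternative, slightly longer argument would differentiate $v_u(\cdot,w)$ under the integral sign — permissible because $\nu_u$ is supported in $[-R_u,R_u]$ and has finite second moment — to get the first-order condition $w=\int_\R v\sinh(\xi_0 v)\,\nu_u(dv)$ and then use $v\sinh(\xi_0 v)\ge \xi_0 v^2$ (for $\xi_0\ge 0$, reducing to this case by the symmetry $v_u(\xi,w)=v_u(-\xi,-w)$); this even produces the sharper constant $1$ in place of $2$, but the comparison-at-$0$ argument is cleaner and already suffices for \eqref{xi_0_inequality}.
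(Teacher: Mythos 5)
Your argument is correct and is essentially the same as the paper's: both combine the comparison $v_u(\xi_0,w)\le v_u(0,w)=0$ with the pointwise bound $\cosh t-1\ge t^2/2$ integrated against $\nu_u$ to get $\xi_0^2 m_u/2\le|\xi_0|\,|w|$. The only difference is the order in which the two ingredients are deployed, which is immaterial.
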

\begin{proof}
We have
%\begin{eqnarray*}
%&&\int_0^u \int_{\R} (\cosh(\xi z) - 1) \, d\nu_{r}^{(\alpha)}(z) \, dr\\
%&& \ge  \frac{1}{2} \int_0^u \int_{\R} \xi^2 z^2 \, d\nu_{r}^{(\alpha)}(z) \, dr\\
%&& = \frac{\calA_{\alpha}}{2} \xi^2 \int_0^u \int_{-r^{1/\alpha - \eps}}^{r^{1/\alpha - \eps}} |z|^{1 - \alpha}  \, dz \, dr\\
%&&= c \xi^2 \int_0^u r^{2/\alpha - 1 - 2 \eps + \eps \alpha}  \, dr\\
%&&= c \xi^2 u^{2/\alpha - 2 \eps + \eps \alpha}.
%\end{eqnarray*}

\begin{eqnarray*}
\int_{\R} (\cosh(\xi z) - 1) \, \nu_{u}(dz)
& \ge&  \frac{1}{2}  \int_{\R} \xi^2 z^2 \, \nu_{u}(dz)
 =\frac{1}{2} \xi^2 m_u\\
%&= c& \xi^2 u^{2/\alpha - 2 \eps + \eps \alpha}.
\end{eqnarray*}
Hence
$
v_u(\xi,w) \ge -\xi w + \xi^2m_u/2
$.
Since  $v_u(\xi_0,w) \le v_u(0,w)= 0$ we have
$
-|\xi_0| |w| + \xi_0^2m_u/2 \le 0
$,
which gives (\ref{xi_0_inequality}).%, by (\ref{mu}) .
\end{proof}

\begin{lemma} \label{exp_bound_derivative} Let $\tau>0$.
For any $0<u \le \tau $, $w \in \R$, $k \in \N_0$ we have

\begin{equation} \label{der_est_3}
\left|\frac {d^k}{d w^k} g_{u}(w) \right|
\le c_k \left(\frac1{h^{-1}(1/u)} \right)^k
g_{u}(0)
\end{equation}
and
\begin{equation} \label{der_est_1}
\left|\frac {d^k}{d w^k} g_{u}(w) \right|
\le c_k \left(\frac1{u^\eps h^{-1}(1/u)} \right)^k e^{-{\frac{|w|}{8 R_u}}}
g_{u}(0) .
\end{equation}
The constant $c_k$ depends on $C_1, \alpha, \eps, \tau, h^{-1}(1/\tau), h^{-1}(1)$ and $k$.
If \newline $\tau=\tau_0= (h(1)\vee 1)^{-\frac1{1-\eps}}$, then the constant $c_k$ depends on $k, C_1, \alpha, \eps, h(1)$.

%\mr{\begin{equation} \label{der_est_1}
%\left|\frac {d^k}{d w^k} g_{u}(w) \right|
%\le c_k\left(\frac1{h^{-1}(1/u)} \right)^k\left(1+ \frac {|w|}{h^{-1}(1/u) }\right)^k e^{-{\frac{|w|}{4 {h^{-1}(u^{\eps-1}) }}}}
%g_{u}(0) e^{c u^{\alpha \eps}}.
%\end{equation}}
\end{lemma}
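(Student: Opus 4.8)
The plan is to derive both estimates from the Fourier representation
$$
\frac{d^{k}}{dw^{k}}g_{u}(w)=\frac{1}{2\pi}\int_{\R}(iz)^{k}e^{iwz}e^{-\int_{0}^{u}\psi_{r}(z)\,dr}\,dz,
$$
differentiation under the integral being legitimate since $\int_{\R}|z|^{k}e^{-\int_{0}^{u}\psi_{r}(z)\,dr}\,dz<\infty$ by Lemma \ref{moment1}. For \eqref{der_est_3} this is already enough: bounding the integrand in absolute value and using the upper estimate in \eqref{moments_g} with $\gamma=k$ gives $\bigl|\frac{d^{k}}{dw^{k}}g_{u}(w)\bigr|\le c\,h^{-1}(1/u)^{-(k+1)}$, and the lower bound $g_{u}(0)\ge c_{2}(2\pi h^{-1}(1/u))^{-1}$ from Corollary \ref{gu(0)estimate} lets one trade one factor $h^{-1}(1/u)^{-1}$ for $g_{u}(0)$, which is \eqref{der_est_3}.

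For the exponentially decaying bound \eqref{der_est_1} I would shift the contour of integration into the complex plane. Assume $w\neq0$ (for $w=0$ the claim follows from \eqref{der_est_3}) and put $\eta=\operatorname{sgn}(w)/(8R_{u})$. Since $\nu_{u}$ is supported in $[-R_{u},R_{u}]$ and has finite second moment $m_{u}$, the bound $|1-\cos(vz)|\le\frac12|vz|^{2}e^{R_{u}|\mathrm{Im}\,z|}$ valid for $|v|\le R_{u}$ shows that $z\mapsto\int_{\R}(1-\cos(vz))\,\nu_{u}(dv)=\int_{0}^{u}\psi_{r}(z)\,dr$, and hence the whole integrand, extends to an entire function of $z$. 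Integrating this entire function around the boundary of the rectangle with vertices $\pm T$, $\pm T+i\eta$ and letting $T\to\infty$ --- on the vertical sides the modulus is $O(T^{k}e^{-cT^{\alpha}})$ and hence vanishes, because \eqref{H2} and \eqref{psi-scaling} give $\int_{0}^{u}\psi_{r}(z)\,dr\ge uC_{1}^{*}\psi(1)|z|^{\alpha}-\frac{2}{\eps}u^{\eps}$ for $|z|\ge1$ --- I arrive at
$$
\frac{d^{k}}{dw^{k}}g_{u}(w)=\frac{e^{-|w|/(8R_{u})}}{2\pi}\int_{\R}\bigl(i(t+i\eta)\bigr)^{k}e^{iwt}e^{-\int_{\R}(1-\cos(v(t+i\eta)))\,\nu_{u}(dv)}\,dt,
$$
where $e^{-|w|/(8R_{u})}$ comes from $|e^{iw(t+i\eta)}|=e^{-w\eta}$ and $w\eta=|w|/(8R_{u})\ge0$.

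To estimate the remaining integral I would write $1-\cos(v(t+i\eta))=(1-\cosh(v\eta))+\cosh(v\eta)(1-\cos(vt))$ and use $\cosh(v\eta)\ge1$ to get
$$
\mathrm{Re}\int_{\R}(1-\cos(v(t+i\eta)))\,\nu_{u}(dv)\ge\int_{0}^{u}\psi_{r}(t)\,dr-\int_{\R}(\cosh(v\eta)-1)\,\nu_{u}(dv).
$$
Since $|v\eta|\le R_{u}/(8R_{u})=1/8$ on $\supp\nu_{u}$ we have $\cosh(v\eta)-1\le(v\eta)^{2}$, so by the upper bound of Lemma \ref{2moment} the last integral is $\le\eta^{2}m_{u}\le(8R_{u})^{-2}R_{u}^{2}u^{\eps}=u^{\eps}/64\le\tau^{\eps}/64$; therefore $|e^{-\int_{\R}(1-\cos(v(t+i\eta)))\nu_{u}(dv)}|\le e^{\tau^{\eps}/64}e^{-\int_{0}^{u}\psi_{r}(t)\,dr}$. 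Combining this with $|i(t+i\eta)|^{k}\le c_{k}(|t|^{k}+\eta^{k})$ and Lemma \ref{moment1} (applied with $\gamma=k$ and with $\gamma=0$) I obtain $\bigl|\frac{d^{k}}{dw^{k}}g_{u}(w)\bigr|\le c_{k}e^{-|w|/(8R_{u})}(h^{-1}(1/u)^{-(k+1)}+R_{u}^{-k}h^{-1}(1/u)^{-1})$. Finally, $u^{\eps-1}\le u^{-1}$ for $u\le1$ and $h^{-1}$ decreasing give $R_{u}\ge h^{-1}(1/u)\ge u^{\eps}h^{-1}(1/u)$, while for $1\le u\le\tau$ both $R_{u}$ and $u^{\eps}h^{-1}(1/u)$ are pinched between positive constants depending only on $\tau,\alpha,\eps,C_{1}$; in both cases $R_{u}^{-k}\le c(u^{\eps}h^{-1}(1/u))^{-k}$ and $h^{-1}(1/u)^{-k}\le c(u^{\eps}h^{-1}(1/u))^{-k}$, and one last replacement of $h^{-1}(1/u)^{-1}$ by $g_{u}(0)$ via Corollary \ref{gu(0)estimate} yields \eqref{der_est_1}. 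For $\tau=\tau_{0}=(h(1)\vee1)^{-1/(1-\eps)}\le1$ one always has $u\le1$ and uses the $\tau_{0}$-versions of Lemmas \ref{2moment}, \ref{moment1} and of Corollary \ref{gu(0)estimate}, so that $c_{k}$ depends only on $k,C_{1},\alpha,\eps,h(1)$.

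I expect the main obstacle to be the rigorous justification of the contour deformation: verifying that the integrand is entire (which rests on the compact support of $\nu_{u}$ and on $m_{u}<\infty$) and that the vertical segments of the rectangle contribute nothing in the limit. Once the shifted representation is in hand, the crucial analytic step is the lower bound on $\mathrm{Re}\int_{\R}(1-\cos(v(t+i\eta)))\,\nu_{u}(dv)$ together with the bound $\int_{\R}(\cosh(v\eta)-1)\,\nu_{u}(dv)=O(1)$, which is exactly where the choice $\eta=\operatorname{sgn}(w)/(8R_{u})$ and the second-moment estimate $m_{u}\le R_{u}^{2}u^{\eps}$ are used; the remainder is bookkeeping of constants, genuinely delicate only in the $\tau_{0}$-regime.
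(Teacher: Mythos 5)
Your proof is correct, but it takes a genuinely different route from the paper's. Both arguments use contour shifting, but the paper shifts to the \emph{optimal} (saddle-point) height $\xi_0 = \arg\min_\xi v_u(\xi,w)$: this requires Lemma \ref{theta0} for the bound $|\xi_0|\le 2|w|/m_u$, plus the estimate $v_u(\xi_0,w)\le \frac{em_u}{2R_u^2}-\frac{|w|}{4R_u}$ which the paper imports from the proof of Lemma 4.2 in \cite{S2017}. It then picks up an extra polynomial factor $(1+c|w|/R_u)^k$ that is absorbed by degrading the exponential rate from $4R_u$ to $8R_u$. You instead shift by the \emph{fixed} amount $\eta=\operatorname{sgn}(w)/(8R_u)$: since $|v\eta|\le 1/8$ on $\supp\nu_u$, the elementary bound $\cosh x - 1\le x^2$ combined with Lemma \ref{2moment} gives $\int(\cosh(v\eta)-1)\,\nu_u(dv)\le \eta^2 m_u\le u^\eps/64$, which is $O(1)$, and the rate $e^{-|w|/(8R_u)}$ emerges immediately, with no absorption step and no need for Lemma \ref{theta0} or the external reference. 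The trade-off: your argument is more self-contained and elementary (you just invoke Lemmas \ref{2moment}, \ref{moment1}, Corollary \ref{gu(0)estimate}); the paper's saddle-point version is the one that in other contexts can squeeze out sharper decay, but here both give the same final inequality. Your bookkeeping $R_u\ge h^{-1}(1/u)\ge u^\eps h^{-1}(1/u)$ for $u\le 1$ and the compactness argument for $1\le u\le\tau$ are fine, and the justification of entirety and vanishing of the vertical sides is correct.
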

\begin{proof}
The proof of (\ref{der_est_3}) follows immediately from Lemma \ref{moment1} and Corollary  \ref{gu(0)estimate}.

Let
$$Q_{u}(\xi,w)= i \xi w - \int_0^u \psi_{r}(\xi) \, dr,\  u>0,\ \xi, w \in \R. $$
We have
$$
g_{u}(w) %=
%\frac{1}{2 \pi} \int_{\R} e^{iwz}
%e^{ \int_{\R} \left(e^{iz v} - 1\right) \, d\nu_{u}(v) } \, dz
= \frac{1}{2 \pi} \int_{\R} e^{Q_u(\xi,w)} \, d \xi.
$$
For any $k \in \N_0$ we get
$$\frac{d^k}{d w^k} g_{u}(w)
= \frac{1}{2 \pi} \int_{\R} i^k \xi^k   e^{Q_u(\xi,w)} d \xi$$

Recall that  $\xi_0= \text {arg min}_\xi v_u(\xi, w)$. We proceed in a similar way as in \cite{KS2012} where a bound on the transition density was derived. Note that functions $\psi_r$ and $Q_u(\cdot,w)$ can be extended analytically to $\C$.
 Applying the Cauchy-Poincare theorem (justification is exactly the same as in the proof of Theorem 6 of \cite{KS2012})
we claim that
$$\left|\frac{d^k}{d w^k} g_{u}(w) \right|=
\frac{1}{2 \pi} \left|\int_{\R} (\xi+i\xi_0)^k e^{Q_u(\xi+i\xi_0,w)}d \xi\right|.$$

Observe that
$$
\text{Re}\, Q_{u}(\xi+i\xi_0,w)
\le v_u(\xi_0,w)- \int_0^u \psi_r(\xi) \, dr.
$$

Hence
\begin{eqnarray*}\left|\frac{d^k}{d w^k} g_{u}(w)\right|
&\le& \int_{\R} (|\xi|+|\xi_0|)^k
e^{v_u(\xi_0,w)- \int_0^u \psi_r(\xi) \, dr} d \xi\\
&\le& 2^k e^{v_u(\xi_0,w)}
\int_{\R} (|\xi|^k+|\xi_0|^k)
e^{- \int_0^u \psi_r(\xi) \, dr} d \xi
\end{eqnarray*}

Now, we will show that for any $w \in \R$ we have
\begin{eqnarray}
%\label{v_u_bound}
{v_u(\xi_0,w)}  &\le&
{\frac{ em_u}{2R^2_u}} {-\frac{|w|}{4 R_u}} \label{v_u_bound}\\
&\le& \frac{ e}2u^\epsilon {-\frac{|w|}{4 R_u}}.\label{v_u_bound1}
\end{eqnarray}
%We observe that
If $|w| \le \frac{2 em_u}{R_u}$, then
$${\frac{ em_u}{2R^2_u}} {-\frac{|w|}{4 R_u}}\ge 0= v_u(0,w)\ge v_u(\xi_0,w),$$
which proves (\ref{v_u_bound}) in this case.

If $|w| \ge \frac{2 em_u}{R_u}$,
to prove (\ref{v_u_bound}),  we use the arguments as in \cite[proof of Lemma 4.2]{S2017}, so we omit the details.

Next, observe that (\ref{v_u_bound1}) follows from Lemma \ref{2moment}.
%We have $\cosh(s) - 1 \le s^2 e^{|s|}$ for $s \in \R$ so
%\begin{eqnarray*}
%v_u(\xi,w) &=&
%- \xi w +  \int_{\R} (\cosh(\xi v) - 1) \, d\nu_r^{(\alpha)}(v) \\
%&\le&
%- \xi w +\int_{\R} \xi^2 v^2 e^{|\xi| |v|} \, d\nu_r^{(\alpha)}(v) \\
%&\le&
%- \xi w + \xi^2 e^{|\xi| R_u}  \int_{\R} v^2 \, d\nu_r^{(\alpha)}(v)\\
%&=& - \xi w + \xi^2 e^{|\xi| R_u}m_u.
%\end{eqnarray*}

%We choose $s > 0$ such that
%$$
%s R_u e^{s R_u} =
%\frac{|w| R_u}{2m_u}.
%$$
%Put $T_u^{(\alpha)} = s R_u$. We have
%$$
%T_u^{(\alpha)} e^{T_u^{(\alpha)}} =
%\frac{|w| R_u}{2m_u}\ge e,
%$$
%since $|w| \ge \frac{2 em_u}{R_u}$,
%so $T_u^{(\alpha)} \ge 1$. Hence
%$$
%e^{T_u^{(\alpha)}} \le T_u^{(\alpha)} e^{T_u^{(\alpha)}} \le
%e^{2 T_u^{(\alpha)}},
%$$
%$$
%T_u^{(\alpha)} \le \log\left(T_u^{(\alpha)} e^{T_u^{(\alpha)}}\right) \le
%2 T_u^{(\alpha)},

We also have, by (\ref{moments_g}),
$$
\int_{\R} |\xi|^k e^{- \int_0^u \psi_r(\xi) \, dr} d \xi
\le \frac {c_k}{ h^{-1}(1/u)^{k+1}}.
$$
By Lemma \ref{theta0},
$$
\int_{\R} |\xi_0|^k
e^{- \int_0^u \psi_r(\xi) \, dr} d \xi
\le c_k |w|^k  \frac c{ m^k_uh^{-1}(1/u)}.
$$
Hence
\begin{eqnarray*}
\left|\frac {d^k}{d w^k} g_{u}(w) \right|
&\le& \frac {c_k}{ h^{-1}(1/u)^{k+1}}\left(1+ \frac {R_u h^{-1}(1/u)}{m_u} \frac {|w|}{R_u}\right)^k e^{-{\frac{|w|}{4 R_u}}}\\
&\le& \frac {c_k}{ h^{-1}(1/u)^{k+1}}\left(1+ \frac {R_u h^{-1}(1/u)}{m_u}\right)^k  e^{-{\frac{|w|}{8 R_u}}}.
\end{eqnarray*}
Next, we observe that, by Lemma  \ref{2moment} and  since $h^{-1}(1/u)\le \left(\frac{h^{-1}(1/\tau)}{h^{-1}(1)}\vee1\right) R_{u}$, we obtain

$$\frac {R_u h^{-1}(1/u)}{m_u}\le  c\frac { h^{-1}(1/u)}{u^\eps R_u}\le  \frac c {u^\eps}  $$
which implies

$$\left|\frac {d^k}{d w^k} g_{u}(w) \right|
\le \frac {c_k}{u^{k\eps} h^{-1}(1/u)^{k+1}}  e^{-{\frac{|w|}{8 R_u}}}.$$

Finally, we note that all the constants in the case $\tau=\tau_0= (h(1)\vee 1)^{-\frac1{1-\eps}}$ are dependent only on $k, C_1, \alpha, \eps,h(1)$. It follows from the appropriate parts of  Lemma  \ref{2moment} and Lemma \ref{moment1}.
\end{proof}

Let  for $0<u<\infty$ and  $f\in C^2(\R)$
$$
 K_u f(w) =  \mathrm{P.V.} \int_{|z| \le R_u} (f(w + z) - f(w)) \, \nu(dz), \ w\in \R.
$$
For $f \in L^1(\R)$ and $\xi \in \R$ denote $\hat{f}(\xi) = \int_{\R} e^{-i\xi x} f(x) \, dx$.
If $f\in C^2(\R)\cap L^1(\R) $ is such that $\limsup_{|x|\to \infty}|x|^{1+\delta}|f''(x)|<\infty$ for some
 $\delta>0$ we observe that $K_u f \in  L^1(\R)$ and
\begin{equation}
\label{fourier}
\widehat{K_u f}(\xi)= -\psi_u(\xi) \hat{f}(\xi),\ \xi\in \R.
\end{equation}
Next, by \eqref{der_est_1}, the above requirements are satisfied for $f=g_u$.

 We have $\hat{g}_u(\xi) = e^{-\int_0^u \psi_r(\xi) \, dr}$, so for any $0 < u <\infty $ and $\xi \in \R$
$$
\frac{\partial}{\partial u} \hat{g}_u(\xi) + \psi_u(\xi) \hat{g}_u(\xi) = 0.
$$
By \eqref{fourier}, it follows that for any $0 < u <\infty$ and $w \in \R$ we have
\begin{equation}
\label{1dim_equation1}
\frac{\partial}{\partial u} \hat{g}_u(w) - \widehat {K_u g_u}(w) = 0.
\end{equation}

Next we claim  that
$$\frac{\partial}{\partial u} g_u(w)= \frac1{2\pi}\int_{\R} e^{iwz} \frac{\partial}{\partial u}\hat{g_u}(z) \, dz.$$
This follows from the estimate
$$|\frac{\partial}{\partial u}\hat{g_u}(z)|= \psi_u(z) \hat{g}_u(z) \le e^{(2/\eps) u^{ \eps}} \psi(z)e^{-u\psi(z)},$$
%\le(c/u) e^{-c_1u|z|^\alpha }$$
which is implied by  \eqref{H2}.
Next we use \eqref{1dim_equation1}
 to get
\begin{equation} \label{1dim_equation}\frac{\partial}{\partial u} g_u(w)= \frac1{2\pi}\int_{\R} e^{iwz} \widehat{K_u g_u}(z) \, dz= {K_u g_u}(w),  \end{equation}
where we have equality almost surely. By continuity we have it everywhere.

\section{Parametrix construction}\label{Parametrix construction}

This highly technical section contains detailed proofs of a number of  facts and estimates needed to provide the construction of the fundamental solution  $p_{t,s}(x,y)$, which was explained earlier in Section \ref{s3}. The construction demands many auxiliary results, in particular key  estimates of the zero-order approximation term $p_{t,s}^{(0)}(x,y)$ and the kernel $q_{t,s}^{(0)}(x,y)$ contained in Lemma \ref{py_integrable} and Lemma  \ref{q0_estimates_lemma}, respectively.

In this section we adopt the convention that constants denoted by $c$ (or $c_1, c_2, \ldots$) may change their value from one use to the next. Unless is explicitly stated otherwise, we understand that constants denoted by $c$ (or $c_1, c_2, \ldots$) depend only on $d$, $\alpha$, $\beta$, $\gamma_1$, $\gamma_2$, $\gamma_3$, $C_1, \ldots, C_7$. We  also understand that they may depend on on the choice of the constant $\eps$. We write $c = c(a,b, \ldots)$ when $c$ depends on the above constants and additionally on $a$, $b$, \ldots. For a square matrix $A$
we denote by $|A|$ its standard operator norm. The standard inner product for $x, y \in \R^d$ we denote by $x y$.
\begin{remark}
\label{remark_epsilon}
Our choice of  $p_{t,s}^{(0)}(x,y)$ and the kernel $q_{t,s}^{(0)}(x,y)$ will depend on the given value of $\eps>0$. In order to have required  bounds  involving  these objects, we have to impose a restriction on $\eps$. Namely throughout the rest of the paper we assume that $\eps\le \eps_0$, with $\eps_0$ defined below.

 In the case (A) we set
\begin{eqnarray*}\eps_0&=& \min \left\{ \frac{\gamma_1 \alpha}{2(d+3)\beta}, \frac{\gamma_2 \alpha}{2(d+3)},
\frac{  \gamma_1/(\beta(1+\gamma_1))}{2+2/\alpha +  \gamma_1/(\beta(1+\gamma_1))  },\frac {\gamma_3-1}{\gamma_3-1+\beta(1+(d+1)/\alpha)}\right\}, \end{eqnarray*}
while in the case (B) we pick
\begin{eqnarray*}\eps_0&=& \min \left\{ \frac{(1 + \gamma_1)/\beta - 1/\alpha}{2(d+3)/\alpha}, \frac{\gamma_2 - \left(1/\alpha - 1/\beta\right)}{2(d+3)/\alpha},
\frac{ 1/\beta - 1/((1+\gamma_1)\alpha)}{2+2/\alpha +1/\beta - 1/((1+\gamma_1)\alpha)},\right.\\
&& \quad \quad \quad \left.
\frac {\gamma_3-\beta/\alpha}{\gamma_3+\beta(1+d/\alpha)}\right\}. \end{eqnarray*}
Due to our assumptions $\eps_0$ is positive.
\end{remark}
%Recall that $\eps \in (0,\eps_0]$ is fixed ($\eps_0$ is defined in Remark \ref{remark_epsilon}).

 For $i \in \{1,\ldots,d\}$, $u > 0$ put
$$
R^{(i)}_u=R^{(i)}_u(\eps)= h_i^{-1}\left(\frac 1{u^{1-\eps}}\right).
$$

Let $g_{u}^{(i)}= g_{u}^{(i,\eps)}$ be the truncated density corresponding to $\psi_i$ according to the truncation procedure described in Section \ref{1dim}.

For any $u > 0$, $x \in \R^d$ define
$$
G_u(x)= g_{u}^{(1)}(x_1)g_{u}^{(2)}(x_2)\cdots g_{u}^{(d)}(x_d).
$$

Let $h^{-1}_{max}(r)= \max_j h^{-1}_{j}(r)$ and let  $h^{-1}_{min}(r)= \min_j h^{-1}_{j}(r)$.
Let $M_{u}$ be a diagonal $d$ - dimensional square matrix with the diagonal  $8(R^{(1)}_{u}, \dots R^{(d)}_{u})$. The multiplier $8$ is only for the notational convenience.
Note that     $|M_{u}|= 8\max_i R^{(i)}_u= 8h^{-1}_{max}\left({u^{-1+\eps}}\right)$ and   $|M^{-1}_{u}|= \frac18\max_i \frac1{R^{(i)}_u}= \frac18\frac1{h^{-1}_{min}\left({u^{-1+\eps}}\right)}$.

We recall that for $u>0$ we defined
$$
\kappa(u) = (u, h_1(1), \ldots, h_d(1),  h^{-1}_1(1), \ldots, h^{-1}_d(1), h_1^{-1}(1/u), \ldots, h_d^{-1}(1/u)).
$$

Observe that for every $\tau>0$
\begin{equation}\label{Mnorm} \frac1c u^{(\eps-1)/\beta} \le |M^{-1}_{u}|\le  c u^{(\eps-1)/\alpha},\ 0<u\le \tau, \end{equation}
and
\begin{equation}\label{Mnorm1} \frac1c u^{(1-\eps)/\alpha } \le |M_{u}|\le  c u^{(1-\eps)/\beta},\ 0<u\le \tau, \end{equation}
where $c$  depends also on $\tau$ trough the vector $\kappa(\tau)$, that is in our notation $c= c(\kappa(\tau))$. This follows from Lemma  \ref{scal_h}.
Throughout the  whole section we set
$$\tau_0=  \min_{k\le d}\left\{ (h_k(1)\vee 1)^{-\frac1{1-\eps}}\right\}.$$
 Then the constant $c$ in (\ref{Mnorm}) and  (\ref{Mnorm1}) depends on the vector $\bar{h}=(h_1(1), \dots, h_d(1)) $, if $\tau\le \tau_0$, in our convention we write $c=c(\bar{h})$. Again, this follows from Lemma  \ref{scal_h}.

The following lemma follows easily from Corollary \ref {gu(0)estimate} and Lemma \ref{exp_bound_derivative}.
\begin{lemma}
\label{Gest}

Fix $\tau > 0$. For any $u \in (0,\tau]$, $x \in \R^d$ we have
%For any $u > 0$, $x \in \R^d$ we have
$$ c^{-1} \prod_{i=1}^d \frac1{h_i^{-1}(1/u)}\le G_u(0)\le c \prod_{i=1}^d \frac1{h_i^{-1}(1/u)},$$
$$G_u(x) \le  c  G_u(0)  e^ {-|xM^{-1}_{u}|}, $$
%-$$G_u(x) \le c G_u(0) e^{- \sum^d_{j=1}{\frac{|x_j|}{8 R^{(i)}_u}}}\le c  G_u(0)  e^{- {|xM^{-1}_{u}|}. $$
%$$
%\left|\frac{\partial}{\partial x_i } G_u(x)\right|
%\le  \tilde{c}_1 G_u(0) \frac1{h_i^{-1}(1/u)}. $$
$$
\left|\frac{\partial}{\partial x_i } G_u(x)\right|
\le c G_u(0) \frac1{h_i^{-1}(1/u)}u^{-\eps} e^{- |xM^{-1}_{u}|}, $$
%$$
%\left|\frac{\partial}{\partial x_i } G_u(x)\right|
%\le  c G_u(0) \frac1{h_i^{-1}(1/u)}u^{-\eps} e^{- \sum^d_{j=1}{\frac{|x_j|}{8 R^{(i)}_u}}}\le c G_u(0) \frac1{h_i^{-1}(1/u)}u^{-\eps} e^{- {|xM^{-1}_{u}|}. $$
%$$
%\left|\frac{\partial^2}{\partial x_i \partial x_j} G_u(x)\right|
%\le  \tilde{c}_1 G_u(0) \frac1{h_i^{-1}(1/u)h_j^{-1}(1/u)}.
%$$
$$
\left|\frac{\partial^2}{\partial x_i \partial x_j} G_u(x)\right|
%\le c_1 G_u(0) \frac1{h_i^{-1}(1/u)h_j^{-1}(1/u)} u^{-2\eps} e^{- \sum^d_{j=1}{\frac{|x_j|}{8 R^{(i)}_u}}}
 \le c G_u(0) \frac1{h_i^{-1}(1/u)h_j^{-1}(1/u)}u^{-2\eps} e^{- {|xM^{-1}_{u}|}},
$$
%$$G_u(x) \le \tilde{c}_1 G_u(0) e^{-\tilde{c} {\frac{|x|}{ u^{{(1-\eps)/\beta}}}}$$
%and
%$$
%\left|\frac{\partial^2}{\partial x_i \partial x_j} G_u(x)\right|\le
%\tilde{c}_1 G_u(0) \frac1{h_i^{-1}(1/u)h_j^{-1}(1/u)} u^{-2\eps} e^{-\tilde{c} {\frac{|x|}{ u^{{(1-\eps)/\beta}}}}.
%$$
where the constant $c$ depends additionally on $\tau$, that is $c=c(\kappa(\tau))$. If $\tau\le \tau_0$, then $c= c(\bar{h})$.
\end{lemma}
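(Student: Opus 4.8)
The plan is to derive all four displayed estimates from the one-dimensional results of Section \ref{1dim} applied coordinatewise, using the product structure $G_u(x)=\prod_{i=1}^d g_u^{(i)}(x_i)$. First I would establish the two-sided bound for $G_u(0)$: since $G_u(0)=\prod_{i=1}^d g_u^{(i)}(0)$, apply Corollary \ref{gu(0)estimate} to each factor $g_u^{(i)}(0)$, which gives $g_u^{(i)}(0)\asymp 1/h_i^{-1}(1/u)$ with constants depending only on $\kappa(\tau)$ (or on $\bar h$ when $\tau\le\tau_0$, by the corresponding refined statements in Lemma \ref{moment1}); taking the product over $i$ yields the claimed two-sided bound. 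This also shows $G_u(0)>0$, so the remaining estimates can be phrased as ratios against $G_u(0)$.

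Next, for the Gaussian-type decay bound, I would use \eqref{der_est_1} of Lemma \ref{exp_bound_derivative} with $k=0$: for each coordinate $g_u^{(i)}(x_i)\le c\, e^{-|x_i|/(8R_u^{(i)})} g_u^{(i)}(0)$. Multiplying over $i$ gives $G_u(x)\le c^d\, G_u(0)\prod_{i=1}^d e^{-|x_i|/(8R_u^{(i)})}=c^d G_u(0)\, e^{-\sum_i |x_i|/(8R_u^{(i)})}$. Since $M_u$ is the diagonal matrix with entries $8R_u^{(i)}$, one has $|xM_u^{-1}|=\bigl(\sum_i x_i^2/(8R_u^{(i)})^2\bigr)^{1/2}\le \sum_i |x_i|/(8R_u^{(i)})$, so the exponential sum dominates $e^{-|xM_u^{-1}|}$, giving the second displayed inequality.

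For the first and second derivative bounds I would differentiate the product. We have $\partial_{x_i} G_u(x) = \bigl(\tfrac{d}{dw}g_u^{(i)}\bigr)(x_i)\prod_{j\ne i} g_u^{(j)}(x_j)$; bounding the $i$-th factor by \eqref{der_est_1} with $k=1$, namely $|\tfrac{d}{dw}g_u^{(i)}(x_i)|\le c\,(u^\eps h_i^{-1}(1/u))^{-1} e^{-|x_i|/(8R_u^{(i)})} g_u^{(i)}(0)$, and bounding the remaining factors by \eqref{der_est_1} with $k=0$ as above, then multiplying, produces exactly $c\,G_u(0)\,(h_i^{-1}(1/u))^{-1} u^{-\eps} e^{-|xM_u^{-1}|}$ after the same passage from the coordinatewise exponential sum to $|xM_u^{-1}|$. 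The mixed second derivative is handled the same way: for $i\ne j$ both the $i$-th and $j$-th factors get the $k=1$ bound and the rest the $k=0$ bound; for $i=j$ one uses \eqref{der_est_1} with $k=2$ on the single factor, noting $(u^\eps h_i^{-1}(1/u))^{-2}=(h_i^{-1}(1/u)h_j^{-1}(1/u))^{-2}u^{-2\eps}$ when $i=j$ — one must check that in both cases the stated bound with $u^{-2\eps}$ and the product $h_i^{-1}(1/u)h_j^{-1}(1/u)$ is what comes out, which it does since $k=2$ contributes $u^{-2\eps}$ and two factors of the respective $h^{-1}$.

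The only genuinely delicate point — and thus the main obstacle — is bookkeeping the constants' dependence: one must verify that in all invocations of Lemma \ref{moment1}, Corollary \ref{gu(0)estimate}, and Lemma \ref{exp_bound_derivative} the constants depend only on the quantities listed there ($C_1$, $\alpha$, $\eps$, $\tau$, and the one-dimensional data $h_i^{-1}(1/\tau)$, $h_i^{-1}(1)$, $h_i(1)$), so that after taking the product over $i$ the resulting constant depends only on $\kappa(\tau)$, and in the range $\tau\le\tau_0$ only on $\bar h=(h_1(1),\dots,h_d(1))$ via the refined statements of those lemmas (valid since $\tau_0=\min_k (h_k(1)\vee1)^{-1/(1-\eps)}$ is at most the per-coordinate threshold $(h_i(1)\vee1)^{-1/(1-\eps)}$). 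Everything else is a routine coordinatewise factorization; I would present it compactly, citing \eqref{der_est_1} and Corollary \ref{gu(0)estimate} for each factor and the elementary inequality $|xM_u^{-1}|\le\sum_i |x_i|/(8R_u^{(i)})$ for the passage to the matrix-norm exponent.
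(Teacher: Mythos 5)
Your proposal is correct and follows essentially the same route as the paper's proof: both derive the $G_u(0)$ bound from Corollary \ref{gu(0)estimate} coordinatewise, and both obtain the decay and derivative estimates by applying \eqref{der_est_1} of Lemma \ref{exp_bound_derivative} to each factor of the product and using the elementary inequality $|xM_u^{-1}|\le\sum_j|x_j|/(8R_u^{(j)})$ to collapse the coordinatewise exponentials into $e^{-|xM_u^{-1}|}$. You simply spell out the case bookkeeping (which $k$ to use on which factor) and the constant-dependence tracking more explicitly than the paper's terse two-line proof does.
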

\begin{proof} The first estimate follows directly from Corollary \ref {gu(0)estimate}.

All the remaining  estimates follow directly from Lemma \ref{exp_bound_derivative} and the observation
$$\sum^d_{j=1}{\frac{|x_j|}{8 R^{(i)}_u}}\ge \left[\sum^d_{j=1}\left(\frac{|x_j|}{8 R^{(i)}_u}\right)^2\right]^{1/2}
= |xM^{-1}_{u}|. $$
\end{proof}

By (\ref{h_inv1}) and the above lemma we obtain the following corollary.
\begin{corollary}  \label{Gu(0)estimate}
 Fix $\tau > 0$. For any $u \in (0,\tau]$
$$
G_{u\lambda}(0)\leq c  \lambda^{-d/\alpha} G_{u}(0), \quad \lambda\le 1,
$$
where $c = c(\kappa(\tau))$. If $\tau\le \tau_0$, then $c= c(\bar{h})$.
\end{corollary}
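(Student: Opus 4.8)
The plan is to read off the estimate directly from the two-sided on-diagonal bound for $G_u(0)$ in Lemma~\ref{Gest} together with the scaling inequality \eqref{h_inv1}, applied in each coordinate separately. First I note that since $0<\lambda\le 1$ and $0<u\le\tau$ we have $u\lambda\le u\le\tau$, so Lemma~\ref{Gest} is available at both time instants $u\lambda$ and $u$; it gives
$$
G_{u\lambda}(0)\le c\prod_{i=1}^{d}\frac{1}{h_i^{-1}(1/(u\lambda))},\qquad G_{u}(0)\ge c^{-1}\prod_{i=1}^{d}\frac{1}{h_i^{-1}(1/u)},
$$
with $c=c(\kappa(\tau))$ (and $c=c(\bar{h})$ when $\tau\le\tau_0$).

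Dividing the two displays yields
$$
\frac{G_{u\lambda}(0)}{G_{u}(0)}\le c^{2}\prod_{i=1}^{d}\frac{h_i^{-1}(1/u)}{h_i^{-1}(1/(u\lambda))}.
$$
I would then apply \eqref{h_inv1} with $h=h_i$ to each factor. This is legitimate because in both cases \textbf{(A)} and \textbf{(B)} every exponent $\psi_i$, equivalently every Pruitt function $h_i$, lies in $\mathrm{WSC}(\alpha,\beta)$ with the \emph{same} constants $\alpha$ and $C_1$, so \eqref{h_inv1} gives
$$
\frac{h_i^{-1}(1/u)}{h_i^{-1}(1/(\lambda u))}\le C_1^{-1/\alpha}\lambda^{-1/\alpha}\bigl(h_i^{-1}(\tau^{-1})\vee 1\bigr)^{2},\qquad i=1,\dots,d.
$$
Multiplying these $d$ inequalities produces $\lambda^{-d/\alpha}$ times a constant that depends only on $d$, $\alpha$, $C_1$ and on $h_1^{-1}(\tau^{-1}),\dots,h_d^{-1}(\tau^{-1})$, all of which are entries of $\kappa(\tau)$; combining with the factor $c^2$ above we obtain $G_{u\lambda}(0)\le c\lambda^{-d/\alpha}G_u(0)$ with $c=c(\kappa(\tau))$, which is the asserted bound.

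Finally I would track the constant in the regime $\tau\le\tau_0$. There $\tau^{-1}\ge\tau_0^{-1}\ge(h_i(1)\vee 1)^{1/(1-\eps)}\ge h_i(1)$, so monotonicity of $h_i^{-1}$ gives $h_i^{-1}(\tau^{-1})\le h_i^{-1}(h_i(1))=1$; hence all factors $(h_i^{-1}(\tau^{-1})\vee 1)$ equal $1$ and drop out, and together with the $c(\bar{h})$ version of Lemma~\ref{Gest} this yields $c=c(\bar{h})$. I do not expect any genuine obstacle: the argument is pure bookkeeping of constants, and the only point worth stating explicitly is the uniformity of the weak scaling constants $\alpha,C_1$ across the coordinates, which is guaranteed by assumptions \textbf{(A)}/\textbf{(B)}.
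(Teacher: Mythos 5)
Your proof is correct and follows exactly the route the paper indicates (the paper simply remarks ``By \eqref{h_inv1} and the above lemma we obtain the following corollary''): you combine the two-sided on-diagonal bound for $G_u(0)$ from Lemma~\ref{Gest} with the coordinate-wise scaling inequality \eqref{h_inv1}, and your bookkeeping of the constants, including the observation that $h_i^{-1}(\tau^{-1})\le 1$ when $\tau\le\tau_0$, is accurate.
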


For $0 \le t < s$ and $x, w, y \in \R^d$ put
\begin{equation}\label{densityrep}
p_{t,s}^x(w) = \frac1{|\det A_s(x)|}G_{s-t}(w (A_s^{-1}(x))^T).
\end{equation}
and
$$
L_{t,s}^y f(x) = \sum_{k = 1}^d \mathrm{P.V.} \int_{|u| < R^{(k)}_{s-t}} [f(x + u e_k A_s^T(y)) - f(x)] \, \nu_k (du).
$$
By (\ref{1dim_equation}), for any $0 \le t < s$ and $w, y \in \R^d$, we have
$$
\left(\frac{\partial}{\partial t} + L_{t,s}^y\right) p_{t,s}^y(w) = 0.
$$

We choose our zero-order approximation $p_{t,s}^{(0)}$ in  parametrix construction as
\begin{equation}
\label{p_t_s_0}
p_{t,s}^{(0)}(x,y) = p_{t,s}^y(x-y).
\end{equation}
Before we come to crucial estimates of $q_{t,s}^{(0)}$ (defined in Section \ref{s31}) we need to show some auxiliary results on $p_{t,s}^w$.

Let
\begin{eqnarray*}
\|A\| &=& \max\left(\sup_{t > 0, x\in \R^d}  |A_t^T(x)|, \sup_{t > 0, x \in \R^d} | A_t^{-1}(x)|, \sup_{t > 0, x, y\in \R^d, x \ne y}
\frac{| A_t^T(x)-A_t^T(y)|}{|x - y|^{\gamma_1}},\right. \\
&& \left.  \sup_{t > 0, x, y\in \R^d, x \ne y}
\frac{|(A_t^T(x))^{-1}-(A_t^T(y))^{-1}|}{|x - y|^{\gamma_1}},
\sup_{s > t > 0, x\in \R^d, }
\frac{|(A_t^T(x))^{-1}-(A_s^T(x))^{-1}|}{(s - t)^{\gamma_2}}
\right).
\end{eqnarray*}

 It is clear that $\|A\|\ge 1$. Note that $\|A\|$ may be bounded from above by a constant which depends only on
%$d$, $\alpha$, $\beta$, $\gamma_1$, $\gamma_2$, $\gamma_3$, $C_1, \ldots, C_7$, $h_1(1), \ldots, h_d(1)$.
$d$, $C_3, \ldots, C_6$. Using standard calculations and the conditions (\ref{bounded}, \ref{determinant}, \ref{Holder}, \ref{Lipschitz}) we have
$$ ||A||\le (C_3 +C_5 +C_6)d + \frac{C^{d-1}_3}{C_4} d +(C_5+C_6)\frac{C^{d-1}_3}{C_4} d^2.$$

From Lemma \ref{Gest}, Lemma \ref{scal_h} and (\ref{densityrep}) we easily get the following corollary.
\begin{corollary}
\label{general_matrix}
Let $\tau>0$.  Then there is a constant $c= c(\kappa(\tau))$ such that for $0  < s-t\le \tau $ and $y,w\in \R^d$
\begin{equation}\label{densityest}
\left| p_{t,s}^y(w)\right|
 \le c  G_{s-t}(0)  e^{-|w (A_s^{-1}(y))^TM^{-1}_{s-t}|},
\end{equation}
\begin{equation}\label{densityest1}
\left| p_{t,s}^y(w)\right|
\le cG_{s-t}(0) e^{-\frac{1}{||A||} \frac{|w |}{ |M_{s-t}| }},
\end{equation}
\begin{equation} \label{gradest}
\left|\nabla p_{t,s}^y(w)\right|
\le c  |M^{-1}_{s-t}|(s-t)^{-\eps(1+ 1/\alpha)} G_{s-t}(0) e^{-|w (A_s^{-1}(y))^TM^{-1}_{s-t}|} ,
\end{equation}
\begin{equation} \label{hessianest}
\left|\nabla^2 p_{t,s}^y(w)\right|
\le c|M^{-1}_{s-t}|^2(s-t)^{-\eps(2+ 2/\alpha)} G_{s-t}(0) e^{-|w (A_s^{-1}(y))^TM^{-1}_{s-t}|},
\end{equation}
\begin{equation} \label{hessianest1}
\left|\nabla^2 p_{t,s}^y(w)\right|
\le c |M^{-1}_{s-t}|^2(s-t)^{-\eps(2+ 2/\alpha)} G_{s-t}(0) e^{-\frac{1}{||A||} \frac{|w |}{ |M_{s-t}| }}.
\end{equation}
If $\tau\le \tau_0$, then $c= c(\bar{h})$.
\end{corollary}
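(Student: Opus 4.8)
The plan is to propagate the estimates of Lemma~\ref{Gest} through the bounded linear change of variables $w\mapsto w(A_s^{-1}(y))^T$ built into \eqref{densityrep}. Writing $B=A_s^{-1}(y)$ and $u=s-t$, so that $p_{t,s}^y(w)=|\det A_s(y)|^{-1}G_u(wB^T)$, the prefactor is bounded by $1/C_4$ via \eqref{determinant}, while \eqref{bounded}--\eqref{Lipschitz} give $|B|\le\|A\|$, $|B^T|\le\|A\|$ and $|A_s^T(y)|\le\|A\|$, with $\|A\|$ depending only on $d,C_3,\dots,C_6$. Since all constants from Lemma~\ref{Gest} and from Corollary~\ref{h_inv0} depend only on $\kappa(\tau)$ (and only on $\bar{h}$ when $\tau\le\tau_0$), the constants produced along the way will have the same dependence.

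First I would derive \eqref{densityest} by plugging $x=wB^T$ into the pointwise bound $G_u(x)\le cG_u(0)e^{-|xM_u^{-1}|}$ of Lemma~\ref{Gest}; this is immediate. To pass to \eqref{densityest1} I would prove the elementary matrix inequality $|wB^TM_u^{-1}|\ge\frac1{\|A\|}\frac{|w|}{|M_u|}$: for any row vector $v$ one has $|v|=|(vM_u^{-1})M_u|\le|vM_u^{-1}|\,|M_u|$, hence $|vM_u^{-1}|\ge|v|/|M_u|$, and taking $v=wB^T$ together with $(B^T)^{-1}=A_s^T(y)$ gives $|w|\le|wB^T|\,|A_s^T(y)|\le\|A\|\,|wB^T|$; substituting this lower bound for the exponent in \eqref{densityest} yields \eqref{densityest1}. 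The bound \eqref{hessianest1} is obtained from \eqref{hessianest} in exactly the same way, by trading $e^{-|wB^TM_u^{-1}|}$ for $e^{-\|A\|^{-1}|w|/|M_u|}$.

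For the derivative estimates I would differentiate under the substitution: the chain rule gives $|\nabla p_{t,s}^y(w)|\le C_4^{-1}\|A\|\,|(\nabla G_u)(wB^T)|$ and $|\nabla^2 p_{t,s}^y(w)|\le C_4^{-1}\|A\|^2\,|(\nabla^2 G_u)(wB^T)|$, after which I would insert the gradient and Hessian bounds of Lemma~\ref{Gest}. The remaining task is to rewrite the factors $\big(h_i^{-1}(1/u)\big)^{-1}$ and $\big(h_i^{-1}(1/u)h_j^{-1}(1/u)\big)^{-1}$ in terms of $|M_u^{-1}|$, and this is the one spot that needs more than bookkeeping: for small $u$ monotonicity of $h_i^{-1}$ compares $h_i^{-1}(1/u)$ with $R_u^{(i)}=h_i^{-1}(u^{\eps-1})$ in the wrong direction. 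I would instead invoke the weak-scaling estimate \eqref{h_inv} of Corollary~\ref{h_inv0}, $R_u^{(i)}/h_i^{-1}(1/u)=h_i^{-1}(u^{\eps-1})/h_i^{-1}(u^{-1})\le c\,u^{-\eps/\alpha}$, which combined with $\big(R_u^{(i)}\big)^{-1}\le8|M_u^{-1}|$ gives $\big(h_i^{-1}(1/u)\big)^{-1}\le c\,u^{-\eps/\alpha}|M_u^{-1}|$; multiplying this by the $u^{-\eps}$ (resp.\ $u^{-2\eps}$) already present in Lemma~\ref{Gest} converts it into the $(s-t)^{-\eps(1+1/\alpha)}$ (resp.\ $(s-t)^{-\eps(2+2/\alpha)}$) appearing in \eqref{gradest}, \eqref{hessianest}, \eqref{hessianest1}. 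The main obstacle is thus not a single hard estimate but keeping the matrix factors (powers of $\|A\|$) and these $h_i^{-1}$-to-$|M_u^{-1}|$ conversions correctly organized, and checking that the $\tau$-dependence of every constant enters only through $\kappa(\tau)$, or through $\bar{h}$ when $\tau\le\tau_0$.
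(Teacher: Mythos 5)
Your proposal is correct and follows essentially the same route as the paper's: plug $x=wB^T$ into the pointwise and derivative bounds of Lemma~\ref{Gest}, absorb $|\det A_s(y)|^{-1}$ and the matrix factors from the chain rule into $\|A\|$-dependent constants, convert $(h_i^{-1}(1/u))^{-1}$ into $c\,u^{-\eps/\alpha}|M_u^{-1}|$ via \eqref{h_inv}, and finish \eqref{densityest1}/\eqref{hessianest1} from the elementary lower bound $|wB^TM_u^{-1}|\ge \|A\|^{-1}|w|/|M_u|$. The paper writes out only the Hessian case and cites the rest as analogous; you give the same argument with the matrix inequality spelled out and the $h^{-1}$-to-$|M_u^{-1}|$ conversion made explicit per coordinate, which is a mild elaboration rather than a different method.
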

\begin{proof} We provide the proof only for (\ref{hessianest}) and (\ref{hessianest1}) since the other estimates can be shown in a similar fashion.
Applying Lemma \ref{Gest} and (\ref{densityrep}) we obtain
$$\left|\nabla^2 p_{t,s}^y(w)\right|  \le c G_{s-t}(0) \frac1{(h_{min}^{-1}((s-t)^{-1}))^2}(s-t)^{-2\eps} e^{- {|w (A_s^{-1}(y))^TM^{-1}_{s-t}|}}. $$
Next, by (\ref{h_inv}),

\begin{equation*}\label{Dmeasure} \frac {1} {|M^{-1}_{s-t}|h_{min}^{-1}((s-t)^{-1})}=   \frac {h_{min}^{-1}((s-t)^{\eps-1})} {8h_{min}^{-1}((s-t)^{-1})}\le
c(s-t)^{-\eps/\alpha},  \end{equation*}
which completes the proof of (\ref{hessianest}).

Since $|w (A_s^{-1}(y))^TM^{-1}_{s-t}|\ge \frac{1}{||A||} \frac{|w |}{ |M_{s-t}| }$ the estimate  (\ref{hessianest}) yields (\ref{hessianest1}).
\end{proof}

The following lemma is a simple consequence of the change of variable formula, hence its proof is omitted.
\begin{lemma} \label{int}
Let $\rho>0, x\in \R^d, 0\le t<s< \infty$.  There is a constant $c= c(\rho) $ such that %$c= c(\rho, d, C_3) $
$$\int_{\R^d} |x-y|^{\rho}e^{-|(x-y) (A_s^{-1}(x))^TM^{-1}_{s-t}|}dy  \le   c |M_{s-t}|^{\rho} det (M_{s-t}). $$
  %\le |M_{s-t}|^{2\gamma_1} det (M_{s-t})  det  A_s(x)
\end{lemma}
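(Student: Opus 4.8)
\textbf{Plan for the proof of Lemma \ref{int}.}

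The plan is to reduce the integral to a standard one by the linear change of variables $v = (x-y)(A_s^{-1}(x))^T M^{-1}_{s-t}$. First I would write $w = x - y$, so that the integral equals $\int_{\R^d} |w|^\rho e^{-|w(A_s^{-1}(x))^T M^{-1}_{s-t}|}\, dw$ (translation invariance of Lebesgue measure). Setting $B = (A_s^{-1}(x))^T M^{-1}_{s-t}$ and $v = wB$, we have $w = vB^{-1}$ with $B^{-1} = M_{s-t} A_s^T(x)$, and $dw = |\det B^{-1}|\, dv = \det(M_{s-t})\,|\det A_s^T(x)|\, dv$ (recall $M_{s-t}$ is a diagonal matrix with positive entries, so $\det M_{s-t}>0$). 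Then $|w| = |vB^{-1}| \le |B^{-1}|\,|v| = |M_{s-t} A_s^T(x)|\,|v| \le \|A\|\,|M_{s-t}|\,|v|$, using $\|A\|\ge |A_s^T(x)|$ and submultiplicativity of the operator norm. Hence
\begin{align*}
\int_{\R^d} |w|^\rho e^{-|wB|}\, dw
&\le \|A\|^\rho\, |M_{s-t}|^\rho\, \det(M_{s-t})\,|\det A_s^T(x)|\int_{\R^d} |v|^\rho e^{-|v|}\, dv.
\end{align*}

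It remains to absorb the remaining factors into the constant. The integral $\int_{\R^d}|v|^\rho e^{-|v|}\,dv = \omega_{d-1}\,\Gamma(\rho+d)$ is finite and depends only on $d$ and $\rho$, and $|\det A_s^T(x)| = |\det A_s(x)| \le C_3^d$ by \eqref{bounded}; moreover $\|A\|$ is bounded by a constant depending only on $d, C_3,\dots,C_6$. Collecting these, with $c = c(\rho)$ (allowed to depend also on the standing constants $d, C_3,\dots,C_6$ as per the section's convention) we obtain
$$
\int_{\R^d} |x-y|^{\rho}e^{-|(x-y) (A_s^{-1}(x))^TM^{-1}_{s-t}|}\,dy \le c\, |M_{s-t}|^{\rho}\det(M_{s-t}),
$$
which is exactly the claim. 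Note the estimate is uniform in $t,s,x$ because none of the bounds used ($\|A\|$, $|\det A_s(x)|$, and the value of the Gaussian-type integral) depend on these parameters.

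I do not expect any real obstacle here: the only points requiring a word of care are that $M_{s-t}$ is invertible with positive determinant (immediate from its explicit diagonal form with entries $8R^{(i)}_{s-t} = 8 h_i^{-1}((s-t)^{-1+\eps}) > 0$), and that the operator-norm inequality $|vB^{-1}| \le |B^{-1}|\,|v|$ together with $|B^{-1}| = |M_{s-t}A_s^T(x)| \le |M_{s-t}|\,|A_s^T(x)|$ is applied in the correct direction so that the bound on $|w|$ is in terms of $|v|$. This is precisely the routine change-of-variables argument the paper says it is omitting; the proposal above records it for completeness.
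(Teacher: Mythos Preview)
Your proposal is correct and is exactly the routine change-of-variables argument the paper alludes to when it says the lemma ``is a simple consequence of the change of variable formula.'' The only trivial slip is the bound $|\det A_s(x)|\le C_3^d$, which should be $|\det A_s(x)|\le d!\,C_3^d$ (or Hadamard's bound), but this is immaterial since the constant is absorbed into $c=c(\rho)$ under the section's convention.
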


\begin{lemma}\label{point} Let $x,y\in \R^d$, $0\le t<s$ and  $\delta >0$. Let
\begin{equation*}
%\label{xiform}
\xi = y-x +   \theta u e_k A_t^T(x)+ (1-\theta)u e_k A_s^T(y)) +\lambda U_t(x, e_ku),\ |u|\le R^{(k)}_{s-t},
\end{equation*}
where  $0\le \theta, \lambda\le 1$.

If  $|x-y|^{1+\gamma_1}\le  \frac{(s-t)^\delta} {|M^{-1}_{s-t}|}$ and $0< s-t\le \tau$,
then
\begin{equation}
\label{xiest0}
|\xi (A^T_s(y))^{-1} M^{-1}_{s-t}-  (y-x)(A^T_s(x))^{-1}M^{-1}_{s-t}| \le c.
\end{equation}
 The contant $c= c(\delta, \kappa(\tau))$. If $\tau\le \tau_0$, then $c= c(\delta,\bar{h})$.
\end{lemma}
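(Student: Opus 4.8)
The plan is to expand the vector on the left of \eqref{xiest0} into four pieces and estimate each separately, using the hypothesis $|x-y|^{1+\gamma_1}|M^{-1}_{s-t}|\le (s-t)^\delta$ together with the scaling bounds for $R^{(k)}_u$ and $|M^{-1}_u|$ from Lemma \ref{scal_h} (equivalently \eqref{Mnorm}, \eqref{Mnorm1}). Write $w=y-x$, $N=M^{-1}_{s-t}$, $B=(A^T_s(y))^{-1}$, $B'=(A^T_s(x))^{-1}$. Substituting the expression for $\xi$, using $A^T_s(y)B=I$ and $A^T_t(x)B=I+[A^T_t(x)-A^T_s(y)]B$, and collecting the two terms proportional to $u e_k N$, one obtains
\begin{equation*}
\xi BN-wB'N=w(B-B')N+u e_k N+\theta\, u e_k\big[A^T_t(x)-A^T_s(y)\big]BN+\lambda\, U_t(x,e_ku)BN.
\end{equation*}
The first term is bounded by $\|A\|\,|x-y|^{1+\gamma_1}|M^{-1}_{s-t}|\le\|A\|\,(s-t)^{\delta}\le\|A\|\tau^{\delta}$ by the H\"older continuity of $(A^T_s(\cdot))^{-1}$ contained in $\|A\|$ together with the hypothesis, and the second term has norm $|u|\,|e_kM^{-1}_{s-t}|=|u|/(8R^{(k)}_{s-t})\le 1/8$ since $|u|\le R^{(k)}_{s-t}$. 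So only the third and the fourth terms require work, and that is where the assumptions $\gamma_3>\max(1,\beta)$ and (in case \textbf{(B)}) \textbf{(D)} are used.

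For the fourth term, $|U_t(x,e_ku)|\le C_7|u|^{\gamma_3}\le C_7(R^{(k)}_{s-t})^{\gamma_3}$ and $|B|\le\|A\|$, so it is $\le c(R^{(k)}_{s-t})^{\gamma_3}|M^{-1}_{s-t}|$. In case \textbf{(A)} all the $R^{(k)}_{s-t}$ coincide and $|M^{-1}_{s-t}|=(8R^{(k)}_{s-t})^{-1}$, so this equals $cR^{\gamma_3-1}_{s-t}\le cR^{\gamma_3-1}_{\tau}$, finite because $\gamma_3>1$. In case \textbf{(B)}, Lemma \ref{scal_h} gives $R^{(k)}_{s-t}\le c(s-t)^{(1-\eps)/\beta}$ and $|M^{-1}_{s-t}|\le c(s-t)^{(\eps-1)/\alpha}$, hence the fourth term is $\le c(s-t)^{(1-\eps)(\gamma_3/\beta-1/\alpha)}$, and the exponent is nonnegative exactly by the third inequality in \eqref{indices}.

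For the third term, split $A^T_t(x)-A^T_s(y)=[A^T_t(x)-A^T_s(x)]+[A^T_s(x)-A^T_s(y)]$ and bound the two differences by $c(s-t)^{\gamma_2}$ (via \eqref{Lipschitz}) and by $\|A\|\,|x-y|^{\gamma_1}$ (via \eqref{Holder}); thus the third term is $\le c\,|u|\,|M^{-1}_{s-t}|\big[(s-t)^{\gamma_2}+|x-y|^{\gamma_1}\big]$. In case \textbf{(A)}, $|u|\,|M^{-1}_{s-t}|\le1/8$, $(s-t)^{\gamma_2}\le\tau^{\gamma_2}$, and $|x-y|^{\gamma_1}\le\big(8R^{(k)}_{s-t}(s-t)^{\delta}\big)^{\gamma_1/(1+\gamma_1)}$ is bounded by the hypothesis, so we are done. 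In case \textbf{(B)}, $|u|\,|M^{-1}_{s-t}|\le R^{(k)}_{s-t}|M^{-1}_{s-t}|\le c(s-t)^{-(1-\eps)(1/\alpha-1/\beta)}$ by Lemma \ref{scal_h}; multiplying by $(s-t)^{\gamma_2}$ gives the power $\gamma_2-(1-\eps)(1/\alpha-1/\beta)\ge\gamma_2-(1/\alpha-1/\beta)>0$ by the second inequality in \eqref{indices}, while for the $|x-y|^{\gamma_1}$ summand one keeps $|M^{-1}_{s-t}|$ symbolic and uses the hypothesis as $|x-y|^{\gamma_1}\le\big((s-t)^{\delta}/|M^{-1}_{s-t}|\big)^{\gamma_1/(1+\gamma_1)}$, so that
\begin{equation*}
R^{(k)}_{s-t}|M^{-1}_{s-t}|\,|x-y|^{\gamma_1}\le R^{(k)}_{s-t}\,|M^{-1}_{s-t}|^{1/(1+\gamma_1)}(s-t)^{\delta\gamma_1/(1+\gamma_1)}\le c(s-t)^{(1-\eps)(1/\beta-1/(\alpha(1+\gamma_1)))+\delta\gamma_1/(1+\gamma_1)},
\end{equation*}
whose exponent is positive because $1/\beta-1/(\alpha(1+\gamma_1))>0$ is precisely the first inequality in \eqref{indices}. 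Summing the four bounds and using $0<s-t\le\tau$ yields \eqref{xiest0}; the dependence of $c$ on $\kappa(\tau)$, and on $\bar h$ only when $\tau\le\tau_0$, is inherited from Lemma \ref{scal_h} and \eqref{Mnorm}, \eqref{Mnorm1}, since $\|A\|$ depends only on $d,C_3,\dots,C_6$ and $C_7$ enters only as a standard constant. The main obstacle is the third term in case \textbf{(B)}: the anisotropy of the truncation levels forces the factor $R^{(k)}_{s-t}/\min_jR^{(j)}_{s-t}$, which blows up like a negative power of $s-t$, and this blow-up must be compensated by the H\"older gain $|x-y|^{\gamma_1}$ together with the standing restriction on $|x-y|$ — balancing the resulting exponents is exactly what condition \textbf{(D)} is for.
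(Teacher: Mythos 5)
Your proof is correct and follows essentially the same strategy as the paper: expand $\xi(A_s^T(y))^{-1}M^{-1}_{s-t}-(y-x)(A_s^T(x))^{-1}M^{-1}_{s-t}$ into the four-piece decomposition (space-H\"older difference of $(A^T_s)^{-1}$, the $ue_kM^{-1}_{s-t}$ piece, the $ue_k[A_t^T(x)-A_s^T(y)]$ piece, and the $U$-piece), bound each via $\|A\|$, (\ref{Mnorm})--(\ref{Mnorm1}), and the hypothesis on $|x-y|$, and verify the exponents in cases (A) and (B) using \textbf{(D)}. One point where you are a bit more careful than the paper's own write-up: in the third term you decompose $A_t^T(x)-A_s^T(y)=[A_t^T(x)-A_s^T(x)]+[A_s^T(x)-A_s^T(y)]$ and explicitly control the time-H\"older contribution $R^{(k)}_{s-t}|M^{-1}_{s-t}|(s-t)^{\gamma_2}$ in case (B) via the second inequality of (\ref{indices}); the paper bounds that middle term by $\|A\||u||x-y|^{\gamma_1}|M^{-1}_{s-t}|$ only and leaves the $(s-t)^{\gamma_2}$ part implicit. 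The paper also reduces to $\theta\in\{0,1\}$ by a convexity remark while you treat general $\theta$ directly; either is fine and the conclusions agree.
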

\begin{proof} We denote $z=y-x$,  $ U= U_t(x, e_ku),\ R^{(k)}= R^{(k)}_{s-t}$ and $M= M_{s-t}$.
It is clear that it is enough to consider the case when $\theta=1$ or $\theta=0$. We provide the argument if $\theta=1$ since the case $\theta=0$ is similar, if not easier.
Let  $\xi = z +    u e_k A_t^T(x)$. Noting that  $u e_k M^{-1}=  (u/R^{(k)}) e_k$,
%$u e_k M^{-1}_{s-t}=  (u/R^{(k)}_{s-t}) e_k$,
we obtain

\begin{eqnarray*}&&|\xi (A^T_s(y))^{-1} M^{-1}-  z(A^T_s(x))^{-1}M^{-1}|\\&&=
|z [(A^T_s(y))^{-1} -  (A^T_s(x))^{-1}]M^{-1} +  u e_k [A_t^T(x)-A^T_s(y)] (A^T_s(y))^{-1}M^{-1}\\
&&+u e_k M^{-1}+  U (A^T_s(y))^{-1}M^{-1}|\\
&&\le ||A|||z|^{1+\gamma_1} |M^{-1}|+||A|||u||z|^{\gamma_1}|M^{-1}|+ |u|/R^{(k)}+ C_7||A|| |M^{-1}| |u|^{\gamma_3}\\
&& \le ||A|| \left( |z|^{1+\gamma_1} |M^{-1}|+R^{(k)}|z|^{\gamma_1}|M^{-1}|+ 1+C_7 |M^{-1}| (R^{(k)})^{\gamma_3}\right)\end{eqnarray*}

%\begin{eqnarray*}&&|\xi (A^T_s(y))^{-1} M^{-1}_{s-t}-  z(A^T_s(x))^{-1}M^{-1}_{s-t}|\\&&=
%|z [(A^T_s(y))^{-1} -  (A^T_s(x))^{-1}]M^{-1}_{s-t} +  u e_k [A_t^T(x)-(A^T_s(y))] (A^T_s(y))^{-1}M^{-1}_{s-t}
%+u e_k M^{-1}_{s-t}+  E (A^T_s(y))^{-1}M^{-1}_{s-t}|\\
%&&\le |z|^{1+\gamma_1} |M^{-1}_{s-t}|+|u||z|^{\gamma_1}|M^{-1}_{s-t}|+ |u|/R^{(k)}_{s-t}\\
%&& \le |z|^{1+\gamma_1} |M^{-1}_{s-t}|+R^{(k)}_{s-t}|z|^{\gamma_1}|M^{-1}_{s-t}|+ 1\end{eqnarray*}

In the case (A) ( when $h_1=\dots=h_d$)  we have $ R^{(k)}|M^{-1}|=1/8$, hence, by (\ref{Mnorm1}),

$$ |z|^{1+\gamma_1} |M^{-1}|+R^{(k)}|z|^{\gamma_1}|M^{-1}|+ 1+C_7 |M^{-1}| (R^{(k)})^{\gamma_3}\le (s-t)^\delta + |z|^{\gamma_1} +1+ C_7|M^{-1}|^{1-\gamma_3}\le c, $$
%%%
with $c=c(\delta, \kappa(\tau))$.

In the  case (B),   by (\ref{Mnorm}) and (\ref{Mnorm1}), \begin{eqnarray*}&&|R^{(k)}|z|^{\gamma_1}||M^{-1}|
\le c (s-t)^{(1-\eps)/\beta} \left(\frac{(s-t)^\delta} {8|M^{-1}|}\right)^{\frac{\gamma_1}{\gamma_1+1}}|M^{-1}|\\
&&=  c(s-t)^{(1-\eps)/\beta+\delta\frac{\gamma_1}{\gamma_1+1}} |M^{-1}|^{\frac{1}{\gamma_1+1}}
\\&&\le c(s-t)^{(1-\eps)/\beta+\delta\frac{\gamma_1}{\gamma_1+1}} (s-t)^{\frac{-1+\eps}{(\gamma_1+1)\alpha}}=c (s-t)^{\delta_0}, \end{eqnarray*}
where  $ \delta_0=\delta\frac{\gamma_1}{\gamma_1+1}+(1-\eps)(\frac{1}{\beta}-\frac1{(\gamma_1+1)\alpha})>0$. Moreover, again by (\ref{Mnorm}) and (\ref{Mnorm1}),
$$|M^{-1}| (R^{(k)})^{\gamma_3}\le c(s-t)^{(1-\eps)(\frac{\gamma_3}{\beta}-\frac{1}{\alpha})}. $$
This implies that
\begin{eqnarray*}
&& |z|^{1+\gamma_1} |M^{-1}|+R^{(k)}|z|^{\gamma_1}|M^{-1}|+ 1+\eta_5 |M^{-1}| (R^{(k)})^{\gamma_3}\\
&&\le c(s-t)^\delta + c(s-t)^{\delta_0} +1+ c(s-t)^{(1-\eps)(\frac{\gamma_3}{\beta}-\frac{1}{\alpha})}\le c,
\end{eqnarray*}
with $c=c(\delta, \kappa(\tau))$.
Hence, in both cases, the proof of (\ref{xiest0}) is completed.
\end{proof}

For any $x\in \R^d, \delta>0$ let
$$
D(\delta,x) = \left\{w \in \R^d: \, |x-w|^{1+\gamma_1}\le  \frac{(s-t)^\delta} {|M^{-1}_{s-t}|} \right\}.
$$

\begin{lemma}
\label{G_difference}
Fix $\tau > 0$ and $\delta > 0$. For any $x \in \R^d$, $y \in D(\delta,x)$, $0  < s-t \le \tau$ we have
\begin{eqnarray}
\nonumber
&& \left|G_{s-t}\left((x-y) (A^T_s(y))^{-1} \right) -
G_{s-t}\left((x-y) (A^T_s(x))^{-1}\right)\right|\\
\label{G_difference_inequality}
&& \le c (s - t)^{\delta - \eps(1+1/\alpha)} G_{s - t}(0)
e^{-|(x-y) (A_s^{-1}(x))^T M^{-1}_{s-t}|}
\end{eqnarray}
and %For any $x \in \R^d$, $0 < t < s \le \tau$ we also have
\begin{equation}
\label{G_difference_integral}
\int_{D(\delta,x)} \left|G_{s-t}\left((x-y) (A^T_s(y))^{-1} \right) -
G_{s-t}\left((x-y) (A^T_s(x))^{-1}\right)\right| \, dy
\le c (s - t)^{\delta - (d+3) \eps/\alpha}.
\end{equation}
 The contant $c= c(\delta, \kappa(\tau))$. If $\tau\le \tau_0$, then $c= c(\delta,\bar{h})$.\end{lemma}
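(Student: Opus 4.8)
The plan is to reduce both inequalities to a first-order Taylor expansion of $G_{s-t}$ in its (matrix) argument, taken along the segment joining $(A_s^T(x))^{-1}$ and $(A_s^T(y))^{-1}$. Fix $x\in\R^d$ and $y\in D(\delta,x)$, put $w=x-y$, $B_x=(A_s^T(x))^{-1}$, $B_y=(A_s^T(y))^{-1}$, and $B_\theta=(1-\theta)B_x+\theta B_y$ for $\theta\in[0,1]$. Since $G_{s-t}$ is smooth and $\theta\mapsto wB_\theta$ is affine,
\[
G_{s-t}(wB_y)-G_{s-t}(wB_x)=\int_0^1\nabla G_{s-t}(wB_\theta)\cdot\bigl(w(B_y-B_x)\bigr)\,d\theta ,
\]
so the task is to estimate, uniformly in $\theta$, the matrix increment $|w(B_y-B_x)|$, the gradient $|\nabla G_{s-t}(wB_\theta)|$, and the exponential factor $e^{-|wB_\theta M_{s-t}^{-1}|}$ produced by the gradient bound.

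First I would bound $|w(B_y-B_x)|\le|x-y|\,|B_y-B_x|\le\|A\|\,|x-y|^{1+\gamma_1}$, using the H\"older continuity of $x\mapsto(A_s^T(x))^{-1}$ encoded in $\|A\|$; since $y\in D(\delta,x)$ this is at most $\|A\|(s-t)^{\delta}/|M_{s-t}^{-1}|$. Next, summing the coordinatewise gradient estimate of Lemma \ref{Gest} over the $d$ coordinates and using \eqref{h_inv} to replace $\frac{1}{h_i^{-1}(1/(s-t))}(s-t)^{-\eps}$ by $|M_{s-t}^{-1}|(s-t)^{-\eps(1+1/\alpha)}$, one gets $|\nabla G_{s-t}(v)|\le c\,G_{s-t}(0)\,|M_{s-t}^{-1}|\,(s-t)^{-\eps(1+1/\alpha)}e^{-|vM_{s-t}^{-1}|}$. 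Finally, for the exponential I apply Lemma \ref{point} with $u=0$ (hence $U_t(x,0)=0$ by \eqref{kernel_E}) and $\lambda=0$, which yields $|wB_yM_{s-t}^{-1}-wB_xM_{s-t}^{-1}|\le c$; since $wB_\theta M_{s-t}^{-1}-wB_xM_{s-t}^{-1}=\theta\,(wB_yM_{s-t}^{-1}-wB_xM_{s-t}^{-1})$ the same bound holds with $B_\theta$ in place of $B_y$, so $e^{-|wB_\theta M_{s-t}^{-1}|}\le c\,e^{-|wB_xM_{s-t}^{-1}|}=c\,e^{-|(x-y)(A_s^{-1}(x))^TM_{s-t}^{-1}|}$. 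Multiplying the three bounds, the two occurrences of $|M_{s-t}^{-1}|$ cancel and \eqref{G_difference_inequality} follows.

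For \eqref{G_difference_integral} I would integrate \eqref{G_difference_inequality} over $D(\delta,x)\subset\R^d$ and apply Lemma \ref{int} with $\rho=0$ to get $\int_{\R^d}e^{-|(x-y)(A_s^{-1}(x))^TM_{s-t}^{-1}|}\,dy\le c\det(M_{s-t})$. Using $G_{s-t}(0)\le c\prod_{i=1}^d\frac{1}{h_i^{-1}(1/(s-t))}$ from Lemma \ref{Gest} and $\det(M_{s-t})=8^d\prod_{i=1}^d h_i^{-1}((s-t)^{\eps-1})$, the inequality \eqref{h_inv} applied coordinate by coordinate gives $G_{s-t}(0)\det(M_{s-t})\le c(s-t)^{-d\eps/\alpha}$, so that the integral is at most $c(s-t)^{\delta-\eps(1+1/\alpha)-d\eps/\alpha}$. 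Since $\alpha\le2$ one has $\eps(1+1/\alpha)+d\eps/\alpha=\eps+(d+1)\eps/\alpha\le(d+3)\eps/\alpha$, hence on $0<s-t\le\tau$ the exponent can be lowered to $\delta-(d+3)\eps/\alpha$ at the cost of a constant depending on $\tau$, which is \eqref{G_difference_integral}; for $\tau\le\tau_0\le1$ no such extra constant is needed, so the sharper dependence holds. The asserted dependence of $c$ is inherited from Lemmas \ref{Gest}, \ref{point}, \ref{int}, from \eqref{h_inv}, and from the bound on $\|A\|$ in terms of $d,C_3,\dots,C_6$. The calculation is otherwise routine: the only mildly delicate point is the bookkeeping of the powers of $(s-t)$ and of $|M_{s-t}|$ and the repeated passage between $h_i^{-1}((s-t)^{-1})$ and $h_i^{-1}((s-t)^{\eps-1})$ via \eqref{h_inv}, and there is no genuine obstacle, since the core mechanism --- Taylor expansion in the matrix argument, H\"older continuity of $A^{-1}$, and the ``frozen exponential'' comparison of Lemma \ref{point} --- is already in place.
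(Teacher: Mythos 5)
Your proof is correct and follows essentially the same route as the paper's: a first-order Taylor expansion of $G_{s-t}$ along the segment between $(x-y)(A_s^T(x))^{-1}$ and $(x-y)(A_s^T(y))^{-1}$, with the matrix increment controlled by the H\"older continuity of $(A_s^T(\cdot))^{-1}$, the gradient by Lemma~\ref{Gest} plus the $h^{-1}$-ratio bound \eqref{h_inv}, and the exponential shifted from the intermediate point back to the $x$-frozen argument via the boundedness of the $M_{s-t}^{-1}$-scaled displacement. The only differences from the paper's proof are cosmetic: you use the integral form of Taylor's theorem rather than the mean-value form, you fold the passage from $1/h_{\min}^{-1}(1/(s-t))$ to $|M_{s-t}^{-1}|(s-t)^{-\eps/\alpha}$ directly into the gradient bound rather than performing it as a separate step, and you route the exponential comparison through Lemma~\ref{point} (with $u=\lambda=0$ and the $\theta$-scaling observation) instead of redoing the direct estimate on $|\xi M_{s-t}^{-1}-(x-y)(A_s^T(x))^{-1}M_{s-t}^{-1}|$, but this is exactly the bound Lemma~\ref{point} encapsulates, so nothing new is needed.
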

\begin{proof}
We have
\begin{eqnarray*}G_{s-t}\left((x-y) (A^T_s(y))^{-1} \right)&=&   G_{s-t}\left((x-y) (A^T_s(x))^{-1}\right)\\&+& \nabla G_{s-t}(\xi) \left[(x-y)((A^T_s(y))^{-1}-(A^T_s(x))^{-1})\right],\end{eqnarray*}
where $\xi=\theta (x-y)(A^T_s(x))^{-1} + (1-\theta) (x-y)(A^T_s(y))^{-1}, \ 0\le \theta\le 1$.
Next, we observe that
\begin{equation}\label{diff}|(x-y) (A^T_s(y))^{-1}- (x-y) (A^T_s(x))^{-1}|\le ||A|||x-y|^{1+\gamma_1}\le (s-t)^\delta||A||\frac1{|M^{-1}_{s-t}|}.\end{equation}
 Hence,
$$|\xi M^{-1}_{s-t}-  (x-y)(A^T_s(x))^{-1}M^{-1}_{s-t}|\le (s-t)^\delta |M^{-1}_{s-t}| ||A|||M^{-1}_{s-t}|^{-1} = (s-t)^\delta ||A||.$$
This implies, by Lemma \ref{Gest},  that
$$|\nabla G_{s-t}(\xi)|\le c  \frac1{h^{-1}_{min}(1/(s-t))}(s-t)^{-\eps} G_{s-t}(0) e^{-|(x-y) (A_s^{-1}(x))^TM^{-1}_{s-t}|}, $$
which together with (\ref{diff}) yield
\begin{eqnarray*}
&&\left|\nabla G_{s-t}(\xi) \left[(x-y)((A^T_s(y))^{-1}-(A^T_s(x))^{-1})\right]\right|\\
&&\le c(s-t)^\delta\frac1{|M^{-1}_{s-t}|}\frac1{h^{-1}_{min}(1/(s-t))}(s-t)^{-\eps} G_{s-t}(0) e^{-|(x-y) (A_s^{-1}(x))^TM^{-1}_{s-t}|}\\
&&\le c(s-t)^\delta (s-t)^{-\eps(1+1/\alpha)} G_{s-t}(0) e^{-|(x-y) (A_s^{-1}(x))^TM^{-1}_{s-t}|},\end{eqnarray*}
since, by Corollary  \ref{h_inv0},
$$\frac1{|M^{-1}_{s-t}|}\frac1{h^{-1}_{min}(1/(s-t))}= 8 \frac{h^{-1}_{min}((s-t)^{\eps-1})}{h^{-1}_{min}((s-t)^{-1})}\le  c (s-t)^{-\eps/\alpha}.$$
The proof of (\ref{G_difference_inequality}) is completed.

Applying (\ref{G_difference_inequality}) and Lemma \ref{int} we get
\begin{eqnarray*}
&& \int_{D(\delta,x)} \left|G_{s-t}\left((x-y) (A^T_s(y))^{-1} \right) -
G_{s-t}\left((x-y) (A^T_s(x))^{-1}\right)\right| \, dy\\
&& \le c\, det (M_{s-t})G_{s-t}(0)(s-t)^{\delta -\eps-\eps/\alpha}.
\end{eqnarray*}
Next, by Lemma
\ref{Gest} and Corollary  \ref{h_inv0},
\begin{equation} \label{det_G}
 det (M_{s-t})G_{s-t}(0)\le c\prod_{k=1}^{d} \frac {h_k^{-1}((s-t)^{\eps-1})} {h_k^{-1}((s-t)^{-1})}\le
c(s-t)^{-d\eps/\alpha},
\end{equation}
which implies (\ref{G_difference_integral}).
\end{proof}

\begin{lemma}
\label{py_integrable} Fix $\tau > 0$.
%Let $\eps \le \frac{\gamma_1 \alpha}{2(d+3)\beta}$ in the case (A) or let
%$\eps \le \frac{(1 + \gamma_1)/\beta - 1/\alpha}{2(d+3)/\alpha}$ in the case (B).
There exists $c(\kappa(\tau))=c > 0$ such that for any $0 <s-t \le  \tau$ and $x \in \R^d$ we have
\begin{equation}
\label{py_integral}
\int_{\R^d} p_{t,s}^y(x-y) \, dy \le c.
\end{equation}
Moreover,  for any  $0 <s-t \le  \tau$ and $x \in \R^d$ we have
\begin{equation}
\label{py_px_difference}
\sup_{y \in \R^d} \left|p_{t,s}^y(x-y) - p_{t,s}^x(x-y)\right| \le c G_{s - t}(0) (s - t)^{\eps}
\end{equation}
and
\begin{equation}
\label{py_px_difference_int}
\int_{\R^d} \left|p_{t,s}^y(x-y) - p_{t,s}^x(x-y)\right| \, dy \le c (s - t)^{\eps}.
\end{equation}
  If $\tau\le \tau_0$, then $c= c(\bar{h})$.
%\mr{We can take  $\rho=\eps$.  }
\end{lemma}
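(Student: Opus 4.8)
The plan is to derive all three estimates from a single bound on the difference $p_{t,s}^y(x-y)-p_{t,s}^x(x-y)$ of the frozen densities. Note first that \eqref{py_integral} follows from \eqref{py_px_difference_int}: by \eqref{densityrep}, the substitutions $w=x-y$ and $v=w(A_s^{-1}(x))^T$ (the latter with Jacobian $|\det A_s(x)|$) give $\int_{\R^d}p_{t,s}^x(x-y)\,dy=\int_{\R^d}G_{s-t}(v)\,dv=1$, since each one-dimensional factor $g_{s-t}^{(i)}$ of $G_{s-t}$ is a probability density (Section~\ref{1dim}); hence $\int_{\R^d}p_{t,s}^y(x-y)\,dy\le 1+\int_{\R^d}|p_{t,s}^y(x-y)-p_{t,s}^x(x-y)|\,dy\le 1+c(s-t)^{\eps}\le c$. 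For \eqref{py_px_difference} and \eqref{py_px_difference_int} I would write, using \eqref{densityrep},
$$
p_{t,s}^y(x-y)-p_{t,s}^x(x-y)=\Big(\frac1{|\det A_s(y)|}-\frac1{|\det A_s(x)|}\Big)G_{s-t}\big((x-y)(A_s^{-1}(y))^T\big)+\frac{\Delta(x,y)}{|\det A_s(x)|},
$$
where $\Delta(x,y)=G_{s-t}((x-y)(A_s^{-1}(y))^T)-G_{s-t}((x-y)(A_s^{-1}(x))^T)$. The scalar factor satisfies $\big|\,|\det A_s(y)|^{-1}-|\det A_s(x)|^{-1}\big|\le c|x-y|^{\gamma_1}$ because $\det A_s(\cdot)$ is a polynomial in entries obeying \eqref{Holder} and is bounded below by \eqref{determinant}, while $G_{s-t}((x-y)(A_s^{-1}(y))^T)\le cG_{s-t}(0)e^{-|(x-y)(A_s^{-1}(y))^TM^{-1}_{s-t}|}\le cG_{s-t}(0)e^{-|x-y|/(\|A\|\,|M_{s-t}|)}$ by Corollary~\ref{general_matrix}. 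I would then fix $\delta>0$ and split the supremum (resp. the $y$-integral) over the regions $D(\delta,x)$ and $\R^d\setminus D(\delta,x)$.

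On $D(\delta,x)$, estimate \eqref{diff} shows that replacing $A_s(y)$ by $A_s(x)$ in the exponent costs only the bounded factor $e^{\|A\|\tau^{\delta}}$, so the first summand is there at most $c|x-y|^{\gamma_1}G_{s-t}(0)e^{-|(x-y)(A_s^{-1}(x))^TM^{-1}_{s-t}|}$; its supremum is $cG_{s-t}(0)|M_{s-t}|^{\gamma_1}$, and by Lemma~\ref{int} with $\rho=\gamma_1$ its integral over $\R^d$ is $cG_{s-t}(0)|M_{s-t}|^{\gamma_1}\det(M_{s-t})$. Since $G_{s-t}(0)\det(M_{s-t})\le c(s-t)^{-d\eps/\alpha}$ by \eqref{det_G} and $|M_{s-t}|^{\gamma_1}\le c(s-t)^{\gamma_1(1-\eps)/\beta}$ by \eqref{Mnorm1}, these are $\le cG_{s-t}(0)(s-t)^{\eps}$ and $\le c(s-t)^{\eps}$ respectively, once $\eps\le\eps_0$. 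The second summand $\Delta(x,y)$ is handled on $D(\delta,x)$ directly by Lemma~\ref{G_difference}: \eqref{G_difference_inequality} gives the pointwise bound $c(s-t)^{\delta-\eps(1+1/\alpha)}G_{s-t}(0)$ and \eqref{G_difference_integral} the integral bound $c(s-t)^{\delta-(d+3)\eps/\alpha}$, both dominated by the required right-hand sides provided $\delta$ is taken large enough compared with $\eps$.

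On $\R^d\setminus D(\delta,x)$ one has $|x-y|\ge r_0:=\big((s-t)^{\delta}/|M^{-1}_{s-t}|\big)^{1/(1+\gamma_1)}$, and both $p_{t,s}^y(x-y)$ and $p_{t,s}^x(x-y)$ are $\le cG_{s-t}(0)e^{-|x-y|/(\|A\|\,|M_{s-t}|)}$ by \eqref{densityest1}; splitting off a factor $e^{-r_0/(2\|A\|\,|M_{s-t}|)}$ and integrating the residual tail shows that this region contributes at most $cG_{s-t}(0)|M_{s-t}|^{d}e^{-r_0/(2\|A\|\,|M_{s-t}|)}$ to the integral and $cG_{s-t}(0)e^{-r_0/(2\|A\|\,|M_{s-t}|)}$ to the supremum. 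By \eqref{Mnorm}, \eqref{Mnorm1} and Lemma~\ref{scal_h}, $r_0/|M_{s-t}|\ge c(s-t)^{-a}$ with $a>0$ provided $\delta$ is small enough (it suffices that $\delta<\gamma_1(1-\eps)/\beta$ in case~(A), and $\delta<(1-\eps)\big((1+\gamma_1)/\alpha-1/\beta\big)$, an automatically positive quantity, in case~(B)); since $G_{s-t}(0)|M_{s-t}|^{d}\le c(s-t)^{-d/\alpha}$ grows at most polynomially, the exponential dominates and this region contributes $o((s-t)^{N})$ for every $N$. Collecting the two regions yields \eqref{py_px_difference} and \eqref{py_px_difference_int}, and hence \eqref{py_integral}; the dependence of the constant on $\kappa(\tau)$, and on $\bar h$ when $\tau\le\tau_0$, is inherited from the cited lemmas. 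The step I expect to be the real obstacle is not any single estimate but the bookkeeping of the powers of $(s-t)$: one must choose $\delta$ simultaneously large enough (roughly $\delta\ge\eps(2+(d+3)/\alpha)$, from the $D(\delta,x)$ part) and small enough (bounded above by a fixed positive multiple of $1-\eps$, from the complementary part), which is possible precisely because $\eps\le\eps_0$ with $\eps_0$ as in Remark~\ref{remark_epsilon}, and verifying it amounts to routine inequalities among $\alpha,\beta,\gamma_1$ that have to be checked separately in cases~(A) and~(B).
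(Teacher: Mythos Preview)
Your approach is essentially the same as the paper's: split into $D(\delta,x)$ and its complement, use Lemma~\ref{G_difference} and the H\"older continuity of $\det A_s(\cdot)$ on the near region, and exponential decay via \eqref{densityest1} on the far region. Your derivation of \eqref{py_integral} directly from \eqref{py_px_difference_int} via $\int p_{t,s}^x(x-y)\,dy=1$ is a clean shortcut; the paper instead bounds $\int_{D(\delta,x)}p_{t,s}^y(x-y)\,dy$ separately.

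There is, however, a genuine error in your case~\textbf{(B)} threshold. You claim $r_0/|M_{s-t}|\ge c(s-t)^{-a}$ with $a>0$ provided $\delta<(1-\eps)\big((1+\gamma_1)/\alpha-1/\beta\big)$, calling this ``automatically positive''. The indices are swapped: using \eqref{Mnorm} for an upper bound on $|M^{-1}_{s-t}|$ and \eqref{Mnorm1} for an upper bound on $|M_{s-t}|$ gives
\[
\frac{r_0}{|M_{s-t}|}\;\ge\; c\,(s-t)^{\frac{\delta+(1-\eps)/\alpha}{1+\gamma_1}-\frac{1-\eps}{\beta}},
\]
so the actual requirement is $\delta<(1-\eps)\big((1+\gamma_1)/\beta-1/\alpha\big)$. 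This quantity is \emph{not} automatically positive; it is positive precisely by the first inequality in assumption~\textbf{(D)}, and this is exactly where \textbf{(D)} enters the proof. With your stated bound (which is indeed trivially positive since $\alpha\le\beta$) the argument would appear to go through without \textbf{(D)}, contradicting Example~\ref{ex2}. Once you correct the swap, the paper's specific choice $\delta=(1-2\eps)\big((1+\gamma_1)/\beta-1/\alpha\big)$ satisfies both the upper constraint just derived and the lower constraint $\delta-(d+3)\eps/\alpha\ge\eps$ coming from \eqref{G_difference_integral}, thanks to $\eps\le\eps_0$; the rest of your outline then matches the paper.
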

\begin{proof}
Let $\delta>0$ and $z$ be $x$ or $y$. For  $y \in  D^c(\delta,x)$ we use (\ref{densityest1}) and Lemma \ref{scal_h} to have
\begin{equation}
\label{exp_estimates}
p_{t,s}^z(x-y) \le c (s-t)^{-d/\alpha}\exp \left(-\frac{|x-y|}{||A|||M_{s-t}|} \right).
\end{equation}

In the case (A) we have  $|M^{-1}_{s-t}|^{-1}=|M_{s-t}|$.  Hence, by (\ref{Mnorm1}),
\begin{eqnarray*} \frac{|x-y|} {|M_{s-t}|}&\ge& \frac{\left[(s-t)^\delta  |M_{s-t}|\right]^{\frac1{1+\gamma_1}}}
{|M_{s-t}|}= \left[(s-t)^\delta  |M_{s-t}|^{-\gamma_1}\right]^{\frac {1}{1+\gamma_1}}\\
 &\ge&  c \left[(s-t)^{\delta - (1-\eps)\frac {\gamma_1}{ \beta }}\right]^\frac1{1+\gamma_1}.  \end{eqnarray*}
In the case (A) we choose $\delta= \frac {(1-2\eps)\gamma_1}{ \beta }  < \frac {(1-\eps)\gamma_1}{ \beta }$. Then clearly the exponent at $s - t$ is negative.

Next, we observe that in the case (B), by  (\ref{Mnorm}) and (\ref{Mnorm1}),
$$ \frac{|x-y|} {|M_{s-t}|}\ge \frac{\left[(s-t)^\delta |M^{-1}_{s-t}|^{-1}\right]^{\frac1{1+\gamma_1}}}
{|M_{s-t}|}\ge  c (s-t)^{\frac{\delta + (1-\eps)/\alpha - (1-\eps)(1+\gamma_1)/\beta }{1+\gamma_1}}. $$
In the case (B) we choose $\delta =  (1-2\eps)\left[  (1+\gamma_1)/\beta-1/\alpha\right]<  (1-\eps)\left[  (1+\gamma_1)/\beta-1/\alpha\right]$. Then clearly the exponent at $s - t$ is negative.

Hence, in both cases,  we find $c= c( \kappa(\tau) )$
such that for $y \in D^c(\delta,x)$
\begin{equation}
\label{max1}
p_{t,s}^z(y-x) \le c (s-t)^{-d/\alpha}  \exp \left(-\frac{|x-y|}{||A|||M_{s-t}|} \right) \le c (s-t)^\eps.
\end{equation}
Similarly,  we find $c= c( \kappa(\tau)) $ such that
\begin{equation}
\label{int1}
\int_{ D^c(\delta,x)}  p_{t,s}^z(y-x)dy \le c (s-t)^{-d/\alpha} \int_{ D^c(\delta,x)} \exp \left(-\frac{|x-y|}{||A|||M_{s-t}|} \right) \, dy \le c (s-t)^\eps.
\end{equation}

For any $x \in \R^d$,  $0 <s-t \le  \tau$ we have, by (\ref{G_difference_integral}),
\begin{eqnarray}
\int_{D(\delta,x)}p_{t,s}^y(x-y)&\le&
 \frac{1}{C_4} \int_{D(\delta,x)} G_{s-t}\left((x-y) (A^T_s(y))^{-1} \right) \, dy   \nonumber\\
& \le& \frac{1}{C_4}
\int_{D(\delta,x)} \left|G_{s-t}\left((x-y) (A^T_s(y))^{-1} \right)
- G_{s-t}\left((x-y) (A^T_s(x))^{-1} \right)\right| \, dy  \nonumber\\
& +& \frac{1}{C_4} \int_{D(\delta,x)} G_{s-t}\left((x-y) (A^T_s(x))^{-1} \right) \, dy \nonumber\\
& \le & c + c (s - t)^{\delta - (d + 3) \eps/\alpha}. \label{int2}
\end{eqnarray}
%This gives
%\begin{equation}
%\label{int2}
%\int_{D(\delta,x)} p_{t,s}^y(x-y) \, dy \le c + c (s - t)^{\delta - (d + 3) \eps/\alpha}.
%\end{equation}

We also have
\begin{eqnarray*}
&& \left|p_{t,s}^y(x-y) - p_{t,s}^x(x-y)\right|\\
&& = \left| \frac{1}{|\det(A_s(y))|} G_{s-t}\left((x-y) (A^T_s(y))^{-1} \right)
- \frac{1}{|\det(A_s(x))|} G_{s-t}\left((x-y) (A^T_s(y))^{-1} \right)\right. \\
&& \left. +\frac{1}{|\det(A_s(x))|} G_{s-t}\left((x-y) (A^T_s(y))^{-1} \right)
- \frac{1}{|\det(A_s(x))|} G_{s-t}\left((x-y) (A^T_s(x))^{-1} \right) \right|\\
&& \le c |x - y|^{\gamma_1} G_{s-t}\left((x-y) (A^T_s(y))^{-1} \right)\\
&& + c \left| G_{s-t}\left((x-y) (A^T_s(y))^{-1} \right)
-  G_{s-t}\left((x-y) (A^T_s(x))^{-1} \right) \right|
\end{eqnarray*}
Note that for any $x \in \R^d$, $y \in D(\delta,x)$ and  $0 <s-t \le  \tau$ we have $|x - y|^{\gamma_1} \le c (s - t)^{\delta + (1 - \eps)/\beta}$. It follows that for any $x \in \R^d$ and  $0 <s-t \le  \tau$
\begin{equation}
\label{int3}
\int_{D(\delta,x)} \left|p_{t,s}^y(x-y) - p_{t,s}^x(x-y)\right| \, dy
\le c (s - t)^{\delta + (1 - \eps)/\beta} + c (s - t)^{\delta - (d + 3) \eps/\alpha}.
\end{equation}

Recall that in the case (A) we picked $\delta = \frac{(1-2\eps)\gamma_1}{\beta}$. Since  $\eps \le \eps_0\le \frac{\gamma_1 \alpha}{2(d+3)\beta}$ we have  $\frac{(d+3) \eps}{\alpha} \le \frac{\gamma_1}{2 \beta} = \frac{\delta}{2(1 - 2\eps)} < \delta$. Hence
$$\delta-\frac{(d+3) \eps}{\alpha}\ge \delta\left(1-\frac{1}{2(1 - 2\eps)}\right)\ge  (1 - 4\eps)\frac{(d+3) \eps}{\alpha}\ge \eps,$$
 since $\eps\le 1/8$. %Choosing $\rho =\delta(1-\frac{1}{2(1 - 2\eps)})= \frac{(1-4\eps)\gamma_1}{2\beta}$ we have
%$\rho\ge \eps$.

Recall that in the case (B) we picked  $\delta = (1-2\eps)((1 + \gamma_1)/\beta - 1/\alpha)$. Since
$\eps \le \eps_0 \le \frac{(1 + \gamma_1)/\beta - 1/\alpha}{2(d+3)/\alpha}$  we obtain
$\frac{(d+3) \eps}{\alpha} \le \frac{(1 + \gamma_1)/\beta - 1/\alpha}{2} =
\frac{\delta}{2(1 - 2\eps)} < \delta$ , since $\eps\le 1/8$. Hence, as in the case (A) we obtain
$$\delta-\frac{(d+3) \eps}{\alpha}\ge \delta\left(1-\frac{1}{2(1 - 2\eps)}\right)\ge  (1 - 4\eps)\frac{(d+3) \eps}{\alpha}\ge \eps.$$

 %Choosing \\ $\rho =\frac{1-2\eps}2((1 + \gamma_1)/\beta - 1/\alpha)$ we have
%$\rho\ge \eps$.

Now (\ref{int1}), (\ref{int2}) imply (\ref{py_integral}). By (\ref{G_difference_inequality}) and (\ref{max1}) we get (\ref{py_px_difference}). (\ref{int1}), (\ref{int3}) imply (\ref{py_px_difference_int}).
%\mr{ We can pick concrete value for $\rho$ e.g. $\rho= \delta/3 $: so in the case (A) $\rho = \frac{(1-\eps)\gamma_1}{6\beta}$  and   $\rho = (1-\eps)((1 + \gamma_1)/\beta - 1/\alpha)/6$}
\end{proof}

\begin{lemma}
\label{Gst_difference}

%Let $\eps \le \frac{\gamma_2 \alpha}{2(d+3)}$ in the case (A) or let
%$\eps \le \frac{\gamma_2 - \left(\frac{1}{\alpha} - \frac{1}{\beta}\right)}{2(d+3)/\alpha}$ in the case (B).
There exist $c= c(\bar{h}) $ such that for any  $0 \le t<s$ and $x \in \R^d$ we have
$$
\sup_{y \in \R^d} \left|G_{s - t}((x - y) (A_s^T(x))^{-1}) - G_{s - t}((x - y) (A_t^T(x))^{-1})\right|
\le c G_{s - t}(0) (s - t)^{\eps}.
$$
and
$$
\int_{\R^d} \left|G_{s - t}((x - y) (A_s^T(x))^{-1}) - G_{s - t}((x - y) (A_t^T(x))^{-1})\right| \, dy
\le c (s - t)^{\eps}.
$$
%\mr{We can take  $\rho=\eps$.  }
% If $\tau\le \tau_0$, then $c= c(\bar{h})$.
\end{lemma}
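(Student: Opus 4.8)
The plan is to imitate the proofs of Lemmas \ref{G_difference} and \ref{py_integrable}, with the spatial H\"older condition \eqref{Holder} replaced by the temporal H\"older condition \eqref{Lipschitz} (encoded in $\|A\|$), and to treat a large-time and a small-time regime separately. For the large-time regime I would use that $g_u(w)=\tfrac1{2\pi}\int_\R e^{iwz}e^{-\int_0^u\psi_r(z)\,dr}\,dz$ has $\sup_w g_u(w)=g_u(0)$, hence $G_u(w)\le G_u(0)$ for all $w$, while $\int_{\Rd}G_u\big((x-y)(A^T(x))^{-1}\big)\,dy=|\det A(x)|\le d!\,C_3^d$ for any $A$ satisfying \eqref{bounded}. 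Fixing a threshold $\tau_1\le\tau_0$ (to be chosen below), for $s-t>\tau_1$ both the supremum and the integral of the difference are bounded by constants, while $(s-t)^\eps\ge\tau_1^\eps$ and $G_{s-t}(0)$ is bounded below on $(0,\tau_1]$ (by monotonicity of $G_u(0)$, or by Lemma \ref{Gest}); hence both estimates hold for $s-t>\tau_1$, and it remains to handle $0<s-t\le\tau_1$, where $\tau_1\le\tau_0$ guarantees that the scaling estimates of this section have constants $c=c(\bar h)$.

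For $0<s-t\le\tau_1$ I would write $z=x-y$, $B_s=(A_s^{-1}(x))^T$, $B_t=(A_t^{-1}(x))^T$, $B_\theta=\theta B_s+(1-\theta)B_t$, and apply the fundamental theorem of calculus:
\[
G_{s-t}(zB_s)-G_{s-t}(zB_t)=\int_0^1\nabla G_{s-t}(zB_\theta)\cdot\big[z(B_s-B_t)\big]\,d\theta,\qquad |B_s-B_t|\le\|A\|(s-t)^{\gamma_2}
\]
by \eqref{Lipschitz}. Combining Lemma \ref{Gest} with Corollary \ref{h_inv0} gives the gradient bound $|\nabla G_u(v)|\le c\,G_u(0)\,|M^{-1}_u|\,u^{-\eps(1+1/\alpha)}e^{-|vM^{-1}_u|}$ (equivalently \eqref{gradest}), so
\[
\big|G_{s-t}(zB_s)-G_{s-t}(zB_t)\big|\le c\,(s-t)^{\gamma_2-\eps(1+1/\alpha)}G_{s-t}(0)\,|z|\,|M^{-1}_{s-t}|\int_0^1e^{-|zB_\theta M^{-1}_{s-t}|}\,d\theta .
\]
The key step is then to replace $e^{-|zB_\theta M^{-1}_{s-t}|}$ by a constant times $e^{-c|zB_sM^{-1}_{s-t}|}$: since $B_\theta-B_s=(1-\theta)(B_t-B_s)$ one has $|z(B_\theta-B_s)M^{-1}_{s-t}|\le\|A\|(s-t)^{\gamma_2}|z||M^{-1}_{s-t}|$, and inserting $M_{s-t}A_s^T(x)$ gives $|z|\le\|A\|\,|M_{s-t}|\,|zB_sM^{-1}_{s-t}|$.

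In case \textbf{(A)} the matrix $M_{s-t}$ is a scalar multiple of the identity, so $|M_{s-t}||M^{-1}_{s-t}|=1$ and the above bound reads $|z(B_\theta-B_s)M^{-1}_{s-t}|\le\|A\|^2(s-t)^{\gamma_2}|zB_sM^{-1}_{s-t}|$; choosing $\tau_1\le\tau_0$ with $\|A\|^2\tau_1^{\gamma_2}\le\tfrac12$ (a threshold depending only on the standard constants), I get $|zB_\theta M^{-1}_{s-t}|\ge\tfrac12|zB_sM^{-1}_{s-t}|$ for all $y$, whence, using $|z||M^{-1}_{s-t}|\le\|A\||zB_sM^{-1}_{s-t}|$ and $re^{-r/4}\le4/e$,
\[
\big|G_{s-t}(zB_s)-G_{s-t}(zB_t)\big|\le c\,(s-t)^{\gamma_2-\eps(1+1/\alpha)}G_{s-t}(0)\,e^{-\frac14|(x-y)(A_s^{-1}(x))^TM^{-1}_{s-t}|}.
\]
Since $\eps\le\eps_0$ in case \textbf{(A)} forces $\gamma_2-\eps(1+1/\alpha)\ge\eps$, this gives the supremum estimate; integrating in $y$ by Lemma \ref{int} and using $\det(M_{s-t})G_{s-t}(0)\le c(s-t)^{-d\eps/\alpha}$ (cf.\ \eqref{det_G}) gives the integral estimate, again because $\eps\le\eps_0$. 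In case \textbf{(B)} the ratio $|M_{s-t}||M^{-1}_{s-t}|$ need not be bounded, so I would instead split $\Rd=D(\delta,x)\cup D^c(\delta,x)$ with $\delta=(1-2\eps)\big((1+\gamma_1)/\beta-1/\alpha\big)$, exactly as in the proof of Lemma \ref{py_integrable}. On $D^c(\delta,x)$, estimate \eqref{densityest1} applied to both frozen matrices shows (this is precisely \eqref{max1}, \eqref{int1}) that $G_{s-t}(zB_s),G_{s-t}(zB_t)\le c(s-t)^\eps$ pointwise with integral over $D^c(\delta,x)$ at most $c(s-t)^\eps$; combined with the lower bound for $G_{s-t}(0)$ on $(0,\tau_0]$ this settles $D^c(\delta,x)$. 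On $D(\delta,x)$ one has $|z|^{1+\gamma_1}\le(s-t)^\delta/|M^{-1}_{s-t}|$, which with \eqref{Mnorm} yields $|z||M^{-1}_{s-t}|\le c(s-t)^{(\delta-(1-\eps)\gamma_1/\alpha)/(1+\gamma_1)}$, and the exponent of $|z|(s-t)^{\gamma_2}|M^{-1}_{s-t}|$, namely $\gamma_2+(\delta-(1-\eps)\gamma_1/\alpha)/(1+\gamma_1)$, tends to $\gamma_2-(1/\alpha-1/\beta)>0$ as $\eps\to0$ by the first two inequalities of \textbf{(D)}; so, shrinking $\tau_1$ once more, $\|A\|(s-t)^{\gamma_2}|z||M^{-1}_{s-t}|\le1$ on $D(\delta,x)$ and $e^{-|zB_\theta M^{-1}_{s-t}|}\le e\,e^{-|zB_sM^{-1}_{s-t}|}$. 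Plugging these bounds into the displayed inequality, the power of $s-t$ becomes $\gamma_2-\eps(1+1/\alpha)+(\delta-(1-\eps)\gamma_1/\alpha)/(1+\gamma_1)$, which again converges to $\gamma_2-(1/\alpha-1/\beta)>0$ as $\eps\to0$ and so is $\ge\eps$ for $\eps\le\eps_0$; Lemma \ref{int} and \eqref{det_G} then yield the integral estimate over $D(\delta,x)$, finishing the proof.

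The main obstacle, as in Lemma \ref{G_difference}, is the interplay between the highly anisotropic scaling matrix $M_{s-t}$ (in case \textbf{(B)} one may have $|M_{s-t}||M^{-1}_{s-t}|\gg1$) and the rotating matrices $(A_t^{-1}(x))^T$: in case \textbf{(B)} this is what forces the decomposition into $D(\delta,x)$ and its complement, and it is exactly the H\"older conditions \textbf{(D)} that make the estimate on $D(\delta,x)$ close, whereas in case \textbf{(A)} the isotropy $|M_{s-t}||M^{-1}_{s-t}|=1$ substitutes for \textbf{(D)} and no smallness of $\gamma_2$ is needed. Since all thresholds $\tau_1$ and all constants above depend only on $d,\alpha,\beta,\gamma_1,\gamma_2,\gamma_3,C_1,\dots,C_7$ and $\bar h$, the final constant is of the stated form $c=c(\bar h)$.
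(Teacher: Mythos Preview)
Your plan is correct and yields the lemma, but it follows a genuinely different route from the paper's proof, so a short comparison is warranted.

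The paper introduces a \emph{new} region
\[
\tilde D(\delta,x)=\Big\{w:\,|x-w|\le \tfrac{(s-t)^{\delta-\gamma_2}}{|M_{s-t}^{-1}|}\Big\},
\]
tailored to the temporal H\"older index $\gamma_2$: on $\tilde D$ one has $|(x-y)(B_s-B_t)|\le\|A\|(s-t)^\delta/|M^{-1}_{s-t}|$ directly, and then repeats the gradient argument of Lemma~\ref{G_difference}; on $\tilde D^c$ the exponential tail wins. The same decomposition is used in both cases \textbf{(A)} and \textbf{(B)}, with $\delta=(1-\eps)\gamma_2$ resp.\ $\delta=(1-\eps)(\gamma_2-(1/\alpha-1/\beta))$. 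By contrast, you \emph{recycle} the spatial region $D(\delta,x)$ of Lemma~\ref{py_integrable} in case \textbf{(B)} (with the $\gamma_1$-based $\delta$), and in case \textbf{(A)} you dispense with any splitting altogether by using the isotropy $|M_{s-t}||M^{-1}_{s-t}|=1$ to get $|zB_\theta M^{-1}_{s-t}|\ge\tfrac12|zB_sM^{-1}_{s-t}|$ globally. Your case \textbf{(A)} argument is cleaner than the paper's; your case \textbf{(B)} argument trades the introduction of a new region for a more involved exponent analysis (the power of $s-t$ picks up an extra term $(\delta-(1-\eps)\gamma_1/\alpha)/(1+\gamma_1)$, whose limit as $\eps\to0$ is $1/\beta-1/\alpha$, so you need \emph{both} the first two inequalities of \textbf{(D)} to close). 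Both approaches work.

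Two small remarks. First, the clause ``$G_{s-t}(0)$ is bounded below on $(0,\tau_1]$'' in your large-time argument is irrelevant (and potentially confusing): for $s-t>\tau_1$ the sup bound follows from $\sup\le 2G_{s-t}(0)$ together with $(s-t)^\eps\ge\tau_1^\eps$ alone, as the paper also does. Second, your repeated assertion ``so is $\ge\eps$ for $\eps\le\eps_0$'' is true but not self-evident from the limit being positive; one should check that the (affine in $\eps$) exponent minus $\eps$ stays nonnegative on $[0,\eps_0]$, which amounts to verifying that the slope is dominated by $2(d+3)/\alpha$. This is a routine arithmetic check along the lines of the $\eps_0$-verifications scattered through Section~\ref{Parametrix construction}.
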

\begin{proof} We first prove the lemma under the assumption $0<s-t\le \tau_0$.
Let $\delta > 0$. Put
$$
\tilde{D}(\delta,x) =
\left\{w: \, |x - w| \le \frac{(s - t)^{\delta - \gamma_2}}{|M_{s - t}^{-1}|}\right\}.
$$
First, we consider  $y \in \tilde{D}(\delta,x)$. Then  we have
$$
\left|(x - y)(A_s^T(x))^{-1} - (x - y)(A_t^T(x))^{-1}\right| \le
\|A\| |x - y| (s - t)^{\gamma_2} \le
\frac{(s - t)^{\delta} \|A\|}{|M_{s - t}^{-1}|}.
$$
By the same arguments as in the proof of Lemma \ref{G_difference}, we get
\begin{eqnarray*}
&&  \left|G_{s - t}((x - y) (A_s^T(x))^{-1}) - G_{s - t}((x - y) (A_t^T(x))^{-1})\right|\\
&& \le c (s - t)^{\delta - \eps (1 + 1/\alpha)} G_{s - t}(0) e^{-|(x - y)(A_s^{-1}(x))^T M_{s - t}^{-1}|}.
\end{eqnarray*}
and
$$
\int_{\tilde{D}(\delta,x)} \left|G_{s - t}((x - y) (A_s^T(x))^{-1}) - G_{s - t}((x - y) (A_t^T(x))^{-1})\right| \, dy
\le c (s - t)^{\delta - (d + 3)\eps/\alpha}.
$$
Next, we estimate the expression $\left|G_{s - t}((x - y) (A_s^T(x))^{-1}) - G_{s - t}((x - y) (A_t^T(x))^{-1})\right|$ for $y \in \tilde{D}^c(\delta,x)$. In the case (A) we have $|M_{s - t}^{-1}|^{-1} = |M_{s - t}|$. Hence for $y \in \tilde{D}^c(\delta,x)$
$$
\frac{|x - y|}{|M_{s - t}|} \ge
(s - t)^{\delta - \gamma_2}.
$$
In the case (A) we will assume that  $\delta < \gamma_2$.

In the case (B), by (\ref{Mnorm}) and (\ref{Mnorm1}), for $y \in \tilde{D}^c(\delta,x)$
$$
\frac{|x - y|}{|M_{s - t}|} \ge
\frac{(s - t)^{\delta - \gamma_2}}{|M_{s - t}^{-1}| |M_{s - t}|} \ge
c (s - t)^{\delta - \gamma_2 + (1 - \eps)\left(\frac{1}{\alpha} - \frac{1}{\beta}\right)}.
$$
In the case (B) we will assume that  $\delta < \gamma_2 - \left(\frac{1}{\alpha} - \frac{1}{\beta}\right)$.

By the same arguments as in the proof of Lemma \ref{py_integrable}  we find $c = c(\bar{h})$ such that for $y \in \tilde{D}^c(\delta,x)$
$$
\left|G_{s - t}((x - y) (A_s^T(x))^{-1}) - G_{s - t}((x - y) (A_t^T(x))^{-1})\right|
\le c (s - t)^{\eps}\le c G_{s - t}(0)(s - t)^{\eps}
$$
and
$$
\int_{\tilde{D}^c(\delta,x)} \left|G_{s - t}((x - y) (A_s^T(x))^{-1}) - G_{s - t}((x - y) (A_t^T(x))^{-1})\right| \, dy
\le c (s - t)^{\eps}.
$$

In the case (A) we pick $\delta = (1-\eps)\gamma_2$. Since $\eps\le \eps_0 \le \frac{\gamma_2 \alpha}{2(d+3)}\le 1/4$ we have  $\frac{(d+3) \eps}{\alpha} \le \frac{\gamma_2}{2 } = \delta/2(1-\eps)$. Hence
$$\delta-\frac{(d+3) \eps}{\alpha}\ge \delta(1-\frac{1}{2(1 - \eps)})\ge  (1 - 2\eps)\frac{(d+3) \eps}{\alpha}\ge \eps,$$
so  we obtain the conclusion of the lemma in the case (A).
In the case (B) we pick $\delta = (1-\eps) \left(\gamma_2 - \left(\frac{1}{\alpha} - \frac{1}{\beta}\right)\right)$. Since
$\eps \le \eps_0 \le \frac{\gamma_2 - \left(\frac{1}{\alpha} - \frac{1}{\beta}\right)}{2(d+3)/\alpha}\le 1/4$ we get
$\frac{(d+3) \eps}{\alpha} \le \frac{\gamma_2 - \left(\frac{1}{\alpha} - \frac{1}{\beta}\right)}{2}
= \delta/2(1-\eps)$. As in the case (A) we obtain
$$\delta-\frac{(d+3) \eps}{\alpha}\ge \delta(1-\frac{1}{2(1 - \eps)})\ge  (1 - 2\eps)\frac{(d+3) \eps}{\alpha}\ge \eps.$$
%Taking $\rho= \delta-\frac{(d+3) \eps}{\alpha}$ we complete the proof.
This completes  the proof in the case $0<t-s\le \tau_0$.

In the case $0<t-s\ge \tau_0$ the conclusion is trivial since
$$
\sup_{y \in \R^d} \left|G_{s - t}((x - y) (A_s^T(x))^{-1}) - G_{s - t}((x - y) (A_t^T(x))^{-1})\right|
\le 2 G_{s - t}(0)
$$
and
$$
|\det (A_s^T(x))^{-1}|\int_{\R^d} \left|G_{s - t}((x - y) (A_s^T(x))^{-1}) - G_{s - t}((x - y) (A_t^T(x))^{-1})\right| \, dy
\le 2 .
$$

\end{proof}

\begin{lemma}
\label{q0_estimates_lemma} Suppose that  $0 <  s-t\le \tau$.
%Let  $0 <  s-t\le \tau$ and $x \in \R^d$.
We have
\begin{equation*}
%\label{integral_q0}
\int_{\R^d} |q_{t,s}^{(0)}(x,y)| \, dy \le c (s-t)^{-1 + \eps },\ x \in \R^d.
\end{equation*}
%
 %There is a positive $\rho$ such that and
%
Moreover,
\begin{equation}
\label{q0_pointwise_estimate1}
|q_{t,s}^{(0)}(x,y)|  \le c (s-t)^{-1+\eps}G_{s-t}(0),\ x,y \in \R^d.
\end{equation}
The constant $c=c(\kappa(\tau))$.  If $\tau\le \tau_0$, then $c= c(\bar{h})$.
\end{lemma}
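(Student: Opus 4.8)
The starting point is the exact identity
$$
q_{t,s}^{(0)}(x,y)=-(\partial_t+L_{t;x})p_{t,s}^{(0)}(x,y)
=\big(L_{t,s}^y p_{t,s}^y\big)(x-y)-\big(L_{t;x}\,p_{t,s}^y(\cdot-y)\big)(x),
$$
valid because the frozen kernel $w\mapsto p_{t,s}^y(w)$ satisfies $(\partial_t+L_{t,s}^y)p_{t,s}^y=0$ (the relation recorded just before \eqref{p_t_s_0}, a consequence of \eqref{1dim_equation}), and because the $t$-dependence of $p_{t,s}^{(0)}(x,y)=p_{t,s}^y(x-y)$ sits entirely in $p_{t,s}^y$. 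Thus $q_{t,s}^{(0)}$ is the difference of two L\'evy-type operators applied to the \emph{same} function: the true generator $L_t$ with coefficient $A_t(x)$ and nonlinearity $U_t(x,\cdot)$, against the frozen, truncated operator $L_{t,s}^y$ with coefficient $A_s(y)$ and jumps cut at level $R^{(k)}_{s-t}$. Rewriting $L_{t,s}^y$ in the absolutely convergent compensated form (legitimate by symmetry of $\nu_k$) and splitting the $k$-th summand according to the jump size $|u|\lessgtr R^{(k)}_{s-t}$, one gets $q_{t,s}^{(0)}(x,y)=\sum_{k=1}^d(\mathcal I_k^{\mathrm{sm}}(x,y)+\mathcal I_k^{\mathrm{lg}}(x,y))$, where (using $\mathbf 1_{|u|\le1}=1$ for $|u|<R^{(k)}_{s-t}\le1$) the small-jump integrand is
$$
\big[p_{t,s}^y(x-y+ue_kA_s^T(y))-p_{t,s}^y(x-y+uA_t(x)e_k+U_t(x,e_ku))\big]-u\,\nabla p_{t,s}^y(x-y)\cdot\big[e_kA_s^T(y)-A_t(x)e_k\big],
$$
and $\mathcal I_k^{\mathrm{lg}}$ is minus the $L_{t;x}$-integrand over $|u|\ge R^{(k)}_{s-t}$ (the frozen operator contributing nothing there).

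For the small-jump part I would use a second-order Taylor expansion of $p_{t,s}^y$, separating the contribution of the coefficient discrepancy $e_kA_s^T(y)-A_t(x)e_k$, which by \eqref{Holder} and \eqref{Lipschitz} is $O(|x-y|^{\gamma_1}+(s-t)^{\gamma_2})$, from that of $U_t(x,e_ku)$, which by \eqref{kernel_E} is $O(|u|^{\gamma_3})$. The gradient and Hessian bounds of Corollary \ref{general_matrix}, combined with Lemma \ref{point} and Lemma \ref{G_difference} (to keep every argument of $G_{s-t}$ occurring in the expansion within a bounded distance, in the $(A_s^{-1}M_{s-t}^{-1})$-metric, of $(x-y)(A_s^{-1}(x))^TM^{-1}_{s-t}$ on $D(\delta,x)$), reduce $|\mathcal I_k^{\mathrm{sm}}(x,y)|$ to an integral over $|u|<R^{(k)}_{s-t}$ of terms of type $|u|^2$, $|u|^{1+\gamma_3}$, $|u|^{2\gamma_3}$ times $G_{s-t}(0)$, a power of $|M^{-1}_{s-t}|$, and the factor $e^{-c|(x-y)(A_s^{-1}(x))^TM^{-1}_{s-t}|}$. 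The $u$-moments are handled by Lemma \ref{2moment} ($\int_{|u|\le R}u^2\nu_k\asymp R^2(s-t)^{\eps}$) and Lemma \ref{2moment0} ($\int_{|u|\le R}|u|^\gamma\nu_k\le cR^\gamma(s-t)^{-1+\eps}$ for $\gamma>\beta$, applicable since $\gamma_3>\max(1,\beta)$). Integrating $e^{-c|(x-y)(A_s^{-1}(x))^TM^{-1}_{s-t}|}$ over $D(\delta,x)$ by Lemma \ref{int}, turning the resulting $\det M_{s-t}\,G_{s-t}(0)$ and powers of $|M^{\pm1}_{s-t}|$ into powers of $s-t$ via \eqref{det_G}, \eqref{Mnorm}, \eqref{Mnorm1}, and absorbing the region $D^c(\delta,x)$ by exponential smallness exactly as in the proof of Lemma \ref{py_integrable}, yields the $\|\cdot\|_{\infty,1}$-bound; estimating the exponential by $1$ and keeping $G_{s-t}(0)$ gives \eqref{q0_pointwise_estimate1}.

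For the large-jump part I would use only crude bounds: $|p_{t,s}^y|\le cG_{s-t}(0)$ and, on the annulus $R^{(k)}_{s-t}\le|u|\le1$ (empty unless $R^{(k)}_{s-t}<1$), $|\nabla p_{t,s}^y(x-y)|\le c|M^{-1}_{s-t}|(s-t)^{-\eps(1+1/\alpha)}G_{s-t}(0)$ from Corollary \ref{general_matrix}. Since $\nu_k(\{|u|\ge R^{(k)}_{s-t}\})\le h_k(R^{(k)}_{s-t})=(s-t)^{-1+\eps}$ by the very definition of $R^{(k)}_{s-t}$, and a dyadic decomposition of $\{R^{(k)}_{s-t}\le|u|\le1\}$ together with \eqref{h-scaling_u0} gives $\int_{R^{(k)}_{s-t}\le|u|\le1}|u|\,\nu_k(du)\le cR^{(k)\,1\wedge\alpha}_{s-t}(s-t)^{-1+\eps}$ (up to a logarithm when $\alpha=1$), one gets $|\mathcal I_k^{\mathrm{lg}}(x,y)|\le cG_{s-t}(0)(s-t)^{-1+\eps}$. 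For the integral bound one notes that $x-y+uA_t(x)e_k+U_t(x,e_ku)=x'-y$ with $x'=x+uA_t(x)e_k+U_t(x,e_ku)$ independent of $y$, so $\int_{\R^d}p_{t,s}^y(x'-y)\,dy\le c$ uniformly in $x'$ by \eqref{py_integral}; combined with $\int_{\R^d}p_{t,s}^y(x-y)\,dy\le c$ and the gradient term integrated as in Lemma \ref{py_integrable}, this gives $\int_{\R^d}|\mathcal I_k^{\mathrm{lg}}(x,y)|\,dy\le c(s-t)^{-1+\eps}$.

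It remains to verify that every power of $s-t$ produced above is $\ge -1+\eps$. In case \textbf{(A)} all $R^{(k)}_{s-t}$ coincide, $R^{(k)}_{s-t}|M^{-1}_{s-t}|\equiv\tfrac18$, and the surplus exponents are multiples of $\gamma_1,\gamma_2,\gamma_3-1$ damped by small multiples of $\eps$; the restriction $\eps\le\eps_0$ from Remark \ref{remark_epsilon} makes them all nonnegative. In case \textbf{(B)} the truncation levels genuinely differ, $|M^{-1}_{s-t}|$ and $|M_{s-t}|$ are governed by the extreme indices $\alpha$ and $\beta$ through \eqref{Mnorm}, \eqref{Mnorm1}, and the surplus exponents involve $(1+\gamma_1)/\beta-1/\alpha$, $\gamma_2-(1/\alpha-1/\beta)$ and $\gamma_3-\beta/\alpha$, which are positive precisely by \textbf{(D)}, \eqref{indices}; the quantitative threshold $\eps\le\eps_0$ then forces each final exponent to be $\ge-1+\eps$. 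Finally one first proves the lemma for $0<s-t\le\tau_0$ (where all $R^{(k)}_{s-t}\le1$ and constants depend on $\bar h$ only) and removes this restriction by noting that for $\tau_0\le s-t\le\tau$ the factor $(s-t)^{-1+\eps}$ is bounded below, so crude bounds on $p^{(0)}$ and $L_tp^{(0)}$ suffice, at the cost of a constant depending on $\kappa(\tau)$. The main technical obstacle is exactly the case \textbf{(B)} bookkeeping: tracking, for each of the several Taylor and large-jump terms, how $|M^{\pm1}_{s-t}|$, $\det M_{s-t}\,G_{s-t}(0)$ and the anisotropic $u$-moments combine, and checking that the accumulated $(s-t)$-exponent lands on the admissible side of the thresholds defining $\eps_0$.
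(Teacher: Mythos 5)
Your decomposition and toolkit coincide with the paper's. The identity $q_{t,s}^{(0)}=(L_{t,s}^y - L_t^x)p_{t,s}^y(\cdot-y)(x)$, the split at the jump size $R^{(k)}_{s-t}$, the second-order Taylor expansion controlled via Lemma \ref{point} and Corollary \ref{general_matrix}, the moment bounds of Lemmas \ref{2moment}, \ref{2moment0}, the dichotomy $D(\delta,x)$ versus $D^c(\delta,x)$, Lemma \ref{int} and \eqref{det_G}, and the final reduction to $\tau\le\tau_0$ are all exactly what the paper does; the paper separates the small-jump region into a coefficient-mismatch term $\text{I}$ and a nonlinearity term $\text{II}$, while you keep them under one integral $\mathcal I_k^{\mathrm{sm}}$ and sort them out at the expansion stage, which is a purely cosmetic difference.

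There is, however, one genuine gap in your treatment of the large-jump part. Because you insist on writing $L_{t;x}$ in absolutely convergent compensated form, your $\mathcal I_k^{\mathrm{lg}}$ contains the term $-\nabla p_{t,s}^y(x-y)\cdot A_t(x)e_k\int_{R^{(k)}_{s-t}\le|u|\le1}u\,\nu_k(du)$, and you propose to bound it by $|\nabla p_{t,s}^y(x-y)|\cdot\int_{R^{(k)}_{s-t}\le|u|\le1}|u|\,\nu_k(du)$. Even granting your first-moment estimate $\int_{R^{(k)}_{s-t}\le|u|\le1}|u|\,\nu_k(du)\le c\,R^{(k)\,1\wedge\alpha}_{s-t}(s-t)^{-1+\eps}$, combining it with the gradient bound $|\nabla p_{t,s}^y|\le c|M^{-1}_{s-t}|(s-t)^{-\eps(1+1/\alpha)}G_{s-t}(0)$ and $|M^{-1}_{s-t}|\asymp 1/R^{(k)}_{s-t}$ (case (A)) yields an $(s-t)$-exponent of $-1-\eps/\alpha$ for $\alpha\ge 1$ and roughly $-1/\alpha<-1$ for $\alpha<1$, strictly worse than the target $-1+\eps$; the crude absolute-value bound therefore does not close. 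The remedy is that this term vanishes identically: $\nu_k$ is symmetric, so $\int_{R^{(k)}_{s-t}\le|u|\le1}u\,\nu_k(du)=0$. Once you use this cancellation (equivalently, write the large-jump contribution with no compensator at all, which is what the paper's term $\text{III}$ does), $\mathcal I_k^{\mathrm{lg}}$ reduces to $\int_{|u|\ge R^{(k)}_{s-t}}\bigl[p_{t,s}^y(x-y+ue_kA_t^T(x)+U_t(x,ue_k))-p_{t,s}^y(x-y)\bigr]\nu_k(du)$, bounded pointwise by $2cG_{s-t}(0)\,\nu_k(\{|u|\ge R^{(k)}_{s-t}\})\le cG_{s-t}(0)(s-t)^{-1+\eps}$ and in $L^1(dy)$ via \eqref{py_integral}, which matches the paper.
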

\begin{proof}
Let
$$
L_{t}^z f(x) = \sum_{k = 1}^d \mathrm{P.V.} \int_{\R} [f(x + u e_k A_t^T(z)+U_t(z,u e_k)) - f(x)] \, \nu_k(du).
$$
Recall that
$$
\left(\frac{\partial}{\partial t} + L_{t,s}^y\right) p_{t,s}^y(w) = 0.
$$
It follows that
\begin{eqnarray*}
&& |q_{t,s}^{(0)}(x,y)|\\
&& = \left|\left(\frac{\partial}{\partial t} + L_{t}\right) p_{t,s}^{(0)}(\cdot,y)(x)\right| \\
&& = \left|\left(\frac{\partial}{\partial t} + L_{t}^x\right) p_{t,s}^{y}(\cdot)(x-y)\right| \\
&& = \left|\left(-L_{t,s}^y + L_{t}^x\right) p_{t,s}^{y}(\cdot)(x-y)\right| \\
&& \le \left|\sum_{k = 1}^d \mathrm{P.V.} \int_{|u| <  R^{(k)}_{s-t}}
\left[p_{t,s}^{y}(x-y + u e_k A_t^T(x)) - p_{t,s}^{y}(x-y + u e_k A_s^T(y))\right]
\, \nu_k(du) \right|\\
&& +\left| \sum_{k = 1}^d \mathrm{P.V.} \int_{|u| < R^{(k)}_{s-t}}
\left[p_{t,s}^{y}(x-y + u e_k A_t^T(x) + U_t(x,u e_k)) \right. \right. \\
&& \quad \quad \quad \quad \quad \quad \quad \quad \quad \quad - \left. \left. p_{t,s}^{y}(x-y + u e_k A_t^T(x))\right]
\, \nu_k(du) \right|\\
&& +\left| \sum_{k = 1}^d \int_{|u| \ge R^{(k)}_{s-t}}
\left[p_{t,s}^{y}(x-y + u e_k A_t^T(x) + U_t(x,u e_k)) - p_{t,s}^{y}(x-y)\right]
\, \nu_k(du) \right|\\
&& = \text{I}(x,y) + \text{II}(x,y) + \text{III}(x,y).
\end{eqnarray*}

For the sake of simplicity we will denote $p(w)= p_{t,s}^y(w)$ and $z=x-y$.
To handle the term $\text{I}(x,y)$  we have to estimate
$$
 \mathrm{P.V.} \int_{|u| <  R^{(k)}_{s-t}}
\left[p(z + u e_k A_t^T(x)) - p(z + u e_k A_s^T(y))\right]
\, \nu_k(du).
$$
Because of the symmetry, we can re-write this integral as
%$$
%\begin{aligned}
 %\int_{|u| < (s-t)^{\frac{1}{\alpha_k} - \eps}}&
%\left[p(z+ u e_k A_t^T(x)) - p(z + u e_k A_t^T(y))\right.
%\\&\left.-\nabla p(z)(u e_k (A_t^T(y)-A_t^T(x))) \right]
%\, \frac{c_k \, du}{|u|^{1+ \alpha_k}}.
%\end{aligned}
%$$

$$
 \int_{|u| <  R^{(k)}_{s-t}}
\left[p(z+ u e_k A_t^T(x)) - p(z + u e_k A_s^T(y))\right.
\left.-\nabla p(z)(u e_k (A_t^T(x)-A_s^T(y))) \right]
\, \nu_k(du).
$$
We have
\begin{equation}
\begin{aligned}
&p(z + u e_k A_t^T(x)) - p(z + u e_k A_s^T(y))
-\nabla p(z)(u e_k (A_t^T(x)-A_s^T(y)))
\\&= \Big[\nabla p(z+\theta u e_k A_t^T(x)+ (1-\theta)u e_k A_s^T(y) )-\nabla p(z)\Big](u e_k (A_t^T(x)-A_s^T(y)))
\\&= \Big[\nabla p(z+u e_k A_s^T({y}) )- \nabla p(z+\theta u e_k A_t^T(x)+ (1-\theta)u e_k A_s^T(y) )\Big](u e_k (A_t^T(x)-A_s^T(y)))\\
&+ \Big[\nabla p(z+u e_k A_s^T({y}) )-\nabla p(z)\Big](u e_k (A_t^T(x)-A_s^T(y)))\\
&= \Delta_1+\Delta_2,
\end{aligned}\label{1}
\end{equation}
where $\theta\in [0,1]$.  Next,
$$\nabla p(z+u e_k A_s^T({y}) )- \nabla p(z+\theta u e_k A_t^T(x)+ (1-\theta)u e_k A_s^T(y) )=(\theta u e_k ( A_s^T(y)-A_t^T(x))\nabla^2 p(\xi),$$
where
\begin{equation*}
%\label{xiform}
\xi = z +   \theta^* u e_k A_t^T(x)+ (1-\theta^*)u e_k A_s^T(y)),\ |u|\le R^{(k)}_{s-t}.
\end{equation*}
Troughout the whole proof we pick   $\delta = \frac{(1-2\eps)\gamma_1}{\beta}$ in the case (A) and
$\delta = (1-2\eps)((1 + \gamma_1)/\beta - 1/\alpha)$
in the case (B). Note that such choice of $\delta$ is dictated by  Lemma
\ref{py_integrable}, since we are going to use some arguments contained therein.

Let $|z|^{1+\gamma_1}\le  \frac{(s-t)^\delta} {|M^{-1}_{s-t}|}$, that is $y \in D(\delta,x)$.
 By Lemma  \ref{point} we have
%Next, for $|z|\le (s-t)^{\frac{1-2\eps}{\beta}}$,  we claim that
\begin{equation*}
%\label{xiest0}
|\xi (A^T_s(y))^{-1} M^{-1}_{s-t}-  z(A^T_s(x))^{-1}M^{-1}_{s-t}| \le c.
\end{equation*}
%$$ |z|^{1+\gamma_1} |M^{-1}_{s-t}|+$$ |z|^{1+\gamma_1} |M^{-1}_{s-t}|+R^{(k)}_{s-t}|z|^{\gamma_1}|M^{-1}_{s-t}|+ 1\le (s-t)^\delta + |z|^{\gamma_1} +1\le C. $$
%M^{-1}_{s-t}|+ 1\le (s-t)^\delta + |z|^{\gamma_1} +1\le C. $$
%\le 1+|M^{-1}_{s-t}| ||A||(s-t)^{ 1/\alpha} \le C,
%\xi = z +    u e_k A_s^T(y))
%where the last inequality follows from Lemma \ref{scal_h}.
%where $C= C(||A||)$.
Applying this  and  (\ref{hessianest}), we arrive at
%\begin{equation*}
%\left|\nabla^2 p(\xi)\right|
%\le c e^{-|z (A_s^{-1}(x))^TM^{-1}_{s-t}|}
%(s-t)^{-2/\alpha-2\eps} p(0). %\label{gradest12}
 %w\in \R^d.
%\end{equation*}
$$
\left|\nabla^2 p(\xi)\right|
\le c \frac{|M^{-1}_{s-t}|^2}{(s-t)^{2\eps(1+1/\alpha)}}  e^{-|z (A_s^{-1}(x))^TM^{-1}_{s-t}|} G_{s-t}(0).
$$
This implies
\begin{eqnarray*}|\Delta_1|&\leq&  c  |u|^2 | A_s^T(y)-A_t^T(x)|^2
\frac{|M^{-1}_{s-t}|^2}{(s-t)^{2\eps(1+1/\alpha)}}  e^{-|z (A_s^{-1}(x))^TM^{-1}_{s-t}|} G_{s-t}(0)
%\\
%&\leq&  c p(0)(t-s)^{-2/\alpha-2\eps} ((s-t)^{2\gamma_1\frac{1-2\eps}{\beta}} + (s-t)^{2\gamma_2})u^2 \exp \left(-|(x-y) (A_s^{-1}(x))^TM^{-1}_{s-t}|\right)
\end{eqnarray*}
Next, by Lemma \ref{2moment},
$$
%\begin{aligned}
\int_{|u| < R^{(k)}_{s-t}}u^2\nu_k(du)\le
  %(R^{(k)}_{s-t})^2   h_k( h_k^{-1}((s-t)^{\eps -1}))\\&=
 (R^{(k)}_{s-t})^2  (s-t)^{\eps -1}%\\
\le |M_{s-t}|^2 (s-t)^{\eps -1}.
%\end{aligned}
$$
Hence,
$$
\begin{aligned}
\text{I}_1(x,y):&=\int_{|u| <R^{(k)}_{s-t}}|\Delta_1|\nu_k(du)\\&\le
c G_{s-t}(0)(|M^{-1}_{s-t}| |M_{s-t}|)^2 | A_s^T(y)-A_t^T(x)|^2
e^{-|z (A_s^{-1}(x))^TM^{-1}_{s-t}|}
    (s-t)^{ - \eps(1+2/\alpha)-1}.
		%\\
%&\le C (s-t)^{\frac{2-2\eps}{\beta}}(s-t)^{\eps -1}
\end{aligned}
$$

Now, let us estimate the second summand $\Delta_2$ in the right hand side of \eqref{1}.
%For that, we claim that  :
%\begin{equation}\label{partial}
%|\nabla p(z+u e_k A_s^T(y) )-\nabla p(z)|\leq %C(t-s)^{-1/\alpha_k-1/\alpha}\prod_{j=1}^{d}(t-s)^{-1/\alpha_j}.
%
Since  $p(w)= |\det((A^T_s(y))^{-1})|G_{s-t}\left(w (A^T_s(y))^{-1} \right), w\in \R^d$, we have
$$\nabla p(w) = |\det ((A^T_s(y))^{-1} )| \nabla G_{s-t}\left(w (A^T_s(y))^{-1} \right) (A^T_s(y))^{-1}. $$
%
%$$
%\begin{aligned}
%|&\nabla p(z+u e_k A_t^T(\mr{x}) )-\nabla p(z)|
%\\&\le |\Big\nabla p(z+u e_k A_t^T(\mr{x}) )- \nabla p(z+u e_k A_s^T(y) )|+|\nabla p(z+u %e_k A_s^T(\y) )-\nabla p(z)|\\%
%&\leq  C(t-s)^{-2/\alpha}\prod_{j=1}^{d}(%t-s)^{-1/\alpha_j} |u| | e_k(A_t^T(x)-A_s^T(y))| + \Delta_1,
%\end{aligned}
%$$
%where
Hence,
$$
\begin{aligned}
&  |\nabla p(z+u e_k A_s^T(y) )-\nabla p(z)|\\
%&= |A \left(\nabla G_{s-t}\left(z (A^T_s(y))^{-1}+u e_k\right)-\nabla G_{s-t}\left(z (A^T_s(y))^{-1}\right)\right) (A^T_s(y))^{-1}|\\
&\le c  |\nabla G_{s-t}\left(z (A^T_s(y))^{-1}+u e_k\right)-\nabla G_{s-t}\left(z (A^T_s(y))^{-1}\right)|\\
&\leq  c  \left|\frac{\partial}{\partial w_k}\nabla G_{s-t}(\xi)\right||u|,%\\
%&\le   C \frac1{h_k^{-1}((s-t)^{ -1})}(s-t)^{-1/\alpha}G_{s-t}(0)
\end{aligned}
$$
where  $\xi = z (A^T_s(y))^{-1}+\theta u e_k, \ 0\le\theta\le 1$.
%
%Let $|z|^{1+\gamma_1}\le  \frac{(s-t)^\delta} {|M^{-1}_{s-t}|}$ with  the constant
%$\delta>0$ to  be picked later.
 By Lemma \ref{point}, we obtain
 \begin{equation*}
\label{xiest1}
|\xi  M^{-1}_{s-t}-  z(A^T_s(x))^{-1}M^{-1}_{s-t}| \le c.
\end{equation*}
Applying this  and  (\ref{hessianest}), we arrive at
\begin{equation*}
\left|\frac{\partial}{\partial w_k}\nabla G_{s-t}(\xi)\right|
\le c  e^{-|z (A_s^{-1}(x))^TM^{-1}_{s-t}|}
%\frac{(s-t)^{-(2+2/\alpha)\eps}}{{ h_k^{-1}((s-t)^{-1 + \eps})}} |M^{-1}_{s-t}| p(0). %\label{gradest12}
 %w\in \R^d.
(s-t)^{-(2+2/\alpha)\eps}\frac1{R^{(k)}_{s-t}} |M^{-1}_{s-t}| G_{s-t}(0).
\end{equation*}
Then we  have
$$
\begin{aligned}
&|\Delta_2|=\left|\Big[\nabla p(z+u e_k A_s^T(y) )-\nabla p(z)\Big](u e_k (A_t^T(y)-A_t^T(x)))\right|
\\&\leq (s-t)^{-(2+2/\alpha)\eps}\frac1{R^{(k)}_{s-t}} |M^{-1}_{s-t}| G_{s-t}(0)|u|^2 \left|A_t^T(x)-A_s^T(y)\right|e^{-|z (A_s^{-1}(x))^TM^{-1}_{s-t}|}.%\\
%&\leq  C \frac1{h_k^{-1}((s-t)^{ -1})}(s-t)^{-1/\alpha-2\eps}G_{s-t}(0)(s-t)^{\gamma_1/\beta- 2\epsilon\gamma_1} e^{-|z (A_s^{-1}(x))^TM^{-1}_{s-t}|} |u|^2.
\end{aligned}
$$
Hence, since $\int_{|u| < R^{(k)}_{s-t}}u^2\nu_k(du)\le
   (R^{(k)}_{s-t})^2     (s-t)^{\eps -1}$, we have
$$
\begin{aligned}
\text{I}_2(x,y)&:=\int_{|u| < R^{(k)}_{s-t}}|\Delta_2|\nu_k(du)\\&\le
c G_{s-t}(0)|M^{-1}_{s-t}| R^{(k)}_{s-t} \left|A_t^T(x)-A_s^T(y)\right|
e^{-|z (A_s^{-1}(x))^TM^{-1}_{s-t}|}
    (s-t)^{-(2+2/\alpha)\eps} (s-t)^{\eps -1}\\
		&\le
c G_{s-t}(0)|M^{-1}_{s-t}| |M_{s-t}| \left|A_t^T(x)-A_s^T(y)\right|
e^{-|z (A_s^{-1}(x))^TM^{-1}_{s-t}|}
    (s-t)^{-(2+2/\alpha)\eps} (s-t)^{\eps -1}.
		%\\
%&\le C (s-t)^{\frac{2-2\eps}{\beta}}(s-t)^{\eps -1}
\end{aligned}
$$
We observe that, by (\ref{Mnorm1}),
$$|x-y|^{\gamma_1}\le \left( \frac{(s-t)^\delta} {|M^{-1}_{s-t}|}\right)^\frac{\gamma_1}{1+\gamma_1}\le
c(s-t)^{(\delta+(1-\eps)/\beta) \frac{\gamma_1}{1+\gamma_1}} \le c(s-t)^{((1-\eps)/\beta) \frac{\gamma_1}{1+\gamma_1}}.$$
 In the case (A) $|M^{-1}_{s-t}| |M_{s-t}|=1$,  hence
$$
\begin{aligned}|M^{-1}_{s-t}| |M_{s-t}||A_s^T(y)-A_t^T(x)|& \le  |A_s^T(y)-A_s^T(x)|+ | A_s^T(y)-A_t^T(y)|\\&\le c (|x-y|^{\gamma_1}+ (s-t)^{\gamma_2})\\
&\le c ( (s-t)^{\frac{1-\eps}\beta \frac{\gamma_1}{1+\gamma_1}}+ (s-t)^{\gamma_2})\\
&\le  c (s-t)^{(1-\eps)\rho},
\end{aligned}
$$
where $\rho = \min\{  \gamma_1/\beta(1+\gamma_1), \gamma_2  \}$.
In the case (B) we have , by  (\ref{Mnorm}) and(\ref{Mnorm1}),
$$
\begin{aligned}|M^{-1}_{s-t}| |M_{s-t}||A_s^T(y)-A_t^T(x)|& \le \left(|M^{-1}_{s-t}| |M_{s-t}|\right) |A_s^T(y)-A_s^T(x)|+ | A_s^T(y)-A_t^T(y)|\\&\le c \left(|M^{-1}_{s-t}| |M_{s-t}|\right)(|x-y|^{\gamma_1}+ (s-t)^{\gamma_2})\\
&\le c ( |M_{s-t}||M^{-1}_{s-t}|^{\frac{1}{1+\gamma_1}} (s-t)^{\delta\frac{\gamma_1}{1+\gamma_1}}+ |M^{-1}_{s-t}| |M_{s-t}|(s-t)^{\gamma_2})\\
&\le  (s-t)^{(1-\eps)(\frac1\beta - \frac1{ (1+\gamma_1)\alpha }) +\delta\frac{\gamma_1}{1+\gamma_1}}+
(s-t)^{(1-\eps)(\frac1\beta - \frac1{ \alpha}) +\gamma_2}\\
&\le  c (s-t)^{(1-\eps)\rho},
\end{aligned}
$$
where $\rho = \min\{ 1/\beta - 1/(1+\gamma_1)\alpha, \gamma_2+ 1/\beta - 1/\alpha   \}$.
This implies that for $y \in D(\delta,x)$,
\begin{eqnarray}\text{I}(x,y)&\le& c \text{I}_2(x, y)\nonumber \\&\le& c G_{s-t}(0)|M^{-1}_{s-t}| |M_{s-t}| \left|A_t^T(x)-A_s^T(y)\right|
e^{-|z (A_s^{-1}(x))^TM^{-1}_{s-t}|}
    (s-t)^{-(1+2/\alpha)\eps-1} \label{estIs0} \\&\le& c G_{s-t}(0)|M^{-1}_{s-t}| |M_{s-t}| \left|A_t^T(x)-A_s^T(y)\right|
(s-t)^{-(1+2/\alpha)\eps-1} e^{-\frac{|z|} {||A|||M_{s-t}|}}\nonumber\\&\le& c G_{s-t}(0)  (s-t)^{\rho(1-\eps)-(2+2/\alpha)\eps}
(s-t)^{\eps-1}e^{-\frac{|z|} {||A|||M_{s-t}|}} \nonumber \\&\le& c G_{s-t}(0)
(s-t)^{\eps-1}e^{-\frac{|z|} {||A|||M_{s-t}|}}, \label{estIs1}\end{eqnarray}
%%%%%%%%%%%
 provided
\begin{equation} \label{eps_cond}\eps\le \frac{\rho(1-\eps)}{2+2/\alpha}. \end{equation}
That is, when
%$$0<\eps\le \frac{\rho}{2+2/\alpha +\rho} $$

$$0<\eps\le \frac{\min\{  \gamma_1/\beta(1+\gamma_1), \gamma_2  \}}{2+2/\alpha +\min\{  \gamma_1/\beta(1+\gamma_1), \gamma_2  \}}\quad \text{in the case (A)}$$
and
$$0<\eps\le \frac{\min\{ 1/\beta - 1/(1+\gamma_1)\alpha, \gamma_2+ 1/\beta - 1/\alpha   \}}{2+2/\alpha +\min\{ 1/\beta - 1/(1+\gamma_1)\alpha, \gamma_2+ 1/\beta - 1/\alpha   \}}\quad  \text{in the case (B)}.$$
Hence  (\ref{eps_cond}) holds for $\eps\le \eps_0$.
Next,
$$|A_s^T(y)-A_t^T(x)| \le |A_s^T(y)-A_s^T(x)|+ | A_s^T(y)-A_t^T(y)|\le c (|x-y|^{\gamma_1}+ (s-t)^{\gamma_2}).$$
%
%Observe that
%$$\int_{\R^d} |(x-y)|^{2\gamma_1}e^{-|(x-y) (A_s^{-1}(x))^TM^{-1}_{s-t}|}dy \le |M_{s-t}|^{2\gamma_1} det (M_{s-t})  det  A_s(x) c(d)\le   c  |M_{s-t}|^{2\gamma_1} det (M_{s-t})  $$ and
%$$\int_{\R^d} e^{-|(x-y) (A_s^{-1}(x))^TM^{-1}_{s-t}|}dy =  det (M_{s-t})  det  A_s(x) c(d)\le c det (M_{s-t}).$$
%Recall that  $D( x)= \left\{w; |x-w|^{1+\gamma_1}\le  \frac{(s-t)^\delta} {|M^{-1}_{s-t}|} \right\}$.
%
By (\ref{estIs0}), Lemma \ref{int} used twice ( with $\rho =\gamma_1$ or  with $\rho =0$),  (\ref{det_G}) and finally  (\ref{Mnorm1}), we arrive at
\begin{eqnarray*}
&& \int_{D(\delta, x)}|I(x,y)|dy\\
&\le&  cG_{s-t}(0)det (M_{s-t})|M^{-1}_{s-t}| |M_{s-t}| (|M_{s-t}|^{\gamma_1} + (s-t)^{\gamma_2})(s-t)^{ - \eps(1+2/\alpha)-1}\\
&\le&  c(t-s)^{-\eps(2 +(d+2)/\alpha)}|M^{-1}_{s-t}| |M_{s-t}| (|M_{s-t}|^{\gamma_1} + (s-t)^{\gamma_2})(s-t)^{ \eps-1}\\
&\le&  c(t-s)^{-\eps(2 +(d+2)/\alpha)}|M^{-1}_{s-t}| |M_{s-t}| ((s-t)^{(1-\eps)\gamma_1/\beta} + (s-t)^{\gamma_2})
(s-t)^{ \eps-1}.
\end{eqnarray*}
In the case (A) we have  $|M^{-1}_{s-t}| |M_{s-t}|=1$, hence
\begin{equation} \label{Iestimate1}\int_{D(\delta, x)}|I(x,y)|dy \le c  (s-t)^{ -1+ \eps},\end{equation}
if  $ \eps\le \min\{\frac {\gamma_1}{\gamma_1+\beta(2+(d+2)/\alpha) }, \frac{\gamma_2}{2+(d+2)/\alpha}\}$, which is satisfied with our assumptions on $\eps$.
In the case (B) we have , by (\ref{Mnorm}) and (\ref{Mnorm1}) , $|M^{-1}_{s-t}| |M_{s-t}|\le c (s-t)^{(1-\eps)(-1/\alpha+1/\beta)}$, so
\begin{eqnarray}\int_{D(\delta, x)}|I(x,y)|dy &\le& c [ (t-s)^{-\eps(2 +(d+2)/\alpha)+(1-\eps)((1+\gamma_1)/\beta-1/\alpha)}\nonumber\\ &+& (t-s)^{-\eps(2 +(d+2)/\alpha)+(1-\eps)(1/\beta-1/\alpha)+\gamma_2}] (s-t)^{ -1+ \eps}\nonumber\\
&\le& c (s-t)^{ -1+ \eps}. \label{Iestimate2}
\end{eqnarray}
provided   $-\eps(2 +(d+2)/\alpha)+(1-\eps)((1+\gamma_1)/\beta-1/\alpha)\ge 0 $ and \\
$-\eps(2 +(d+2)/\alpha)+(1-\eps)(1/\beta-1/\alpha)+\gamma_2\ge 0$.
That is   $$\eps\le \min\left\{\frac {(1+\gamma_1)/\beta - 1/\alpha}{ (\gamma_1+1)/\beta +2 +(d+1)/\alpha},
\frac {(1/\beta - 1/\alpha+ \gamma_2)}{ 1/\beta +2 +(d+1)/\alpha} \right\}. $$
Again, this is true with our assumptions.

%

%Next, for $|z|\le (s-t)^{\frac{1-2\eps}{\beta}}$,  we claim that

%Observe that
%$$\int_{\R^d} |x-y|^{\gamma_1}e^{-|(x-y) (A_s^{-1}(x))^TM^{-1}_{s-t}|}dy \le |M_{s-t}|^{\gamma_1} det (M_{s-t})  det  A_s(x) c(d)\le   c  |M_{s-t}|^{\gamma_1} det (M_{s-t})  $$
%and, by (),

 Now we deal with the estimates of $\text{I}(x,y)$ over ${D^c(\delta,  x)}$. We note that our  assumptions yield that
in the case (A)  $\eps \le \frac{\gamma_1 \alpha}{2(d+3)\beta} $ ,
while
in the case (B)
%$\eps \le \frac{(1 + \gamma_1)/\beta - 1/\alpha}{2((1 + \gamma_1)/\beta + (d + 2)/\alpha)}$.
$\eps \le \frac{(1 + \gamma_1)/\beta - 1/\alpha}{2(d+3)/\alpha}$.
We have
\begin{equation}
\label{difference}|p(z + u e_k A_t^T(x)) - p(z + u e_k A_s^T(y))-\nabla p(z)(u e_k (A_t^T(y)-A_t^T(x)))|\le c |\nabla^2 p(\xi)|u^2
\end{equation}
for
\begin{equation*}
%\label{xiform}
\xi = z + \lambda ( \theta u e_k A_t^T(x)+ (1-\theta)u e_k A_s^T(y)),\ |u|\le R^{(k)}_{s-t},
\end{equation*}
with $\lambda, \theta \in [0,1]$.
We note that
\begin{eqnarray*}
|\xi- z| &\le&  \lambda (\theta|u|| e_k A_t^T(x)|+ (1-\theta)|u|| e_k A_t^T(y)|)\nonumber\\&\le&
  |u|||A||\nonumber\\
&\le&
   R^{(k)}_{s-t}||A||\\
&\le&
  |M_{s-t}|||A||.%\label{xiest}
\end{eqnarray*}
%where $C= C(||A||)$.
Hence, by (\ref{hessianest1}) and then by \eqref{Mnorm}, we get
\begin{eqnarray*}
\left|\nabla^2 p(\xi)\right|
&\le& c |M^{-1}_{s-t}|^2\exp \left(-\frac{|z|}{||A|||M_{s-t}|}\right)
 (s-t)^{-(2+\eps)/\alpha} G_{s-t}(0)\\
&\le& c \exp \left(-\frac{|z|}{||A|||M_{s-t}|}\right)
 (s-t)^{-4/\alpha} G_{s-t}(0).
\end{eqnarray*}
Combined with  (\ref{difference})  it yields
\begin{eqnarray*}
\nonumber
 \text{I}(x,y) &\le&
c \exp \left(-\frac{|z|}{||A|||M_{s-t}|}\right)
 (s-t)^{-4/\alpha} G_{s-t}(0)
\sum_{k = 1}^d \int_{|u| < R^{(k)}_{s-t}}
|u|^{2} \nu_k(du)
 \, du\\
&\le& c \exp \left(-\frac{|z|}{||A|||M_{s-t}|}\right)
 (s - t)^{-\frac{d+4}{\alpha}},
\end{eqnarray*}
since $\ G_{s-t}(0)\le c (s - t)^{-\frac{d}{\alpha}}$. Next, observe that
$\frac{|z|}{||A|||M_{s-t}|}\ge c(s - t)^{-\frac{\eps\gamma_1}{\beta}}$ for $y\in{D^c(\delta,  x)}$, which implies that
\begin{equation} \label{I(y)pointwise}\text{I}(x,y)\le c \exp \left(-\frac{|z|}{2||A|||M_{s-t}|}\right).\end {equation}
Using this  we obtain
\begin{equation}
\label{integral2}
\int_{{D^c(\delta,  x)}} \text{I}(x,y) \, dy \le c.
\end{equation}

Now we estimate $\text{II}(x,y)$. We have
\begin{equation}
\label{Lagrange}
\left|p(z + u e_k A_t^T(x) + U_t(x,u e_k)) - p(z + u e_k A_t^T(x))\right|
\le \left|\nabla p(\xi)U_t(x,u e_k)\right|,
\end{equation}
where
\begin{equation*}
%\label{xiform2}
\xi = z + u e_k A_t^T(x) + \lambda U_t(x,u e_k),\ |u|\le R^{(k)}_{s-t},
\end{equation*}
with $\lambda \in (0,1)$.

Let $|z|^{1+\gamma_1}\le  \frac{(s-t)^\delta} {|M^{-1}_{s-t}|}$. By Lemma  \ref{point}, we have
%Next, for $|z|\le (s-t)^{\frac{1-2\eps}{\beta}}$,  we claim that
\begin{equation*}
%\label{xiest0}
|\xi (A^T_s(y))^{-1} M^{-1}_{s-t}-  z(A^T_s(x))^{-1}M^{-1}_{s-t}| \le c.
\end{equation*}
Applying this  and  (\ref{gradest}), we arrive at
%\begin{equation*}
%\left|\nabla^2 p(\xi)\right|
%\le c e^{-|z (A_s^{-1}(x))^TM^{-1}_{s-t}|}
%(s-t)^{-2/\alpha-2\eps} p(0). %\label{gradest12}
 %w\in \R^d.
%\end{equation*}
\begin{equation}
\label{nablapxi}
\left|\nabla p(\xi)\right|
\le c \frac{|M^{-1}_{s-t}|}{(s-t)^{\eps(1+1/\alpha)}}  e^{-|z (A_s^{-1}(x))^TM^{-1}_{s-t}|} G_{s-t}(0).
\end{equation}
From Lemma \ref{2moment}  we infer that
$\int_{|u| < R^{(k)}_{s-t}}|u|^{\gamma_3}\nu_k(du)\le c
   (R^{(k)}_{s-t})^{\gamma_3}    (s-t)^{\eps -1}$. This combined with (\ref{Lagrange}) yield
\begin{equation}
\label{II(y)pointwise}
\text{II}(x,y)\le c \frac{|M^{-1}_{s-t}||M_{s-t}|^{\gamma_3}}{(s-t)^{\eps(1+1/\alpha)}}  e^{-|z (A_s^{-1}(x))^TM^{-1}_{s-t}|} G_{s-t}(0)(s-t)^{\eps -1}.
\end{equation}
We also note that % (\mr{some details later})
\begin{equation}
\label{II(y)pointwise1}
\text{II}(x,y)\le c  G_{s-t}(0)(s-t)^{\eps -1}e^{-\frac{|z|} {||A|||M_{s-t}|}}.
\end{equation}
To prove it in the  case (A), using (\ref{Mnorm}), we observe that
	$$|M^{-1}_{s-t}||M_{s-t}|^{\gamma_3}= |M_{s-t}|^{\gamma_3-1}  \le c(s-t)^{(1-\eps) (\gamma_3-1)/\beta}.$$
	Hence, from (\ref{II(y)pointwise}) we obtain (\ref{II(y)pointwise1}) provided
	 $$ \eps\le \frac {\gamma_3-1}{\gamma_3+\beta(1+1/\alpha)},$$
which holds with our assumptions.
	To prove it in the  case (B) we observe   that, by (\ref{Mnorm}) and (\ref{Mnorm1}),
	$$|M^{-1}_{s-t}||M_{s-t}|^{\gamma_3}\le c(s-t)^{(1-\eps) (\gamma_3/\beta-1/\alpha)}.$$
	Hence, from (\ref{II(y)pointwise}) we obtain (\ref{II(y)pointwise1}) provided
	 $$ \eps\le \frac {\gamma_3-\beta/\alpha}{\gamma_3+\beta},$$
	which again holds in this case.

	Applying Lemma \ref{int} (with $\rho=0$), the estimate \eqref{II(y)pointwise}, (\ref{det_G}) and finally  (\ref{Mnorm1}),  we obtain
\begin{eqnarray*}\int_{D( \delta, x)}|\text{II}(x,y)|dy
&\le&  cG_{s-t}(0)det (M_{s-t})|M^{-1}_{s-t}| |M_{s-t}|^{\gamma_3} (s-t)^{-(1+1/\alpha)\eps} (s-t)^{  \eps-1}\\
&\le&  c(t-s)^{-\eps(1 +(d+1)/\alpha)}|M^{-1}_{s-t}| |M_{s-t}|^{\gamma_3}  (s-t)^{  \eps-1}\\
&\le&  c(t-s)^{-\eps(1 +(d+1)/\alpha)}|M^{-1}_{s-t}| |M_{s-t}|(s-t)^{(1-\eps)(\gamma_3-1)/\beta} (s-t)^{  \eps-1}.
\end{eqnarray*}
%\\
%&\leq&  c p(0)(t-s)^{-2/\alpha-2\eps} ((s-t)^{2\gamma_1\frac{1-2\eps}{\beta}} + (s-t)^{2\gamma_2})u^2 \exp \left(-|(x-y) (A_s^{-1}(x))^TM^{-1}_{s-t}|\right)
%\end{eqnarray*}
%
Then, by the same arguments as we applied to handle the term $\text{I}(x,y)$, we obtain
\begin{equation}\label{IIestimate}\int_{D( \delta, x)}|\text{II}(x,y)|dy \le c  (s-t)^{ -1+ \eps}\end{equation}
 in both cases: (A) (since $ \eps\le \frac {\gamma_3-1}{\gamma_3-1+\beta(1+(d+1)/\alpha)}$) and
(B) (since $ \eps\le \frac {\gamma_3-\beta/\alpha}{\gamma_3+\beta(1+d/\alpha)}$).
Moreover, again the same reasoning, as when $\text{I}(x,y)$ was explored, leads to
\begin{equation}\label{IIestimate1}\int_{D^c( \delta, x)}|\text{II}(x,y)|dy \le c \end{equation}
and
%
%\begin{equation}\text{II}(x,y)\le c\le c p(0)(s-t)^{-1 + \eps }, \ y \in D^c(\delta,  x).   \label{est_II} \end{equation}
%
\begin{equation} \text{II}(x,y)\le c \exp \left(-\frac{|z|}{2||A|||M_{s-t}|}\right), \ y \in D^c(\delta,  x). \label{est_II}\end{equation}

By Lemma \ref{py_integrable},% for $y \in \R^d$ we have
\begin{eqnarray*}
%\label{integral_II}
\int_{\R^d} \text{III}(x,y) \, dy &\le& c \sum_{k = 1}^d \int_{|u| \ge  R^{(k)}_{s-t}}
\, \nu_k(du)
\le c \sum_{k = 1}^d h_k(R^{(k)}_{s-t})\\&=&c \sum_{k = 1}^d h_k(h^{-1}_k((s-t)^{-1 + \eps }))
= c d (s-t)^{-1 + \eps }.
\end{eqnarray*}

Using this, (\ref{Iestimate1}), (\ref{Iestimate2}), (\ref{integral2}), (\ref{IIestimate}) and (\ref{IIestimate1}) we get the first assertion of the lemma.

Finally it is clear that
$$\text{III}(x,y)\le c G_{s-t}(0)  \sum_{k = 1}^d \int_{|u| \ge R^{(k)}_{s-t}}
\, \nu_k(du) \le  c G_{s-t}(0)(s-t)^{-1 + \eps }.$$
This together with (\ref{estIs1}), (\ref{II(y)pointwise1}) and  (\ref{est_II}) prove the second assertion of the lemma. Finally, we remark that all the constants appearing in the above estimates   $c$ depned on $\tau $ through $\kappa(\tau)$ and for $\tau\le \tau_0$ the constants $c=c(\bar{h})$.

\end{proof}

\begin{lemma}
\label{q0_estimates_lemma_x} Assume \textbf{(I)} and  let $\tau>0$.   For $0 < t < s\le \tau$ and $y \in \R^d$ we have
\begin{equation*}
%\label{integral_q0}
\int_{\R^d} |q_{t,s}^{(0)}(x,y)| \, dx \le c (s-t)^{-1 + \eps },
\end{equation*}
where $c= c(C_8, \kappa(\tau))$. If $\tau\le \tau_0$, then $c= c(C_8,\bar{h})$.
\end{lemma}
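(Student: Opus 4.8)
The plan is to repeat the proof of Lemma~\ref{q0_estimates_lemma} almost verbatim, but keeping track of the integration with respect to $x$ instead of $y$. I would start from the decomposition obtained there, $|q_{t,s}^{(0)}(x,y)| \le \text{I}(x,y) + \text{II}(x,y) + \text{III}(x,y)$, with the same choice of $\delta$ (namely $\delta = (1-2\eps)\gamma_1/\beta$ in case (A) and $\delta = (1-2\eps)((1+\gamma_1)/\beta - 1/\alpha)$ in case (B)). The only structural remark needed is that the set $D(\delta,x)=\{w:|x-w|^{1+\gamma_1}\le (s-t)^\delta/|M^{-1}_{s-t}|\}$ is symmetric in its two entries, so $\{x:y\in D(\delta,x)\}=D(\delta,y)$, and the estimate splits into integrals over $D(\delta,y)$ and $D^c(\delta,y)$ exactly as before.

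On $D(\delta,y)$ I would use the pointwise bounds \eqref{estIs0} for $\text{I}$ and \eqref{II(y)pointwise} for $\text{II}$; both carry the factor $e^{-|(x-y)(A_s^{-1}(x))^TM^{-1}_{s-t}|}$ with the matrix evaluated at $x$. The key step is to freeze this matrix at $y$: by \eqref{diff}, on $D(\delta,y)$ one has $|(x-y)((A_s^T(x))^{-1}-(A_s^T(y))^{-1})M^{-1}_{s-t}|\le \|A\|(s-t)^\delta\le c$, whence $e^{-|(x-y)(A_s^{-1}(x))^TM^{-1}_{s-t}|}\le e^{c}\,e^{-|(x-y)(A_s^{-1}(y))^TM^{-1}_{s-t}|}$. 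Since $A_s(y)$ is now constant in the integration variable, Lemma~\ref{int} (with $x$ and $y$ interchanged) gives $\int_{\R^d}|x-y|^\rho e^{-|(x-y)(A_s^{-1}(y))^TM^{-1}_{s-t}|}\,dx\le c|M_{s-t}|^\rho\det(M_{s-t})$ for every $\rho\ge 0$; combining with $|A_t^T(x)-A_s^T(y)|\le c(|x-y|^{\gamma_1}+(s-t)^{\gamma_2})$, with \eqref{det_G}, and with \eqref{Mnorm}, \eqref{Mnorm1}, one reproduces word for word the expressions leading to \eqref{Iestimate1}, \eqref{Iestimate2} and \eqref{IIestimate}, so that $\int_{D(\delta,y)}(\text{I}+\text{II})(x,y)\,dx\le c(s-t)^{-1+\eps}$, the inequalities on $\eps_0$ being invoked exactly as in Lemma~\ref{q0_estimates_lemma}. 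On $D^c(\delta,y)$ the bounds \eqref{I(y)pointwise} and \eqref{est_II} are of the form $c\exp(-|x-y|/(2\|A\||M_{s-t}|))$, a function of $|x-y|$ alone, so $\int_{D^c(\delta,y)}(\text{I}+\text{II})(x,y)\,dx\le c\|A\|^d|M_{s-t}|^d\le c\le c(s-t)^{-1+\eps}$ for $s-t\le\tau$, using \eqref{Mnorm1}.

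The term $\text{III}$ is the only place where assumption \textbf{(I)} enters. Here the shift $x\mapsto x+ue_kA_t^T(x)+U_t(x,e_ku)$ is precisely $x\mapsto x+V_t(x,e_ku)$, so with $f=p_{t,s}^y(\cdot-y)$ and $z=e_ku$ one gets $\int_{\R^d}p_{t,s}^y(x-y+ue_kA_t^T(x)+U_t(x,e_ku))\,dx=\|T^{t,z}f\|_{L_1}\le C_8\|f\|_{L_1}\le cC_8$ by \textbf{(I)} and \eqref{py_integral}, while trivially $\int_{\R^d}p_{t,s}^y(x-y)\,dx\le c$. Hence $\int_{\R^d}\text{III}(x,y)\,dx\le c(C_8+1)\sum_{k=1}^d \nu_k(\{|u|\ge R^{(k)}_{s-t}\})\le c(C_8+1)\sum_{k=1}^d h_k(R^{(k)}_{s-t})=c(C_8+1)d\,(s-t)^{-1+\eps}$. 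Summing the three contributions gives the assertion, and the stated dependence of the constant on $\kappa(\tau)$ (resp.\ on $\bar h$ when $\tau\le\tau_0$) is inherited from Lemmas~\ref{int}, \ref{py_integrable} and \ref{q0_estimates_lemma}.

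The hard part is the first step of the $D(\delta,y)$ estimate: one must not lose the anisotropy of the exponential factor when moving the integration to $x$. Using instead the crude bound $e^{-|x-y|/(\|A\||M_{s-t}|)}$ and integrating would produce a volume factor $|M_{s-t}|^d$ in place of the correct $\det(M_{s-t})$, and $G_{s-t}(0)|M_{s-t}|^d$ fails to be bounded as $s-t\to 0$ when $\alpha<\beta$; freezing the rotation $A_s(x)$ at $y$ on the set $D(\delta,y)$ is exactly what recovers $\det(M_{s-t})$ and hence, via \eqref{det_G}, the correct power of $s-t$.
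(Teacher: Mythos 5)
Your proof is correct and follows essentially the same route as the paper's: decompose $q^{(0)}_{t,s}=\mathrm{I}+\mathrm{II}+\mathrm{III}$ as in Lemma~\ref{q0_estimates_lemma}, re-run the $D(\delta,\cdot)$/$D^c(\delta,\cdot)$ estimates for the integral in $x$, and invoke \textbf{(I)} together with $\|p_{t,s}^y(\cdot-y)\|_{L_1}=1$ for the large-jump term $\mathrm{III}$. The paper dismisses the first two terms with ``literally the same estimates,'' whereas you correctly isolate the one genuine extra step needed when integrating in $x$ rather than $y$ — freezing $A_s(x)$ at $A_s(y)$ inside the exponential on $D(\delta,y)$ before applying Lemma~\ref{int} — and your observation that the crude isotropic bound $e^{-|x-y|/(\|A\||M_{s-t}|)}$ would yield the (generally too large) factor $|M_{s-t}|^d$ instead of $\det(M_{s-t})$ is accurate and explains why that step matters in case~\textbf{(B)}.
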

\begin{proof}
  The proof repeats partially the proof of Lemma \ref{q0_estimates_lemma}. Namely, take the decomposition
  $$
  q_{t,s}^{(0)}(x,y)=\text{I}(x,y)+\text{II}(x,y)+\text{III}(x,y)
  $$
  from this proof, and observe that literally the same estimates as in the above proof yield the required intergal-in-$x$ bound for the first two terms:
  $$
  \int_{\R^d} (|\text{I}(x,y)|+|\text{II}(x,y)|) \, dx \le c (s-t)^{-1 + \eps }.
  $$
  For the third term, we have to use the additional assumption \textbf{(I)}. Namely,
  $$
  \begin{aligned}
  \|\text{III}(\cdot, y)\|_{L_1}&\leq  \sum_{k = 1}^d \int_{|u| \ge R^{(k)}_{s-t}}
\int_{\R^d}\left[p_{t,s}^{y}(x-y + u e_k A_t^T(x) + U_t(x,u e_k)) + p_{t,s}^{y}(x-y)\right]
\, \nu_k(du)
\\&=\int_{z: z_k\ge R^{(k)}_{s-t}, k=1, \dots, d}
\left[\|T^{t,z}p_{t,s}^{y}(\cdot-y)\|_{L_1} + \|p_{t,s}^{y}(\cdot-y)\|_{L_1}\right]\,\mu(dz)
\\&\leq c \int_{z: z_k\ge R^{(k)}_{s-t}, k=1, \dots, d}
\mu(dz)
\\&\leq c(s-t)^{-1+\eps},
\end{aligned}
$$
where in the penultimate inequality we have used \textbf{(I)} and the identity
$$
\int_{\R^d} p_{t,s}^{y}(x,y) \, dx =1,
$$ which is easy to derive from the definition of $p_{t,s}^{y}(x,y)$.
\end{proof}

\begin{lemma}
\label{small_u}
Fix $\tau > 0$. For any $\xi \in (0,1]$, $\zeta > 0$,  $0 <s-t \le  \tau$, $x, y \in \R^d$, if $s - t \ge \xi$, then we have
\begin{eqnarray*}
&& \left|\sum_{k = 1}^d \int_{|u| <  R^{(k)}_{s-t} \wedge \zeta}
\left[p_{t,s}^{y}(x-y + u e_k A_t^T(x)) - p_{t,s}^{y}(x-y + u e_k A_s^T(y))\right]
\, \nu_k(du) \right|\\
&+& \left| \sum_{k = 1}^d \int_{|u| < R^{(k)}_{s-t} \wedge \zeta}
\left[p_{t,s}^{y}(x-y + u e_k A_t^T(x) + U_t(x,u e_k)) - p_{t,s}^{y}(x-y + u e_k A_t^T(x))\right]
\, \nu_k(du) \right|\\
&\le& c \sum_{k = 1}^d \int_{|u| <  R^{(k)}_{s-t} \wedge \zeta}
(|u|^2 + |u|^{\gamma_3})
\, \nu_k(du),
\end{eqnarray*}
where $c= c(\xi, \kappa(\tau))$.
\end{lemma}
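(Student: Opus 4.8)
The plan is to exploit the standing hypothesis $s-t\ge\xi$: once the time increment is bounded below, all the quantities $G_{s-t}(0)$, $|M^{-1}_{s-t}|$ and the negative powers $(s-t)^{-\eps(1+1/\alpha)}$, $(s-t)^{-\eps(2+2/\alpha)}$ that occur in the gradient and Hessian estimates of Corollary~\ref{general_matrix} are monotone in $s-t$ and hence, by \eqref{Mnorm}, \eqref{Mnorm1} and Lemma~\ref{Gest}, bounded from above by constants depending only on $\xi$, on $\tau$ through $\kappa(\tau)$, and on the model constants. Thus from \eqref{gradest} and \eqref{hessianest} (dropping the exponential factors, which are $\le1$) I would first record that there is $c=c(\xi,\kappa(\tau))$ with
\[
\sup_{w\in\R^d}|\nabla p_{t,s}^{y}(w)|\le c,\qquad \sup_{w\in\R^d}|\nabla^2 p_{t,s}^{y}(w)|\le c,\qquad 0<\xi\le s-t\le\tau,\ y\in\R^d.
\]
Thanks to these uniform-in-$w$ bounds, the delicate splitting into $D(\delta,x)$ and its complement used in the proof of Lemma~\ref{q0_estimates_lemma} is unnecessary here; plain Taylor estimates will do.

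For the first sum I would fix $k$, put $z=x-y$, $a=u e_k A_t^T(x)$, $b=u e_k A_s^T(y)$, and note $|a|,|b|\le\|A\|\,|u|$. Since $\nu_k$ is symmetric and the region $\{|u|<R^{(k)}_{s-t}\wedge\zeta\}$ is symmetric, the odd term $\nabla p_{t,s}^{y}(z)(a-b)$ may be inserted under the (principal value) integral without changing its value; this is also what makes the integrand absolutely integrable near $u=0$, which matters when $\alpha\le1$. Writing
\[
p_{t,s}^{y}(z+a)-p_{t,s}^{y}(z+b)-\nabla p_{t,s}^{y}(z)(a-b)=\bigl[p_{t,s}^{y}(z+a)-p_{t,s}^{y}(z)-\nabla p_{t,s}^{y}(z)a\bigr]-\bigl[p_{t,s}^{y}(z+b)-p_{t,s}^{y}(z)-\nabla p_{t,s}^{y}(z)b\bigr],
\]
each bracket is a second order Taylor remainder, bounded in absolute value by $\frac12\sup_w|\nabla^2 p_{t,s}^{y}(w)|\,\|A\|^2|u|^2\le c|u|^2$, so the first sum is at most $c\sum_{k=1}^d\int_{|u|<R^{(k)}_{s-t}\wedge\zeta}|u|^2\,\nu_k(du)$.

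For the second sum I would fix $k$ and use a first order Taylor expansion in the direction $U_t(x,u e_k)$ together with the smallness condition \eqref{kernel_E}:
\[
|p_{t,s}^{y}(z+u e_k A_t^T(x)+U_t(x,u e_k))-p_{t,s}^{y}(z+u e_k A_t^T(x))|\le\sup_{w}|\nabla p_{t,s}^{y}(w)|\,|U_t(x,u e_k)|\le c\,C_7\,|u|^{\gamma_3},
\]
whence the second sum is at most $c\sum_{k=1}^d\int_{|u|<R^{(k)}_{s-t}\wedge\zeta}|u|^{\gamma_3}\,\nu_k(du)$ (these integrals being finite since $\nu_k\in\mathrm{WSC}(\alpha,\beta)$ and $\gamma_3>\beta$); adding the two estimates gives the assertion. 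I do not expect a real obstacle: the only delicate point is the bookkeeping of the constant, where it is essential that $s-t\ge\xi$, since otherwise the derivative bounds for $p_{t,s}^{y}$ degenerate as $s-t\to0$.
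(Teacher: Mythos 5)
Your proof is correct and uses essentially the same approach as the paper: the paper's proof is a one-line reference to the Taylor-type estimates ($\Delta_1$, $\Delta_2$, the Lagrange/gradient bounds \eqref{Lagrange}, \eqref{nablapxi}) from the proof of Lemma \ref{q0_estimates_lemma}, combined with the observation that $s-t\ge\xi$ renders all the prefactors uniformly bounded. Your write-up is a touch cleaner in that you make explicit that the uniform-in-$w$ bounds on $\nabla p_{t,s}^y$ and $\nabla^2 p_{t,s}^y$ (obtained by discarding the exponential factors in Corollary \ref{general_matrix} and bounding $|M^{-1}_{s-t}|$, $(s-t)^{-\eps(\cdot)}$, $G_{s-t}(0)$ by constants depending on $\xi$ and $\kappa(\tau)$) make the $D(\delta,x)$-splitting from Lemma \ref{q0_estimates_lemma} unnecessary, and you correctly use the symmetry of $\nu_k$ to insert the first-order term so that the first integrand becomes $O(|u|^2)$ and hence absolutely integrable; but this is the same argument the paper is compressing.
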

\begin{proof}
The lemma follows from the estimates of $\Delta_1$, $\Delta_2$ (in the proof of Lemma \ref{q0_estimates_lemma}), (\ref{Lagrange}) and (\ref{nablapxi}).
\end{proof}

\begin{lemma}
\label{q0_new_estimates_lemma}
Fix $\tau > 0$.  We have
\begin{equation}
\label{q0_integral_estimate}
\lim_{r\to \infty}\sup_{x\in \R^d,  0<s-t<\tau }\int_{B^c(x,r)} |q_{t,s}^{(0)}(x,y)| \, dy=0
\end{equation}
and
\begin{equation}
\label{p0_integral_estimate}
\lim_{r\to \infty}\sup_{x\in \R^d,  0<s-t<\tau }\int_{B^c(x,r)} |p_{t,s}^{(0)}(x,y)| \, dy =0.
\end{equation}

\end{lemma}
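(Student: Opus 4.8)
The plan is to reduce both limits to one elementary fact and then re-use, almost verbatim, the pointwise bounds already established for $p^{(0)}$ and $q^{(0)}$. The elementary fact I will use: if $\Phi\colon(0,\tau]\to[0,\infty)$ satisfies $\Phi(u)\le cu^{-N}$ for some $N\ge0$ and $\mu>0$ is fixed, then $\sup_{0<u\le\tau}\Phi(u)e^{-\lambda u^{-\mu}}\to0$ as $\lambda\to\infty$ (substitute $v=u^{-\mu}$; the resulting $\sup_{v\ge\tau^{-\mu}}v^{N/\mu}e^{-\lambda v}$ tends to $0$). Throughout I will use $G_{s-t}(0)\le c(s-t)^{-d/\alpha}$ (Lemma \ref{Gest}, Lemma \ref{scal_h}), $|M_{s-t}|\le c(s-t)^{(1-\eps)/\beta}$ (see \eqref{Mnorm1}), and $\int_{|w|\ge Ra}e^{-|w|/a}\,dw\le c\,a^de^{-R/2}$. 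For \eqref{p0_integral_estimate}: since $p_{t,s}^{(0)}(x,y)=p_{t,s}^{y}(x-y)$, the bound \eqref{densityest1} gives $|p_{t,s}^{(0)}(x,y)|\le cG_{s-t}(0)e^{-|x-y|/(\|A\||M_{s-t}|)}$, hence, changing variables $w=x-y$,
$$
\int_{B^c(x,r)}|p_{t,s}^{(0)}(x,y)|\,dy\le cG_{s-t}(0)|M_{s-t}|^{d}e^{-r/(2\|A\||M_{s-t}|)}.
$$
Now $G_{s-t}(0)|M_{s-t}|^{d}\le c(s-t)^{-N}$ with $N=d\bigl(\tfrac1\alpha-\tfrac{1-\eps}{\beta}\bigr)>0$, and $r/(2\|A\||M_{s-t}|)\ge(r/c)(s-t)^{-(1-\eps)/\beta}$, so the elementary fact with $\lambda=r/c$ gives \eqref{p0_integral_estimate}.

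For \eqref{q0_integral_estimate} I will start from the decomposition $|q_{t,s}^{(0)}(x,y)|\le\text{I}(x,y)+\text{II}(x,y)+\text{III}(x,y)$ from the proof of Lemma \ref{q0_estimates_lemma}, with the same choice of $\delta$. Combining \eqref{estIs1} and \eqref{II(y)pointwise1} (valid on $D(\delta,x)$) with \eqref{I(y)pointwise} and \eqref{est_II} (valid on $D^c(\delta,x)$) gives, for all $y\in\R^d$,
$$
\text{I}(x,y)+\text{II}(x,y)\le c\bigl(1+G_{s-t}(0)(s-t)^{\eps-1}\bigr)e^{-|x-y|/(2\|A\||M_{s-t}|)},
$$
so the $B^c(x,r)$-integral of $\text{I}+\text{II}$ is estimated exactly as for $p^{(0)}$: the prefactor $(1+G_{s-t}(0)(s-t)^{\eps-1})|M_{s-t}|^{d}$ is again a negative power of $s-t$. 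For $\text{III}$ I split off the diagonal jump contribution, $\text{III}(x,y)\le d(s-t)^{\eps-1}p_{t,s}^{y}(x-y)+\text{III}_2(x,y)$ with $\text{III}_2(x,y)=\sum_{k=1}^{d}\int_{|u|\ge R^{(k)}_{s-t}}p_{t,s}^{y}\bigl(x-y+ue_kA_t^{T}(x)+U_t(x,ue_k)\bigr)\,\nu_k(du)$, using $\nu_k(\{|u|>R^{(k)}_{s-t}\})\le h_k(R^{(k)}_{s-t})=(s-t)^{\eps-1}$; the first summand is handled like $p^{(0)}$.

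The hard part is $\text{III}_2$, which transports mass to distances comparable with the unbounded jump size, so the localization within $|M_{s-t}|$ is useless there. The plan is: fix $r$, choose $\rho=\rho(r)\to\infty$ with $\|A\|\rho+C_7\rho^{\gamma_3}\le r/2$ (possible since $\rho\mapsto\|A\|\rho+C_7\rho^{\gamma_3}$ is continuous, strictly increasing, and vanishes at $0$), and split the $u$-integral at $\rho$. On $\{R^{(k)}_{s-t}\le|u|\le\rho\}\cap\{|x-y|\ge r\}$ one has $|x-y+ue_kA_t^{T}(x)+U_t(x,ue_k)|\ge|x-y|-r/2\ge|x-y|/2$, so by \eqref{densityest1} this part of $\int_{B^c(x,r)}\text{III}_2\,dy$ is at most $c(s-t)^{\eps-1}G_{s-t}(0)|M_{s-t}|^{d}e^{-r/(4\|A\||M_{s-t}|)}$, again of the required shape. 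On $\{|u|>\rho\}$ I drop the restriction to $B^c(x,r)$ and use that the proof of Lemma \ref{py_integrable} gives $\int_{\R^d}p_{t,s}^{y}(v-y)\,dy\le c$ uniformly in $v\in\R^d$ and $0<s-t\le\tau$; hence this part is at most $c\sum_{k=1}^{d}\nu_k(\{|u|>\rho(r)\})\le c\sum_{k=1}^{d}h_k(\rho(r))$, which tends to $0$ as $r\to\infty$ because $h_k(t)\to0$ as $t\to\infty$ for every L\'evy measure. Adding all contributions yields \eqref{q0_integral_estimate}.
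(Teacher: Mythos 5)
Your proof is correct and follows the paper's argument closely: both use \eqref{densityest1} to handle $p^{(0)}$, decompose $q^{(0)}$ into $\text{I}+\text{II}+\text{III}$, treat $\text{I}+\text{II}$ via the pointwise exponential bounds (\ref{estIs1}), (\ref{I(y)pointwise}), (\ref{II(y)pointwise1}), (\ref{est_II}), and split the large-jump term $\text{III}$ into a part where the jump moves $x-y$ by at most $|x-y|/2$ (absorbed into the exponential tail integral) and a part where $|u|$ is so large that $\sum_k h_k(\cdot)\to0$ as $r\to\infty$. The only variation is cosmetic: the paper keeps the difference of the two $p^y$-values in both pieces and cuts at the $y$-dependent level $|u|\lessgtr |x-y|^{1/\gamma_3}/r_0^{1/\gamma_3}$, whereas you first peel off the $p_{t,s}^y(x-y)$ summand and then cut the shifted part at a fixed $\rho(r)$ with $\|A\|\rho+C_7\rho^{\gamma_3}\le r/2$; this side-steps a small Fubini subtlety in the paper's $\text{V}$ and, incidentally, tracks the jump-mass factor $h_k(R^{(k)}_{s-t})=(s-t)^{\eps-1}$ more carefully than the paper's display preceding \eqref{vestimate1} (an innocuous sign slip there, since \eqref{I-IIestimate} already carries the correct power $(s-t)^{-d/\alpha+\eps-1}$).
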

\begin{proof}
Keeping the notation from Lemma \ref{q0_estimates_lemma} we have
$$
|q_{t,s}^{(0)}(x,y)| \le \text{I}(x,y) + \text{II}(x,y) + \text{III}(x,y).
$$
By \eqref{estIs1},  (\ref{I(y)pointwise}) and \eqref{II(y)pointwise1},  \eqref{est_II}
 we get for any $x,y\in \R^d$,
\begin{equation}
\label{I-IIpointwise_estimate}
\text{I}(x,y) + \text{II}(x,y) \le
c \exp\left(-\frac{|x-y|}{2\|A\| |M_{s-t}|}\right) (s-t)^{-d/\alpha +\eps-1}.
\end{equation}

By (\ref{bounded}) and (\ref{kernel_E}), for any $x \in \R^d$, $u \in \R$, $t > 0$ we have
\begin{equation}
\label{uEestimate}
\max_{1 \le k \le d} \left|u e_k A_t^T(x) + U_t(x,u e_k)\right| \le (dC_3+C_7)(|u|^{\gamma_3} \vee |u|).
\end{equation}
%The value of the constant $c_0$ will not change in this proof.
Put $r_0 = 2 (dC_3+C_7)$. We bound
$\text{III}(x,y)$ from above by
\begin{eqnarray*}
&& \left| \sum_{k = 1}^d
\int_{\frac{|x-y|^{1/\gamma_3}}{r_0^{1/\gamma_3}} \ge |u| \ge R^{(k)}_{s-t} }
\left[p_{t,s}^{y}(x-y + u e_k A_t^T(x) + U_t(x,u e_k)) - p_{t,s}^{y}(x-y)\right]
\, \nu_k(du) \right|
\\
&&+ \left| \sum_{k = 1}^d
\int_{|u| \ge \max\left(R^{(k)}_{s-t}, \frac{|x-y|^{1/\gamma_3}}{r_0^{1/\gamma_3}}\right)}
\left[p_{t,s}^{y}(x-y + u e_k A_t^T(x) + U_t(x,u e_k)) - p_{t,s}^{y}(x-y)\right]
\, \nu_k(du) \right|\\
&&= \text{IV}(x,y) + \text{V}(x,y).
\end{eqnarray*}

%The last inequality follows by the fact that for any $k = 1, \ldots, d$ and $\lambda, r > 0$ we have
%$h_k(\lambda r) \le (\lambda^{-2}\vee1) h_k(r)$.

Assume now that $|x - y| \ge r_0 $.
When $u$ satisfies $|x-y|^{1/\gamma_3}/r_0^{1/\gamma_3} \ge |u| \ge R^{(k)}_{s-t}$ then, by (\ref{uEestimate}), we have
\begin{eqnarray*}
\left|x-y + u e_k A_t^T(x) + U_t(x,u e_k)\right|
&\ge& \left|x-y\right| - \left|u e_k A_t^T(x) + U_t(x,u e_k)\right|\\
&\ge& \left|x-y\right| - (r_0/2) (|u|^{\gamma_3} \vee |u|)\\
&\ge& \left|x-y\right| - (r_0/2) \frac{|x - y|}{r_0}\\
&=& \frac{|x - y|}{2}.
\end{eqnarray*}
Using this and Corollary \ref{general_matrix} we get
$$
\left|p_{t,s}^{y}(x-y + u e_k A_t^T(x) + U_t(x,u e_k)) - p_{t,s}^{y}(x-y)\right|
\le c G_{s-t}(0) \exp\left(-\frac{|x-y|}{2 \|A\| |M_{s-t}|}\right).
$$
It follows that for $|x-y| \ge r_0$ we have
\begin{eqnarray}
 \text{IV}(x,y)
&\le& c G_{s-t}(0)
\left(\sum_{k = 1}^d \int_{|u| \ge R_{s-t}^{(k)}} \, \nu_k(du)\right)
 \exp\left(-\frac{|x-y|}{2 \|A\| |M_{s-t}|}\right) \nonumber\\
&\le& c G_{s-t}(0) \exp\left(-\frac{|x-y|}{2 \|A\| |M_{s-t}|}\right)
,\label{vestimate1}
\end{eqnarray}
since for any $k \in \{1, \ldots, d\}$ we have
$\int_{|u| \ge R_{s-t}^{(k)}} \, \nu_k(du) \le h_k\left(R_{s-t}^{(k)}\right) = \frac{1}{(s-t)^{\eps - 1}}\le c$.

By elementary arguments for any $a, r > 0$ we have
$$
\int_{B^c(x,r)} e^{-a |x - y|} \, dy = \frac{c}{a^d} \int_{ar}^{\infty} e^{-v} v^{d - 1} \, dv \le \frac{c}{a^d} e^{- ar/2},
$$
where $c$ depends only on $d$.
Using this, \eqref{I-IIpointwise_estimate}, \eqref{vestimate1} and  and (\ref{Mnorm1}) we get for $r \ge r_0$
\begin{equation}
\label{I-IIestimate}
\int_{B^c(x,r)} (\text{I}(x,y) + \text{II}(x,y)+ \text{IV}(x,y)) \, dy
\le c (s-t)^{-d/\alpha +\eps-1} |M_{s - t}|^d \exp\left(\frac{- r}{4 |M_{s - t}| \|A\|}\right)
\le c e^{-c_1 r}.
\end{equation}

By Lemma \ref{py_integrable}, we have
\begin{eqnarray}
\nonumber
\int_{\R^d} \text{IV}(x,y) \, dy
&\le& c \sum_{k = 1}^d
\int_{|u| \ge \max\left(R^{(k)}_{s-t}, r^{1/\gamma_3}/r_0^{1/\gamma_3}\right)}
\, \nu_k(du) \\
&\le& c \sum_{k = 1}^d h_k\left(\frac{r^{1/\gamma_3}}{r_0^{1/\gamma_3}}\right).\label{IVestimate}
%&\le& c \sum_{k = 1}^d h_k(r^{1/\gamma_3}).
\end{eqnarray}
Since  $\lim_{r\to \infty}h_k(r)=0$, the first assertion of the lemma follows from (\ref{I-IIestimate}) and  (\ref{IVestimate}).

The proof of the second follows easily from \eqref{densityest1}.
\end{proof}

Put
$$
W = \{(t, s):\, t, s \in [0,\infty), \, t < s\}.
$$
\begin{lemma}
\label{pts_continuous}
The function $W\times\R^d\times\R^d \ni (t, s, x, y) \to p_{t, s}^{y}(x)$ is continuous as well as  the function $W\times\R^d\times\R^d \ni (t, s, x, y) \to q_{t,s}^{(0)}(x,y)$.
\end{lemma}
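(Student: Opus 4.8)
The plan is to prove continuity by establishing it in stages, first for the one-dimensional building blocks, then for the product density $G_u$, then for the ``frozen'' kernel $p_{t,s}^y(x)$, and finally for $q_{t,s}^{(0)}(x,y)$ via its explicit decomposition. For the one-dimensional density, recall from Corollary \ref{gu(0)estimate} that $(0,\infty)\times\R\ni(u,x)\to g_u^{(i)}(x)$ is continuous; this follows from the Fourier representation $g_u^{(i)}(x)=\frac1{2\pi}\int_\R e^{ixz}e^{-\int_0^u\psi_r^{(i)}(z)\,dr}\,dz$, the continuity of the integrand in $(u,x)$, and the dominated convergence theorem justified by the upper bound in \eqref{moments_g} with $\gamma=0$ (which also controls $u$ away from $0$ uniformly on compacts). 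Taking the product, $G_u(x)=\prod_i g_u^{(i)}(x_i)$ is jointly continuous on $(0,\infty)\times\R^d$.

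Next I would treat $p_{t,s}^y(x)=\frac1{|\det A_s(y)|}G_{s-t}\bigl(x(A_s^{-1}(y))^T\bigr)$. Here the only subtlety beyond the continuity of $G$ is the continuity of $y\mapsto A_s(y)$ and $y\mapsto A_s^{-1}(y)$, which is guaranteed by the Hölder condition \eqref{Holder} together with \eqref{determinant}; composing continuous maps, $(t,s,x,y)\to p_{t,s}^y(x)$ is continuous on $W\times\R^d\times\R^d$. For $q_{t,s}^{(0)}(x,y)$ I would use the decomposition $q_{t,s}^{(0)}(x,y)=\text{I}(x,y)+\text{II}(x,y)+\text{III}(x,y)$ from the proof of Lemma \ref{q0_estimates_lemma}. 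Each of the three integrals is over $\R$ in the variable $u$ against the L\'evy measure $\nu_k$, with the integrand built from $p_{t,s}^y$, its gradient, and its Hessian — all of which are continuous in the parameters by the step above and by differentiating the Fourier representation (the derivative bounds \eqref{gradest}, \eqref{hessianest} guarantee the differentiated Fourier integrals converge and depend continuously on $(t,s,x,y)$).

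The main obstacle is justifying passage to the limit under the $u$-integrals, since the truncation levels $R^{(k)}_{s-t}=h_k^{-1}((s-t)^{-1+\eps})$ themselves vary with $(t,s)$ and the integrands near $u=0$ are only integrable after the second-order Taylor cancellation. I would handle this by fixing a compact set $\calK\subset W\times\R^d\times\R^d$, on which $s-t$ stays in some interval $[\xi,\tau]$, so that all the truncation radii $R^{(k)}_{s-t}$ are bounded above and below and all the prefactors $G_{s-t}(0)$, $|M_{s-t}^{\pm1}|$ are bounded; then the estimates behind Lemma \ref{small_u} (i.e. the $\Delta_1,\Delta_2$-bounds, \eqref{Lagrange}, \eqref{nablapxi}) provide a dominating function of the form $c(|u|^2+|u|^{\gamma_3})\mathbf 1_{|u|\le c}+c\mathbf 1_{|u|\ge c}$ which is $\nu_k$-integrable (using $\int_{|u|\le1}u^2\,\nu_k(du)<\infty$, $\gamma_3>\beta$, and $h_k(R)<\infty$). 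With a fixed dominating function independent of the parameters in $\calK$, and pointwise continuity of the integrands in $(t,s,x,y)$ for each fixed $u$ (including continuity of the characteristic functions $\mathbf 1_{|u|<R^{(k)}_{s-t}}$ for $\nu_k$-a.a.\ $u$, since $R^{(k)}_{s-t}$ moves continuously and $\nu_k$ has no atoms on a generic level set — or alternatively by splitting $\text{I},\text{II}$ at a fixed radius and putting the annular remainder into the $\text{III}$-type term), the dominated convergence theorem yields continuity of each of $\text{I},\text{II},\text{III}$ on $\calK$, hence of $q_{t,s}^{(0)}$ on all of $W\times\R^d\times\R^d$.
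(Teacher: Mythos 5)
Your proposal is correct and follows essentially the same route as the paper: continuity of $p_{t,s}^y(x)$ from the continuity statement in Corollary \ref{gu(0)estimate} and continuity of $A_s(\cdot)$, and continuity of $q_{t,s}^{(0)}$ via the $\text{I}+\text{II}+\text{III}$ decomposition, the small-$u$ domination supplied by Lemma \ref{small_u}, the large-$u$ bound \eqref{densityest1}, and bounded/dominated convergence. You spell out more explicitly the domination on compacts and the subtlety of the $(t,s)$-dependent truncation radius (which the paper handles via the auxiliary parameter $\zeta$ in Lemma \ref{small_u}), but the ingredients and logic are the same.
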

\begin{proof}
The first assertion  follows from Lemma \ref{gu(0)estimate} and continuity of the map  $(s,x)\mapsto A_s(x)$.
Recall that $q_{t,s}^{(0)}(x,y)$ is equal to
\begin{eqnarray*}
&& \sum_{k = 1}^d \int_{|u| <  R^{(k)}_{s-t}}
\left[p_{t,s}^{y}(x-y + u e_k A_t^T(x)) - p_{t,s}^{y}(x-y + u e_k A_s^T(y))\right]
\, \nu_k(du)\\
&& + \sum_{k = 1}^d \int_{|u| <  R^{(k)}_{s-t}}
\left[p_{t,s}^{y}(x-y + u e_k A_t^T(x) + U_t(x,u e_k)) - p_{t,s}^{y}(x-y + u e_k A_s^T(x))\right]
\, \nu_k(du) \\
&& + \sum_{k = 1}^d \int_{|u| \ge R^{(k)}_{s-t}}
\left[p_{t,s}^{y}(x-y + u e_k A_t^T(x) + U_t(x,u e_k)) - p_{t,s}^{y}(x-y)\right]
\, \nu_k(du).
\end{eqnarray*}
Hence, the second assertion of the lemma follows from the first, Lemma \ref{small_u}, (\ref{densityest1}) and the bounded convergence theorem.
\end{proof}
\begin{lemma}
\label{pts0_delta}
For any $f \in C_{\infty}(\R^d)$, $t_0 \ge 0$ we have
$$
\lim_{W \ni (t,s) \to (t_0,t_0)} \|P_{t,s}^{(0)} f - f\|_{\infty} = 0.
$$
\end{lemma}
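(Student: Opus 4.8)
The plan is to treat $P_{t,s}^{(0)}$ as a perturbation of an approximate identity. Write $u=s-t$ and recall that $P_{t,s}^{(0)}f(x)=\int_{\R^d}p_{t,s}^{y}(x-y)f(y)\,dy$ with $p_{t,s}^{y}(w)=|\det A_s(y)|^{-1}G_{u}\big(w(A_s^{-1}(y))^{T}\big)$. I would introduce the frozen-at-$x$ kernel $\bar p_{t,s}(x,y):=p_{t,s}^{x}(x-y)$; by the change of variables $w=(x-y)(A_s^{-1}(x))^{T}$ and the fact that $G_{u}$ is a probability density (Section \ref{1dim}), one has $\int_{\R^d}\bar p_{t,s}(x,y)\,dy=1$ for every $x$, so that
$$
P_{t,s}^{(0)}f(x)-f(x)=\int_{\R^d}\big(p_{t,s}^{y}(x-y)-\bar p_{t,s}(x,y)\big)f(y)\,dy+\int_{\R^d}\bar p_{t,s}(x,y)\big(f(y)-f(x)\big)\,dy .
$$

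The first term is handled directly by \eqref{py_px_difference_int} of Lemma \ref{py_integrable}: its modulus is at most $\|f\|_{\infty}\int_{\R^d}|p_{t,s}^{y}(x-y)-p_{t,s}^{x}(x-y)|\,dy\le c\|f\|_{\infty}(s-t)^{\eps}$, uniformly in $x$, which tends to $0$ as $s-t\to0$. For the second term I would apply the same change of variables to rewrite it as $\int_{\R^d}G_{u}(w)\big(f(x-wA_s^{T}(x))-f(x)\big)\,dw$. Fixing $\eta>0$, uniform continuity of $f$ gives $\rho>0$ with $|f(x')-f(x)|<\eta$ whenever $|x'-x|<\rho$; since $|wA_s^{T}(x)|\le\|A\|\,|w|$ with $\|A\|$ a finite constant depending only on $d,C_3,\dots,C_6$, the part of the integral over $\{|w|\le\rho/\|A\|\}$ is at most $\eta\int_{\R^d}G_{u}=\eta$, while the part over $\{|w|>\rho/\|A\|\}$ is at most $2\|f\|_{\infty}\int_{|w|>\rho/\|A\|}G_{u}(w)\,dw$.

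It thus remains to show $\int_{|w|>r}G_{u}(w)\,dw\to0$ as $u\to0^{+}$ for each fixed $r>0$, and this is the one place where the Gaussian-type decay of $G_{u}$, rather than mere integrability, is used. Applying $G_{u}(w)\le cG_{u}(0)e^{-|wM_{u}^{-1}|}$ from Lemma \ref{Gest}, substituting $v=wM_{u}^{-1}$, and using $\det(M_{u})G_{u}(0)\le c\,u^{-d\eps/\alpha}$ from \eqref{det_G} together with $|M_{u}|\le c\,u^{(1-\eps)/\beta}$ from \eqref{Mnorm1}, one obtains
$$
\int_{|w|>r}G_{u}(w)\,dw\le c\,u^{-d\eps/\alpha}\int_{|v|>r/|M_{u}|}e^{-|v|}\,dv\le c\,u^{-d\eps/\alpha}\,\Big(\tfrac{r}{|M_{u}|}\Big)^{d-1}e^{-r/|M_{u}|},
$$
which tends to $0$ as $u\to0^{+}$ because $r/|M_{u}|\ge c\,r\,u^{-(1-\eps)/\beta}\to\infty$ makes the exponential dominate the polynomial factor. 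All the estimates above are uniform in $x$, so letting first $s-t\to0$ and then $\eta\to0$ yields $\|P_{t,s}^{(0)}f-f\|_{\infty}\to0$ whenever $s-t\to0$, in particular as $W\ni(t,s)\to(t_0,t_0)$.

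I expect no genuine obstacle here: the proof is an approximate-identity argument, and the only technical point is the concentration estimate $\int_{|w|>r}G_{u}\to0$, which follows from the exponential bound of Lemma \ref{Gest}. (As an alternative one could use $\int_{\R^d}|w|^{2}G_{u}(w)\,dw=\sum_{i}m_{u}^{(i)}\le\sum_{i}(R^{(i)}_{u})^{2}u^{\eps}\to0$ from Lemma \ref{2moment} and Chebyshev's inequality.)
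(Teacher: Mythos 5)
Your argument is correct and matches the paper's approach exactly: the paper's (very terse) proof invokes precisely the three facts you use, namely that the frozen-at-$x$ kernel $p_{t,s}^x(x-\cdot)$ integrates to $1$, the $L^1$-in-$y$ bound \eqref{py_px_difference_int} to replace $p_{t,s}^y$ by $p_{t,s}^x$, and the exponential decay \eqref{densityest1} (equivalently, the bound in Lemma \ref{Gest} that you use directly) to run the standard approximate-identity argument on the frozen kernel. You have simply spelled out the details; the alternative route via the second moment $m_u$ and Chebyshev that you mention in parentheses would also work, but is not what the paper does.
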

\begin{proof}
Note that for any $x \in \R^d$, $0 < t < s < \infty$ we have $\int_{\R^d} p_{t,s}^x(x - y) \, dy = 1$. Using this, (\ref{py_px_difference_int}) and (\ref{densityest1}) we easily obtain the assertion of the lemma.
\end{proof}
\begin{lemma}
\label{qts0_delta}
For any $f \in C_{\infty}(\R^d)$, $0 \le t_0 < s_0$ we have
$$
\lim_{W \ni (t,s) \to (t_0,s_0)} \|Q_{t,s}^{(0)} f - Q_{t_0,s_0}^{(0)}f\|_{\infty} = 0
$$and
$$
\lim_{W \ni (t,s) \to (t_0,s_0)} \|P_{t,s}^{(0)} f - P_{t_0,s_0}^{(0)}f\|_{\infty} = 0.
$$
\end{lemma}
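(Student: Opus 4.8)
The plan is to treat the two convergences in parallel, reducing each to dominated convergence after splitting the $\R^d$-integral over a large ball and its complement. Fix $f\in C_\infty(\R^d)$ and $0\le t_0<s_0$, and fix a sequence $W\ni(t_n,s_n)\to(t_0,s_0)$; it suffices to show $Q^{(0)}_{t_n,s_n}f(x)\to Q^{(0)}_{t_0,s_0}f(x)$ and $P^{(0)}_{t_n,s_n}f(x)\to P^{(0)}_{t_0,s_0}f(x)$ uniformly in $x$. For all large $n$ we have $s_n-t_n\ge \xi$ for some fixed $\xi>0$ and $s_n-t_n\le\tau$ for some fixed $\tau$, so all the estimates of Section \ref{Parametrix construction} are available with constants uniform in $n$.

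First I would handle the tails. For $P^{(0)}$ this is immediate from \eqref{p0_integral_estimate} of Lemma \ref{q0_new_estimates_lemma}: given $\eta>0$ pick $r$ so that $\int_{B^c(x,r)}|p^{(0)}_{t,s}(x,y)|\,dy<\eta$ for all $x$ and all $0<s-t<\tau$, hence $|\int_{B^c(x,r)}p^{(0)}_{t_n,s_n}(x,y)f(y)\,dy|\le\eta\|f\|_\infty$ and likewise at the limit. For $Q^{(0)}$ the same works using \eqref{q0_integral_estimate}. So it remains to prove uniform convergence of the truncated integrals $\int_{B(x,r)} q^{(0)}_{t_n,s_n}(x,y)f(y)\,dy\to\int_{B(x,r)} q^{(0)}_{t_0,s_0}(x,y)f(y)\,dy$ and the analogue for $p^{(0)}$.

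On the ball $B(x,r)$ I would invoke Lemma \ref{pts_continuous}: the maps $(t,s,x,y)\mapsto p^{y}_{t,s}(x-y)$ and $(t,s,x,y)\mapsto q^{(0)}_{t,s}(x,y)$ are continuous, so pointwise in $y$ we have $q^{(0)}_{t_n,s_n}(x,y)\to q^{(0)}_{t_0,s_0}(x,y)$. To get the convergence uniformly in $x$ I would use an equicontinuity/compactness argument: the family $\{(t,s,y)\mapsto q^{(0)}_{t,s}(x-y+y,\cdot)\}$... more cleanly, note that $q^{(0)}_{t,s}(x,y)$ depends on $(x,y)$ only through $x-y$ and $y$ (through $A_s(y)$ and $A_t(x)=A_t(y+(x-y))$), and by the pointwise bounds \eqref{estIs1}, \eqref{II(y)pointwise1}, \eqref{est_II}, \eqref{I-IIpointwise_estimate} together with the tail bound for $\text{III}$, the integrand is dominated uniformly in $n$ and $x$ by an integrable function of $y-x$ on $B(x,r)$; combined with pointwise convergence from Lemma \ref{pts_continuous}, dominated convergence gives convergence of the ball integral for each fixed $x$, and a standard $3\varepsilon$-argument — approximating $x$ by finitely many points of a compact exhaustion and using uniform continuity of the (jointly continuous, hence uniformly continuous on compacts) integrand together with the uniform domination — upgrades this to uniformity in $x\in\R^d$. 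The same scheme, using Lemma \ref{gu(0)estimate}/Corollary \ref{general_matrix} and continuity of $(s,x)\mapsto A_s(x)$, gives the statement for $P^{(0)}$.

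The main obstacle is the uniformity in $x$: pointwise-in-$x$ convergence plus the tail control is easy, but since $\R^d$ is not compact one must exploit the translation structure of the kernels — $q^{(0)}_{t,s}(x,y)$ and $p^{(0)}_{t,s}(x,y)$ are, up to the slowly-varying matrix coefficients, functions of $x-y$ — so that the "moving ball'' $B(x,r)$ can be recentered and the domination made genuinely $x$-independent. Once that reduction is in place, Lemma \ref{pts_continuous} and dominated convergence finish the proof.
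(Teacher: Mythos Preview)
Your proof has a genuine gap in establishing uniformity in $x\in\R^d$ for the ball integral. After the change of variable $z=y-x$, you need
\[
\sup_{x\in\R^d}\left|\int_{B(0,r)}\bigl[q^{(0)}_{t_n,s_n}(x,x+z)-q^{(0)}_{t_0,s_0}(x,x+z)\bigr]f(x+z)\,dz\right|\to 0.
\]
Dominated convergence gives this for each fixed $x$, but your ``$3\varepsilon$ argument via compact exhaustion'' cannot close: $x$ ranges over the non-compact $\R^d$, and Lemma \ref{pts_continuous} only gives joint continuity, hence uniform continuity on \emph{compact} $(t,s,x,y)$-sets, not on $\{(x,y):|x-y|\le r\}$. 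The ``translation structure'' does make the \emph{domination} $x$-independent, but it does not make the \emph{pointwise convergence} of the integrand uniform in $x$: the kernel $q^{(0)}_{t,s}(x,x+z)$ depends genuinely on $x$ through $A_t(x)$, $A_s(x+z)$ and $U_t(x,\cdot)$, not merely through $x-y$.

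The paper sidesteps this by first reducing to $f$ with compact support: any $f\in C_\infty(\R^d)$ is uniformly approximated by such functions, and the approximation error is controlled by $\|Q^{(0)}_{t,s}\|_{B_b\to B_b}\le c(s-t)^{-1+\eps}$, which is bounded near $(t_0,s_0)$. Once $\mathrm{supp}\,f\subset B(0,r/2)$, the problem splits cleanly. On $\{|x|\le r\}$, joint continuity of $(t,s,x)\mapsto Q^{(0)}_{t,s}f(x)$ (from Lemma \ref{pts_continuous}, the pointwise bound \eqref{q0_pointwise_estimate1} and bounded convergence) gives uniform continuity on a genuinely compact set, hence uniform convergence there. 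On $\{|x|\ge r\}$ one has $|x-y|\ge r/2$ for every $y\in\mathrm{supp}\,f$, so $|Q^{(0)}_{t,s}f(x)|\le\|f\|_\infty\int_{B^c(x,r/2)}|q^{(0)}_{t,s}(x,y)|\,dy$, which is small uniformly in $x$ and $(t,s)$ by Lemma \ref{q0_new_estimates_lemma}. The same scheme handles $P^{(0)}$. Your tail control and use of Lemma \ref{pts_continuous} are correct; what is missing is precisely this compact-support reduction, which is what converts the uniform-in-$x$ problem into one over a compact $x$-set.
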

\begin{proof} We give a detailed  proof of the first statement and only  a sketch for the second.

%since the proof of the second is similar, if not easier.
 By Lemma \ref{q0_new_estimates_lemma}, it is enough to prove the lemma for $f$ with compact support.
We note that the function $(W\times\R^d) \ni (t, s, x) \to Q_{t,s}^{(0)}f(x)$ is continuous. This follows from \eqref{q0_pointwise_estimate1}, Lemma \ref{pts_continuous} and the bounded convergence theorem.
 Let $r>0$.
Hence it is uniformly continuous on
$$
\{t, s, x: \, t \ge 0; \, |x|\le r; \,  |t-t_0|, |s-s_0| \le |s_0-t_0|/3 \}.
$$
It follows that
$$
\lim_{W \ni (t,s) \to (t_0,s_0)} \sup_{|x|\le r} |Q_{t,s}^{(0)} f(x) - Q_{t_0,s_0}^{(0)}f(x)| = 0.
$$
Let $r$ be so large that the support of $f$ is contained in $B(0, r/2)$.
Next, we have
 $$\sup_{|x|\ge r,  0<s-t<2(s_0-t_0) }|Q_{t,s}^{(0)} f(x)|\le \|f\|_{\infty}\sup_{x\in \R^d,  0<s-t<2(s_0-t_0) }\int_{B^c(x,r/2)} |q_{t,s}^{(0)}(x,y)|dy.$$
Hence
$$
\limsup_{W \ni (t,s) \to (t_0,s_0)}  \|Q_{t,s}^{(0)} f - Q_{t_0,s_0}^{(0)}f\|_{\infty}  \le 2\|f\|_{\infty}\sup_{x\in \R^d,  0<s-t<2(s_0-t_0) }\int_{B^c(x,r/2)} |q_{t,s}^{(0)}(x,y)|dy,$$
which converges to $0$, if $r\to \infty$, by Lemma \ref{q0_new_estimates_lemma}. This completes the proof of the first assertion.

Finally, we remark that  the function $W\times\R^d \ni (t, s, x) \to P_{t,s}^{(0)}f(x)$ is continuous, due to Lemma  \ref{pts_continuous}. Next, similarly as above, we apply  \eqref{p0_integral_estimate} to complete the proof of the second assertion.
\end{proof}
\begin{lemma}
For any  $0 <s-t \le  \tau$ and $x, y \in \R^d$ such that $|x - y| \le (s - t)^{1/\alpha}$ we have
\begin{equation}
\label{xydifference1}
\int_{\R^d} \left|p_{t,s}^z(x-z) - p_{t,s}^z(y-z)\right| \, dz
\le c |x - y| (s - t)^{-1/\alpha - (d + 2)\eps/\alpha},
\end{equation}
where $c = c(\kappa(\tau))$.
\end{lemma}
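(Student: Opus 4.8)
The plan is to differentiate $p_{t,s}^z(\cdot-z)$ along the segment joining $x$ and $y$ and then integrate the resulting gradient bound in the parameter $z$. Writing $w_\theta=\theta x+(1-\theta)y$ for $\theta\in[0,1]$ and using that $w\mapsto p_{t,s}^z(w)$ is $C^1$ (indeed $C^\infty$ by \eqref{densityrep} and the smoothness of $G_{s-t}$), one has $\tfrac{d}{d\theta}p_{t,s}^z(w_\theta-z)=\nabla p_{t,s}^z(w_\theta-z)\cdot(x-y)$, hence
\begin{equation*}
\int_{\R^d}\big|p_{t,s}^z(x-z)-p_{t,s}^z(y-z)\big|\,dz\le|x-y|\int_0^1\Big(\int_{\R^d}\big|\nabla p_{t,s}^z(w_\theta-z)\big|\,dz\Big)\,d\theta .
\end{equation*}
Thus it suffices to show that $J(w):=\int_{\R^d}\big|\nabla p_{t,s}^z(w-z)\big|\,dz\le c(s-t)^{-1/\alpha-(d+2)\eps/\alpha}$ uniformly in $w\in\R^d$; the hypothesis $|x-y|\le(s-t)^{1/\alpha}$ is what renders the resulting estimate non-trivial (otherwise the trivial bound by $2\sup_z\int_{\R^d}p_{t,s}^z(\,\cdot-z)\,dz$ is better) but is not otherwise needed.

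To estimate $J(w)$ I would fix $w$, take $\delta>0$ exactly as in the proof of Lemma~\ref{py_integrable} (so $\delta=(1-2\eps)\gamma_1/\beta$ in case \textbf{(A)} and $\delta=(1-2\eps)((1+\gamma_1)/\beta-1/\alpha)$ in case \textbf{(B)}, both positive under our assumptions), and split $J(w)$ over $D(\delta,w)$ and $D^c(\delta,w)$. On $D(\delta,w)$ I would use \eqref{gradest}: since for $z\in D(\delta,w)$ one has $\big|(w-z)\big[(A_s^{-1}(z))^T-(A_s^{-1}(w))^T\big]M^{-1}_{s-t}\big|\le\|A\|\,|w-z|^{1+\gamma_1}\,|M^{-1}_{s-t}|\le\|A\|(s-t)^\delta\le c$, the weight in \eqref{gradest} may be replaced by $e^{-|(w-z)(A_s^{-1}(w))^TM^{-1}_{s-t}|}$ up to a multiplicative constant; integrating this over $z\in\R^d$ by Lemma~\ref{int} (the case $\rho=0$, which follows from the same change of variables) and then using \eqref{det_G} and \eqref{Mnorm},
\begin{equation*}
\int_{D(\delta,w)}\big|\nabla p_{t,s}^z(w-z)\big|\,dz\le c\,|M^{-1}_{s-t}|\,(s-t)^{-\eps(1+1/\alpha)}\,G_{s-t}(0)\det M_{s-t}\le c\,(s-t)^{-1/\alpha-\eps-d\eps/\alpha},
\end{equation*}
which, since $\alpha\le2$ forces $-\eps-d\eps/\alpha\ge-(d+2)\eps/\alpha$, is at most $c(s-t)^{-1/\alpha-(d+2)\eps/\alpha}$ for $0<s-t\le\tau$.

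On $D^c(\delta,w)$ I would instead discard the matrix from the exponent via $|(w-z)(A_s^{-1}(w))^TM^{-1}_{s-t}|\ge\|A\|^{-1}|w-z|/|M_{s-t}|$, which together with \eqref{gradest} gives the cruder bound $\big|\nabla p_{t,s}^z(w-z)\big|\le c\,|M^{-1}_{s-t}|\,(s-t)^{-\eps(1+1/\alpha)}\,G_{s-t}(0)\,e^{-|w-z|/(\|A\||M_{s-t}|)}$. By the choice of $\delta$, the computation already carried out in the proof of Lemma~\ref{py_integrable} (the one leading to \eqref{max1}--\eqref{int1}) shows that $|w-z|/(\|A\||M_{s-t}|)\ge c(s-t)^{-\kappa}$ for $z\in D^c(\delta,w)$, with some $\kappa>0$; substituting $v=(w-z)/|M_{s-t}|$ then bounds $\int_{D^c(\delta,w)}e^{-|w-z|/(\|A\||M_{s-t}|)}\,dz$ by $c\,|M_{s-t}|^d e^{-c(s-t)^{-\kappa}}$, and since the stretched exponential dominates every power of $s-t$ on $(0,\tau]$, this whole contribution is bounded by $c$, hence by $c(s-t)^{-1/\alpha-(d+2)\eps/\alpha}$ on $0<s-t\le\tau$. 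Adding the two contributions gives $J(w)\le c(s-t)^{-1/\alpha-(d+2)\eps/\alpha}$ with $c=c(\kappa(\tau))$, and integrating in $\theta$ finishes the proof.

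The main obstacle is precisely that the exponential weight in \eqref{gradest} carries $A_s(z)$ evaluated at the integration variable $z$, not at a frozen point, so Lemma~\ref{int} does not apply directly. This is resolved as above: on the ``bulk'' region $D(\delta,w)$ the H\"older continuity of $A$ lets one freeze $A_s(z)$ at $A_s(w)$ at the cost of a bounded factor (this is exactly why the exponent $\delta$ must be positive and why that piece is harmless), while on $D^c(\delta,w)$ one trades the sharp weight for $|w-z|/|M_{s-t}|$, whose super-polynomial tail makes the region negligible; the remaining work is the routine bookkeeping of powers of $s-t$ through \eqref{det_G} and \eqref{Mnorm}, where $\alpha\le2$ is used once.
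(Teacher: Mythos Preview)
Your proof is correct and follows essentially the same route as the paper: bound the difference via the gradient of $p_{t,s}^z$, split the $z$-integral over $D(\delta,\cdot)$ and its complement, freeze $A_s(z)$ on the bulk using H\"older continuity, and absorb the tail via the stretched-exponential estimate from the proof of Lemma~\ref{py_integrable}. The one (minor) difference is that the paper applies the mean value theorem to obtain a single intermediate point $\xi$ and then uses the hypothesis $|x-y|\le (s-t)^{1/\alpha}$ to show $|\xi M_{s-t}^{-1}-(x-z)(A_s^{-1}(z))^T M_{s-t}^{-1}|\le c$, whereas by integrating over $\theta$ first and bounding $J(w)$ uniformly in $w$ you sidestep this step entirely; your observation that the smallness condition on $|x-y|$ is not actually needed for the inequality itself is correct.
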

\begin{proof}
We have
\begin{eqnarray*}
&& p_{t,s}^z(x-z) - p_{t,s}^z(y-z)\\
&& = \frac{1}{|\det(A_s(z))|}
\left[G_{s - t}((x - z)(A_s^{-1}(z))^T) - G_{s - t}((y - z)(A_s^{-1}(z))^T)\right]\\
&& = \frac{1}{|\det(A_s(z))|} \nabla G_{s-t}(\xi)\left[(x - y)(A_s^{-1}(z))^T\right],
\end{eqnarray*}
where
$\xi = (\theta(x - z) + (1 - \theta)(y - z)) (A_s^{-1}(z))^T$, $0 \le \theta \le 1$.
By Lemma \ref{Gest} and then Lemma
\ref{scal_h}, we obtain
\begin{eqnarray*}
\left|p_{t,s}^z(x-z) - p_{t,s}^z(y-z)\right|
&\le& c |x - y| G_{s - t}(0) \frac{1}{h_{min}^{-1}(1/(s - t))} \frac{1}{(s - t)^{\eps}}
e^{-|\xi M_{s - t}^{-1}|}\\
&\le& c |x - y| G_{s - t}(0)  \frac{1}{(s - t)^{\eps+1/\alpha}}
e^{-|\xi M_{s - t}^{-1}|}.
\end{eqnarray*}
We have
$$
|\xi M_{s - t}^{-1}| \ge
|(x - z) (A_s^{-1}(z))^T M_{s - t}^{-1}|
- |\xi M_{s - t}^{-1} - (x - z) (A_s^{-1}(z))^T M_{s - t}^{-1}|.
$$
Since $|x - y| \le (s - t)^{1/\alpha}$, using (\ref{Mnorm1}), we get
\begin{eqnarray*}
|\xi M_{s - t}^{-1} - (x - z) (A_s^{-1}(z))^T M_{s - t}^{-1}|
&\le&
|(-(1-\theta)x + (1-\theta)y) (A_s^{-1}(z))^T M_{s - t}^{-1}|\\
&\le& |x - y| \|A\| |M_{s - t}^{-1}|\\
&\le& c |x - y| (s - t)^{-1/\alpha  + \eps/\alpha}\\
&\le& c.
\end{eqnarray*}
We pick  $\delta > 0$ in the same way as in  Lemma
\ref{py_integrable}. By the same arguments as in the proof of Lemma \ref{G_difference} for $z \in D(\delta,x)$ we have
$$
|(x - z) (A_s^{-1}(z))^T - (x - z) (A_s^{-1}(x))^T| \le \|A\| |x - z|^{1 + \gamma_1}
\le (s - t)^{\delta}  \|A\| \frac{1}{|M_{s - t}^{-1}|}.
$$
Hence
$$
|(x - z) (A_s^{-1}(z))^T M_{s - t}^{-1} - (x - z) (A_s^{-1}(x))^T M_{s - t}^{-1}|
\le (s - t)^{\delta} |M_{s - t}^{-1}| \|A\| |M_{s - t}^{-1}|^{-1}
= (s - t)^{\delta} \|A\|.
$$
Therefore for $z \in D(\delta,x)$ we have
$$
\left|p_{t,s}^z(x-z) - p_{t,s}^z(y-z)\right|
\le c |x - y| G_{s - t}(0) \frac{1}{(s - t)^{\eps+1/\alpha}}
e^{-|(x - z) (A_s^{-1}(x))^T M_{s - t}^{-1}|}.
$$
Using the above estimates, Lemma \ref{int} and Lemma
\ref{Gest} we get
\begin{eqnarray*}
&& \int_{D(\delta,x)} \left|p_{t,s}^z(x-z) - p_{t,s}^z(y-z)\right| \, dz\\
&& \le c |x - y| G_{s - t}(0) \frac{1}{(s - t)^{\eps+1/\alpha}}
\int_{D(\delta,x)} e^{-|(x - z) (A_s^{-1}(x))^T M_{s - t}^{-1}|} \, dz\\
&& \le c |x - y| G_{s - t}(0) \frac{1}{(s - t)^{\eps+1/\alpha}}
\det(M_{s - t})\\
&& \le \frac{c |x - y|}{(s - t)^{1/\alpha + \eps}}
\prod_{i = 1}^d \frac{h_i^{-1}(1/(s - t)^{1 - \eps})}{h_i^{-1}(1/(s - t))}.
\end{eqnarray*}
By  Corollary  \ref{h_inv0} this is bounded from above by
$$
c |x - y| (s - t)^{- 1/\alpha - (d + 2) \eps/\alpha}.
$$
For $z \in D^c(\delta, x)$ we have
\begin{eqnarray*}
&& \left|p_{t,s}^z(x-z) - p_{t,s}^z(y-z)\right|\\
&& \le c |x - y| G_{s - t}(0) \frac{1}{(s - t)^{\eps+1/\alpha}}
e^{-|(x - z) (A_s^{-1}(z))^T M_{s - t}^{-1}|}\\
&& \le c |x - y| G_{s - t}(0) \frac{1}{(s - t)^{\eps+1/\alpha}}
e^{-\frac{|x - z|}{\|A\| |M_{s - t}|}}.
\end{eqnarray*}
Using (\ref{int1}) we infer that  there exists $c $ such that
\begin{eqnarray*}
&& \int_{D^c(\delta, x)} \left|p_{t,s}^z(x-z) - p_{t,s}^z(y-z)\right| \, dz\\
&& \le c |x - y| G_{s - t}(0) \frac{1}{(s - t)^{\eps+1/\alpha}}
\int_{D^c(\delta, x)} e^{-\frac{|x - z|}{\|A\| |M_{s - t}|}} \, dz\\
&& \le c  |x - y|,
\end{eqnarray*}
which finishes the proof of (\ref{xydifference1}).
\end{proof}

\begin{proof}[Proof of Theorem \ref{Holder_thm_new}] Let  $0 < \gamma < \gamma' < \alpha$, $\gamma \le 1$. We pick $\eps =\min\{ \eps_0, \frac{\gamma'-\gamma }{\gamma(d+2)}\}$. Then for any
 $0 <s-t \le  \tau$ and $x, y \in \Rd$ such that $|x - y| \le (s - t)^{1/\alpha}$ we have
\begin{equation}
\label{xydifference2}
\int_{\R^d} \left|p_{t,s}^z(x-z) - p_{t,s}^z(y-z)\right| \, dz
\le c |x - y|^{\gamma} (s - t)^{-\gamma'/\alpha},
\end{equation}
where $c = c(\gamma,\gamma',\kappa(\tau))$. To prove   (\ref{xydifference2}) we observe that our choice of  $\eps \in (0,\eps_0]$ yields
$1 + (d + 2) \eps \le  \gamma'/\gamma$.
Hence,
by (\ref{xydifference1}), we get
$$
\left(\int_{\R^d} \left|p_{t,s}^w(x-w) - p_{t,s}^w(y-w)\right| \, dw\right)^{\gamma}
\le c |x - y|^{\gamma} (s - t)^{-\frac{\gamma}{\alpha}(1 + (d + 2) \eps)}
\le c |x - y|^{\gamma} (s - t)^{-\gamma'/\alpha}.
$$
On the other hand, by (\ref{py_integral}), we obtain
$$
\left(\int_{\R^d} \left|p_{t,s}^w(x-w) - p_{t,s}^w(y-w)\right| \, dw\right)^{1 - \gamma}
\le c.
$$
The last two estimates imply (\ref{xydifference2}).

 %\mr{ We choose $\eps =\min\{ \eps_0, \frac{\gamma'-\gamma }{\gamma(d+2)}\}$.}
If $|x - y| \ge ((s - t)/2)^{1/\alpha}$ then the assertion of the theorem is trivial, so we may assume that $|x - y| < ((s - t)/2)^{1/\alpha}$.
We have
\begin{eqnarray*}
&& \left|P_{t,s} f(x) - P_{t,s}f(y)\right| =
\left| \int_{\R^d} \left(p_{t,s}^z(x-z) - p_{t,s}^z(y-z)\right) f(z) \, dz \right. \\
&& \left. + \int_t^s \int_{\R^d} \left(p_{t,r}^w(x-w) - p_{t,r}^w(y-w)\right)
\int_{\R^d} q_{r,s}(w,z) f(z) \, dz \, dw \, dr \right|.
\end{eqnarray*}
By (\ref{xydifference2}) and (\ref{para-e30a}), this is bounded from above by
\begin{eqnarray}
\nonumber
&& c |x - y|^{\gamma} (s-t)^{-\gamma'/\alpha} \|f\|_{\infty}\\
\label{Ptsxy}
&& + c \|f\|_{\infty}
\int_t^s \int_{\R^d} \left|p_{t,r}^w(x-w) - p_{t,r}^w(y-w)\right|
(s - r)^{\eps \alpha - 1} \, dw \, dr.
\end{eqnarray}
Recall that we assumed $|x - y| < ((s - t)/2)^{1/\alpha}$. Let us denote
\begin{eqnarray*}
&& \int_t^s \int_{\R^d} \left|p_{t,r}^w(x-w) - p_{t,r}^w(y-w)\right|
(s - r)^{\eps \alpha - 1} \, dw \, dr\\
&& = \int_t^{t + |x - y|^{\alpha}} \ldots
+ \int_{t + |x - y|^{\alpha}}^{t + (s - t)/2} \ldots
+ \int_{t + (s - t)/2}^{s - t} \ldots\\
&& = \text{I} + \text{II} + \text{III}.
\end{eqnarray*}

By (\ref{py_integral}) and our assumption $|x - y|^{\alpha} < (s - t)/2$ we get
$$
\text{I}
\le c \int_t^{t + |x - y|^{\alpha}} \left(\frac{s - t}{2}\right)^{\eps \alpha - 1} \, dr
\le c \frac{|x - y|^{\alpha}}{s - t}
\le c \left(\frac{|x - y|^{\alpha}}{s - t}\right)^{\gamma/\alpha}
\le c |x - y|^{\gamma} (s-t)^{-\gamma'/\alpha}.
$$
By (\ref{xydifference2}) we get
$$
\text{II}
\le c
 |x - y|^{\gamma} (r - t)^{-\gamma'/\alpha} \, dr
\le c |x - y|^{\gamma} (s-t)^{-\gamma'/\alpha}.
$$
Again by (\ref{xydifference2}) we obtain
$$
\text{III}
\le c
\int_{t + (s - t)/2}^{s - t} |x - y|^{\gamma} (r - t)^{-\gamma'/\alpha} (s - r)^{\eps \alpha - 1} \, dr
\le c |x - y|^{\gamma} (s-t)^{-\gamma'/\alpha}.
$$
By (\ref{Ptsxy}) and the estimates of $\text{I}$, $\text{II}$, $\text{III}$ we obtain the assertion of the theorem.
\end{proof}

\begin{lemma}
\label{p0_ptilde_difference}
%There exist $c > 0$, $\rho > 0$ such that
For any  $0 <s-t \le  \tau$ and $x \in \R^d$  we have
\begin{equation}
\label{p0_ptilde_difference_max}
\sup_{y \in \R^d} \left|p_{t,s}^{(0)}(x,y) - \tilde{p}_{t,s}(x,y)\right| \le c G_{s - t}(0) (s - t)^{\eps}
\end{equation}
and
\begin{equation}
\label{p0_ptilde_difference_int}
\int_{\R^d}\left|p_{t,s}^{(0)}(x,y) - \tilde{p}_{t,s}(x,y)\right| \, dy \le c (s - t)^{\eps}.
\end{equation}
%%%%
The contant $c= c( \kappa(\tau))$. If $\tau\le \tau_0$, then $c= c(\bar{h})$.
\end{lemma}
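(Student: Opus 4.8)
The plan is to pass from $p_{t,s}^{(0)}(x,y)=p_{t,s}^{y}(x-y)$ to $\tilde p_{t,s}(x,y)$ by switching one ingredient at a time, relying on the auxiliary estimates already proved. Since each $\tilde g^{(i)}_u$, and hence $\tilde G_u$, is even, and since $(A_r(x)^{-1})^T=(A_r^T(x))^{-1}$, one may rewrite $\tilde p_{t,s}(x,y)=\frac1{|\det A_t(x)|}\tilde G_{s-t}\big((x-y)(A_t^T(x))^{-1}\big)$. With the abbreviations $w_s=(x-y)(A_s^T(x))^{-1}$, $w_t=(x-y)(A_t^T(x))^{-1}$, and recalling $p_{t,s}^{x}(x-y)=\frac1{|\det A_s(x)|}G_{s-t}(w_s)$ by \eqref{densityrep}, I would use the decomposition
\begin{align*}
p_{t,s}^{(0)}(x,y)-\tilde p_{t,s}(x,y)
&=\big[p_{t,s}^{y}(x-y)-p_{t,s}^{x}(x-y)\big]
+\Big(\frac1{|\det A_s(x)|}-\frac1{|\det A_t(x)|}\Big)G_{s-t}(w_s)\\
&\quad+\frac1{|\det A_t(x)|}\big[G_{s-t}(w_s)-G_{s-t}(w_t)\big]
+\frac1{|\det A_t(x)|}\big[G_{s-t}(w_t)-\tilde G_{s-t}(w_t)\big],
\end{align*}
and estimate the four terms separately, both in the $\sup_{y}$ norm and in the $\int_{\Rd}\cdot\,dy$ norm.

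The first term is exactly what \eqref{py_px_difference} and \eqref{py_px_difference_int} of Lemma \ref{py_integrable} deliver. For the second term, $|\det A_s(x)-\det A_t(x)|\le c\,(s-t)^{\gamma_2}$ follows from \eqref{bounded} and \eqref{Lipschitz} (the determinant being a polynomial in the bounded entries), while \eqref{determinant} keeps the denominators away from $0$; bounding $G_{s-t}(w_s)\le c\,G_{s-t}(0)$ via Lemma \ref{Gest} gives the pointwise estimate, and the substitution $w_s\mapsto y$ (Jacobian $|\det A_s(x)|$, together with $\int_{\Rd}G_{s-t}=1$) gives the integral one. The third term is, after dividing by the bounded prefactor $|\det A_t(x)|^{-1}\le C_4^{-1}$, precisely the quantity estimated in Lemma \ref{Gst_difference}.

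For the fourth term I would first establish the $d$-dimensional analogue of Lemma \ref{gu_difference}: $\sup_{w\in\Rd}|G_u(w)-\tilde G_u(w)|\le c\,u^{\eps}G_u(0)$ and $\int_{\Rd}|G_u(w)-\tilde G_u(w)|\,dw\le c\,u^{\eps}$. This follows by telescoping the products $G_u=\prod_i g^{(i)}_u$ and $\tilde G_u=\prod_i\tilde g^{(i)}_u$, applying \eqref{gu_difference_max} and \eqref{gu_difference_integral} to the single coordinate that differs, and controlling the remaining factors by $g^{(i)}_u(\cdot)\le c\,g^{(i)}_u(0)$ (from \eqref{der_est_3} with $k=0$) and $\tilde g^{(i)}_u(\cdot)\le c\,g^{(i)}_u(0)$ (from $\tilde g^{(i)}_u=g^{(i)}_u\ast P^{\text{tail}}_u$ and \eqref{gu_comp}); for the integral version one uses Fubini together with $\int g^{(i)}_u=\int\tilde g^{(i)}_u=1$. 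The substitution $w_t\mapsto y$ then converts this into the required bound for the fourth term.

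Collecting the four contributions gives $\sup_{y}|p^{(0)}_{t,s}(x,y)-\tilde p_{t,s}(x,y)|\le c\,G_{s-t}(0)\big((s-t)^{\eps}+(s-t)^{\gamma_2}\big)$ and $\int_{\Rd}|p^{(0)}_{t,s}(x,y)-\tilde p_{t,s}(x,y)|\,dy\le c\big((s-t)^{\eps}+(s-t)^{\gamma_2}\big)$. It then remains to note $\gamma_2>\eps_0\ge\eps$: in case \textbf{(A)}, $\eps_0\le\gamma_2\alpha/(2(d+3))<\gamma_2$ since $\alpha\le2<2(d+3)$; in case \textbf{(B)}, since $\gamma_2>1/\alpha-1/\beta$ by \textbf{(D)} and $1/\beta-1/\alpha\le0$, $\eps_0\le\alpha(\gamma_2-1/\alpha+1/\beta)/(2(d+3))\le\alpha\gamma_2/(2(d+3))<\gamma_2$. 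Hence $(s-t)^{\gamma_2}\le\tau^{\gamma_2-\eps}(s-t)^{\eps}$ for $0<s-t\le\tau$, the extra $\tau$-power is absorbed into $c$, and the asserted dependence of $c$ on $\kappa(\tau)$ (and the improvement $c=c(\bar h)$ for $\tau\le\tau_0$) is inherited from Lemmas \ref{py_integrable}, \ref{Gst_difference}, \ref{Gest} and \ref{gu_difference}. The argument is almost entirely bookkeeping; the only place needing genuine care is the telescoped product estimate for $|G_u-\tilde G_u|$, where one must make sure the optimal power $u^{\eps}$ and the correct on-diagonal normalization $G_u(0)$ both emerge, and — for the second and third terms — that trading a spatial displacement against $|M_{s-t}^{-1}|$ does not introduce any negative power of $s-t$; but the latter is exactly what Lemmas \ref{Gst_difference} and \ref{Gest} already absorb.
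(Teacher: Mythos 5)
Your proposal is correct and follows essentially the same route as the paper: the same four-way decomposition (paper's $I_1, I_3, I_4, I_5$) into the freezing error, the determinant error, the $A_s$-vs-$A_t$ error, and the truncation error, each handled by Lemma \ref{py_integrable}, conditions \eqref{bounded}--\eqref{Lipschitz}, Lemma \ref{Gst_difference}, and Lemma \ref{gu_difference} respectively. You merely spell out two steps the paper leaves implicit — the telescoping of the coordinate products to lift Lemma \ref{gu_difference} to $d$ dimensions, and the check that $\gamma_2>\eps_0$ so the $(s-t)^{\gamma_2}$ from the determinant term is absorbed into $(s-t)^{\eps}$ — but the structure is identical.
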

\begin{proof}
Let  $0 <s-t \le  \tau$, $x \in \R^d$ be arbitrary. We have
\begin{eqnarray*}
\left|p_{t,s}^{(0)}(x,y) - \tilde{p}_{t,s}(x,y)\right|
&=&
\left|p_{t,s}^y(y - x) - \tilde{p}_{t,s}(x,y)\right|\\
&\le&
\left|p_{t,s}^y(y - x) - p_{t,s}^x(y - x)\right|
+
\left|p_{t,s}^x(y - x) - \tilde{p}_{t,s}(x,y)\right|\\
&=&  \text{I}_1 +\text{I}_2.
\end{eqnarray*}
We also have
\begin{eqnarray*}
\text{I}_2 &\le&
\left|
\frac{1}{|\det A_s(x)|} G_{s - t}((A_s(x))^{-1}(y - x)) - \frac{1}{|\det A_t(x)|} G_{s - t}((A_s(x))^{-1}(y - x))
\right|\\
&+&
\frac{1}{|\det A_t(x)|} \left|
 G_{s - t}((A_s(x))^{-1}(y - x)) - G_{s - t}((A_t(x))^{-1}(y - x))
\right|\\
&+&
\frac{1}{|\det A_t(x)|} \left|
 G_{s - t}((A_t(x))^{-1}(y - x)) - \tilde{G}_{s - t}((A_t(x))^{-1}(y - x))
\right|\\
&=&
\text{I}_3 + \text{I}_4 + \text{I}_5.
\end{eqnarray*}
It remains to justify $\sup_{y \in \R^d} \text{I}_i \le c G_{s - t}(0) (s - t)^{\eps}$ and $\int_{\R^d} \text{I}_i \, dy \le c (s - t)^{\eps}$ for $i \in \{1,2, 3,4,5\}$ and some $c > 0$ for some $c=c(\kappa(\tau))$ in the general case or $c=c(\bar{h})$ if $t<s\le\tau_0$.

By Lemma \ref{py_integrable} we get such estimates for $\text{I}_1$. By (\ref{bounded}), (\ref{determinant}) and (\ref{Lipschitz}) we get the estimates for $\text{I}_3$. By Lemma \ref{Gst_difference} and (\ref{determinant}) we obtain such estimates for $\text{I}_4$. Analogous estimates of $\text{I}_5$ follow from Lemma \ref{gu_difference} and definitions of $G_{s - t}$, $\tilde{G}_{s - t}$.
\end{proof}

\appendix

\section{Estimates for Example \ref{ex1} and Example \ref{ex2}}\label{sA}

In this section we prove the two inequalities, which were stated and used in  Example \ref{ex1} and Example \ref{ex2}.
\begin{proof}[Proof of \eqref{h-discrete}] For $\rho_{k+1}<|y|\leq \rho_k$ we have $\rho_k\leq c^{-1}|y|$, hence
$$\begin{aligned}
h(r)&=r^{-2}\sum_{k: \rho_k\leq r}\rho_k^2\nu(\rho_{k+1}<|y|\leq \rho_k)+\sum_{k:\rho_k>r}\nu(\rho_{k+1}<|y|\leq \rho_k)
\\&\leq c^{-2}r^{-2}\int (1\wedge(|y|^2r^{-2})\nu(dy)= c^{-2} \frac{4c_\alpha}{\alpha(2-\alpha)} r^{-\alpha},
\end{aligned}
$$
which proves the upper bound in \eqref{h-discrete}. Similarly, we have
$$
\begin{aligned}
h(r)&\geq \int_{|y|\leq \rho_1}(1\wedge(|y|^2r^{-2})\nu(dy)\geq Br^{-\alpha}, \quad r\in (0,1],
\end{aligned}
$$
which proves the lower bound.
\end{proof}
\begin{proof}[Proof of \eqref{on-diagonal-est}] Without loss of generality we can take $t=0, x=0,$ then
$$
X_s=\int_0^sA(r)\mathbf{e}_1\, dZ_r^1+\int_0^sA(r)\mathbf{e}_2\, dZ_r^2
$$
The characteristic function of $X_t$ has the form
$$
\phi_s^X(z)=\exp\left\{-\int_0^s(|(A(r)z)_1|^{\alpha_1}+|(A(r)z)_2|^{\alpha_2})\, dr\right\},
$$
and thus the distribution density equals
$$
p_s^X(x)=\frac{1}{(2\pi)^2}\int_{\R^2}\exp\left\{-ix\cdot z-\int_0^s(|(A(r)z)_1|^{\alpha_1}+|(A(r)z)_2|^{\alpha_2})\, dr\right\}\, dz.
$$
In particular,
$$
p_s^X(0)=\frac{1}{(2\pi)^2}\int_{\R^2}\exp\left\{-\int_0^s(|(A(r)z)_1|^{\alpha_1}+|(A(r)z)_2|^{\alpha_2})\, dr\right\}\, dz,
$$
below we will show that the latter integral exists.

Lets estimate from below $$
\int_0^s(|(A(r)z)_1|^{\alpha_1}+|(A(r)z)_2|^{\alpha_2})\, dr\geq \int_0^s|(A(r)z)_2|^{\alpha_2}\, dr
$$
We recall that
$$
(A(r)z)_2=r^\gamma z_1+z_2, \quad z=(z_1, z_2),
$$ and perform case study.

\emph{Case 1:} $|z_2|>\frac{s^\gamma}2|z_1|$. Then
$$
|r^\gamma z_1+z_2|\geq \frac14|z_2|\hbox{ if } r\in [0,\frac{s}{4^{1/\gamma}}] \hbox{ and } |r^\gamma z_1+z_2|\geq0 \hbox{ othwerwize,}
$$
which gives
$$
\int_0^s|(A(r)z)_2|^{\alpha_2}\, dr\geq cs |z_2|^{\alpha_2}.
$$

\emph{Case 2:} $|z_2|\leq \frac{s^\gamma}2|z_1|$. Consider two intervals $I(s)=[\frac{s}{2},\frac{3s}{4}), J(s)= [\frac{3s}{4}, s].$ At least one of these intervals is free from the roots of the function $r\mapsto  |r^\gamma z_1+z_2|$, and this function depends on $v=r^{\gamma}$ linearly  with the slope $\pm|z_1|$. Since the values of this function in the endpoints are positive, this yields that, at least on the half of the interval,
$$
|r^\gamma z_1+z_2|\geq c s^\gamma|z_1|,
$$
which gives
$$
\int_0^s|(A(r)z)_2|^{\alpha_2}\, dr\geq cs^{1+\alpha_2\gamma} |z_1|^{\alpha_2}.
$$

Now we can complete the estimate of $p^Y(0)$. We have
$$\begin{aligned}
p_s^X(0)&\leq \frac{1}{(2\pi)^2}\int_{\R^2}\exp\left\{-\int_0^t|(A(s)z)_2|^{\alpha_2}\, ds\right\}\, dz
\\&\leq \frac{1}{(2\pi)^2} \int_{|z_2|> \frac{s^\gamma}2|z_1|}\exp\left\{-cs |z_2|^{\alpha_2}\right\}\, dz
\\&+ \frac{1}{(2\pi)^2}\int_{|z_2|\leq \frac{s^\gamma}2|z_1|}\exp\left\{-c s^{1+\alpha_2\gamma} |z_1|^{\alpha_2}\right\}\, dz=:I_1+I_2.
\end{aligned}
$$
Since
$$
I_1=\frac{4}{(2\pi)^2s^\gamma}\int_{\Re}|z_2|\exp\left\{-cs |z_2|^{\alpha_2}\right\}\, dz_2=\Big|_{s^{1/\alpha_2} z_2=v}=Cs^{-\gamma-{2/\alpha_2}},
$$
$$
\begin{aligned}
I_2&=\frac{s^\gamma}{(2\pi)^2}\int_{\Re}|z_1|\exp\left\{-c s^{1+\alpha_2\gamma} |z_1|^{\alpha_2}\right\}\, dz_1=\Big|_{s^{(1+\alpha_2\gamma)/\alpha_2} z_1=v}
\\&=Cs^\gamma\cdot t^{-2(1+\alpha_2\gamma)/\alpha_2}= Cs^{-\gamma-{2/\alpha_2}},
\end{aligned}$$
this completes the proof of \eqref{on-diagonal-est}.\end{proof}


\begin{thebibliography}{99}
\bibliographystyle{plain}

\bibitem{BGR14} K. Bogdan, T. Grzywny, M. Ryznar, Density and tails of unimodal convolution semigroups, J. Funct. Anal. 266 (2014) 3543--3571.

\bibitem{BKS17}
K. Bogdan, V. Knopova, P. Sztonyk, Heat kernel of anisotropic nonlocal operators, Documenta Mathematicae 25 (2020) 1--54.

\bibitem{CHZ2020} Z.-Q. Chen, Z. Hao, X. Zhang, H{\"o}lder regularity and gradient estimates for SDEs
driven by cylindrical $\alpha$-stable processes, Electron. J. Probab. 25 (2020), article no. 137, 1--23.

\bibitem{CHXZ17} Z.-Q. Chen, E. Hu, L. Xie, X. Zhang, Heat kernels for non-symmetric diffusions operators
with jumps, J. Differ. Equ. 263 (2017) 6576--6634.

\bibitem{CZ16}
Z.-Q. Chen, X. Zhang, Heat kernels and analyticity of non-symmetric jump diffusion semigroups, Probab. Theory Relat. Fields 16 (2016) 267--312.

\bibitem{CZ18} Z.-Q. Chen, X. Zhang, Heat kernels for time-dependent non-symmetric stable-like operators,
J. Math. Anal. Appl.  465 (2018) 1--21.

\bibitem{C1976} F. H. Clarke, On the inverse function theorem,  Pacific Journal of Mathematics
Vol. 64, No 1 (1976), 97--102.

\bibitem{DF13} A. Debussche, N. Fournier, Existence of
densities for stable-like driven SDE’s with H{\"o}lder
continuous coefficients, J. Funct. Anal. 264(8) (2013) 1757--1778.

\bibitem{EIK04}
S.D. Eidelman, S.D. Ivasyshen, A.N. Kochubei, Analytic Methods in the Theory of Differential and Pseudo-Differential Equations of Parabolic Type, Birkh\"auser, Basel 2004.

\bibitem{EK86} S. N. Ethier, T. G. Kurtz, Markov Processes: Characterization and Convergence, Wiley,
New York 1986.

\bibitem{Fe36}
W. Feller, Zur Theorie der stochastischen Prozesse. (Existenz- und Eindeutigkeitss\"atze), Mathematische Annalen 113 (1936) 113--160. Reprinted and translated in~ R.L. Schilling, Z. Vondra\v{c}ek, W. Wojczynski, William Feller. Selected Papers I, Springer, Cham (2015).

\bibitem{FJR18} M. Friesen, P. Jin, B. R{\"u}diger, Existence of densities for stochastic differential equations driven by L{\'e}vy processes with anisotropic jumps, arXiv:1810.07504

\bibitem{gev13}
M. Gevrey, Sur les \'equations aux d\'eriv\'ees partielles du type parabolique, Journal des Mathematiques Pures et Appliqu\'ees 9 (1913) 305--471 and 10 (1914) 105--148.

\bibitem{G14} T. Grzywny, On Harnack inequality and H\"older regularity for isotropic unimodal L{\'e}vy processes,
Potential Anal.  41 (2014) 1--29.


\bibitem{GS19} T. Grzywny, K. Szczypkowski, Heat kernels of non-symmetric L\'evy-type operators, J. Diff. Equ. 267 (2019) 6004-6064.

\bibitem{GS20} T. Grzywny, K. Szczypkowski, L\'evy processes: Concentration function and
heat kernel bounds, Bernoulli 26(4)  (2020) 3191--3223.

\bibitem{had1911}
J. Hadamard, Sur la solution fondamentale des \'equations aux d\'eriv\'ees partielles du type parabolique, Comptes Rendus de l'Academie des Sciences, Paris 152 (1911) 1148--1149.

\bibitem{H1993} P. Haj{\l}asz,  Change of variables formula under minimal assumptions,
Colloquium Mathematicum 64(1) (1993), 93--101.

\bibitem{IW81}
N. Ikeda, S. Watanabe, Stochastic differential equations and diffusion processes, North-Holland, Amsterdam, 1981.

\bibitem{KKK19}
V. Knopova, A. Kochubei, A. Kulik, Parametrix Methods for Equations with Fractional Laplacians,
In: A.N.\ Kochubei, Y.\ Luchko (eds.), Handbook of Fractional Calculus with Applications, Vol.2. De Gruyter, Berlin 2019.

\bibitem{KK18}  V. Knopova, A. Kulik,  Parametrix construction of the transition probability density of the solution to an SDE driven by $\alpha$-stable noise, Annales de l'Institut Henri Poincar\'e 54(1) (2018) 100--140.

 \bibitem{KKS2020} V.  Knopova,  A. Kulik, R.  Schilling, Construction and heat kernel estimates of general stable-like Markov processes, arXiv:2005.08491

\bibitem{KS2012} V.  Knopova,  R.  Schilling, Transition density estimates for a class of L{\'e}vy and L{\'e}vy-type processes, J. Theoret. Probab. 25(1) (2012) 144--170.

\bibitem{K89}
A.N. Kochubei, Parabolic pseudodifferential equations, hypersingular integrals, and Markov processes, Mathematics of the USSR -- Izvestiya 33 (1989) 233--259.


\bibitem{Ko00}
V. Kolokoltsov, Symmetric stable laws and stable-like jump-diffusions, Proc. London Math. Soc. 80 (2000) 725--768.

\bibitem{Kue17a}
F. K\"uhn, L\'evy-Type  Processes: Moments,  Construction  and  Heat  Kernel  Estimates,
Springer, Lecture Notes in Mathematics 2187 (L\'evy Matters VI), Berlin  2017.

\bibitem{Kue17b}
F. K\"uhn, Transition probabilities of L\'evy-type processes: Parametrix construction, Math. Nachr. 292  (2019)  358--376.

\bibitem{KR19}
T. Kulczycki, M. Ryznar, Semigroup properties of solutions of SDEs driven by L\'evy processes with independent coordinates, Stochastic Process. Appl. 130 (2020) 7185--7217.

\bibitem{KR17}
T. Kulczycki, M. Ryznar, Transition density estimates for diagonal systems of SDEs driven by cylindrical $\alpha$-stable process, ALEA Lat. Am. J. Probab. Math. Stat. 15  (2018) 1335--1375.

\bibitem{KRS18}
T. Kulczycki, M. Ryznar, P. Sztonyk, Strong Feller property for SDEs driven by multiplicative cylindrical stable noise, Potential Anal. (2020), published online https://doi.org/10.1007/s11118-020-09850-8

\bibitem{K19} A. Kulik, Approximation in law of locally $\alpha$--stable L\'evy-type processes by non-linear regressions,     Electron. J. Probab. 24 (2019), paper no. 83, 45 pp.

\bibitem{Ku18}
A. Kulik, On weak uniqueness and distributional properties of a solution to an SDE with $\alpha$-stable noise, Stochastic Process. Appl. 129 (2019)  473--506.

\bibitem{Le1907}
E.E.\ Levi, Sulle equazioni lineari totalmente ellittiche  alle derivate parziali, Rendiconti del Circolo Matematico di Palermo 24 (1907)  275--317.


\bibitem{Stroock_Varad} D. W.~Stroock, S. R. S.~Varadhan, Multidimensional Diffusion Processes, Springer, Berlin 1979.


\bibitem{S2017} P. Sztonyk, Estimates of densities for L{\'e}vy processes with lower intensity of large jumps, Math. Nachr. 290(1) (2017)
120--141.


\end{thebibliography}
\end{document}